\pgfplotsset{compat=newest}
\definecolor{c595959}{RGB}{89,89,89}
\definecolor{c5a5a5a}{RGB}{90,90,90}
\setlist[enumerate]{itemsep=0mm}
\theoremstyle{plain}
\declaretheorem[title=Theorem, parent=section]{Theorem}
\declaretheorem[title=Lemma,sibling=Theorem]{Lemma}
\declaretheorem[title=Proposition,sibling=Theorem]{Proposition}
\declaretheorem[title=Corollary,sibling=Theorem]{Corollary}
\theoremstyle{definition}
\declaretheorem[title=Definition,sibling=Theorem]{Definition}
\declaretheorem[title=Remark,sibling=Theorem]{Remark}
\declaretheorem[title=Remark, numbered=no]{Remark*}
\declaretheorem[title=Assumption, numbered=no]{Assumption*}
\numberwithin{equation}{section}
\newcommand{\N}{\mathds{N}}
\newcommand{\R}{\mathds{R}}
\newcommand{\ind}{\mathds{1}}
\newcommand{\intd}{\, \mathrm{d}} % for letter d in integrals
\DeclareMathOperator{\supp}{supp}
\DeclareMathOperator{\pv}{p.v.}
\providecommand\@dotsep{5}
\def\listtodoname{List of Todos}
\def\listoftodos{\@starttoc{tdo}\listtodoname}
\def\namedlabel#1#2{\begingroup
	#2%
	\def\@currentlabel{#2}%
	\phantomsection\label{#1}\endgroup
}
\begin{document}
	\allowdisplaybreaks
	\title[Boundary regularity for non-local operators with vanishing horizon]{Boundary regularity for non-local operators with symmetric kernels and vanishing horizon}
	\author{Philipp Svinger}
	
	\address{Fakult{\"a}t f{\"u}r Mathematik, Universit{\"a}t Bielefeld, Postfach 10 01 31, 33501 Bielefeld, Germany}
	\email{psvinger@math.uni-bielefeld.de}
	
	\makeatletter
	\@namedef{subjclassname@2020}{%
		\textup{2020} Mathematics Subject Classification}
	\makeatother
	
	\subjclass[2020]{47G20, 35B65, 35S15, 35R09}
	
	\keywords{Nonlocal, boundary regularity, censored, regional, fractional Laplace, truncated, vanishing horizon}

\begin{abstract} 
We prove optimal Hölder boundary regularity for a non-local operator with a singular, symmetric kernel that depends on the distance to the boundary of the underlying domain.
Additionally, we prove higher boundary regularity of solutions.
\end{abstract}

	\maketitle

\section{Introduction}
Non-local energy forms have long been used to define function spaces (see, e.g., \cite{Leoni2023}) and have been intensively studied in recent years due to their connections with other fields, such as Markov processes (see \cite{Fukushima2010}) and peridynamics (see \cite{Silling2000}).
The non-local nature of these energy forms leads to challenging questions regarding coercivity estimates and functional inequalities.
In \cite{Dyda2006}, Dyda proved the following comparability result for non-local energy forms defining the Sobolev-Slobodeckij space $H^s(\Omega )$.
Given a bounded Lipschitz domain $\Omega \subset \R^d$, $s\in (0,1)$, and $\sigma \in (0,1]$, there exists some constant $c=c(\Omega ,d,s,\sigma )>0$ such that for any $u\in L^2 (\Omega )$
\begin{equation} \label{eq:ComparabiltiyDydaInequlatiy}
\int _\Omega \int _\Omega \frac{(u(x)-u(y))^2}{|x-y|^{d+2s}} \intd y \intd x \leq c \int _\Omega \int_{B_{ \sigma d_\Omega (x) }(x)} \frac{(u(x)-u(y))^2}{|x-y|^{d+2s}} \intd y \intd x
\end{equation}
where $d_\Omega (x)$ denotes the distance from $x$ to the boundary $\partial \Omega$.
By setting
\begin{align*}
\mathcal{E}_{\Omega ,\sigma} (u,v) &= \frac{1}{2}\int_\Omega \int_{B_{ \sigma d_\Omega (x) }(x)} \frac{(u(x)-u(y))(v(x)-v(y))}{|x-y|^{d+2s}} \intd y\intd x \\
&  = \frac{1}{2}\int_\Omega \int_\Omega \frac{(u(x)-u(y))(v(x)-v(y))}{|x-y|^{d+2s}} \frac{ \mathbbm{1} _{\{ y\in B_{\sigma d_\Omega (x)}(x) \} }  +\mathbbm{1} _{\{ x\in B_{\sigma d_\Omega (y)}(y) \} } }{2} \intd y\intd x ,
\end{align*}
the estimate \eqref{eq:ComparabiltiyDydaInequlatiy} establishes that $[u]_{H^s(\Omega )}$ is comparable to $\mathcal{E}_{\Omega ,\sigma} (u,u)$.
This energy form has an associated integro-differential operator
\begin{equation} \label{eq:DefOperator}
\mathcal{L}_{\Omega ,\sigma } u(x) = \pv \int_\Omega (u(x)-u(y)) \mathcal{K}_{\Omega , \sigma }(x,y) \intd y = \lim _{\epsilon \searrow 0} \int_{\Omega \setminus B_{\epsilon}(x)} (u(x)-u(y)) \mathcal{K}_{\Omega , \sigma }(x,y) \intd y.
\end{equation}
where
\begin{equation} \label{eq:KernelOperator}
\mathcal{K}_{\Omega ,\sigma }(x,y)  = |x-y|^{-d-2s} \frac{ \mathbbm{1} _{\{ y\in B_{\sigma d_\Omega (x)}(x) \} }  +\mathbbm{1} _{\{ x\in B_{\sigma d_\Omega (y)}(y) \} }  }{2}  .
\end{equation}

This paper investigates the boundary regularity of solutions to the Dirichlet problem
\begin{gather} \label{eq:DirichletProblem}
\begin{aligned}
\mathcal{L}_{\Omega , \sigma} u &= f \quad \text{ in }\Omega , \\
u& =0 \quad \text{ on } \partial \Omega .
\end{aligned}
\end{gather}
Note that we need a restriction $s>1/2$ to define a trace of $u$ in the variational setup and impose boundary values $u=0$ on $\partial \Omega$.

\subsection{Main Results}
For the operator $\mathcal{L}_{\Omega , \sigma}$ from \eqref{eq:DefOperator}, we will prove the following boundary regularity result.
\begin{Theorem}\label{Thm:HoelderContinuityWeakSol}
Let $\Omega \subset \R^d$ be a bounded $C^{1,1}$ domain, $s\in (1/2,1)$, and $\sigma \in (0,1]$.
Assume that $u\in H^s_0(\Omega )$ is a weak solution to the Dirichlet problem \eqref{eq:DirichletProblem} for some $f\in L^\infty (\Omega )$.
Then $u\in C^{2s-1 }( \overline{\Omega })$ with
\begin{equation} \label{eq:HoelderBoundaryRegularity}
\| u \| _{C^{2s-1} (\overline{\Omega } )} \leq c   \| f \| _{L^\infty (\Omega )} 
\end{equation} 
for some constant $c=c(\Omega ,d,s,\sigma ) >0$.
\end{Theorem}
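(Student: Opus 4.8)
The plan is to follow the barrier-and-rescaling scheme for boundary regularity of non-local Dirichlet problems, in the spirit of the theory for the censored (regional) fractional Laplacian, where $2s-1$ is exactly the exponent that appears. Two ingredients are needed: a pointwise decay estimate $|u(x)|\le C\|f\|_{L^\infty(\Omega)}\,d_\Omega(x)^{2s-1}$, obtained from a barrier and the comparison principle, and its upgrade to \eqref{eq:HoelderBoundaryRegularity} by rescaling against interior regularity. For the latter one first records the scaling of the operator: with $\Omega_r=(\Omega-x_0)/r$ one has $\mathcal{K}_{\Omega,\sigma}(x_0+rx,x_0+ry)=r^{-d-2s}\,\mathcal{K}_{\Omega_r,\sigma}(x,y)$, so $v(x):=u(x_0+rx)$ solves $\mathcal{L}_{\Omega_r,\sigma}v=r^{2s}f(x_0+r\cdot)$. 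Away from $\partial\Omega$ the operator is non-degenerate---on scales below the local horizon $\sigma d_\Omega(x)$ the kernel $\mathcal{K}_{\Omega,\sigma}$ agrees with $|x-y|^{-d-2s}$ up to a bounded lower-order remainder coming from censoring and symmetrisation---so interior regularity for $\mathcal{L}_{\Omega,\sigma}$ gives, for $z\in\Omega$ with $d_\Omega(z)=\rho$,
\[
[u]_{C^{2s-1}(B_{\rho/4}(z))}\le C\rho^{1-2s}\Big(\|u\|_{L^\infty(B_{\rho/2}(z))}+\rho^{2s}\|f\|_{L^\infty(\Omega)}\Big),
\]
once the decay estimate controls $u$ outside $B_{\rho/2}(z)$.

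\emph{Barrier.} In the half-space $\R^d_+=\{x_d>0\}$ one computes $\mathcal{L}_{\R^d_+,\sigma}[(x_d)_+^{2s-1}]$; by the scaling above together with translation invariance in the first $d-1$ variables this equals $c_\sigma\,x_d^{-1}$ for a single constant $c_\sigma$, and this half-space computation, the analogue of the classical censored-stable computations, is what forces the exponent $2s-1$. The vanishing horizon helps here: since the integration defining $\mathcal{L}_{\Omega,\sigma}u(x)$ only involves points $y$ with $d_\Omega(y)$ bounded below in terms of $d_\Omega(x)$, the singular integral $\mathcal{L}_{\Omega,\sigma}[d_\Omega^{2s-1}](x)$ is automatically well defined pointwise in $\Omega$ even though $d_\Omega^{2s-1}$ is merely Hölder at $\partial\Omega$. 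Transplanting the half-space model to $\Omega$ and using $\partial\Omega\in C^{1,1}$---so that $d_\Omega$ is $C^{1,1}$ near $\partial\Omega$ and at scale $r$ the boundary is a hyperplane up to an $O(r^2)$ error---one shows that a function $\phi$ equal to $d_\Omega^{2s-1}$ near $\partial\Omega$ and smoothly cut off in the interior satisfies the bounds needed to turn $\pm\phi$, after an interior perturbation, into sub- and supersolutions; the errors come from the curvature of $\partial\Omega$, the far geometry of $\Omega$, and the symmetrisation indicator $\mathbbm{1}_{\{x\in B_{\sigma d_\Omega(y)}(y)\}}$, each contributing a convergent bounded term.

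\emph{Comparison and conclusion.} The weak comparison principle, immediate from the non-negativity of $\mathcal{E}_{\Omega,\sigma}$ by testing the equation with $(u\mp C\|f\|_{L^\infty(\Omega)}\phi)_+\in H^s_0(\Omega)$, then gives $|u(x)|\le C\|f\|_{L^\infty(\Omega)}\,d_\Omega(x)^{2s-1}$, and in particular $\operatorname{osc}_{B_r(x_0)\cap\Omega}u\le Cr^{2s-1}\|f\|_{L^\infty(\Omega)}$ for every $x_0\in\partial\Omega$. Inserting this into the rescaled interior estimate at the dyadic scales $\rho=2^{-k}$ and summing a geometric series bounds $[u]_{C^{2s-1}}$ on the dyadic strips $\{2^{-k-1}<d_\Omega<2^{-k}\}$; a standard weighted Hölder argument---comparing $u(x)$ and $u(y)$ through the interior estimate when $|x-y|\le\tfrac12\max(d_\Omega(x),d_\Omega(y))$ and through the decay estimate otherwise---then assembles these into the global bound \eqref{eq:HoelderBoundaryRegularity}.

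\emph{Main obstacle.} The genuinely delicate point is the interaction between the vanishing horizon and the boundary: $\mathcal{L}_{\Omega,\sigma}$ degenerates precisely where information is needed, so neither the interior estimate nor the barrier is scale-invariant down to $\partial\Omega$, and one has to track the ratio of the horizon $\sigma d_\Omega$ to the distance to $\partial\Omega$ throughout the boundary layer. Concretely, the crux is the half-space computation of $c_\sigma$---establishing that it has the right sign, or else correcting the barrier by a lower-order term---and controlling the symmetrisation term uniformly as $x\to\partial\Omega$.
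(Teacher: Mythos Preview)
Your overall scheme---barrier plus interior regularity plus the standard two-case patching argument---is exactly the paper's strategy, and Step~3 of the paper's Proposition~4.2 is essentially your final paragraph. Two points, however, are glossed over in a way that hides the actual work.

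\textbf{The half-space constant is zero.} You write $\mathcal{L}_{\R^d_+,\sigma}[(x_d)_+^{2s-1}]=c_\sigma x_d^{-1}$ and leave open whether $c_\sigma$ has a usable sign. In fact $c_\sigma=0$: the function $x_d^{2s-1}$ is exactly $\mathcal{L}_{\R^d_+,\sigma}$-harmonic (this is the paper's Proposition~2.5, where the computation reduces to the integrand $(1-t^p)(1-t^{2s-1-p})$ vanishing identically at $p=2s-1$). Consequently $d_\Omega^{2s-1}$ alone, even with an interior perturbation, cannot serve as a barrier---there is no leading-order sign to exploit. The paper's barrier is $\varphi=C\,d_\Omega^{2s-1}-d_\Omega^{2s-1+\epsilon}$, and it is the strict inequality $a(2s-1+\epsilon,\sigma)<0$ for the perturbation exponent that produces $\mathcal{L}_\Omega\varphi\gtrsim d_\Omega^{-1+\epsilon}$ near the boundary. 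Your phrase ``or else correcting the barrier by a lower-order term'' is therefore not a fallback but the entire mechanism.

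\textbf{The transplantation errors are not bounded.} You say the errors from curvature, far geometry, and the symmetrisation indicator each give ``a convergent bounded term''. They do not: $\mathcal{L}_\Omega d_\Omega^{2s-1}(x)$ is singular of order $d_\Omega(x)^{-\gamma}$ for some $\gamma\in(0,1)$ (Lemma~3.2). What matters is only that $\gamma<1$, so this is dominated by the $d_\Omega^{-1+\epsilon}$ coming from the correction term. Establishing this requires a quantitative comparison of $\mathcal{L}_\Omega$ acting on $l(y)^p=y_d^p$ with the half-space operator, which in turn means controlling the symmetric difference between $\{y:|x-y|<\sigma d_\Omega(y)\}$ and its half-space model. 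For $\sigma\in(0,1)$ this model is an ellipsoid and the analysis is local; for $\sigma=1$ the model is an unbounded paraboloid and the far contribution must be cut off at a scale $\rho^{-1/\kappa}$ and estimated separately (Lemmas~3.3 and~3.4). This geometric step is the technical core of the barrier construction and cannot be absorbed into a one-line remark.
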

In terms of higher boundary regularity, we obtain the following result.
\begin{Theorem}\label{Thm:HigherOrderBoundaryReg}
Let $\Omega \subset \R^d$ be a bounded $C^{1,1}$ domain, $s\in (1/2,1)$, and $\sigma \in (0,1]$.
Assume, that as in \autoref{Thm:HoelderContinuityWeakSol}, $u\in H^s_0(\Omega )$ solves \eqref{eq:DirichletProblem} for some $f\in L^\infty (\Omega )$.
Then there exists some $\alpha = \alpha (d,s,\sigma )\in (0,1)$ such that $u/d_\Omega ^{2s-1} \in C^{\alpha}( \overline{\Omega })$ with
\begin{equation} \label{eq:HigherBoundaryRegularity}
\left\| \frac{u}{d_\Omega ^{2s-1}} \right\| _{C^{\alpha } (\overline{\Omega } )} \leq c   \| f \| _{L^\infty (\Omega )} 
\end{equation} 
for some constant $c=c(\Omega ,d,s,\sigma ) >0$.
\end{Theorem}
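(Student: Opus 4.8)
The plan is to prove a pointwise boundary expansion and then glue it to the interior regularity. Concretely, I would show that there is $\alpha=\alpha(d,s,\sigma)\in(0,1)$ such that for every $z\in\partial\Omega$ there is a number $Q(z)$ with $|Q(z)|\le c\,\|f\|_{L^\infty(\Omega)}$ and
\[
\bigl\| u - Q(z)\, d_\Omega^{2s-1}\bigr\|_{L^\infty(B_r(z)\cap\Omega)} \le c\,\|f\|_{L^\infty(\Omega)}\,r^{2s-1+\alpha}\qquad\text{for all }0<r<r_0 ,
\]
and then deduce \eqref{eq:HigherBoundaryRegularity} by combining this with the (scale-invariant) interior estimates for $\mathcal{L}_{\Omega,\sigma}$. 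The natural route to the expansion is a blow-up and compactness argument in the style of Ros-Oton and Serra. The structural fact that makes it work here is that the rescaling $v_r(x):= r^{-(2s-1)}u(z+rx)$ is compatible with the operator: since $|x-y|$ and $d_\Omega(x)$ are both $1$-homogeneous, after flattening $\partial\Omega$ near $z$ (which is admissible because $\Omega$ is $C^{1,1}$) the rescaled operators converge, as $r\to0$, to a model operator $\mathcal{L}_{\R^d_+,\sigma}$ on the half-space $\R^d_+=\{x_d>0\}$ that is invariant under tangential translations; the relevant boundary profile for this model is $(x_d)_+^{2s-1}$, the analogue of the classical regional fractional Laplacian's boundary profile, which I would confirm by a one-dimensional computation showing that $(x_d)_+^{2s-1}$ is $\mathcal{L}_{\R^d_+,\sigma}$-harmonic.

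Before blowing up, I would collect four ingredients, either elementary or available from the earlier sections: (i) interior estimates for $\mathcal{L}_{\Omega,\sigma}$ in scale-invariant form, giving Hölder control of $u$ on $B_{d_\Omega(x)/2}(x)$ with constants degenerating only like a power of $d_\Omega(x)^{-1}$; (ii) the global bound $\|u\|_{C^{2s-1}(\overline\Omega)}\le c\|f\|_{L^\infty(\Omega)}$ from \autoref{Thm:HoelderContinuityWeakSol}; (iii) a barrier estimate: using the $C^{1,1}$ regularity of $\Omega$ and comparison with the half-space model, $|\mathcal{L}_{\Omega,\sigma}(d_\Omega^{2s-1})|\le c$ in a one-sided neighbourhood of $\partial\Omega$, whence the comparison principle gives $|u|\le c\,\|f\|_{L^\infty(\Omega)}\,d_\Omega^{2s-1}$ near, and then throughout, $\Omega$; and (iv) a Liouville-type classification for the model problem: if $v$ is $\mathcal{L}_{\R^d_+,\sigma}$-harmonic in $\R^d_+$ with $v=0$ on $\partial\R^d_+$ and $|v(x)|\le C(1+|x|^{2s-1+\alpha})$ for some $\alpha<1$, then $v=c\,(x_d)_+^{2s-1}$.

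The expansion then follows by contradiction: were it to fail for every $c$, one would obtain solutions $u_k$, boundary points $z_k$, and radii $r_k\downarrow0$ for which the renormalised rescalings $v_k(x)=\theta_k^{-1}r_k^{-(2s-1)}\bigl(u_k-Q_{r_k}\,d_\Omega^{2s-1}\bigr)(z_k+r_k x)$ --- with $Q_{r_k}$ the best and $\theta_k\downarrow0$ the smallest admissible constants at scale $r_k$ --- satisfy a uniform growth bound, a fixed normalisation at scale one, and $\mathcal{L}_{\Omega_k,\sigma}v_k\to0$, where $\Omega_k=(\Omega-z_k)/r_k$. Ingredient (i) provides equicontinuity on compact subsets of $\overline{\R^d_+}$, so along a subsequence $v_k\to v$ locally uniformly, $v$ solves the model problem, $v=0$ on $\partial\R^d_+$, and $v$ has growth $|x|^{2s-1+\alpha}$; by (iv), $v=c\,(x_d)_+^{2s-1}$, contradicting the scale-one normalisation. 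The standard estimate $|Q_r-Q_{r/2}|\le c\,r^{-(2s-1)}\|u-Q_r\,d_\Omega^{2s-1}\|_{L^\infty(B_r(z)\cap\Omega)}$ then yields the limit $Q(z)$ and the displayed bound. For the glueing: if $x,y$ lie near a common boundary point, the expansion controls $|(u/d_\Omega^{2s-1})(x)-(u/d_\Omega^{2s-1})(y)|$ by $c\,|x-y|^\alpha$ directly; if $x$ is interior and $y\in\partial\Omega$, one inserts $Q(y)\,d_\Omega^{2s-1}$ and applies the expansion at $y$ at scale $\sim|x-y|$; if $x,y$ are both interior, one plays the interior estimate on $B_{d_\Omega(x)/2}(x)$, rescaled, against the expansion at the nearest boundary point.

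I expect the main obstacle to be the model problem --- ingredient (iv) together with the convergence of the blow-up sequence. Because the horizon $\sigma d_\Omega(x)$ shrinks at the boundary, the operator is genuinely $x$-dependent and is scale-invariant only under the blow-up centred at the boundary point; consequently, proving $\mathcal{L}_{\Omega_k,\sigma}v_k\to\mathcal{L}_{\R^d_+,\sigma}v$ in a sense strong enough to pass solutions to the limit requires quantitative control of the kernels and of the nonlocal tails rather than a soft argument. Moreover, the Liouville classification cannot be reached by Fourier methods; it would have to come either from reducing, via the invariance of $\mathcal{K}_{\R^d_+,\sigma}$ under tangential translations, to a one-dimensional problem treatable by an ODE or monotonicity argument, or from an energy-comparison argument that directly rules out solutions growing faster than $(x_d)_+^{2s-1}$.
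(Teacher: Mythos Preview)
Your overall strategy---pointwise expansion via blow-up/contradiction reducing to a half-space Liouville theorem, then glueing with interior estimates---matches the paper's route through \autoref{Lem:ExpansionHigherBoundaryReg}, \autoref{Prop:LiouvilleInHalfspace}, and the final proof of \autoref{Thm:HigherOrderBoundaryReg}. Two technical points need correction.

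First, your ingredient (iii) is false as stated: $\mathcal{L}_{\Omega,\sigma}(d_\Omega^{2s-1})$ is \emph{not} bounded near $\partial\Omega$. \autoref{Lem:BoundLd2sMinus1} gives only $|\mathcal{L}_{\Omega,\sigma} d_\Omega^{2s-1}|\le c\,d_\Omega^{-\gamma}$ for some $\gamma\in(0,1)$; the curvature error from comparing $\Omega$ to the half-space is not small enough to kill the singularity entirely. This matters in the blow-up: the right-hand side $\tilde f_k$ of the rescaled equation contains the term $Q_{m_k,r_k}^{-1}r_k^{2s}q_{m_k,r_k}(\mathcal{L}_\Omega d_\Omega^{2s-1})(r_k x)$, which is of order $Q_{m_k,r_k}^{-1}r_k^{2s-\gamma}d_{\Omega_k}^{-\gamma}$, not $Q_{m_k,r_k}^{-1}r_k^{2s}$. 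One must therefore (a) impose $\alpha<1-\gamma$ so that this still vanishes, and (b) run the compactness step with a $C^{2s-1}$ estimate that tolerates right-hand sides blowing up like $d_\Omega^{-1+\epsilon'}$---this is precisely why \autoref{Prop:HoelderRegulUptoBoundaryLocal} is stated with $\|d_\Omega^{1-\epsilon'}f\|_{L^\infty}$ on the right. The bound $|u|\le c\,d_\Omega^{2s-1}$ you cite is obtained in the paper not from (iii) but from the barrier $C\,d_\Omega^{2s-1}-d_\Omega^{2s-1+\epsilon}$ of \autoref{Prop:Barrier}.

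Second, the Liouville step does not reduce to a one-dimensional ODE. After establishing $u(x)=\tilde u(x_d)$ via difference quotients (\autoref{Lem:UOneDFunction}), one \emph{cannot} conclude $\mathcal{L}_{\R_+,\sigma}\tilde u=0$ on $\R_+$: the $\R^{d-1}$-integration in $\mathcal{L}_{\R^d_+,\sigma}$ does not factor out, because the support of $\mathcal{K}_{\R^d_+,\sigma}(x,\cdot)$ in the $y'$-variables depends on $y_d$. The paper instead proves a boundary Harnack inequality (\autoref{Prop:BoundaryHarnack}) for one-dimensional functions under the full $d$-dimensional operator, via an oscillation-decay scheme combining an interior Harnack with a Hopf-type estimate (\autoref{Lem:HopfTypeResult}, \autoref{Lem:InteriorHarnack}, \autoref{Lem:OscDecayBoundaryHaranck}). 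For $\sigma=1$ the unbounded paraboloidal support of the kernel forces a correction factor $x_d^{(d-1)/2}$ in the barriers (\autoref{Lem:UpperBarrierForBoundaryHarnack}, \autoref{Lem:LowerBarrierForBoundaryHarnack}) and throughout the oscillation argument; this is the main technical content behind (iv) and is considerably more involved than an ODE or soft monotonicity argument.
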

\begin{Remark}\label{Rem:CommentsOnMainResults}We start with some comments on these results.
\begin{itemize}
\item The associated operator to the $H^s$ energy $[u]_{H^s(\Omega)}$ is the regional fractional Laplacian (see \eqref{eq:RegionalFractionalLaplacian} for a definition), for which results corresponding to \autoref{Thm:HoelderContinuityWeakSol} and \autoref{Thm:HigherOrderBoundaryReg} are known (see, e.g., \cite{Fall2022}).
Note, that the comparability of energies $\mathcal{E}_{\Omega ,\sigma}(u,u)$ and $[u]_{H^s(\Omega)}$ does not directly imply that \autoref{Thm:HoelderContinuityWeakSol} and \autoref{Thm:HigherOrderBoundaryReg} also hold true for the operator $\mathcal{L}_{\Omega , \sigma}$.
\item Given $f\in L^{2}(\Omega )$, the comparability \eqref{eq:ComparabiltiyDydaInequlatiy} implies the existence and uniqueness of weak solutions $u\in H^s_0( \Omega )$ to the Dirichlet problem \eqref{eq:DirichletProblem}.
\item In the half-space, we can explicitly compute the action of $\mathcal{L}_{\R^d_+,\sigma}$ on the monomials $\varphi (x) = x_d ^{\gamma}$. Indeed, as for the regional fractional Laplacian, the function $\varphi (x) = x^{2s-1}$ is a harmonic function in the half-line, i.e. $\mathcal{L}_{\R _+,\sigma} \varphi =0$ on $\R _+$ (compare \autoref{Prop:CalculationsInHalfspace}).
\item The $C^{2s-1}$ Hölder regularity from \eqref{eq:HoelderBoundaryRegularity} is optimal. Even if $\partial \Omega$ and $f$ would be smooth, in general $u\notin C^\alpha (\overline{\Omega} )$ for any $\alpha >2s-1$.
To see this, note that the weak solution $u\in H^s_0(\Omega )$ to \eqref{eq:DirichletProblem} with $f \equiv 1$ satisfies $u \asymp d_\Omega ^{2s-1}$ due to the bound $|u| \lesssim d_\Omega ^{2s-1}$ (see \eqref{eq:uLeqDist}) and a Hopf type result (compare \autoref{Rem:HopfTypeLemma}). Hence, this solution is not in $ C^\alpha (\overline{\Omega} )$ for any $\alpha >2s-1$.
\item 
Due to the form of $\mathcal{K}_{\Omega ,\sigma}$, the domain of integration of $\mathcal{L}_{\Omega ,\sigma}$ is very different in the cases $\sigma \in (0,1)$ and $\sigma =1$.
If we evaluate $\mathcal{L}_{\Omega ,\sigma} u(x)$ at some point $x\in \Omega$, the operator integrates around $x$ in two domains corresponding to the two summands in \eqref{eq:KernelOperator} (compare with \autoref{fig:SupportKernelSigma23} and \autoref{fig:SupportKernelSigma1} for examples in the case where $\Omega$ is the half-space).
If $\sigma \in (0,1)$, then both of these domains shrink down to the point $x$ as $x$ approaches the boundary.
However, in the case $\sigma =1$, the domain corresponding to the second summand in \eqref{eq:KernelOperator}, will always include points far into the domain.
This phenomena causes differences in the proofs of \autoref{Thm:HoelderContinuityWeakSol} and \autoref{Thm:HigherOrderBoundaryReg} for the cases $\sigma \in (0,1)$ and $\sigma =1$.
\end{itemize}
\end{Remark}
Let us mention two related open problems.
\begin{enumerate}[label=\roman*)]
\item Due to the method of the proof, we have no control over $\alpha$ in \autoref{Thm:HigherOrderBoundaryReg}.
Hence, the optimal higher boundary regularity remains open.
\item In \cite{Kassmann2022}, a non-local operator with visibility constraint was introduced 
\begin{equation}\label{eq:VisibilityConstraintOperator}
Lu(x)= \pv \int _\Omega \frac{u(x)-u(y)}{|x-y|^{d+2s}} \left( \ind _{\Omega _x}(y) + \ind _{\Omega _y}(x) \right) \intd y
\end{equation}
where $\Omega_x= \{ y \mid tx+(1-t)y \in \Omega , \forall t\in (0,1) \}$.
This operator only considers tuples $(x,y) \in \Omega \times \Omega$ if the line segment between $x$ and $y$ is contained in $\Omega$.
With the same methods as in this paper, \autoref{Thm:HoelderContinuityWeakSol} and \autoref{Thm:HigherOrderBoundaryReg} could be proven for $L$.
It would be interesting to find a general class of non-local operators on domains including $\mathcal{L}_{\Omega ,\sigma}$ and $L$ which allow the kernel to vanish, such that \autoref{Thm:HoelderContinuityWeakSol} and \autoref{Thm:HigherOrderBoundaryReg} still holds true.
However, this seems more difficult because in the proof we heavily use that the operator $\mathcal{L}_{\Omega ,\sigma}$ has an associated version $\mathcal{L}_{\R^d_+,\sigma}$ in the half-space, for which explicit computations are possible.
\end{enumerate}

\subsection{Related literature}

Regularity theory for non-local equations has been an active field of research for the past 25 years.
In particular, understanding regularity up to the boundary is of great interest since non-local equations give rise to phenomena that have no counterpart in the local setting.
Indeed, let $u$ solve the Dirichlet problem
\begin{gather}\label{eq:DirichletProblemGeneralOperator}
\begin{aligned}
L u & = f \quad \text{in} \quad \Omega , \\
u& = 0\quad \text{on} \quad  \partial\Omega 
\end{aligned}
\end{gather}
for $L=-\Delta$ on some bounded smooth domain $\Omega \subset \R^d$.
Then, if $f$ is smooth up to the boundary, so is $u$ (see, e.g., \cite[Thm. 2.35]{FernandezReal2022}).
In contrast, if $L$ is the fractional Laplacian
\begin{equation*}
 (-\Delta )^s u(x) = c_{d,s} \pv \int _{\R^d} \frac{u(x)-u(y)}{|x-y|^{d+2s}} \intd y  ,
\end{equation*}
for some $s\in (0,1)$, solutions $u$ to the exterior value problem
\begin{gather}\label{eq:DirichletProblemExteriorValueGeneralOperator}
\begin{aligned}
L u & = f \quad \text{in} \quad \Omega , \\
u& = 0\quad \text{on} \quad  \Omega ^c
\end{aligned}
\end{gather}
are in general only $C^s$ Hölder continuous up to the boundary, even if $f$ is smooth (see, e.g., \cite{RosOton2014}).
Another type of non-local operator studied in the literature is the regional (or sometimes called censored) fractional Laplacian (see, e.g., \cite{Chen2018}, \cite{Fall2022})
\begin{equation}\label{eq:RegionalFractionalLaplacian}
(-\Delta )^s_\Omega u(x) = c_{d,s} \pv \int _{\Omega} \frac{u(x)-u(y)}{|x-y|^{d+2s}} \intd y .
\end{equation}
For $L=(-\Delta )^s_\Omega$ and $s\in (1/2,1)$, solutions $u$ to the Dirichlet problem \eqref{eq:DirichletProblemGeneralOperator} are in general only $C^{2s-1}$ up to the boundary. As mentioned above, the restriction $s>1/2$ is needed to define a trace of $u$ in the variational setup and impose boundary values $u=0$ on $\partial \Omega$.

A typical approach to study boundary regularity of solutions to \eqref{eq:DirichletProblemGeneralOperator} entails the characterization of non-trivial $L$-harmonic functions in the half-line $\R_+$, i.e., solutions to 
\begin{equation} \label{eq:DirichiletProbleHalfLineGeneralOp}
\begin{aligned}
L\varphi &= 0 \quad \text{in} \quad \R_+ , \\
\varphi (0)&=0 .
\end{aligned}
\end{equation}
Indeed, the function $\varphi _1(x)=x^{2s-1}$ solves \eqref{eq:DirichiletProbleHalfLineGeneralOp} for the regional fractional Laplacian as well as for the operator $\mathcal{L}_{\Omega ,\sigma}$ from \eqref{eq:DefOperator}.
For the fractional Laplacian, a harmonic solution in the half-line is given by $\varphi _2(x):= (x_+)^s$.

Closely related to this work is the paper \cite{Chan2023}, where Hölder boundary regularity of the operator 
\begin{equation}\label{eq:ChanOperator}
L_\Omega  u(x) = \pv \int _{B_{d_\Omega (x)}(x)} \frac{u(x)-u(y)}{|x-y|^{d+2s}} \intd y
\end{equation}
is studied.
In comparison to $(-\Delta )_\Omega ^s$, the domain of integration in the integro-differential operator $L_\Omega$ is further reduced to the ball around $x$ inside $\Omega$ with maximal radius.
The operator $L_\Omega$ has the property that $d_\Omega (x)^{2s-2} L_\Omega u (x) \to c_{d,s} (-\Delta )u (x)$ as $x\to \partial \Omega$ for smooth $u$ (see \cite[Lem. A.1]{Chan2023}).
Furthermore, due to the symmetry of the integration domain in the operator \eqref{eq:ChanOperator}, the function $\varphi _3(x)=x$ is $L_{\R_+}$-harmonic in $\R_+$.
Note that $\varphi _3$ approaches the boundary linearly, indicating a different boundary behavior of solutions to the Dirichlet problem compared to the regional fractional Laplacian.
Indeed, given a bounded $C^{1,1}$ domain $\Omega \subset \R^d$ and $s \in (1/2,1)$, it was proven in \cite{Chan2023}, that
\begin{equation}\label{eq:ChanResult}
\left. \begin{matrix}
u \in C^{2s+\epsilon}_{loc}(\Omega )\cap C(\overline{\Omega} ) \text{ solves \eqref{eq:DirichletProblemGeneralOperator} for } L=L_\Omega \\
d_\Omega  ^{2s-2}f\in L^\infty (\Omega )
\end{matrix} \right\} \Rightarrow u\in C^{1,\alpha }(\overline{\Omega}) \text{ for some } \alpha \in (0,1) .
\end{equation}
The assumption $d_\Omega  ^{2s-2}f\in L^\infty (\Omega )$ (compared to $ f\in L^\infty (\Omega )$ as in \autoref{Thm:HoelderContinuityWeakSol}) is a decay assumption on $f$ at the boundary. However, it does not directly interfere with the result $u\in C^{1,\alpha }(\overline{\Omega})$.
By adapting the proof in \cite{Chan2023}, one could also prove \eqref{eq:ChanResult} under the assumption $f\in L^\infty (\Omega )$.

We can rewrite the operator from \eqref{eq:ChanOperator} as $L_\Omega u(x) = \pv \int _\Omega (u(x)-u(y)) K(x,y) \intd y$ where $K$ is the kernel 
\begin{equation*}
K(x,y)=|x-y|^{-d-2s} \mathbbm{1} _{B_{\sigma d_\Omega (x)}(x)} (y) .
\end{equation*}
and $\sigma$ is the fixed constant $\sigma =1$. Note that this kernel is not symmetric.
By symmetrizing the kernel $K$, we obtain the operator $\mathcal{L}_{\Omega ,\sigma}$.

We want to stress the different boundary regularities for solutions to the Dirichlet problem involving the operators $L_\Omega$ and $\mathcal{L}_{\Omega ,\sigma}$.
This effect is mainly due to the symmetry properties of the corresponding kernels.
The operator $L_\Omega$ corresponds to an operator in non-divergence form whereas $\mathcal{L}_{\Omega ,\sigma }$ corresponds to an operator in divergence form.
We also want to mention that the effect that symmetric kernels of non-local operators may lead to different boundary behavior of solutions to \eqref{eq:DirichletProblemExteriorValueGeneralOperator} than corresponding non-symmetric kernels is known in the literature (see, e.g., \cite{Dipierro2022} where boundary regularity results were proven for translational invariant non-local operators in terms of the corresponding Fourier symbols).

There has also been much interest recently in non-local operators of the form
\begin{equation}\label{eq:OperatorScottDu}
Lu(x)= \int _\Omega (u(x)-u(y)) (k(x,y)+k(y,x)) \intd y
\end{equation}
with associated energy $\iint _{\Omega \times \Omega } (u(x)-u(y))^2 k(x,y) \intd y \intd x$ where
\begin{equation*}
k(x,y)=\frac{1}{|x-y|^\beta} \frac{\ind _{\{ y\in B_{\sigma d_\Omega (x)} (x) \}}}{(\sigma d_\Omega (x) )^{d+2-\beta }}
\end{equation*}
for some $\beta \in [0, d+2)$ and $\sigma >0$ (see, e.g., \cite{Tian2017}, \cite{Scott2024}, \cite{Scott2025}, \cite{Du2022}).
This kernel differs from the one studied in this work due to the factor $(d_\Omega (x)) ^{-d-2+\beta }$.
As a result, the kernel not only has a possible singularity on the diagonal but also as $x$ approaches the boundary.
We also want to mention that, due to scaling properties, the action of this operator on monomials in the half-line can be explicitly computed (compare \autoref{Rem:GeneralizeComputationsInHalfspace}).

For the fractional Laplacian, $C^s$ regularity up to the boundary of solutions to \eqref{eq:DirichletProblemExteriorValueGeneralOperator} with right-hand side $f\in L^\infty (\Omega )$ in $C^{1,1}$ domains was proven in \cite{RosOton2014}.
In multiple works (\cite{Grubb2014}, \cite{Grubb2015}, \cite{RosOton2016a}, \cite{RosOton2016b}, \cite{RosOton2017}), this was generalized to a broader class of operators and less regularity assumptions on the boundary of the domain.
In \cite{RosOton2024Book} $C^s$ boundary regularity was proven for non-degenerate $2s$-stable operators in $C^{1,\alpha}$ domains.
Recently, in \cite{Grube2024}, this result was generalized to domains satisfying a $C^{1,\text{dini}}$-condition.

There are also results on boundary regularity for operators
\begin{equation*}
Lu(x)= \pv \int_{\R^d} (u(x)-u(y)) k(x,y) \intd y
\end{equation*}
with a kernel $k$ satisfying $0<\lambda \leq k(x,y) |x-y|^{d+2s} \leq \Lambda$.
If $k$ is translational invariant, i.e., $k(x,y)=\tilde{k}(x-y)$, then it was shown in \cite{RosOton2024}, that solutions to \eqref{eq:DirichletProblemExteriorValueGeneralOperator} with $f\in L^\infty (\Omega )$ are $C^s$ up to the boundary.
An additional assumption on the kernel is needed for the nontranslational invariant case. In \cite{Kim2024}, the optimal $C^s$ Hölder regularity was proven for Hölder continuous kernels, which are not necessarily translational invariant.

The regional fractional Laplacian (see \eqref{eq:RegionalFractionalLaplacian}) is the infinitesimal generator of the censored symmetric stable process, which was constructed in \cite{Bogdan2003}.
For the regional fractional Laplacian, solutions to the Dirichlet problem \eqref{eq:DirichletProblemGeneralOperator} for $s>1/2$ with $f\in L^\infty (\Omega )$ are in general only $C^{2s-1}$ up to the boundary. This was proven in \cite{Chen2018} using the Green function estimates from \cite{Chen2002}.
The result was optimized in \cite{Fall2022} for $C^{1,\alpha}$ domains and more general right-hand sides. Additionally, \cite{Fall2022} not only investigated the Dirichlet problem but also provided results on boundary regularity for the Neumann problem for the regional fractional Laplacian.

In terms of higher boundary regularity for the fractional Laplacian, there are many results which prove Hölder regularity for $u/d_\Omega ^{s}$ by using various regularity assumptions on the right-hand side $f$ (see, e.g., \cite{RosOton2014}, \cite{RosOton2017}, \cite{Grubb2014}, \cite{Grubb2015}, \cite{Abatangelo2020}).
For the regional fractional Laplacian, regularity for $u/d_\Omega ^{2s-1}$ was proven in \cite{Fall2022} and \cite{Fall2022a}.

In \cite{Cho2024}, Green function estimates and Boundary Harnack Principles were proven for operators with kernels decaying at the boundary (see also \cite{Kim2021} and \cite{Kim2023} for results in the half-space).
However, their results do not cover the operator studied in this paper.

As discussed above, the operator $\mathcal{L}_{\Omega ,\sigma}$ can be seen as an operator with visibility constraint that was studied in \cite{Kassmann2022}.
In \cite{Kassmann2022}, energy forms associated with \eqref{eq:VisibilityConstraintOperator} were investigated, and Poincar\'e inequalities were proven.

Another type of operator similar to $\mathcal{L}_{\Omega ,\sigma}$ is studied in \cite{Bellido2021} where the domain of integration in \eqref{eq:RegionalFractionalLaplacian} is changed from $\Omega$ to some ball with fixed radius $B_{r_0}(x)$.
For this operator, $C^s$ boundary regularity of solutions to \eqref{eq:DirichletProblemExteriorValueGeneralOperator} can be obtained by writing the equation in terms of the fractional Laplacian (similar as in the proof of \autoref{Lem:InteriorRegularity}) and using the boundary regularity for $(-\Delta )^s$.

\subsection{Strategy of the proof}
In the proof of \autoref{Thm:HoelderContinuityWeakSol}, we follow the typical approach of combining an estimate $|u| \lesssim d_\Omega ^{2s-1}$ with interior regularity results.
We deduce the interior regularity results from known results about the fractional Laplacian by writing $(-\Delta )^s u = f + (-\Delta )^s u - \mathcal{L}_{\R^d_+, \sigma} u$ and treating the difference $(-\Delta )^s u - \mathcal{L}_{\R^d_+, \sigma} u$ as a right-hand side.

The key ingredient in the proof of the estimate $|u| \lesssim d_\Omega ^{2s-1}$ is a barrier of the form $\varphi = c d_\Omega ^{2s-1} -  d_\Omega ^{2s-1+\epsilon}$.
In the half-space \smash{$\R^d_+$} we can explicitly compute the action of the operator \smash{$\mathcal{L}_{\R^d_+, \sigma}$} on this barrier (see \autoref{Prop:CalculationsInHalfspace}).
In fact, the function \smash{$d_{\R^d_+} ^{2s-1}$} is a \smash{$\mathcal{L}_{\R^d_+, \sigma}$}-harmonic solution in the half-space.
Furthermore,
\begin{equation*}
\mathcal{L}_{\R^d_+, \sigma} d_{\R^d_+} ^{2s-1+\epsilon }= c_1 d_{\R^d_+} ^{-1+\epsilon }
\end{equation*} 
for some $c_1<0$.

In \autoref{sec:Barriers}, we follow the proof of \cite[Appendix B.2]{RosOton2024Book} to conclude that $ \mathcal{L}_{\Omega ,\sigma} \varphi \gtrsim d_\Omega ^{-1+\epsilon}$ close to $\partial \Omega$.
However, herein, difficulties arise since the operator $ \mathcal{L}_{\Omega ,\sigma}$ is not translational invariant and depends on the domain $\Omega$.
A precise analysis of the geometry of our operator in the half-space is needed.

To prove \autoref{Thm:HigherOrderBoundaryReg}, we use a standard blow-up argument together with a Liouville type result in the half-space (see \autoref{Prop:LiouvilleInHalfspace}).
The Liouville type result follows in two steps.
First, using that \smash{$\mathcal{L}_{\R^d_+ , \sigma }$} is translational invariant in the tangential direction $\R^{d-1}$, we obtain that a solution \smash{$\mathcal{L}_{\R^d_+ , \sigma } u=0$} in \smash{$\R^d_+$} is a one-dimensional function, i.e. $u(x)=\tilde{u}(x_d)$.
Next, we prove a boundary Harnack type result where we show that $\tilde{u}(y)/y^{2s-1}$ can be extended continuously up to $0$ and is locally Hölder continuous (see \autoref{Prop:BoundaryHarnack}). Finally, we use this to show that $u$ is a constant multiple of the harmonic solution $x_d^{2s-1}$.

For the proof of the Liouville type result, we mostly follow the proof of \cite{Audrito2023}.
However, also here, difficulties arise.
After we have proved that the harmonic solution $u$ in the half-space $\R^d_+$ is a one-dimensional function, i.e. $u(x)=\tilde{u}(x_d)$, we cannot directly conclude that also $\mathcal{L}_{\R_+,\sigma}\tilde{u}=0$ in $\R_+$. Instead, we must still work with the operator on $\R^d_+$.
This leads to difficulties in proving the boundary Harnack type result since the operator degenerates as we approach the boundary.
Especially in the case $\sigma =1$, this causes a correction factor $x_d^{(d-1)/2}$ throughout \autoref{sec:BoundaryHarnack}.

\subsection{Outline}
In \autoref{sec:Preliminaries}, we introduce some notation and the weak solution concept for our operator. Furthermore, we prove a first interior regularity result and provide explicit calculations in the half-space.
\hyperref[sec:Barriers]{Section 3} contains the construction of the barrier needed to prove Hölder regularity up to the boundary.
An $L^\infty$ estimate for $u$ and the completion of the proof for \autoref{Thm:HoelderContinuityWeakSol} is contained in \autoref{sec:HoelderRegUpToBoundary}.
In \autoref{sec:BoundaryHarnack}, we will prove a boundary Harnack type result in the half-space, which leads to a Louville type result in \autoref{sec:LiouvilleTypeResult}.
Finally, in \autoref{sec:HigherBoundaryReg}, we complete the proof of \autoref{Thm:HigherOrderBoundaryReg}.

\subsection{Acknowledgments}
Financial support by the the Deutsche Forschungsgemeinschaft (DFG, German Research Foundation) – Project-ID 317210226 – SFB 1283 is gratefully acknowledged.
The author is grateful to Moritz Kassmann, Marvin Weidner, and Florian Grube for helpful discussions and for valuable comments on the manuscript.

\section{Preliminaries} \label{sec:Preliminaries}
In this section, we will introduce some notation and define the weak solution concept for \autoref{Thm:HoelderContinuityWeakSol} and \autoref{Thm:HigherOrderBoundaryReg}.
Additionally, we prove a first interior regularity result and present some explicit computations in the half-space.

We will write $a\wedge b := \min \{ a,b\}$ and $a\vee b := \max \{ a,b\}$.
Furthermore, for $a\in \R$ we use the notation $a=a_+ -a_-$ where $a_+:= a\vee 0$ is the positive part and $a_-:=- (0 \wedge a)$ is the negative part.
In addition, we write $a\lesssim b$ if there exists a positive constant $c>0$ such that $a\leq c b$.
We will use the notation $a\asymp b$ if $a\lesssim b$ and $b \lesssim a$.
Throughout the paper, we use the letter $c$ to denote some positive constant.
However, note, that $c$ might change from line to line.
Moreover, if $\Omega \subset \R^d$ is a domain in $\R^d$, then by
\begin{equation*}
d_\Omega (x)  = \inf \{ |x-y| \colon y\in \Omega ^c \} ,
\end{equation*}
we denote the distance from $x$ to $\Omega ^c = \R^d \setminus \Omega$.

By $C^\infty _c (\Omega )$, we denote the space of smooth functions with compact support inside $\Omega \subset \R^d$.
For some $\alpha \in (0,1)$, we will write $C^\alpha (\overline{\Omega } )$ to denote the space of Hölder continuous functions.
This space is equipped with the norm
\begin{equation*}
\| u\| _{C^\alpha (\overline{\Omega } )} = \sup _{x\in \overline{\Omega }} |u(x)| + \sup _{x,y \in \overline{\Omega }} \frac{|u(x)- u(y)|}{|x-y|^\alpha } .
\end{equation*}
For $s\in (0,1)$, we define a seminorm
\begin{equation*}
[u] ^2 _{H^s(\Omega )} = \int _\Omega \int_\Omega \frac{(u(x)-u(y))^2}{|x-y|^{d+2s}} \intd x \intd y .
\end{equation*}
Now, we can define the fractional Sobolev space $H^s(\Omega )$ as the space of $L^2$ functions on $\Omega$ with $[u]_{H^s (\Omega )} < \infty$.
We equip this space with the norm
\begin{equation*}
\| u \| ^2_{H^s(\Omega )} = \| u \| ^2_{L^2(\Omega )} + [u]^2_{H^s (\Omega )} .
\end{equation*}
By $H^s_0 (\Omega )$, we will denote space we  obtain by completion of $C^\infty _c (\Omega )$ under the $H^s( \Omega )$ norm.
Note that if $s\in (1/2, 1)$ and $\Omega$ is a bounded Lipschitz domain, there is an equivalent definition (see, e.g., \cite[Thm. 3.33]{McLean2000})
\begin{equation}\label{eq:ExtendingHs0Functions}
H^s_0 (\Omega ) = \left\{ u\in H^s (\R^d ) \mid u=0 \quad \text{on} \quad \R^d\setminus \Omega \right\} .
\end{equation}
Hence, throughout the paper, we will extend functions $u\in H^s_0( \Omega )$ by $u=0$ on $\Omega ^c$.
By $H^s_{loc} (\Omega )$, we will denote space of functions $u$ such that the product $u\varphi \in H^s (\Omega )$ for all $\varphi \in C^\infty _c (\Omega )$.

Furthermore, to simplify notation throughout this paper, we will occasionally omit the underlying domain $\Omega$ or the parameter $\sigma$ from the notation of the operator $\mathcal{L}_{\Omega ,\sigma}$, the kernel $\mathcal{K}_{\Omega ,\sigma}$ and the bilinear form $\mathcal{E}_{\Omega ,\sigma}$.

\subsection{Weak solution concept}
The operator $\mathcal{L}_{\Omega ,\sigma}$ from \eqref{eq:DefOperator} is defined in terms of the symmetric kernel $\mathcal{K}_{\Omega ,\sigma}$ (see \eqref{eq:KernelOperator}).
Hence, we have an associated energy form 
\begin{equation} \label{eq:EnergyForm}
\mathcal{E}_{\Omega ,\sigma} (u,v) = \frac{1}{2}\int_\Omega \int_\Omega (u(x)-u(y))(v(x)-v(y)) \mathcal{K}_{\Omega ,\sigma}(x,y) \intd y\intd x .
\end{equation}
As mentioned in the beginning of the introduction, this energy is comparable to the $H^s$ energy $[u]_{H^s(\Omega )}$ in the following sense (see \cite[eq. (13)]{Dyda2006}).
\begin{Corollary}\label{Cor:EnergiesComparable}
Let $\Omega \subset \R^d$ be a bounded Lipschitz domain.
Then, there exists a constant $c=c(\Omega ,d , s,\sigma )>0$ such that
\begin{equation*}
\mathcal{E}_{\Omega , \sigma } (u,u) \leq [u] ^2 _{H^s(\Omega )} \leq c \mathcal{E}_{\Omega , \sigma } (u,u) .
\end{equation*}
\end{Corollary}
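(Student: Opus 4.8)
The plan is to handle the two inequalities separately, and to recognise that the nontrivial one is exactly Dyda's estimate \eqref{eq:ComparabiltiyDydaInequlatiy}. The left inequality $\mathcal{E}_{\Omega,\sigma}(u,u)\le[u]_{H^s(\Omega)}^2$ I expect to be essentially free: at every $(x,y)\in\Omega\times\Omega$ the symmetrized weight $\tfrac12\bigl(\mathbbm{1}_{\{y\in B_{\sigma d_\Omega(x)}(x)\}}+\mathbbm{1}_{\{x\in B_{\sigma d_\Omega(y)}(y)\}}\bigr)$ takes values in $[0,1]$, so $\mathcal{K}_{\Omega,\sigma}(x,y)\le|x-y|^{-d-2s}$ pointwise. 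Inserting this bound into \eqref{eq:EnergyForm} and using $(u(x)-u(y))^2\ge0$ yields
\[
\mathcal{E}_{\Omega,\sigma}(u,u)\le\frac12\int_\Omega\int_\Omega\frac{(u(x)-u(y))^2}{|x-y|^{d+2s}}\intd y\intd x=\frac12[u]_{H^s(\Omega)}^2\le[u]_{H^s(\Omega)}^2,
\]
the spare factor $\tfrac12$ being harmless.

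For the right inequality $[u]_{H^s(\Omega)}^2\le c\,\mathcal{E}_{\Omega,\sigma}(u,u)$, I would first pass from the symmetrized representation of $\mathcal{E}_{\Omega,\sigma}$ to the one-sided one. Since the integrand $(x,y)\mapsto(u(x)-u(y))^2|x-y|^{-d-2s}$ is symmetric in its two arguments, Fubini's theorem together with relabelling $x\leftrightarrow y$ shows that the two indicator contributions in \eqref{eq:EnergyForm} agree, whence
\[
2\,\mathcal{E}_{\Omega,\sigma}(u,u)=\int_\Omega\int_{B_{\sigma d_\Omega(x)}(x)}\frac{(u(x)-u(y))^2}{|x-y|^{d+2s}}\intd y\intd x.
\]
The right-hand side is precisely the quantity bounding $[u]_{H^s(\Omega)}^2$ from below in \eqref{eq:ComparabiltiyDydaInequlatiy}. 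Applying that inequality, which is valid because $\Omega$ is a bounded Lipschitz domain and $u\in L^2(\Omega)$, gives $[u]_{H^s(\Omega)}^2\le 2c_0\,\mathcal{E}_{\Omega,\sigma}(u,u)$ with $c_0=c_0(\Omega,d,s,\sigma)$ the constant from \eqref{eq:ComparabiltiyDydaInequlatiy}, so $c=2c_0$ works.

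I do not anticipate a genuine obstacle: the statement is, as indicated in the text, a corollary of \cite{Dyda2006}. The only point that needs a little care is matching the two equivalent forms of $\mathcal{E}_{\Omega,\sigma}$ and keeping track of the factors $\tfrac12$ in the definitions; one should also note that the two sides may be $+\infty$ simultaneously, which is consistent with the asserted comparability.
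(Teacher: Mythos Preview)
Your proposal is correct and matches the paper's approach: the paper does not give an explicit proof but simply cites \cite[eq.~(13)]{Dyda2006}, and your argument spells out precisely the two steps this citation is meant to cover (the trivial pointwise bound $\mathcal{K}_{\Omega,\sigma}\le |x-y|^{-d-2s}$ for the left inequality, and the symmetrization plus \eqref{eq:ComparabiltiyDydaInequlatiy} for the right one).
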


\begin{Definition}[Weak solution concept]
Fix $s\in (1/2,1)$ and $\sigma \in (0,1]$.
Let $\Omega \subset \R^d$ be a Lipschitz domain and let $\Omega ' \subset \Omega$ be a bounded Lipschitz subdomain.
Given $f\in L^2 (\Omega ')$ we say that a function $u$ satisfying
\begin{equation}\label{eq:AssumptionOnUWeakSol}
\iint _{\Omega \times \Omega \setminus (\Omega '^c \times \Omega '^c )} (u(x)-u(y))^2 \mathcal{K}_{\Omega ,\sigma}(x,y) \intd x \intd y < \infty
\end{equation}
is weak solution to $\mathcal{L}_{\Omega ,\sigma} u =f$ in $\Omega '$ if
\begin{equation}\label{eq:WeakSolConcept}
\mathcal{E}_{\Omega ,\sigma} (u,\varphi ) = \int _\Omega f(x) \varphi (x)  \intd x
\end{equation}
for all $\varphi \in H^s_0(\Omega ')$.
\end{Definition}
\begin{Remark}
Note, that in the definition above, $\mathcal{E}_{\Omega ,\sigma} (u,\varphi )$ is finite for all $\varphi \in H^s_0(\Omega ')$.
\end{Remark}
Now assume, that as in \autoref{Thm:HoelderContinuityWeakSol} and \autoref{Thm:HigherOrderBoundaryReg}, $\Omega = \Omega '$.
Then, using \autoref{Cor:EnergiesComparable}, the assumption in \eqref{eq:AssumptionOnUWeakSol} is equivalent to $[u]_{H^s (\Omega )} < \infty$.
Hence, we can give the following definition for a weak solution to the Dirichlet problem \eqref{eq:DirichletProblem}.
\begin{Definition}
Fix $s\in (1/2,1)$ and $\sigma \in (0,1]$ and let $\Omega\subset \R^d$ be a bounded $C^{1,1}$ domain.
Given $f\in L^\infty (\Omega )$, we say that a function $u\in H^s_0 (\Omega )$ is a weak solution to the Dirichlet problem \eqref{eq:DirichletProblem} if \eqref{eq:WeakSolConcept} is satisfied for all $\varphi \in H^s_0(\Omega )$.
\end{Definition}

\subsection{Interior regularity}
We can rewrite the kernel $\mathcal{K}_{\Omega ,\sigma}$ of our operator $\mathcal{L}_{\Omega ,\sigma}$ from \eqref{eq:DefOperator} as
\begin{equation*}
\mathcal{K}_{\Omega ,\sigma} (x,y) = \frac{\mathcal{B}_{\Omega ,\sigma} (x,y)}{|x-y|^{d+2s}} \quad  \text{where}\quad \mathcal{B}_{\Omega ,\sigma} (x,y) = \frac{\mathbbm{1} _{\{ y\in B_{\sigma d_\Omega (x)}(x) \} }  +\mathbbm{1} _{\{ x\in B_{\sigma d_\Omega (y)}(y) \} } }{2} .
\end{equation*}
Notice that $0\leq \mathcal{B}_{\Omega ,\sigma} (x,y) \leq 1$.
Furthermore, for every $x\in \Omega$ in the interior, we have $\mathcal{B}_{\Omega ,\sigma} (x,y) =1$ for all $y$ in some small neighborhood around $x$. To be more precise:
\begin{equation}\label{eq:BLocally1AroundPoint}
\mathcal{B}_{\Omega ,\sigma} (x,y) =1 \quad \text{for all}\quad y \in B_{cd_\Omega (x)} (x)
\end{equation}
where $c=\sigma /(1+\sigma )$.
Indeed, let $y \in B_{cd_\Omega (x)} (x)$. Then, 
\begin{equation*}
|x-y| \leq c d_\Omega (x) = \sigma \left( d_\Omega (x) - cd_\Omega (x) \right) \leq \sigma d_\Omega (y)
\end{equation*}
which implies $x\in B_{\sigma d_\Omega (y)}(y)$.
Furthermore, $y\in B_{\sigma d_\Omega (x)}(x)$ since $c\leq \sigma$ which shows \eqref{eq:BLocally1AroundPoint}.

We continue by proving our first regularity result for our operator $\mathcal{L}$.
For this, we define the weighted $L^\infty$ norm
\begin{equation*}
\| u\| _{L^\infty _{\kappa} (\Omega )} := \left\| \frac{u(x)}{1+|x|^{\kappa}} \right\| _{L^\infty (\Omega )} .
\end{equation*}
\begin{Lemma}\label{Lem:InteriorRegularity}
Fix $s\in (1/2,1)$, $\sigma \in (0,1]$, $\epsilon \in (0,2s)$ and let $\Omega \subset \R^d$ be a domain with $B_{2+\frac{3}{\sigma}} \subset \Omega$.
Assume that $u$ is a weak solution of $\mathcal{L}_{\Omega , \sigma} u = f$ in $B_1$ with $u\in L^\infty _{2s-\epsilon} (\Omega )$ and $f\in L^{\infty}(B_1)$.
Then $u\in C^{2s} (\overline{ B_{1/2}})$ and
\begin{equation*}
\| u \| _{C^{2s}(\overline{ B_{1/2}})} \leq c \left( \| u\| _{L^\infty _{2s-\epsilon} (\Omega )} + \| f\|_{L^\infty (B_1)} \right)
\end{equation*}
for some constant $c=c(s,d,\epsilon )>0$.
\end{Lemma}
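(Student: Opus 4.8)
The plan is to reduce the equation for $\mathcal{L}_{\Omega,\sigma}$ to an equation for the full-space fractional Laplacian $(-\Delta)^s$, with a controlled right-hand side, and then quote the interior Schauder-type estimates for $(-\Delta)^s$. Concretely, for $x\in B_1$ write
\begin{equation*}
(-\Delta)^s u(x) = c_{d,s}\,\mathrm{p.v.}\!\int_{\R^d}\frac{u(x)-u(y)}{|x-y|^{d+2s}}\intd y
= \mathcal{L}_{\Omega,\sigma}u(x) + \Big((-\Delta)^s u(x) - \mathcal{L}_{\Omega,\sigma}u(x)\Big),
\end{equation*}
so that, at least formally, $(-\Delta)^s u = \tilde f$ in $B_1$ with $\tilde f(x) = f(x) + g(x)$ and
\begin{equation*}
g(x) = \mathrm{p.v.}\!\int_{\R^d}(u(x)-u(y))\Big(\tfrac{c_{d,s}}{|x-y|^{d+2s}} - \mathcal{K}_{\Omega,\sigma}(x,y)\Big)\intd y .
\end{equation*}
The point is that for $x\in B_{1/2}$ (indeed for $x\in B_1$), by \eqref{eq:BLocally1AroundPoint} we have $\mathcal{B}_{\Omega,\sigma}(x,y)=1$ for all $y\in B_{c\,d_\Omega(x)}(x)$, and the hypothesis $B_{2+3/\sigma}\subset\Omega$ forces $d_\Omega(x)$ to be bounded below by a fixed constant on $B_1$, so the difference kernel $\tfrac{c_{d,s}}{|x-y|^{d+2s}}-\mathcal{K}_{\Omega,\sigma}(x,y)$ vanishes for $y$ near $x$ and in particular is bounded (no singularity on the diagonal). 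Hence $g$ is an honest integral, not merely a principal value.

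\textbf{Key steps.} First, make the weak formulation rigorous: one must check that testing $\mathcal{E}_{\Omega,\sigma}(u,\varphi)=\int f\varphi$ against $\varphi\in C_c^\infty(B_1)$ is the same as testing the full-space bilinear form of $(-\Delta)^s$ against $\varphi$, up to the bilinear form with the bounded, off-diagonal kernel $\tfrac{c_{d,s}}{|x-y|^{d+2s}}-\mathcal{K}_{\Omega,\sigma}(x,y)$; since the latter kernel is bounded and $u\in L^1_{loc}$ with the global integrability built into $\| u\|_{L^\infty_{2s-\epsilon}(\Omega)}$ (note $u\equiv 0$ outside $\Omega$ by the extension convention, and the weight $1+|x|^{2s-\epsilon}$ with $2s-\epsilon<2s$ guarantees the tail integral $\int_{\R^d}\tfrac{|u(y)|}{1+|y|^{d+2s}}\intd y<\infty$), Fubini applies and $g\in L^\infty(B_1)$ with $\|g\|_{L^\infty(B_1)}\lesssim \| u\|_{L^\infty_{2s-\epsilon}(\Omega)}$. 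Second, conclude $u$ is a weak (distributional) solution of $(-\Delta)^s u = f+g$ in $B_1$ with $f+g\in L^\infty(B_1)$ and $u$ in the right weighted space so that $(-\Delta)^s u$ makes sense. Third, apply the standard interior regularity estimate for the fractional Laplacian — for $f+g\in L^\infty(B_1)$ and $2s\notin\N$ one gets $u\in C^{2s}(\overline{B_{1/2}})$ with
\begin{equation*}
\| u\|_{C^{2s}(\overline{B_{1/2}})} \le c\Big(\| u\|_{L^\infty_{2s-\epsilon}(\Omega)} + \| f+g\|_{L^\infty(B_1)}\Big)
\le c\Big(\| u\|_{L^\infty_{2s-\epsilon}(\Omega)} + \| f\|_{L^\infty(B_1)}\Big),
\end{equation*}
where the $u$-tail term on the right absorbs the contribution of the global part of $u$ to $(-\Delta)^s$ (this is the usual form of interior estimates, e.g. as in \cite{RosOton2024Book} or \cite{FernandezReal2022}). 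Here one should be slightly careful that the standard statement uses the weight $1+|x|^{2s}$ or an $L^1$ tail; since $2s-\epsilon<2s$ the space $L^\infty_{2s-\epsilon}$ embeds into the weaker tail-weighted space, so this is harmless.

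\textbf{Main obstacle.} The technical heart is controlling $g$ near the diagonal, i.e. verifying that $\tfrac{c_{d,s}}{|x-y|^{d+2s}}-\mathcal{K}_{\Omega,\sigma}(x,y)$ is bounded for $x\in B_1$, $y\in\R^d$. The singular parts at $y=x$ must cancel exactly, which they do precisely because $\mathcal{B}_{\Omega,\sigma}(x,y)\equiv 1$ on a ball $B_{c\,d_\Omega(x)}(x)$ of radius uniformly bounded below (using $B_{2+3/\sigma}\subset\Omega\Rightarrow d_\Omega(x)\ge 1+3/\sigma$ for $x\in B_1$); outside that ball $\mathcal{K}_{\Omega,\sigma}(x,y)\le |x-y|^{-d-2s}\le (c\,d_\Omega(x))^{-d-2s}$ is bounded, and $\tfrac{c_{d,s}}{|x-y|^{d+2s}}$ is likewise bounded there. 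So $g$ is in fact bounded pointwise by $c\big(\sup_{B_1}|u| + \int_{\R^d}\tfrac{|u(y)|}{1+|y|^{d+2s}}\intd y\big)$, hence $\lesssim \|u\|_{L^\infty_{2s-\epsilon}(\Omega)}$. Once this is in hand, everything else is a routine appeal to known interior regularity for $(-\Delta)^s$; no genuinely new idea is needed beyond the ``freeze the kernel / treat the difference as a source'' trick announced in the strategy section, and the only mild subtlety is bookkeeping with the weighted norms and the principal value.
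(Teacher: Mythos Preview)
Your proposal is correct and follows essentially the same route as the paper: rewrite $(-\Delta)^s u = f + g$ in $B_1$, use the hypothesis $B_{2+3/\sigma}\subset\Omega$ to guarantee that $\mathcal{B}_{\Omega,\sigma}(x,\cdot)\equiv 1$ on a ball of uniform radius around each $x\in B_1$ (the paper proves the slightly sharper claim $\mathcal{B}_\Omega(x,y)=1$ for all $x\in B_1$, $y\in B_2$ directly, but your appeal to \eqref{eq:BLocally1AroundPoint} works equally well), bound $g$ in $L^\infty(B_1)$ via the weighted norm $\|u\|_{L^\infty_{2s-\epsilon}(\Omega)}$, and invoke interior regularity for $(-\Delta)^s$ from \cite{RosOton2024Book}. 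The only cosmetic difference is that the paper writes the correction term as $h(x)=u(x)\int\frac{1-\mathcal{B}}{|x-y|^{d+2s}}\intd y - \int\frac{u(y)(1-\mathcal{B})}{|x-y|^{d+2s}}\intd y$ and splits the tail integral over $B_2^c$ explicitly, whereas you phrase it as a single kernel difference; the substance is identical.
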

\begin{proof}
Rewriting our operator $\mathcal{L}_{\Omega}$ in terms of the fractional Laplacian, we obtain
\begin{align*}
(-\Delta )^s u(x) & = f(x) + u(x) \int _{\R^d} \frac{1-\mathcal{B}_\Omega (x,y)}{|x-y|^{d+2s}} \intd y   -\int _{\R^d} \frac{u(y) (1-\mathcal{B} _\Omega (x,y))}{|x-y|^{d+2s}} \intd y \\
& = : f(x) + h(x)
\end{align*}
for $x\in B_1$ in the weak sense.

\textbf{Claim:} $\mathcal{B}_\Omega(x,y)=1$ for all $x\in B_1$ and $y\in B_2$.

For $x\in B_1$ and $y\in B_2$ we have $y\in B_3(x) \subset B_{\sigma d_\Omega (x)}(x)$ where the last inclusion follows since 
\begin{equation*}
d_\Omega (x) \geq 2+ \frac{3}{\sigma} -1 \geq\frac{3}{\sigma}
\end{equation*}
using the assumption $B_{2+3 /\sigma }\subset \Omega$.
Furthermore, $x\in B_3(y)\subset   B_{\sigma d_\Omega (y)}(y)$ where we use that $d_\Omega (y) \geq 2+ \frac{3}{\sigma} -2 =\frac{3}{\sigma}$.
This proves the claim from above.

We continue by estimating $\| h \| _{L^\infty (B_1)}$.
For almost all $x\in B_1$, we have
\begin{align*}
|h(x)| & \leq \| u\| _{L^\infty (B_1)} \int _{B_2^c} \frac{1}{|x-y|^{d+2s}} \intd y +\int _{B_2^c} \frac{|u(y)|}{|x-y|^{d+2s}} \intd y \\
& \leq c \left\| u \right\| _{L^\infty _{2s-\epsilon} (B_1)} + c \int _{B_2^c} \frac{|u(y)|}{(1+|y|^{2s-\epsilon }) |y|^{d+\epsilon }} \intd y \\
& \leq c \left\| u \right\| _{L^\infty _{2s-\epsilon} (B_1)} +  c \left\| u \right\| _{L^\infty_{2s-\epsilon} (B_2^c)} \int _{B_2^c} \frac{1}{ |y|^{d+\epsilon }} \intd y \\
& \leq c \left\| u \right\| _{L^\infty _{2s-\epsilon} (\Omega )} 
\end{align*}
for some constant $c=c(d,s,\epsilon )>0$.
Using interior regularity results for the fractional Laplacian (see, e.g., \cite[Theorem 2.4.3]{RosOton2024Book}), we conclude
\begin{align*}
\| u \| _{C^{2s}(\overline{ B_{1/2}})} & \leq c \left( \| u\| _{L^\infty _{2s-\epsilon} (\Omega )} + \| f +h\|_{L^\infty (B_1)} \right) \\
& \leq c \left( \| u\| _{L^\infty _{2s-\epsilon} (\Omega )} + \| f \|_{L^\infty (B_1)} \right) 
\end{align*}
for some $c=c(d,s,\epsilon )>0$.
\end{proof}
By scaling (see \autoref{Lem:ScalingOperator}), we get the following interior regularity results on balls $B_R$ with radius $R\leq 1$.
The assumption $R\leq 1$ is used to get $\| u_R\| _{L^\infty _{2s-\epsilon}} \leq \| u\| _{L^\infty _{2s-\epsilon}}$ for the rescaled function $u_R(x):=u(Rx)$.
\begin{Corollary}\label{Cor:ScaledDownInteriorRegularity}
Fix $s\in (1/2,1)$, $\sigma \in (0,1]$, $R\leq 1$ and some $\epsilon \in (0,2s)$.
Let $\Omega \subset \R^d$ be a $C^{1,1}$ domain with $ B_{R\left( 2+\frac{3}{\sigma}\right) } \subset \Omega$.
Assume that $u$ is a solution of
\begin{equation*}
\mathcal{L}_{\Omega , \sigma} u = f \qquad \text{in} \quad B_R
\end{equation*}
with $u\in L^\infty _{2s-\epsilon} (\Omega )$ and $f\in L^{\infty}(B_R)$.
Then $u\in C^{\gamma } (\overline{ B_{1/2}})$ for any $\gamma \in [0,2s]$ and
\begin{equation*}
[ u ] _{C^{\gamma }(\overline{ B_{R/2}})} \leq c R^{-\gamma } \left( \| u\| _{L^\infty _{2s-\epsilon} (\Omega )} + R^{2s} \| f\|_{L^\infty (B_R)} \right)
\end{equation*}
for some constant $c=c(s,d ,\epsilon ,\gamma )>0$.
\end{Corollary}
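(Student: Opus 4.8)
The plan is to reduce the statement on $B_R$ to \autoref{Lem:InteriorRegularity} on $B_1$ by the scaling $u_R(x) := u(Rx)$. First I would record the scaling behaviour of the operator, namely that if $u$ solves $\mathcal{L}_{\Omega,\sigma} u = f$ in $B_R$, then $u_R$ solves $\mathcal{L}_{R^{-1}\Omega,\sigma} u_R = f_R$ in $B_1$ with $f_R(x) = R^{2s} f(Rx)$; this is exactly the content of the referenced \autoref{Lem:ScalingOperator}. The rescaled domain $R^{-1}\Omega$ contains $B_{2+3/\sigma}$ precisely because $B_{R(2+3/\sigma)} \subset \Omega$, so the geometric hypothesis of \autoref{Lem:InteriorRegularity} is satisfied for $u_R$.

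Next I would apply \autoref{Lem:InteriorRegularity} to $u_R$ on $B_1$, obtaining
\begin{equation*}
\| u_R \|_{C^{2s}(\overline{B_{1/2}})} \leq c\left( \| u_R \|_{L^\infty_{2s-\epsilon}(R^{-1}\Omega)} + \| f_R \|_{L^\infty(B_1)} \right).
\end{equation*}
For the right-hand side, $\| f_R \|_{L^\infty(B_1)} = R^{2s}\| f \|_{L^\infty(B_R)}$ directly; and the weighted norm satisfies $\| u_R \|_{L^\infty_{2s-\epsilon}(R^{-1}\Omega)} \leq \| u \|_{L^\infty_{2s-\epsilon}(\Omega)}$ because, for $R \leq 1$, one has $1 + |Rx|^{2s-\epsilon} \leq 1 + |x|^{2s-\epsilon}$, so $|u_R(x)|/(1+|x|^{2s-\epsilon}) \leq |u(Rx)|/(1+|Rx|^{2s-\epsilon})$. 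This is the only place $R \leq 1$ is used, as the surrounding text already points out.

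Finally I would undo the scaling for the seminorm. Interpolating or simply restricting the $C^{2s}$ bound gives control of any $C^\gamma$ seminorm for $\gamma \in [0,2s]$, and the seminorm scales as $[u_R]_{C^\gamma(\overline{B_{1/2}})} = R^{\gamma}[u]_{C^\gamma(\overline{B_{R/2}})}$ (for $\gamma \leq 1$ this is immediate from the difference quotient; for $\gamma = 2s > 1$ one reads it off the rescaled Hölder seminorm of the gradient, picking up the same power $R^\gamma$). Combining, $R^\gamma [u]_{C^\gamma(\overline{B_{R/2}})} \leq c(\| u\|_{L^\infty_{2s-\epsilon}(\Omega)} + R^{2s}\|f\|_{L^\infty(B_R)})$, which is the claimed estimate after dividing by $R^\gamma$. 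I do not expect a genuine obstacle here; the only points requiring a little care are the bookkeeping of the scaling exponent for the top-order seminorm when $2s > 1$ (so that $C^{2s}$ means $C^{1,2s-1}$) and the monotonicity of the weight under rescaling, both of which are routine.
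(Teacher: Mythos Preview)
Your proposal is correct and follows exactly the approach the paper indicates: the paper's proof consists only of the sentence ``By scaling (see \autoref{Lem:ScalingOperator})\ldots\ The assumption $R\leq 1$ is used to get $\| u_R\| _{L^\infty _{2s-\epsilon}} \leq \| u\| _{L^\infty _{2s-\epsilon}}$ for the rescaled function $u_R(x):=u(Rx)$,'' and you have supplied precisely the details this sentence leaves implicit.
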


\subsection{Calculations in half-space}
We will finish this section with some explicit calculations in the half-space $\R^d_+$.
Throughout this paper, we will use the notation
\begin{equation}
x= (x' , x_d) \in \R^{d-1} \times \R_+ .
\end{equation}
Recall that the kernel $\mathcal{K}_{\R^d_+}(x,y) = \mathcal{B}(x,y) |x-y|^{-d-2s}$ is given by
\begin{equation}\label{eq:BKernelInHalfSpace}
\mathcal{B}(x,y) = \frac{\mathbbm{1} _{\{ y\in B_{\sigma x_d}(x) \} }  +\mathbbm{1} _{\{ x\in B_{\sigma y_d}(y) \} } }{2} .
\end{equation}
From the definition of $\mathcal{B}$, we directly get for all $x,y,z\in \R^d_+$
\begin{align}
\mathcal{B}(x,y) & = \mathcal{B}(y,x) , \label{eq:BSymmetry} \\
\mathcal{B}(x+(z',0),y + (z',0)) &= \mathcal{B}(x,y)  , \label{eq:ShiftingBInHalfSpace} \\
\mathcal{B}(ax,ay)& = \mathcal{B}(x,y) \qquad (a\in \R_+ )  \label{eq:ScalingBInHalfSpace} .
\end{align}
Now, fix some $x\in \R^d_+$ and let us explicitly determine the two domains, corresponding to the two indicator functions in \eqref{eq:BKernelInHalfSpace}, where $\mathcal{B}(x,\cdot )$ is nonvanishing.
These two domains are shown in \autoref{fig:SupportKernelSigma23} and \autoref{fig:SupportKernelSigma1} for the case $d=2$, $x=e_2=(0,1)$ with $\sigma =2/3$ and $\sigma =1$.

The first domain is just a ball around $x$ with radius $\sigma x_d$, which is shaded in blue in \autoref{fig:SupportKernelSigma23} and \autoref{fig:SupportKernelSigma1}.
The second domain is more interesting.
Notice, that $x\in B_{\sigma y_d}(y)$ if and only if $|x-y|^2=|x'-y'|^2 + (x_d-y_d)^2 < \sigma ^2 y_d^2$ .
This inequality describes different shapes in cases $\sigma \in (0,1)$ and $\sigma =1$.

For $\sigma \in (0,1)$, this inequality is equivalent to
\begin{equation} \label{eq:EllipsoidEquation}
\frac{\left( y_d -\frac{x_d}{1-\sigma ^2} \right) ^2}{ \frac{x_d^2}{1-\sigma ^2} \left( \frac{1}{1-\sigma ^2} -1 \right) } + \frac{|y' - x' | ^2}{x_d ^2 \left( \frac{1}{1-\sigma ^2} -1 \right) } < 1
\end{equation}
which describes an ellipsoid (compare with the red shaded domain in \autoref{fig:SupportKernelSigma23}). We will denote this ellipsoid by $E(x)$.

For $\sigma =1$, the above inequality is equivalent to
\begin{equation}\label{eq:ParaboloidEquation}
\frac{|y'-x'|^2}{2x_d} +\frac{x_d}{2} < y_d
\end{equation}
which describes the domain above a paraboloid (compare with the red shaded domain in \autoref{fig:SupportKernelSigma1}).
We will denote this domain by $P(x)$.

\begin{figure}
    \centering
    \begin{minipage}{0.48\textwidth}
        \centering
        
\begin{tikzpicture}
\begin{axis}[axis lines=middle,
			xmin=-3,xmax=3,
			ymin=0,ymax=5.5,
			grid=both,
			width=\textwidth,
            height=11/12*\textwidth,
            xlabel=$\R$,
            ylabel=$\R$,
           % enlargelimits,
			xtick={-2,-1,1,2},
            ytick={1,2,3,4,5}
            ]

\draw[pattern=north west lines, pattern color=blue] (0,1) circle[radius=2/3 ] ;
\draw[pattern=north east lines, pattern color=red] (0,9/5) ellipse (0.894 and 6/5);
\end{axis}
\end{tikzpicture}        
        
         % first figure itself
        \caption{Shows the support of $\mathcal{B}_{\R^2_+,\sigma }(e_2,\cdot )$ for $\sigma =2/3$.}
        \label{fig:SupportKernelSigma23}
    \end{minipage}\hfill
    \begin{minipage}{0.48\textwidth}
        \centering

\begin{tikzpicture}
\begin{axis}[axis lines=middle,
			xmin=-3,xmax=3,
			ymin=0,ymax=5.5,
			grid=major,
			width=\textwidth,
            height=11/12*\textwidth,
            xlabel=$\R$,
            ylabel=$\R$,
           % enlargelimits,
            xtick={-2,-1,1,2},
            ytick={1,2,3,4,5}
            ]

\draw[pattern=north west lines, pattern color=blue] (0,1) circle[radius=1 ] ;

\addplot [
	name path=P,
    domain=-3.2:3.2, 
    samples=100, 
    color=black,
]
{0.5+0.5* x^2};
 \path[name path=L] (axis cs:\pgfkeysvalueof{/pgfplots/xmin},5) -- (axis cs:\pgfkeysvalueof{/pgfplots/xmax},5);

    \addplot+[draw,pattern=north east lines,pattern color=red]
    fill between[
        of=P and L,
       % soft clip={domain=-3:3},
    ];
\end{axis}
\end{tikzpicture}

        \caption{Shows the support of $\mathcal{B}_{\R^2_+,\sigma }(e_2,\cdot )$ for $\sigma =1$.}
        \label{fig:SupportKernelSigma1}
    \end{minipage}
\end{figure}

Finally, we will compute the action of $\mathcal{L}_{\R^d_+}$ on monomials of the form $u(x)=x_d^p$.
We will follow the calculations from \cite[p. 120 f.]{Bogdan2003} to obtain the following result.
\begin{Proposition} \label{Prop:CalculationsInHalfspace} 
Let $s\in (1/2,1)$ and $\sigma \in (0,1]$.
Furthermore, let $u\colon \R^d_+ \to \R$ be defined by $u(x)=x_d^p =(d_{\R^d_+} (x))^p$ for some $p\in (-1,2s)$ when $\sigma =1$ and $p\in \R$ when $\sigma \in (0,1)$.
Then,
\begin{equation*}
\mathcal{L}_{\R^d_+} u (x)=  a(p,\sigma ) x_d^{p-2s}
\end{equation*}
for all $ x\in \R^d_+$ and some constant $a(p, \sigma)$ which satisfies 
\begin{equation} \label{eq:SignAPSigma}
\begin{aligned}
a(p,\sigma ) = 0 \qquad \text{for } &p=2s-1 ,\\
a(p,\sigma ) < 0 \qquad \text{for } &p>2s-1 \text{ or } p<0 , \\
a(p,\sigma ) > 0 \qquad \text{for } &0<p<2s-1 .
\end{aligned}
\end{equation}
\end{Proposition}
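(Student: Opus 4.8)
The plan is to reduce the $d$-dimensional computation to a one-dimensional integral and then to identify the sign of the resulting constant via a monotonicity/symmetrization argument. First I would exploit the invariances \eqref{eq:ShiftingBInHalfSpace} and \eqref{eq:ScalingBInHalfSpace}: since $u(x)=x_d^p$ depends only on $x_d$, and $\mathcal{B}$ is invariant under tangential shifts, the value $\mathcal{L}_{\R^d_+}u(x)$ depends only on $x_d$; by the scaling \eqref{eq:ScalingBInHalfSpace} of $\mathcal{B}$ and the homogeneity of $|x-y|^{-d-2s}$ and of $u$, one gets $\mathcal{L}_{\R^d_+}u(x)=a(p,\sigma)x_d^{p-2s}$ with $a(p,\sigma)=\mathcal{L}_{\R^d_+}u(e_d)$, provided the latter integral converges. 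So the whole statement reduces to (i) convergence of the principal value at $x=e_d$, and (ii) the sign of the scalar
\[
a(p,\sigma)=\pv\int_{\R^d_+}\bigl(1-y_d^p\bigr)\,\mathcal{K}_{\R^d_+}(e_d,y)\intd y .
\]

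Next I would carry out the integration in the tangential variable $y'$ explicitly, following \cite[p.~120~f.]{Bogdan2003}. Writing $y=(y',y_d)$, one integrates $|e_d-y|^{-d-2s}\mathcal{B}(e_d,y)$ over $y'\in\R^{d-1}$ for fixed $y_d>0$. The support condition encoded in $\mathcal{B}$ cuts out either a ball (first indicator) or an ellipsoid/paraboloid slice (second indicator), as described by \eqref{eq:EllipsoidEquation} and \eqref{eq:ParaboloidEquation}; in each case the $y'$-integral over a Euclidean ball of an integrand of the form $(|y'|^2+(1-y_d)^2)^{-(d+2s)/2}$ is computable in closed form (it is a Beta-type integral, or in the $\sigma=1$ paraboloid case an explicit elementary integral). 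This collapses $a(p,\sigma)$ to a single integral $\int_0^\infty (1-y_d^p)\,m_\sigma(y_d)\intd y_d$ for an explicit, strictly positive weight $m_\sigma$ supported on the $y_d$-range dictated by the geometry (for $\sigma\in(0,1)$ on an interval of positive numbers bounded away from $0$ and $\infty$; for $\sigma=1$ on $(0,\infty)$ with the integrability restriction $p\in(-1,2s)$ ensuring convergence near $0$ and $\infty$). The vanishing $a(2s-1,\sigma)=0$ should then follow from a substitution $y_d\mapsto 1/y_d$ (or a reflection) under which $m_\sigma$ transforms in a way that makes the integrand of the $p=2s-1$ integral odd — this is the nonlocal "harmonicity of $x_d^{2s-1}$" fact already announced in the introduction, and it is the natural normalization point.

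Finally, for the sign statement \eqref{eq:SignAPSigma} I would argue by comparison to this zero. Define $g(p):=a(p,\sigma)$ and use the representation as $\int (1-y_d^p)m_\sigma(y_d)\intd y_d$. After folding the integral via the involution $y_d\mapsto c/y_d$ that fixes the measure up to the Jacobian (the same change of variables that kills $p=2s-1$), one rewrites $a(p,\sigma)$ as an integral of $-(y_d^{p-(2s-1)}-1)(\text{something})$ against a positive kernel over $y_d>1$ (or $<1$), from which $p\mapsto a(p,\sigma)$ is seen to be strictly decreasing across $p=2s-1$, giving $a(p,\sigma)<0$ for $p>2s-1$; a second, separate estimate handles the regime $p<0$ where the singularity of $1-y_d^p$ at $y_d\to0$ dominates and again forces negativity, while for $0<p<2s-1$ the folded integrand has the opposite sign throughout. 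I expect the main obstacle to be \emph{bookkeeping the two geometric cases $\sigma\in(0,1)$ and $\sigma=1$ uniformly} — the domain $E(x)$ versus $P(x)$ — and in particular verifying convergence of the principal value and of the $y_d$-integral at its endpoints in the $\sigma=1$ case, which is exactly where the restriction $p\in(-1,2s)$ is forced; the sign analysis itself is then a clean monotonicity argument once the explicit one-dimensional weight $m_\sigma$ and its symmetry are in hand.
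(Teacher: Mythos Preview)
Your outline is essentially the paper's approach: reduce to $x=e_d$ via \eqref{eq:ShiftingBInHalfSpace}--\eqref{eq:ScalingBInHalfSpace}, integrate out $y'$, and fold via $y_d\mapsto 1/y_d$. Two points where the paper's execution is sharper than your sketch. First, do not try to evaluate the $y'$-integral in closed form; over the slice of $\supp\mathcal{B}(e_d,\cdot)$ at height $y_d$ it is an incomplete Beta integral, not elementary. The paper instead substitutes $y'=|1-y_d|\,u$, which separates out the universal weight $(1+|u|^2)^{-(d+2s)/2}$ and leaves the geometry entirely inside $\mathcal{B}(e_d,(|1-y_d|u,y_d))$. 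Second, the folding step hinges on the identity $\mathcal{B}(e_d,((1/t-1)u,1/t))=\mathcal{B}(e_d,(-(1-t)u,t))$, obtained by combining \emph{all three} of \eqref{eq:BSymmetry}--\eqref{eq:ScalingBInHalfSpace}; note in particular that the involution swaps the ball-indicator and the ellipsoid/paraboloid-indicator, so treating the two summands of $\mathcal{B}$ separately (as you propose) obscures rather than reveals the symmetry. Once folded, the integrand becomes
\[
\frac{(1-t^p)(1-t^{2s-1-p})}{|1-t|^{1+2s}}\cdot(\text{positive }u\text{-integral}),\qquad t\in(0,1),
\]
and \emph{all} of \eqref{eq:SignAPSigma} follows at once by reading off the sign of $(1-t^p)(1-t^{2s-1-p})$ on $(0,1)$. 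Your proposed monotonicity argument and separate treatment of $p<0$ are therefore unnecessary detours; the factorization already does the full job.
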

\begin{proof}
Let $x=(x',x_d)\in \R^{d-1} \times \R_+$.
By scaling and shifting (see Lemma \ref{Lem:ScalingOperator}), we have
\begin{equation*}
\mathcal{L}_{\R^d_+,\sigma } u (x) = x_d^{p-2s} \mathcal{L}_{\R^d_+,\sigma } u (e_d) =:a(p,\sigma ) x_d^{p-2s}
\end{equation*}
where $e_d=(0, \dots ,0 ,1)\in \R^d$. It remains to compute the sign of 
\begin{equation}\label{eq:APSigmaDecomposition}
\begin{aligned}
a(p,\sigma )& = \lim _{\epsilon \to 0} \int _{\R^d_+ \setminus B_\epsilon (e_d)} \frac{1-y_d^p}{\left((1-y_d)^2 + |y'|^2  \right) ^\frac{d+2s}{2}} \mathcal{B}(e_d,y) \intd y' \intd y_d \\
& = \lim _{\epsilon \to 0}\Bigg( \int _{\R^d_+ \setminus I_\epsilon } \frac{1-y_d^p}{\left((1-y_d)^2 + |y'|^2  \right) ^\frac{d+2s}{2}} \mathcal{B}(e_d,y) \intd y' \intd y_d \\
& \qquad  +  \int _{I_\epsilon \setminus B_\epsilon (e_d) } \frac{1-y_d^p}{\left((1-y_d)^2 + |y'|^2  \right) ^\frac{d+2s}{2}} \mathcal{B}(e_d,y) \intd y' \intd y_d \Bigg) \\
&=: \lim _{\epsilon \to 0} (J_{1,\epsilon} + J _{2,\epsilon} ).
\end{aligned}
\end{equation}
where $I_\epsilon = \R^{d-1}\times (1-\epsilon ,1+\epsilon)$.
By the change of variable $y'=|1-y_d| u$, we get
\begin{align*}
J_{1,\epsilon }&= \int _{\R^{d-1}} \frac{1}{\left( 1+ |u|^2  \right) ^\frac{d+2s}{2}} \int _{\R_+ \setminus (1-\epsilon ,1+ \epsilon )}  \frac{1-y_d^p}{|1-y_d|^{1+2s}}   \mathcal{B}(e_d,(|1-y_d| u,y_d))   \intd y_d \intd u \\
&=  \int _{\R^{d-1}} \frac{1}{\left( 1+ |u|^2  \right) ^\frac{d+2s}{2}} \tilde{J} _{1,\epsilon}  \intd u .
\end{align*}
We further decompose $\tilde{J} _{1,\epsilon}$ as
\begin{align*}
\tilde{J} _{1,\epsilon} & = \int _0^{1-\epsilon }  \frac{1-y_d^p}{|1-y_d|^{1+2s}}   \mathcal{B}(e_d,((1-y_d) u,y_d))   \intd y_d \\
& \qquad + \int _{1+\epsilon} ^{(1-\epsilon  ) ^{-1}}  \frac{1-y_d^p}{|1-y_d|^{1+2s}}   \mathcal{B}(e_d,((y_d-1) u,y_d))   \intd y_d \\
& \qquad + \int _{(1-\epsilon  ) ^{-1}} ^{ \infty }  \frac{1-y_d^p}{|1-y_d|^{1+2s}}   \mathcal{B}(e_d,((y_d-1) u,y_d))   \intd y_d \\
& = \tilde{J} _{1,1,\epsilon} +\tilde{J} _{1,2,\epsilon} +\tilde{J} _{1,3,\epsilon} 
\end{align*}
where we use $1+\epsilon <(1-\epsilon  ) ^{-1} $ for $\epsilon <1$.

We start with $\tilde{J} _{1,3,\epsilon}$. Using the transformation $t=1/y_d$, we obtain
\begin{align*}
\tilde{J} _{1,3,\epsilon} & = \int _0^{1-\epsilon} \frac{1-t^{-p}}{|1-1/t|^{1+2s}} t ^{-2} \mathcal{B}(e_d,((1/t-1)u,1/t))  \intd t \\
& = - \int _0^{1-\epsilon} \frac{1-t^p}{|1-t|^{1+2s}} t ^{2s-1-p} \mathcal{B}(e_d,((1/t-1)u,1/t))  \intd t \\
&= -  \int _0^{1-\epsilon} \frac{1-t^p}{|1-t|^{1+2s}} t ^{2s-1-p} \mathcal{B}(e_d,(-(1-t)u,t))  \intd t 
\end{align*}
where in the last step we have used \eqref{eq:ScalingBInHalfSpace}, \eqref{eq:ShiftingBInHalfSpace} and the symmetry of $\mathcal{B}$ to get
\begin{equation*}
\mathcal{B}(e_d,((1/t-1)u,1/t)) = \mathcal{B}(te_d,((1-t)u,1)) = \mathcal{B} ((-(1-t)u,t),e_d) =\mathcal{B} (e_d, (-(1-t)u,t) ).
\end{equation*}
For $\tilde{J} _{1,2,\epsilon}$, we use that $|1-y_d^p | \leq c \epsilon$ for all $y_d \in (1+\epsilon ,(1-\epsilon )^{-1})$ and the fact that $\mathcal{B}\leq 1$.
Furthermore, $(1-\epsilon )^{-1} -(1+\epsilon )\leq 2 \epsilon ^2 $ for $\epsilon <1/2$. Hence, we conclude
\begin{equation*}
|\tilde{J} _{1,2,\epsilon}| \leq 2 c \epsilon ^2 \epsilon \epsilon ^{-1-2s} \to 0 \qquad \text{as} \quad \epsilon \to 0.
\end{equation*}
Together, we obtain
\begin{align*}
J_{1,\epsilon } & =   \int _{\R^{d-1}} \frac{1}{\left( 1+ |u|^2  \right) ^\frac{d+2s}{2}} \left( \int _0^{1-\epsilon} \frac{(1-t^p)(1-t ^{2s-1-p})}{|1-t|^{1+2s}} \mathcal{B}(e_d,((1-t) u,t)) \intd t + \tilde{J} _{1,2,\epsilon} \right)  \intd u \\
& \to \int _0^1  \frac{(1-t^p)(1-t ^{2s-1-p})}{|1-t|^{1+2s}}   \left( \int _{\R^{d-1}} \frac{1}{\left( 1+ |u|^2  \right) ^\frac{d+2s}{2}} \mathcal{B}(e_d,((1-t) u,t)) \intd u \right) \intd t
\end{align*}
as $\epsilon \to 0$.
Note, that the last integral above is absolute convergent.

It remains to estimate $J _{2,\epsilon}$ from \eqref{eq:APSigmaDecomposition}.
Recall, that we can find $\eta = \eta (\sigma )>0$ and $\epsilon _0 =\epsilon _0(\sigma )>0$ such that $\mathcal{B}(e_d,y)=1$ for all $y\in \R^d_+$ satisfying $|y'|\leq \eta $ and $|1-y_d| \leq \epsilon$ for $\epsilon \leq \epsilon_0$.
Hence, for small $\epsilon$, we obtain 
\begin{equation}\label{eq:J2Epsilon}
\begin{aligned}
J _{2,\epsilon} & =  \int _{ (I_\epsilon \setminus B_\epsilon (e_d) ) \cap (B_\eta \times \R ) } \frac{1-y_d^p +p(y_d -1)}{\left((1-y_d)^2 + |y'|^2  \right) ^\frac{d+2s}{2}}  \intd y' \intd y_d  \\
& \qquad +  \int _{(I_\epsilon \setminus B_\epsilon (e_d) ) \cap (B_\eta ^c \times \R ) } \frac{1-y_d^p }{\left((1-y_d)^2 + |y'|^2  \right) ^\frac{d+2s}{2}} \mathcal{B}(e_d,y) \intd y' \intd y_d
\end{aligned}
\end{equation}
Note, that the absolute value of the first summand is bounded by
\begin{equation*}
 \int _{ I_\epsilon   \cap (B_\eta \times \R ) } \frac{| 1-y_d^p +p(y_d -1) |}{\left((1-y_d)^2 + |y'|^2  \right) ^\frac{d+2s}{2}}  \intd y' \intd y_d
\end{equation*}
which is finite and, hence, converges to $0$ as $\epsilon \to 0$.
It is also easy to see that the second summand in \eqref{eq:J2Epsilon} converges to $0$ since we are away from the singularity.
Hence, $J _{2,\epsilon} \to 0$ as $\epsilon \to 0$.
All together, we obtain
\begin{equation*}%\label{eq:APSigmaResultInProof}
a(p,\sigma )= \int _0^1  \frac{(1-t^p)(1-t ^{2s-1-p})}{|1-t|^{1+2s}}   \left( \int _{\R^{d-1}} \frac{1}{\left( 1+ |u|^2  \right) ^\frac{d+2s}{2}} \mathcal{B}(e_d,((1-t) u,t)) \intd u \right) \intd t .
\end{equation*}
Finally, \eqref{eq:SignAPSigma} follows, by evaluating the sign of $(1-t^p)(1-t ^{2s-1-p})$ for $t\in (0,1)$.
\end{proof}
\begin{Remark}\label{Rem:GeneralizeComputationsInHalfspace}
Note that in the proof of \autoref{Prop:CalculationsInHalfspace}, we have not used the explicit form of the kernel \smash{$\mathcal{B}$}.
We only used the properties \eqref{eq:BSymmetry}, \eqref{eq:ShiftingBInHalfSpace}, \eqref{eq:ScalingBInHalfSpace} (symmetry, translational invariance in the tangential direction $\R^{d-1}$ and scaling of $\mathcal{B}$) as well as nice behavior of the kernel close to the diagonal in order to evaluate the principle value integral in a pointwise sense.

We also want to mention that if we replace the scaling property $\mathcal{B}(ax,ay)=\mathcal{B}(x,y)$ from \eqref{eq:ScalingBInHalfSpace} by the assumption
\begin{equation*}
\mathcal{B}(ax,ay) = a^\gamma \mathcal{B}(x,y) ,
\end{equation*}
then one can still determine harmonic functions in the half-line, following the above calculations.
Note that the operator from \eqref{eq:OperatorScottDu} has such a scaling property.
Indeed, one can calculate that the function $\varphi (x)=x$ is a harmonic solution in the half-line for the operator $L$ from \eqref{eq:OperatorScottDu}.
\end{Remark}

\section{Barriers}\label{sec:Barriers}
For this section, we will fix a $C^{1,1}$ domain $\Omega \subset \R^d$ and prove pointwise estimates on $\mathcal{L}_{\Omega , \sigma} \varphi$ where $\varphi$ is a barrier function built form $d_\Omega$.
Note that the distance function $d_\Omega$ is not smooth on $\Omega$.
However, (see, e.g., \cite[Lem 14.16]{Gilbarg2001}) $d_\Omega$ is $C^{1,1}$ close to the boundary.
Hence, since all our estimates are close to the boundary, the pointwise evaluation of $\mathcal{L}_{\Omega , \sigma} \varphi$ will not cause problems.

The main result of this section is \autoref{Prop:Barrier}.
For the proof, we will follow the construction of \cite[Appendix B.2]{RosOton2024Book}.
However, herein, difficulties arise since the operator $ \mathcal{L}_{\Omega ,\sigma}$ is not translational invariant and depends on the domain $\Omega$.

\begin{Proposition}\label{Prop:Barrier}
Fix $s\in (1/2,1)$ and $\sigma \in (0,1]$.
Let $\Omega \subset \R^d$ be a $C^{1,1}$ domain, such that for every $z\in \partial \Omega \cap B_1$ we can find two balls of radius $r_0>0$ in $\Omega$ and $\Omega ^c$ which both touch $\partial \Omega$ in $z$.
%Then there exists some $\epsilon = \epsilon (s,d)\in (0,1)$ and %for every $\epsilon  \in (0, \min \left\{ \frac{1}{2},2s-1,2-2s \right\} )$ 
Then for every $\epsilon>0$ there exists $\tilde{\varphi } \in H^s_{loc}(\R^d )$ and constants $C=C(r_0,d ,s, \sigma , \epsilon )>1$, $\eta =\eta (r_0,d  ,s, \sigma ,\epsilon )>0$ such that
\begin{alignat}{2}
\mathcal{L}_{\Omega , \sigma} \tilde{\varphi } &\geq d_\Omega ^{-1+\epsilon} \qquad & &\text{in} \quad \{ 0 < d_\Omega (x) < \eta \} \cap B_{4/5} \label{eq:BarrierLPhiBound} ,\\
d_\Omega ^{2s-1} \leq \tilde{\varphi } &\leq C d _\Omega^{2s-1}  & &\text{in} \quad B_1  \label{eq:BarrierUpperLowerBound} , \\
\tilde{\varphi } &= 0 & &\text{in} \quad \R^d\setminus B_2 .
\end{alignat} 
\end{Proposition}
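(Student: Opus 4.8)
I would take the barrier to be, near $\partial\Omega$, a ``harmonic plus subcritical'' combination built from the distance function,
\[
\tilde\varphi = \psi\cdot\bigl(C_0\, d_\Omega^{2s-1} - M\, d_\Omega^{2s-1+\epsilon}\bigr),
\]
where $\psi\in C_c^\infty(B_2)$ is a cut-off with $\psi\equiv1$ on $B_{3/2}$, the function $d_\Omega$ is replaced by a smooth positive extension away from a one-sided neighbourhood of $\partial\Omega$ (where it is genuinely $C^{1,1}$, see \cite[Lem.~14.16]{Gilbarg2001}), and $\tilde\varphi:=0$ on $\R^d\setminus\Omega$. The constant $M=M(d,s,\sigma,\epsilon)>0$ will be fixed by the half-space computation and $C_0=C_0(r_0,d,s,\sigma,\epsilon)$ taken large at the end. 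Since $\mathcal{L}_{\Omega,\sigma}$ integrates only over $\Omega$, only the values of $\tilde\varphi$ on $\Omega$ enter \eqref{eq:BarrierLPhiBound}. The inclusion $\tilde\varphi\in H^s_{loc}(\R^d)$ holds because $d_\Omega^{2s-1}$ is $C^{1,1}$ tangentially and behaves like $t^{2s-1}$ transversally, which lies in $H^s$ precisely because $s>1/2$; the bounds \eqref{eq:BarrierUpperLowerBound} in $B_1$ are immediate once $C_0\ge 1+M(\operatorname{diam}\Omega)^\epsilon$, and $\tilde\varphi=0$ outside $B_2$ by construction. So everything reduces to \eqref{eq:BarrierLPhiBound}.

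Next I would localise. Fix $x_0\in B_{4/5}$ with $0<d_\Omega(x_0)<\eta$; for $\eta$ small its nearest boundary point $z_0$ lies in $B_1$, so the two-ball condition of radius $r_0$ applies at $z_0$. Let $\nu$ be the inner normal at $z_0$, let $H=\{(y-z_0)\cdot\nu>0\}$ be the tangent half-space, and set $\varphi_H(y):=C_0\,(d_H(y))_+^{2s-1}-M\,(d_H(y))_+^{2s-1+\epsilon}$. Because $x_0=z_0+d_\Omega(x_0)\nu$ we have $d_H(x_0)=d_\Omega(x_0)$, and \autoref{Prop:CalculationsInHalfspace} gives $\mathcal{L}_{H,\sigma}(d_H^{2s-1})\equiv0$ and $\mathcal{L}_{H,\sigma}(d_H^{2s-1+\epsilon})=a(2s-1+\epsilon,\sigma)\,d_H^{-1+\epsilon}$ with $a(2s-1+\epsilon,\sigma)<0$; hence, choosing $M:=2/|a(2s-1+\epsilon,\sigma)|$,
\[
\mathcal{L}_{H,\sigma}\varphi_H(x_0)=2\,d_H(x_0)^{-1+\epsilon}=2\,d_\Omega(x_0)^{-1+\epsilon}.
\]
It therefore suffices to prove the comparison estimate $\bigl|\mathcal{L}_{\Omega,\sigma}\tilde\varphi(x_0)-\mathcal{L}_{H,\sigma}\varphi_H(x_0)\bigr|\le C$ for some $C=C(r_0,d,s,\sigma,\epsilon)$, for then $\mathcal{L}_{\Omega,\sigma}\tilde\varphi(x_0)\ge 2d_\Omega(x_0)^{-1+\epsilon}-C\ge d_\Omega(x_0)^{-1+\epsilon}$ as soon as $\eta$ is small, and covering $\{0<d_\Omega<\eta\}\cap B_{4/5}$ by such points finishes the proof.

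For the comparison estimate I would split $\mathcal{L}_{\Omega,\sigma}\tilde\varphi(x_0)-\mathcal{L}_{H,\sigma}\varphi_H(x_0)=(\mathcal{L}_{\Omega,\sigma}-\mathcal{L}_{H,\sigma})\varphi_H(x_0)+\mathcal{L}_{\Omega,\sigma}(\tilde\varphi-\varphi_H)(x_0)$, reading $\mathcal{L}_{H,\sigma}$ as integration against $\mathcal{K}_{H,\sigma}(x_0,\cdot)$ over $H$. The kernels $\mathcal{K}_{\Omega,\sigma}(x_0,\cdot)$ and $\mathcal{K}_{H,\sigma}(x_0,\cdot)$ share the factor $|x_0-y|^{-d-2s}$ and differ only through the indicators, i.e. on the symmetric difference of the two supports, which --- because the two-ball condition gives $|d_\Omega(y)-d_H(y)|\lesssim r_0^{-1}|y-z_0|^2$ --- is a thin set (near $\partial B_{\sigma d_\Omega(x_0)}(x_0)$, resp. near $\partial E(x_0)$ or $\partial P(x_0)$) sitting away from $x_0$; on it $|\varphi_H(x_0)-\varphi_H(y)|\lesssim d_\Omega(x_0)^{2s-1}$, so that piece is $O(1)$. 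For the second piece, $g:=\tilde\varphi-\varphi_H$ vanishes at $x_0$ (since $d_H(x_0)=d_\Omega(x_0)$) and, near $z_0$, the $C^{1,1}$ regularity of $d_\Omega$ together with $|d_\Omega-d_H|\lesssim r_0^{-1}|\cdot-z_0|^2$ make $g$ vanish to a strictly positive power of the distance to $z_0$ with $\|D^2g\|_{L^\infty(B_{cd_\Omega(x_0)}(x_0))}\lesssim r_0^{-1}d_\Omega(x_0)^{2s-2}$; splitting $\mathcal{L}_{\Omega,\sigma}g(x_0)$ into the ball $B_{cd_\Omega(x_0)}(x_0)$, where $\mathcal{B}_{\Omega,\sigma}\equiv1$ by \eqref{eq:BLocally1AroundPoint} and a second-order Taylor expansion contributes $\lesssim\|D^2g\|_{L^\infty(B_{cd_\Omega(x_0)}(x_0))}d_\Omega(x_0)^{2-2s}\lesssim r_0^{-1}$, and its complement, again gives an $O(1)$ bound. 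For $\sigma\in(0,1)$ this is essentially all: if $|x_0-y|<\sigma d_\Omega(y)$ then $d_\Omega(y)<(1-\sigma)^{-1}d_\Omega(x_0)$ and hence $|x_0-y|<\sigma(1-\sigma)^{-1}d_\Omega(x_0)$, so the entire kernel support of $\mathcal{K}_{\Omega,\sigma}(x_0,\cdot)$ lies in a ball $B_{Cd_\Omega(x_0)}(z_0)$ on which all the above local estimates apply.

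\textbf{Main obstacle.} The genuinely delicate case is $\sigma=1$: the paraboloidal region $P(x_0)$ in the support of $\mathcal{K}_{\Omega,\sigma}(x_0,\cdot)$ reaches a fixed distance into $\Omega$, where $d_\Omega$ and $d_H$ are no longer $O(|\cdot-z_0|^2)$-close, so both the kernel comparison and the control of $\varphi_H$ deep in the half-space require an honest analysis using the global geometry of $P(x)$ and the uniform ball condition --- this is the part I expect to carry most of the work, matching the fifth bullet of \autoref{Rem:CommentsOnMainResults}. A secondary point to watch is that the curvature-induced error must stay strictly below the main term $d_\Omega^{-1+\epsilon}$; should the crude $O(1)$ bound be borderline for $s$ near $1/2$, one absorbs it by shrinking $\eta$ and, if necessary, tightening the region-by-region splitting, since the proposition only asks for existence of admissible $\eta$ and $C$. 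Assembling the $\sigma\in(0,1)$ and $\sigma=1$ analyses, fixing $M$ as above and $C_0$ large, and covering $\{0<d_\Omega<\eta\}\cap B_{4/5}$ yields \eqref{eq:BarrierLPhiBound}, \eqref{eq:BarrierUpperLowerBound} and the support property simultaneously.
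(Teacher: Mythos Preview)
Your barrier and your decomposition are exactly those of the paper: $\varphi = C\,d_\Omega^{2s-1} - d_\Omega^{2s-1+\epsilon'}$ cut off outside $B_2$, with the comparison to the half-space split into a function-change term $\mathcal{L}_\Omega(\tilde\varphi-\varphi_H)$ and a kernel-change term $(\mathcal{L}_\Omega-\mathcal{L}_H)\varphi_H$. The gap is in the size you claim for these errors.

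The uniform bound $|\mathcal{L}_{\Omega,\sigma}\tilde\varphi(x_0)-\mathcal{L}_{H,\sigma}\varphi_H(x_0)|\le C$ with $C$ independent of $\rho:=d_\Omega(x_0)$ is false: both pieces blow up as $\rho\to0$. For the kernel-change piece, the symmetric difference of the ellipse (resp.\ paraboloid) supports sits at distance $\asymp\rho$ from $x_0$ and, in rescaled coordinates, has measure $\sim\rho^{1/2}$ (resp.\ $\rho^\mu$); combined with $|\varphi_H(x_0)-\varphi_H(y)|\lesssim\rho^{2s-1}$ and $|x_0-y|^{-d-2s}\asymp\rho^{-d-2s}$ over a region of $d$-volume $\asymp\rho^{d+1/2}$, the net contribution is $O(\rho^{-1/2})$ for $\sigma<1$ and $O(\rho^{-1+\mu})$ for $\sigma=1$ --- this is exactly the content of the paper's \autoref{Lem:ConvergenceToEllipse} and \autoref{Lem:ConvergenceToParabola}, which carry out the geometric analysis your ``thin set'' sentence only gestures at. For the function-change piece with $\sigma=1$, the paraboloid $P(x_0)$ reaches distance $O(1)$ into $\Omega$, and there one only has $|d_\Omega^{2s-1}-d_H^{2s-1}|\le|d_\Omega-d_H|^{2s-1}\lesssim|y-x_0|^{2(2s-1)}$, giving $\int_{\rho/2}^{O(1)}r^{2s-3}\,dr\sim\rho^{2s-2}$, again not $O(1)$. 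Your proposed remedy of shrinking $\eta$ therefore goes the wrong way: smaller $\rho$ makes these errors larger, not smaller.

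The correct argument, which is what the paper does in \autoref{Lem:BoundLd2sMinus1} and \autoref{Lem:BoundLd2sMinus1PlusEpsilon}, is not to aim for $O(1)$ at all but to track the actual error exponents and verify that they are all strictly greater than $-1+\epsilon'$ (taking $\epsilon'=\epsilon/2$ small), so that the main term $c_2\rho^{-1+\epsilon'}$ from $\mathcal{L}_H\varphi_H$ dominates $c_1\rho^{-\gamma}$ for $\rho$ small. This forces one to establish $\gamma=\min\{2-2s,1/2\}$ (resp.\ $\min\{2-2s,1-\mu\}$), and proving the $\rho^{1/2}$ (resp.\ $\rho^\mu$) bound on the symmetric-difference measure is the technical heart of the section. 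Your instinct that $\sigma=1$ carries most of the work is right, but note the kernel comparison is already nontrivial for $\sigma\in(0,1)$.
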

\begin{proof}
W.l.o.g. assume that $\epsilon < \min \left\{ 2s-1,1/2,2-2s \right\}$ for the case $\sigma \in (0,1)$ and $\epsilon < \min \left\{ 2s-1 , \mu ,2-2s  \right\}$ with $\mu >0$ from \autoref{Lem:ConvergenceToParabola} when $\sigma =1$.
We set
\begin{equation*}
\varphi := C d _\Omega^{2s-1} - d _\Omega ^{2s-1+\epsilon' } 
\end{equation*}
for $\epsilon '= \epsilon /2$ and some $C>1$ so that \eqref{eq:BarrierUpperLowerBound} holds true.
Using \cite[Lemma B.2.5]{RosOton2024Book}, we can verify that $\varphi \in H^s_{loc}(\R^d )$.
We first consider the case $\sigma \in (0,1)$.
With \autoref{Lem:BoundLd2sMinus1} and \autoref{Lem:BoundLd2sMinus1PlusEpsilon}, we obtain
\begin{equation}\label{eq:BarrierBound1}
\mathcal{L}_\Omega  \varphi (x) \geq - c_1 d_\Omega ^{\min \left\{ 2s-2,-\frac{1}{2} \right\}}  (x) + c_2 d_\Omega ^{-1+\epsilon ' } (x) \geq \frac{c_2}{2} d_\Omega ^{-1+\epsilon ' } (x)
\end{equation}
for all $x \in \Omega \cap B_{4/5}$ with $d_\Omega (x) \leq \eta '$ for some constant $\eta '>0$.
For the last inequality in \eqref{eq:BarrierBound1}, we have used that $-1+\epsilon ' < \min \left\{ 2s-2,-1/2 \right\} $.

For $\sigma =1$, we also use \autoref{Lem:BoundLd2sMinus1} and \autoref{Lem:BoundLd2sMinus1PlusEpsilon} and obtain
\begin{equation} \label{eq:BarrierBound2}
\mathcal{L}_\Omega  \varphi (x) \geq - c_1 d_\Omega ^{\min \left\{ 2s-2, \mu -1 \right\}}  (x) + c_2 d_\Omega ^{-1+\epsilon ' } (x) \geq  \frac{c_2}{2} d_\Omega ^{-1+\epsilon ' } (x)
\end{equation}
for all $x \in \Omega \cap B_{4/5}$ close to the boundary $\partial \Omega$.

We will choose a cutoff function $\chi \in C^\infty _c(\R^d)$ with $\ind _{B_1} \leq \chi \leq \ind _{B_2}$ and define $\tilde{\varphi} := \chi \varphi$.
Then,
\begin{equation*}
\mathcal{L}_\Omega \tilde{\varphi} (x) = \mathcal{L}_\Omega \varphi (x) + \mathcal{L}_\Omega (\varphi (\chi -1))(x)  .
\end{equation*}
Furthermore, for all $x\in \Omega \cap B_{4/5}$, we have
\begin{align*}
|\mathcal{L}_\Omega (\varphi (\chi -1))(x) | & \leq \int _\Omega \frac{|\varphi (y) | (1- \chi (y))}{|x-y|^{d+2s}} \mathcal{B}(x,y) \intd y \leq c
\end{align*}
since we are away from the singularity.
Together with \eqref{eq:BarrierBound1} and \eqref{eq:BarrierBound2}, we conclude that \eqref{eq:BarrierLPhiBound} holds true close to the boundary $\partial \Omega$.
\end{proof}
\begin{Lemma} \label{Lem:BoundLd2sMinus1}
Fix $s\in (1/2,1)$ and $\sigma \in (0,1]$. Let $\Omega \subset \R^d$ be a $C^{1,1}$ domain satisfying the exterior and interior ball condition with radius $r_0$ from \autoref{Prop:Barrier}.
Then there exists some $0<\rho _0 = \rho _0 (r_0 ,s ,d, \sigma )$ such that for all $x_0\in \Omega \cap B_{4/5}$ with $d_\Omega (x_0) \leq \rho _0$ we have
\begin{equation}\label{eq:BoundLD2s-1}
\left| \mathcal{L}_{\Omega , \sigma} d _\Omega^{2s-1}  (x_0)\right| \leq c d_\Omega ^{ -\gamma } (x_0)
\end{equation}
for some constant $c=c(r_0 ,s,d, \sigma )>0$, where $\gamma = \min \left\{ 2-2s, \frac{1}{2} \right\}$ for $\sigma \in (0,1)$ and $\gamma = \min \left\{ 2-2s, 1-\mu  \right\}$ for $\sigma =1$ with $\mu = \mu (d,s)\in (0,1)$ from \autoref{Lem:ConvergenceToParabola}.
\end{Lemma}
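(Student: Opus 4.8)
The plan is to compare $\mathcal{L}_{\Omega,\sigma}d_\Omega^{2s-1}$ near the boundary with the action of the half-space operator on the model harmonic function, which vanishes by \autoref{Prop:CalculationsInHalfspace}. First fix $x_0\in\Omega\cap B_{4/5}$ with $\rho:=d_\Omega(x_0)\le\rho_0$, pick a closest boundary point $z_0\in\partial\Omega$ (so $z_0\in\partial\Omega\cap B_1$ once $\rho_0<1/5$), and after a rotation and translation assume $z_0=0$ with inner unit normal $e_d$; then $x_0=\rho e_d$ and $d_\Omega(x_0)=(x_0)_d=d_{\R^d_+}(x_0)$. Choosing $\rho_0$ small enough (in terms of $r_0$) that $d_\Omega$ is $C^{1,1}$ near $x_0$ with $|D^2 d_\Omega|\lesssim r_0^{-1}$ there (possible by the interior and exterior ball conditions, cf.\ \cite{Gilbarg2001}), and using $d_\Omega(z_0)=(z_0)_d=0$ and $\nabla d_\Omega(z_0)=e_d$, one obtains $|\nabla d_\Omega(y)-e_d|\lesssim r_0^{-1}|y|$ and $|d_\Omega(y)-y_d|\lesssim r_0^{-1}|y|^2$ for $y$ near $z_0$. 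By \autoref{Prop:CalculationsInHalfspace} with $p=2s-1$ we have $\mathcal{L}_{\R^d_+,\sigma}d_{\R^d_+}^{2s-1}(x_0)=a(2s-1,\sigma)\rho^{-1}=0$, so it suffices to bound the difference of the two operators. Writing $\mathcal K_1:=\mathcal{K}_{\Omega,\sigma}(x_0,\cdot)$, $\mathcal K_2:=\mathcal{K}_{\R^d_+,\sigma}(x_0,\cdot)$ (extended by $0$ outside the respective domains, where also $d_\Omega^{2s-1}$ and $d_{\R^d_+}^{2s-1}$ vanish), and using $d_\Omega(x_0)^{2s-1}=\rho^{2s-1}=d_{\R^d_+}(x_0)^{2s-1}$, I would write
\begin{equation*}
\mathcal{L}_{\Omega,\sigma}d_\Omega^{2s-1}(x_0)=\pv\!\int_{\R^d}\!\bigl(d_{\R^d_+}(y)^{2s-1}-d_\Omega(y)^{2s-1}\bigr)\mathcal K_2(y)\intd y+\pv\!\int_{\R^d}\!\bigl(\rho^{2s-1}-d_\Omega(y)^{2s-1}\bigr)\bigl(\mathcal K_1-\mathcal K_2\bigr)(y)\intd y,
\end{equation*}
a \emph{function-difference} term and a \emph{kernel-difference} term (the latter also absorbing the domain mismatch $\Omega\triangle\R^d_+$).

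On $B_r(x_0)$ with $r:=\tfrac{\sigma}{1+\sigma}\rho$, applying \eqref{eq:BLocally1AroundPoint} in $\Omega$ and in $\R^d_+$ gives $\mathcal{B}_{\Omega,\sigma}(x_0,\cdot)=\mathcal{B}_{\R^d_+,\sigma}(x_0,\cdot)=1$ and $B_r(x_0)\subset\Omega\cap\R^d_+$, so $\mathcal K_1=\mathcal K_2=|x_0-\cdot|^{-d-2s}$ there; hence the kernel-difference term is an absolutely convergent integral over $B_r(x_0)^c$, and on $B_r(x_0)$ the function-difference integrand is $g(y)|x_0-y|^{-d-2s}$ with $g:=d_{\R^d_+}^{2s-1}-d_\Omega^{2s-1}\in C^{1,1}(B_r(x_0))$ and $g(x_0)=0$. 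Since $B_r(x_0)$ is centred at $x_0$ the linear Taylor term cancels by symmetry, so this contribution is $\lesssim\|D^2 g\|_{L^\infty(B_r(x_0))}r^{2-2s}$. The decisive observation is a cancellation: the Hessians of $d_{\R^d_+}^{2s-1}$ and $d_\Omega^{2s-1}$ each carry a leading term of size $\rho^{2s-3}$, and the estimates $|\nabla d_\Omega-e_d|\lesssim\rho/r_0$, $|d_\Omega-y_d|\lesssim\rho^2/r_0$, $|D^2 d_\Omega|\lesssim 1/r_0$ on $B_r(x_0)$ force $\|D^2 g\|_{L^\infty(B_r(x_0))}\lesssim\rho^{2s-2}/r_0$, whence the whole near-diagonal contribution is $\lesssim\rho^{2s-2}r^{2-2s}/r_0\lesssim 1/r_0$, which is already below $d_\Omega^{-\gamma}(x_0)$.

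It remains to estimate both terms over $B_r(x_0)^c$. If $\sigma\in(0,1)$, the supports of $\mathcal{B}_{\Omega,\sigma}(x_0,\cdot)$ and $\mathcal{B}_{\R^d_+,\sigma}(x_0,\cdot)$ lie in $B_R(x_0)$ with $R=\tfrac{\sigma}{1-\sigma}\rho$, so both integrals run over the annulus $\mathcal A=B_R(x_0)\setminus B_r(x_0)$, where $|x_0-y|\asymp\rho$ and $|y|\lesssim\rho$. The kernel-difference term is supported on $\Omega\triangle\R^d_+$ together with the thin shell between $\{|x_0-y|=\sigma y_d\}$ and $\{|x_0-y|=\sigma d_\Omega(y)\}$; both sets have measure $\lesssim\rho^{d-1}\cdot\rho^2/r_0$ and carry $|\rho^{2s-1}-d_\Omega(y)^{2s-1}|\lesssim\rho^{2s-1}$, so they contribute $\lesssim\rho^{-d-2s}\cdot\rho^{2s-1}\cdot\rho^{d+1}/r_0\lesssim 1/r_0$. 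For the function-difference term on $\mathcal A$, the part over $\Omega\triangle\R^d_+$ is bounded the same way (there $|g|\lesssim(\rho^2/r_0)^{2s-1}$), and on the set where $y_d$ and $d_\Omega(y)$ are both positive I would split by a threshold $t$: where $d_\Omega(y)\ge t$, one has $y_d\wedge d_\Omega(y)\gtrsim t$ and $|g(y)|\lesssim t^{2s-2}|y_d-d_\Omega(y)|\lesssim t^{2s-2}\rho^2/r_0$; on the slab $\{0<d_\Omega(y)<t\}$, of measure $\lesssim\rho^{d-1}t$, one has $|g(y)|\lesssim t^{2s-1}$. Balancing the resulting bound $\rho^{-d-2s}\bigl(t^{2s-2}\rho^{d+2}/r_0+t^{2s}\rho^{d-1}\bigr)$ over admissible $t$ (e.g.\ $t\asymp\rho^{3/2}$) gives $\lesssim\rho^{-(1-s)}$, and since $1-s\le\min\{2-2s,\tfrac12\}$ this proves \eqref{eq:BoundLD2s-1} for $\sigma\in(0,1)$.

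For $\sigma=1$ the near-diagonal and slab estimates carry over, but the support of $\mathcal{B}_{\Omega,1}(x_0,\cdot)$ is the unbounded set $\{|x_0-y|<d_\Omega(y)\}$, which in the half-space is the region above the paraboloid $\{|x_0-y|=y_d\}$. I would split at a fixed scale $L_0\sim r_0$: for $|x_0-y|>L_0$ both operators only involve points with $|x_0-y|\gtrsim r_0$ and $d_\Omega(y)\le\rho+|x_0-y|\lesssim|x_0-y|$, so each integral is separately $\lesssim\int_{|x_0-y|>r_0}|x_0-y|^{2s-1-d-2s}\intd y\lesssim 1/r_0$ and hence so is their difference, while on the range $r\le|x_0-y|\le L_0$ one uses that $\partial\Omega$ is a $C^{1,1}$ graph so that $\{|x_0-y|<d_\Omega(y)\}$ approaches the model paraboloid, the quantitative rate being supplied by \autoref{Lem:ConvergenceToParabola}; together with $|\rho^{2s-1}-d_\Omega(y)^{2s-1}|\lesssim|x_0-y|^{2s-1}$ and the kernel decay this produces a contribution $\lesssim d_\Omega^{-\gamma}(x_0)$ with $\gamma=\min\{2-2s,1-\mu\}$, which proves \eqref{eq:BoundLD2s-1}. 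I expect this $\sigma=1$ case to be the main obstacle: because the domain of integration does not contract to $x_0$ as $x_0\to\partial\Omega$, one cannot localize near $z_0$ and must control, at every scale between $d_\Omega(x_0)$ and a fixed radius, both the growth of $d_\Omega(y)^{2s-1}$ and the deviation of the curved set $\{|x_0-y|<d_\Omega(y)\}$ from the model paraboloid — precisely where \autoref{Lem:ConvergenceToParabola} and the non-explicit exponent $\mu$ enter.
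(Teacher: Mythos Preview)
Your decomposition is valid but organized oppositely to the paper's. The paper writes
\[
\mathcal{L}_\Omega d_\Omega^{2s-1}(x_0)=\mathcal{L}_\Omega\bigl(d_\Omega^{2s-1}-l^{2s-1}\bigr)(x_0)+\mathcal{L}_\Omega l^{2s-1}(x_0),
\]
i.e.\ it replaces the \emph{function} first (keeping the $\Omega$-kernel), bounding the first term by $c\rho^{2s-2}$ via the crude inequalities $|a^{2s-1}-b^{2s-1}|\le|a-b|(a^{2s-2}+b^{2s-2})$ near $x_0$ and $|a^{2s-1}-b^{2s-1}|\le|a-b|^{2s-1}$ away from $x_0$, and then compares $\mathcal{L}_\Omega l^{2s-1}$ to $\mathcal{L}_{\R^d_+}l^{2s-1}=0$ directly through \autoref{Lem:ConvergenceToEllipse}/\autoref{Lem:ConvergenceToParabola}. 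You instead replace the \emph{kernel} first, which forces your function-difference term to be integrated with $\mathcal{K}_{\R^d_+}$ over $\R^d_+$ (including $\R^d_+\setminus\Omega$), and your kernel-difference term to carry $d_\Omega^{2s-1}$ rather than the model $l^{2s-1}$. For $\sigma\in(0,1)$ your route works and your slab optimization even gives a slightly sharper exponent than the paper's $\rho^{2s-2}$.

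For $\sigma=1$ there is a genuine gap. Your appeal to \autoref{Lem:ConvergenceToParabola} is not clean: that lemma bounds $(\mathcal{L}_\Omega-\mathcal{L}_{\R^d_+})l^{2s-1}(x_0)$, whereas your kernel-difference term is $(\mathcal{L}_\Omega-\mathcal{L}_{\R^d_+})d_\Omega^{2s-1}(x_0)$. The two differ by $\int g\,(\mathcal{K}_1-\mathcal{K}_2)$, which is supported on the thin symmetric difference of paraboloids \emph{and} carries the extra smallness of $g$; this cross term is indeed lower order, but you have to say so and estimate it, or else rerun the proof of \autoref{Lem:ConvergenceToParabola} with $d_{\rho^{-1}\Omega}^{2s-1}$ in place of $\tilde y_d^{2s-1}$. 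Likewise, ``the slab estimates carry over'' is too quick: for $\sigma<1$ your annulus has radius $\asymp\rho$, so $|y|\lesssim\rho$ and $|d_\Omega(y)-y_d|\lesssim\rho^2$, which drives the optimization; for $\sigma=1$ the intermediate range $r\le|x_0-y|\le L_0$ is macroscopic, $|d_\Omega-l|$ is only $\lesssim|y|^2$ there, and you must combine this with the cap-measure $\sim(\rho t)^{(d-1)/2}$ of $\{|x_0-y|=t\}\cap P(x_0)$ to recover $\lesssim\rho^{2s-2}$. The paper sidesteps both issues by keeping the $\Omega$-kernel on the function-difference (so one never leaves $\Omega$) and keeping the explicit model $l^{2s-1}$ on the kernel-difference (so \autoref{Lem:ConvergenceToParabola} applies verbatim).
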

\begin{proof}
Let $x_0 \in \Omega \cap B_{4/5}$ and set $\rho := d_\Omega (x_0)$.
We will prove \eqref{eq:BoundLD2s-1} assuming that $\rho \leq \rho _0$ for some fixed constant $\rho _0>0$, which we will specify later.
After rotation and shifting we may assume that $x_0=(0, \dots, 0,\rho) \in \R^d$ and $0\in \R^d$ is the nearest boundary point and furthermore $B_{r_0} (0, \dots ,0 , r_0) \subset \Omega$ and $B_{r_0} (0, \dots ,0 , - r_0) \subset \Omega ^c$.

We define $l\colon \R^d \to \R$ by $l(y)=(y_d)_+$ where we use the notation $y=(y',y_d)\in \R^{d-1} \times \R$.
Notice that $l(x_0) =d_\Omega (x_0)$ and $\nabla l (x_0)= \nabla d_\Omega (x_0)$ for $\rho_0$ small enough (choose $\rho _0$ small enough such that $d_\Omega$ is $C^{1,1}$ at $x_0$).
We will write
\begin{equation}\label{eq:LDistWithLinear}
\mathcal{L}_\Omega d_\Omega^{2s-1} (x_0) = \mathcal{L}_\Omega ( d_\Omega^{2s-1} -l^{2s-1}) (x_0) + \mathcal{L}_\Omega l^{2s-1} (x_0) 
\end{equation}
and start by estimating $|\mathcal{L}_\Omega ( d_\Omega ^{2s-1} -l^{2s-1}) (x_0)|$.

Since $|a^{2s-1}-b^{2s-1}|\leq |a-b| (a^{2s-2}+b^{2s-2})$, we conclude
\begin{equation}\label{eq:DMinusLPower2sMinus1Estimate}
|d_\Omega ^{2s-1} (y) -l^{2s-1} (y)|  \leq |d_\Omega (y) -l (y)| \left( d_\Omega ^{2s-2} (y) +l^{2s-2} (y) \right)
\end{equation}
for all $y\in \Omega$.
Using that $l(x_0)=d_\Omega (x_0)$, $\nabla l (x_0) = \nabla d_\Omega (x_0)$ and the fact that $d_\Omega$ is $C^{1,1}$ at $x_0$ we get
\begin{equation}\label{eq:dMinusLTaylor}
|d_\Omega (y) -l (y)|  \leq c |y-x_0|^2
\end{equation}
for all $y\in \Omega$.
Combining \eqref{eq:DMinusLPower2sMinus1Estimate} and \eqref{eq:dMinusLTaylor} yields
\begin{equation*}
|d_\Omega ^{2s-1} (y) -l^{2s-1} (y)|  \leq c |y-x_0|^2 \left( d_\Omega ^{2s-2} (y) +l^{2s-2} (y) \right) \leq c |y-x_0|^2 \rho ^{2s-2}
\end{equation*}
for all $y\in B_{\rho /2} (x_0)$.
Since $|a^{2s-1}-b^{2s-1}|\leq |a-b|^{2s-1}$, we also get
\begin{equation*}
|d_\Omega ^{2s-1} (y) -l^{2s-1} (y)| \leq  |d_\Omega  (y) -l (y)|  ^{2s-1} \leq c |y-x_0|^{2(2s-1)}
\end{equation*}
for all $y\in \Omega \setminus B_{\rho /2} (x_0)$.
All together, we obtain
\begin{align*}
|\mathcal{L}_\Omega ( d_\Omega ^{2s-1} -l^{2s-1}) (x_0)| & \leq \pv \int _\Omega \frac{|d_\Omega ^{2s-1} (y) -l^{2s-1} (y)|}{|y-x_0|^{d+2s}} \intd y \\
& \leq c \int _{B_{\rho /2} (x_0)}  \frac{|y-x_0|^2 \rho ^{2s-2}}{|y-x_0|^{d+2s}} \intd y \\
& \qquad + c \int _{\Omega \setminus B_{\rho /2} (x_0)} \frac{  |y-x_0|^{2(2s-1)}  }{|y-x_0|^{d+2s}} \intd y \\
& \leq c \rho ^{2s-2} \left( \frac{\rho}{2} \right) ^{2-2s} + c (1+ \rho ^{2s-2} ) \\
& \leq c \rho ^{2s-2} .
\end{align*}

Next, we investigate the second summand $|\mathcal{L}_\Omega l^{2s-1} (x_0)|$ of \eqref{eq:LDistWithLinear}.
For $\sigma \in (0,1)$, we use \autoref{Lem:ConvergenceToEllipse} (recall that $a(2s-1,\sigma )=0$) to get
\begin{equation*}
|\mathcal{L}_\Omega l^{2s-1} (x_0)| \leq c \rho ^{-\frac{1}{2} } .
\end{equation*}
For $\sigma =1$, we use \autoref{Lem:ConvergenceToParabola} and get
\begin{equation*}
|\mathcal{L}_\Omega l^{2s-1} (x_0)| \leq c \rho ^{-1+\mu } 
\end{equation*}
for some $\mu >0$.
This finishes the proof of the Lemma.
\end{proof}

\begin{Lemma}\label{Lem:ConvergenceToEllipse}
Fix $\sigma \in (0,1)$ and $p \in \R$.
Let $\Omega \subset \R^d$ be a $C^{1,1}$ domain and $x_0\in \Omega$ be a point such that $x_0=(0, \dots , 0, \rho )$ for some $\rho >0$ such that $0\in \partial \Omega$ is the nearest boundary point of $x_0$, i.e. $d_\Omega (x_0)=\rho$.
Furthermore, assume that $B_{r_0} (0, \dots ,0 , r_0) \subset \Omega$ and $B_{r_0} (0, \dots ,0 , - r_0) \subset \Omega ^c$ for some fixed $r_0>0$. 
Let $l\colon \R^d \to \R$ be given by $l(y)=y_d$.

Then, there exists some $0<\rho_0 =\rho _0 (d,s,r_0, \sigma )$ such that whenever $0< \rho \leq \rho_0$, we have
\begin{equation}
\mathcal{L}_\Omega l^p (x_0)= \rho ^{p -2s} \left( a(p ,\sigma) + R \right)
\end{equation}
where $a(p, \sigma )$ is the constant from \autoref{Prop:CalculationsInHalfspace} and
\begin{equation}
|R| \leq c \rho^{\frac{1}{2}} .
\end{equation}
for some constant $c=c(d,s,r_0, \sigma , p)>0$.
\end{Lemma}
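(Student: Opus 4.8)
The plan is to compare the operator $\mathcal{L}_\Omega$ acting on $l^p$ at $x_0$ with the half-space operator $\mathcal{L}_{\R^d_+}$ acting on the same function, whose value is exactly $a(p,\sigma)\rho^{p-2s}$ by \autoref{Prop:CalculationsInHalfspace}. Writing $R := \rho^{2s-p}\mathcal{L}_\Omega l^p(x_0) - a(p,\sigma)$, I would first rescale: set $\rho_0$ small and work at scale $\rho$, using the scaling property \eqref{eq:ScalingBInHalfSpace} and \autoref{Lem:ScalingOperator}, so that after the substitution $y = \rho z$ the point $x_0$ becomes $e_d$ and the rescaled domain $\Omega_\rho := \rho^{-1}\Omega$ satisfies an interior/exterior ball condition with radius $r_0/\rho \to \infty$; in particular $\Omega_\rho$ converges locally to the half-space $\R^d_+$ as $\rho \to 0$. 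Then
\[
R = \mathcal{L}_{\Omega_\rho} l^p(e_d) - \mathcal{L}_{\R^d_+} l^p(e_d) = \lim_{\varepsilon \to 0}\int (1 - z_d^p)\, l^p\text{-difference kernel},
\]
and the difference splits into two contributions: (i) the difference of the integration domains $\Omega_\rho$ versus $\R^d_+$, i.e. the region between the curved boundary of $\Omega_\rho$ and the hyperplane $\{z_d = 0\}$, which by the ball conditions lies in a $\rho$-neighborhood of the hyperplane and is at distance $\gtrsim 1$ from $e_d$; and (ii) the difference of the kernels $\mathcal{B}_{\Omega_\rho}$ versus $\mathcal{B}_{\R^d_+}$, which again is nonzero only where the balls $B_{\sigma d_{\Omega_\rho}(z)}(z)$ and $B_{\sigma z_d}(z)$ disagree — and since $|d_{\Omega_\rho}(z) - z_d| \lesssim \rho\,|z|^2$ from the $C^{1,1}$ (in fact $C^2$ via the ball condition) regularity, this disagreement region has measure $\lesssim \rho$ (hence the exponent $\tfrac12$ is not sharp, but the square-root loss presumably comes from balancing a singular piece near $e_d$, see below).

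Concretely I would estimate each piece. For (i), away from $e_d$ the integrand $(1-z_d^p)|z - e_d|^{-d-2s}\mathcal{B}$ is bounded, the symmetric-difference region $\Omega_\rho \triangle \R^d_+$ (intersected with the support of $\mathcal{B}(e_d,\cdot)$, which for $\sigma<1$ is the bounded ellipsoid $E(e_d)$ — this is where $\sigma<1$ is crucial, keeping everything bounded) has volume $O(\rho)$, so this contributes $O(\rho)$. For (ii), I would control the symmetric difference of the two balls in the definition of $\mathcal{B}$: using $d_{\Omega_\rho}(z) = z_d + O(\rho|z|^2)$, the indicator functions differ only in a set of thickness $O(\rho|z|^2)$ around the two sphere boundaries; integrating $(1-z_d^p)|z-e_d|^{-d-2s}$ over this thin shell, the only issue is near $z = e_d$ where $|z - e_d|^{-d-2s}$ is singular and $|1 - z_d^p| \sim |z - e_d|$, giving an integrable-but-delicate contribution. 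Truncating the shell at distance $\delta$ from $e_d$ splits the bound into $O(\rho \delta^{-\text{something}}) + O(\delta^{\text{something}})$; optimizing in $\delta$ yields $O(\rho^{1/2})$. One must also check that the principal value near $e_d$ is genuinely harmless: near $e_d$ both $\mathcal{B}_{\Omega_\rho}$ and $\mathcal{B}_{\R^d_+}$ equal $1$ (by \eqref{eq:BLocally1AroundPoint} applied in $\Omega_\rho$, valid since $d_{\Omega_\rho}(e_d) = 1$), so the $\mathrm{p.v.}$ singularities cancel exactly in the difference and the $\varepsilon \to 0$ limit passes through without a correction term — this mirrors \autoref{Rem:GeneralizeComputationsInHalfspace}.

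The main obstacle I anticipate is bookkeeping the kernel-difference piece (ii) uniformly in $z$ out to $|z| \to \infty$: since $\sigma < 1$ the support $E(e_d)$ of $\mathcal{B}(e_d,\cdot)$ is a fixed bounded ellipsoid, so the tail is not actually a problem here — but one still needs that the symmetric difference of the two "second-ball" regions $\{z : e_d \in B_{\sigma d_{\Omega_\rho}(z)}(z)\}$ and $\{z : e_d \in B_{\sigma z_d}(z)\}$ is a thin shell of the ellipsoid boundary, which requires translating the quantitative estimate $|d_{\Omega_\rho}(z) - z_d| \lesssim \rho |z|^2$ into a bound on how much the ellipsoid $E(x_0)$ (depending on $d_{\Omega_\rho}$) moves. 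The $C^{1,1}$ hypothesis plus the two-sided ball condition give $C^{1,1}$ regularity of $d_\Omega$ near $0$, hence the quadratic Taylor bound, and from there the shell estimate is geometric. Assembling these, $|R| \le c\rho^{1/2}$ follows, with $c$ depending only on $d, s, r_0, \sigma, p$.
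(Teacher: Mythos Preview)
Your overall strategy matches the paper's: rescale so that $x_0\mapsto e_d$ and $\Omega\mapsto\Omega_\rho:=\rho^{-1}\Omega$, then write $R$ as an integral over the symmetric difference of the supports of $\mathcal{B}_{\Omega_\rho}(e_d,\cdot)$ and $\mathcal{B}_{\R^d_+}(e_d,\cdot)$, and bound that integral. However, your decomposition contains two misconceptions that would muddy the write-up.

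First, piece (i) is vacuous. Since $\sigma<1$, the support of $\mathcal{B}_{\R^d_+}(e_d,\cdot)$ is the compact set $B_\sigma(e_d)\cup E(e_d)\subset\{y_d>(1+\sigma)^{-1}\}$, and for $\rho$ small the same is true of $\mathcal{B}_{\Omega_\rho}(e_d,\cdot)$; neither kernel sees the region $\Omega_\rho\triangle\R^d_+$ at all, so the difference of integration domains contributes nothing.

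Second, and more importantly, there is \emph{no singularity near $e_d$} in the error $R$, so your $\delta$-optimization is based on a wrong premise. Observe that the first indicator contributes nothing to the difference: $d_{\Omega_\rho}(e_d)=1$ exactly, so $B_{\sigma d_{\Omega_\rho}(e_d)}(e_d)=B_\sigma(e_d)$. The entire discrepancy comes from the second indicator, i.e.\ from the symmetric difference of $E_{\R^d_+}=\{y:|e_d-y|<\sigma y_d\}$ and $E_{\rho^{-1}\Omega}=\{y:|e_d-y|<\sigma d_{\Omega_\rho}(y)\}$. But $\partial E(e_d)$ is at distance at least $\sigma/(1+\sigma)>0$ from $e_d$, so on this symmetric difference the integrand $(1-y_d^p)|e_d-y|^{-d-2s}$ is uniformly bounded. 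The paper's exponent $\tfrac12$ does not arise from balancing a singularity; it comes from a cruder geometric choice: the paper measures the shell width in the $y'$-direction, and near the tips $y_d=(1\pm\sigma)^{-1}$ of the ellipsoid (where $\sigma^2y_d^2-(y_d-1)^2\to 0$) an $O(\rho)$ perturbation of the defining quadratic translates only to an $O(\rho^{1/2})$ change in $|y'|$. Your own estimate $|d_{\Omega_\rho}(y)-y_d|=O(\rho)$ on the bounded ellipsoid, combined with the fact that $\nabla\bigl(|e_d-y|-\sigma y_d\bigr)$ is nonvanishing on $\partial E(e_d)$, would in fact give a shell of volume $O(\rho)$ and hence the sharper bound $|R|\le c\rho$ --- so your suspicion that $\tfrac12$ is not sharp is correct, but the mechanism you propose for the loss is not the one operating in the paper.
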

\begin{proof}
By a change of variables, we have
\begin{align*}
\mathcal{L}_\Omega l^{p} (x_0) & = \pv \int _\Omega (l^{p}(x_0)-l^{p}(y)) \mathcal{K}_\Omega (x,y) \intd y \\
& = \rho ^{p-2s} \left( \frac{1}{2}\pv \int _{B_{\sigma } (e_d )} \frac{1-y_d^{p}}{|e_d-y|^{d+2s}} \intd y  + 
 \frac{1}{2} \pv \int _{E_{\rho ^{-1}\Omega}}  \frac{1-y_d^{p}}{|e_d-y|^{d+2s}} \intd y  \right)
\end{align*}
where $e_d=(0, \dots ,0, 1) \in \R^d$ and
\begin{equation*}
E_{\rho ^{-1}\Omega} = \left\{ y\in \R^d \mid |e_d -y | < \sigma d_{\rho ^{-1} \Omega} (y) \right\} .
\end{equation*}
using the notation $\rho ^{-1} \Omega = \{ \rho^{-1} y \colon y \in \Omega \}$.
Recall from \autoref{Prop:CalculationsInHalfspace}, that \smash{$\mathcal{L}_{\R^d_+  } l^{p} (e_d) =a(p,\sigma )$}.
We have
\begin{align*}
 \frac{1}{2}\pv & \int _{B_{\sigma } (e_d )} \frac{1-y_d^{p}}{|e_d-y|^{d+2s}} \intd y  +  
 \frac{1}{2} \pv \int _{E_{\rho ^{-1}\Omega}}  \frac{1-y_d^{p}}{|e_d-y|^{d+2s}} \intd y \\
 & = \mathcal{L}_{\R^d_+} l^{p} (e_d) 
  +\frac{1}{2} \pv \int _{E_{\rho ^{-1}\Omega} \setminus E_{\R^d_+}}  \frac{1-y_d^{p}}{|e_d-y|^{d+2s}} \intd y
   - \frac{1}{2} \pv \int _{  E_{\R^d_+} \setminus E_{\rho ^{-1}\Omega}}  \frac{1-y_d^{p}}{|e_d-y|^{d+2s}} \intd y \\
   & = a(p,\sigma ) +R
\end{align*}
where
\begin{equation*}
E_{\R^d _+} = \left\{ y\in \R^d \mid |y-e_d| < \sigma y_d  \right\} 
\end{equation*}
is the Ellipsoid from \eqref{eq:EllipsoidEquation} with $x=e_d$ (compare with \autoref{fig:SupportKernelSigma23}).
It remains to estimate the term $R$.

\textbf{Claim:} There exists some $c_1=c_1(d,s,\sigma , r_0)>0$, such that if we set $f(\rho ):=c_1\rho^{\frac{1}{2}}$, the symmetric difference $E_{\R^d_+} \Delta E_{\rho ^{-1}\Omega}$ will be contained in the domain we obtain by thickening up the boundary of the Ellipsoid \smash{$\partial E_{\R^d _+}$} in the following sense
\begin{equation}\label{eq:InlcusionSymmetricDiffernce}
 E_{\R^d_+} \Delta E_{\rho ^{-1}\Omega}  \subset \left\{ y+ z  \mid y\in \partial E_{\R^d _+} \quad \text{and} \quad z=(z',0) \text{ with } z'\in \R^{d-1} , |z'| \leq f( \rho) \right\} .
\end{equation}

Assume we have already shown the claim \eqref{eq:InlcusionSymmetricDiffernce}.
Then \smash{$\left| E_{\R^d_+} \Delta E_{\rho ^{-1}\Omega} \right|\leq c \rho ^{\frac{1}{2}}$}. Hence, we can estimate $|R|$ by 
\begin{align*}
|R|     \leq \left| E_{\R^d_+} \Delta E_{\rho ^{-1}\Omega} \right| \left\|  \frac{1-y_d^{p}}{|e_d-y|^{d+2s}} \right\| _{L^\infty ( E_{\R^d_+} \Delta E_{\rho ^{-1}\Omega} )}  \leq c \rho ^{\frac{1}{2}}
\end{align*}
for some constant $c=c(d,s,\sigma ,r_0, p)>0$.

It remains to prove the claim \eqref{eq:InlcusionSymmetricDiffernce}.
Note that we have 
\begin{equation*}
y=(y',y_d) \in E_{\R^d_+} \quad \Leftrightarrow \quad  y_d \in ((1+\sigma) ^{-1},(1-\sigma )^{-1} ) \quad \text{and}\quad |y'| < \sqrt{\sigma ^2 y_d^2 - (y_d -1)^2}   .
\end{equation*}
Using this characterization of $E_{\R^d_+}$, we will prove the claim \eqref{eq:InlcusionSymmetricDiffernce} by showing the two implications \eqref{eq:OneInclusionStripAroundEllipse} and \eqref{eq:SecondInclusionStripAroundEllipse}. 
First, we will show that
\begin{equation}\label{eq:OneInclusionStripAroundEllipse}
y_d \in ((1+\sigma) ^{-1},(1-\sigma )^{-1} )  \text{ and } |y'| < \sqrt{\sigma ^2 y_d^2 - (y_d -1)^2 } -f(\rho )  \quad \Rightarrow \quad (y',y_d) \in E_{\rho ^{-1}\Omega } .
\end{equation}
Recall, that $B_{r_0} (0, \dots ,0 , r_0) \subset \Omega$, i.e. $B_{r_0\rho ^{-1}} (0, \dots , 0, r_0 \rho ^{-1}) \subset \rho ^{-1} \Omega$.
Hence, we can estimate the distance to boundary by $d_\Omega (y',y_d) \geq r_0\rho^{-1} - \sqrt{(y_d-r_0\rho^{-1} )^2 + |y'|^2} $ for points $(y',y_d)\in B_{r_0\rho ^{-1}} (0, \dots , 0, r_0 \rho ^{-1})$.
By choosing $\rho _0$ small, we see that elements $(y',y_d)$ satisfying \eqref{eq:AssumptionForFirstInclusionEllipse} have the property $(y',y_d)\in B_{r_0\rho ^{-1}} (0, \dots , 0, r_0 \rho ^{-1})$.
So to get \eqref{eq:OneInclusionStripAroundEllipse}, it is enough to prove, that if 
\begin{equation}\label{eq:AssumptionForFirstInclusionEllipse}
y_d \in ((1+\sigma) ^{-1},(1-\sigma )^{-1} )  \quad \text{and}\quad  |y'| < \sqrt{\sigma ^2 y_d^2 - (y_d -1)^2 } -f(\rho ),
\end{equation}
then we have
\begin{equation}\label{eq:FirstInclusionEllipseWithDistanceToUpperBall}
 \sqrt{(y_d-1)^2+|y'|^2} <\sigma \left( r_0\rho^{-1} - \sqrt{(y_d-r_0\rho^{-1} )^2 + |y'|^2} \right) .
\end{equation}
By squaring \eqref{eq:FirstInclusionEllipseWithDistanceToUpperBall}, we see that \eqref{eq:FirstInclusionEllipseWithDistanceToUpperBall} is equivalent to
\begin{gather*}
\begin{aligned}
(y_d-1)^2+|y'|^2 & \leq \sigma ^2 \left( \frac{r_0^2}{\rho^2}+ \left( y_d-\frac{r_0}{\rho} \right) ^2 +|y'|^2 -2\frac{r_0}{\rho} \sqrt{\left( y_d- \frac{r_0}{\rho} \right)^2 +|y'|^2}  \right) \\
& =  \sigma ^2 y_d ^2 + \sigma ^2 \rho \left(  \frac{2 r_0^2}{\rho ^3} - \frac{2y_d r_0}{\rho ^2} + \frac{|y'|^2}{\rho} -\frac{2r_0}{\rho ^2} \sqrt{\left( y_d-\frac{r_0}{\rho} \right) ^2 + |y'|^2 }\right) \\
& = \sigma ^2 y_d ^2 + \sigma ^2 \rho \frac{\left( \frac{2 r_0^2}{\rho ^3} - \frac{2y_d r_0}{\rho ^2} + \frac{|y'|^2}{\rho} \right) ^2 -\frac{4r_0^2}{\rho ^4} \left( \left( y_d-\frac{r_0}{\rho} \right) ^2 + |y'|^2 \right)   }{\frac{2 r_0^2}{\rho ^3} - \frac{2y_d r_0}{\rho ^2} + \frac{|y'|^2}{\rho} +\frac{2r_0}{\rho ^2} \sqrt{\left( y_d-\frac{r_0}{\rho} \right) ^2 + |y'|^2 }} \\
& = \sigma ^2 y_d ^2 +\sigma ^2 \rho \left( \frac{-4y_dr_0|y'|^2 + \rho |y'|^4 }{2r_0^2-2\rho y_d r_0 + \rho ^2|y'|^2 + 2r_0\sqrt{r_0^2-2\rho r_0y_d + \rho ^2 y_d^2+ \rho |y'|^2}} \right) \\
& = :\sigma ^2 y_d ^2 +\sigma ^2 \rho A .
\end{aligned}
\end{gather*}
Notice, that if $(y,y_d)$ satisfies \eqref{eq:AssumptionForFirstInclusionEllipse}, both $|y'|$ and $|y_d|$ are bounded and hence, we can choose $c_1$ large enough and $\rho _0$ small enough such that $\sigma ^2 \rho |A| \leq c_1^2 \rho =(f(\rho ))^2$.
So let us assume that $(y,y_d)$ satisfies \eqref{eq:AssumptionForFirstInclusionEllipse}. Then, by using this assumption in the first two steps, we obtain
\begin{align*}
|y'|^2+(y_d-1)^2 & < \sigma ^2y_d^2 - 2 f(\rho )\sqrt{\sigma ^2y_d^2 - (y_d-1)^2} +(f(\rho ))^2 \\
& \leq \sigma ^2y_d^2 - 2 f(\rho )\left( |y'| + f(\rho ) \right) +(f(\rho ))^2 \\
& = \sigma ^2y_d^2 - 2 f(\rho ) |y'|  -(f(\rho ))^2 \\
& \leq  \sigma ^2y_d^2   -(f(\rho ))^2  \\
& \leq \sigma ^2 y_d ^2 +\sigma ^2 \rho A 
\end{align*}
which proves \eqref{eq:FirstInclusionEllipseWithDistanceToUpperBall} and therefore also the implication \eqref{eq:OneInclusionStripAroundEllipse}

In the second step, we will show 
\begin{equation}\label{eq:SecondInclusionStripAroundEllipse}
 (y',y_d) \in E_{\rho ^{-1}\Omega } \quad \Rightarrow \quad   y_d \in ((1+\sigma) ^{-1},(1-\sigma )^{-1} )  \text{ and } |y'| < \sqrt{\sigma ^2 y_d^2 - (y_d -1)^2 } + f(\rho )  .
\end{equation}
First notice that if $y_d \notin ((1+\sigma) ^{-1},(1-\sigma )^{-1} )$ then, we have $(y',y_d) \notin E_{\rho ^{-1}\Omega }$.
Indeed, notice, that $y_d \notin ((1+\sigma) ^{-1},(1-\sigma )^{-1} )$ implies $\sigma ^2 y_d^2 - (y_d -1)^2 \leq 0$ which gives us
\begin{equation*}
 \left( \sigma d_{\rho ^{-1}\Omega}(y) \right) ^2 \leq \sigma^2 \left( |y'|^2 +y_d^2 \right) \leq \sigma^2|y'|^2 + (y_d -1)^2 \leq  |e_d -y| ^2  ,
\end{equation*}
i.e. $(y',y_d) \notin E_{\rho ^{-1}\Omega }$.

Furthermore, if $|y|>(1-\sigma )^{-1}$, then we have $y\notin E_{\rho ^{-1}\Omega }$ since
\begin{equation*}
\sigma d_{\rho ^{-1}\Omega}(y)\leq \sigma |y| < |y| -1  \leq |e_d-y| .
\end{equation*}
So to show \eqref{eq:SecondInclusionStripAroundEllipse}, it is enough to prove, that if
\begin{equation}\label{eq:ConditionSecondInclusion}
|y|\leq\frac{1}{1-\sigma}, \text{ and }y_d \in ((1+\sigma) ^{-1},(1-\sigma )^{-1} ), \text{ and }|y'| \geq \sqrt{\sigma ^2 y_d^2 - (y_d -1)^2 } + f(\rho )
\end{equation}
then $y\notin E_{\rho ^{-1}\Omega }$.

Recall that $B_{r_0} (0, \dots ,0 , -r_0) \subset \Omega ^c$, i.e. $B_{r_0\rho ^{-1}} (0, \dots , 0, -r_0 \rho ^{-1}) \subset  (\rho ^{-1} \Omega )^c$.
Hence, we can estimate the distance to boundary by $d_\Omega (y',y_d) \leq  \sqrt{(y_d+r_0\rho^{-1} )^2 + |y'|^2} - r_0\rho^{-1} $.
So it is enough to prove that if \eqref{eq:ConditionSecondInclusion} is true, then we have
\begin{equation}\label{eq:SecondInclusionEllipseWithDistanceToLowerBall}
 \sqrt{(y_d-1)^2+|y'|^2} > \sigma \left( \sqrt{(y_d+r_0\rho^{-1} )^2 + |y'|^2} -r_0\rho^{-1}  \right) .
\end{equation}
Notice that, as above, this is equivalent to
\begin{align*}
(y_d-1)^2+|y'|^2 & >  \sigma ^2 y_d ^2 +\sigma ^2 \rho \left( \frac{4y_dr_0|y'|^2 + \rho |y'|^4 }{2r_0^2+2\rho y_d r_0 + \rho ^2|y'|^2 + 2r_0\sqrt{r_0^2+2\rho r_0y_d + \rho ^2 y_d^2+ \rho |y'|^2}} \right) \\
& = \sigma ^2 y_d ^2 +\sigma ^2 \rho B
\end{align*}
where again since $|y|\leq\frac{1}{1-\sigma}$ we can choose $c_1$ large enough and $\rho _0$ small enough so that $\sigma ^2 \rho |B| \leq c_1^2\rho =(f(\rho ))^2$.
So assuming \eqref{eq:ConditionSecondInclusion}, we get
\begin{align*}
|y'|^2+(y_d-1)^2 & > \sigma ^2y_d^2 + 2 f(\rho )\sqrt{\sigma ^2y_d^2 - (y_d-1)^2} +(f(\rho ))^2 \\
& \geq \sigma ^2y_d^2 + (f(\rho ))^2 \\
& \geq \sigma ^2 y_d ^2 +\sigma ^2 \rho B .
\end{align*}
This finishes the proof of \eqref{eq:SecondInclusionStripAroundEllipse}.
Note that \eqref{eq:OneInclusionStripAroundEllipse} and \eqref{eq:SecondInclusionStripAroundEllipse} together prove \eqref{eq:InlcusionSymmetricDiffernce} which finishes the proof of this lemma.
\end{proof}

\begin{Lemma}\label{Lem:ConvergenceToParabola}
Fix $\sigma =1$ and $p \in (0,2s )$.
Let $\Omega \subset \R^d$ be a $C^{1,1}$ domain and $x_0\in \Omega$ be a point such that $x_0=(0, \dots , 0, \rho )$ for some small $\rho >0$ and $0\in \partial \Omega$ as the nearest boundary point of $x_0$, i.e. $d_\Omega (x_0)=\rho$.
Furthermore, assume that $B_{r_0} (0, \dots ,0 , r_0) \subset \Omega$ and $B_{r_0} (0, \dots ,0 , - r_0) \subset \Omega ^c$ for some fixed $r_0>0$. 
Let $l\colon \R^d \to \R$ be given by $l(y)=y_d$.

Then there exists some $0<\rho_0 =\rho _0 (d,s,r_0 )$ such that whenever $0< \rho \leq \rho_0$, we have
\begin{equation}
\mathcal{L}_\Omega l^p (x_0)= \rho ^{p -2s} \left( a(p ,1) + R \right)
\end{equation}
where $a(p, \sigma )$ is the constant from Proposition \ref{Prop:CalculationsInHalfspace} and
\begin{equation}
|R| \leq c \rho^{\mu } .
\end{equation}
for some constant $c=c(d,s ,r_0 ,p)>0$ and $\mu = \mu (d,s,p)\in (0,1)$.
\end{Lemma}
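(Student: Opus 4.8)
The plan is to follow the proof of \autoref{Lem:ConvergenceToEllipse}, with the bounded ellipsoid replaced by the \emph{unbounded} paraboloidal region
\begin{equation*}
P_D := \left\{ z\in \R^d \mid |e_d - z| < d_D(z) \right\}
\end{equation*}
associated with a domain $D$; for $D=\R^d_+$ this is the set described by \eqref{eq:ParaboloidEquation} with $x=e_d$ (see \autoref{fig:SupportKernelSigma1}). First I would, by the change of variables $y=\rho z$ (cf. \autoref{Lem:ScalingOperator}), reduce to the fixed point $e_d$: writing $\rho^{-1}\Omega = \{\rho^{-1}y : y\in\Omega\}$,
\begin{equation*}
\mathcal{L}_\Omega l^p(x_0) = \rho^{p-2s}\left( \frac12\,\pv\!\int_{B_1(e_d)} \frac{1-z_d^p}{|e_d-z|^{d+2s}}\intd z + \frac12\,\pv\!\int_{P_{\rho^{-1}\Omega}} \frac{1-z_d^p}{|e_d-z|^{d+2s}}\intd z \right) ,
\end{equation*}
exactly as in \autoref{Lem:ConvergenceToEllipse} with $\sigma=1$, using $d_{\rho^{-1}\Omega}(e_d) = \rho^{-1} d_\Omega(x_0) = 1 = d_{\R^d_+}(e_d)$ to see that the ball $B_1(e_d)$ is the \emph{same} as in the half-space. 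Since $e_d$ is an interior point of both $P_{\rho^{-1}\Omega}$ and $P_{\R^d_+}$, near $e_d$ the two integrands coincide, the singularity cancels in the difference, and — recalling $\mathcal{L}_{\R^d_+}l^p(e_d)=a(p,1)$ from \autoref{Prop:CalculationsInHalfspace} — the remainder $R$ in the statement (so that $\mathcal{L}_\Omega l^p(x_0)=\rho^{p-2s}(a(p,1)+R)$) is the absolutely convergent integral
\begin{equation*}
R = \frac12\left( \int_{P_{\rho^{-1}\Omega}} - \int_{P_{\R^d_+}} \right) \frac{1-z_d^p}{|e_d-z|^{d+2s}}\intd z , \qquad |R| \le \frac12\int_{P_{\rho^{-1}\Omega}\Delta P_{\R^d_+}} \frac{|1-z_d^p|}{|e_d-z|^{d+2s}}\intd z .
\end{equation*}

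Next I would carry out the geometric analysis. The rescaled interior and exterior balls $B_{r_0/\rho}((0,\dots,0,r_0/\rho))\subset\rho^{-1}\Omega$ and $B_{r_0/\rho}((0,\dots,0,-r_0/\rho))\subset(\rho^{-1}\Omega)^c$ give, just as in \autoref{Lem:ConvergenceToEllipse}, two-sided bounds on $d_{\rho^{-1}\Omega}(z)$ by explicit ball-distance functions; a Taylor expansion of these gives $|d_{\rho^{-1}\Omega}(z)-z_d|\lesssim\rho|z'|^2$ for $z_d\lesssim 1/\rho$ and $|z'|\lesssim 1/\rho$, hence $d_{\rho^{-1}\Omega}(z)=z_d(1+O(\rho z_d))$ on $\partial P_{\R^d_+}$ since $|z'|^2\le 2z_d$ there. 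By the same two-inclusion argument that yields \eqref{eq:OneInclusionStripAroundEllipse} and \eqref{eq:SecondInclusionStripAroundEllipse} I would then deduce: (i) for each height $z_d\le M$ with $M\lesssim 1/\rho$, the slice of $P_{\rho^{-1}\Omega}\Delta P_{\R^d_+}$ at that height lies in a tangential neighbourhood of the sphere $\{|z'|^2=2z_d-1\}$ of thickness $\lesssim\rho z_d^{3/2}$; and (ii) at every height the slices of $P_{\rho^{-1}\Omega}$ and of $P_{\R^d_+}$ lie in a ball of radius $\lesssim z_d^{1/2}$ when $z_d\lesssim 1/\rho$ and $\lesssim\rho^{1/2}z_d$ beyond, while $|e_d-z|\asymp z_d$ on both regions once $z_d\ge 2$.

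The estimate itself splits at the height $M:=\rho^{-1/4}$. On $\{z_d\le M\}$, the annuli from (i) stay away from $z=e_d$, so the integrand there is $\lesssim 1$ for bounded $z_d$ and $\lesssim z_d^{p-d-2s}$ for large $z_d$; multiplying by the thickness $\rho z_d^{3/2}$ and the sphere area $\asymp z_d^{(d-2)/2}$ gives a contribution $\lesssim\rho^{1/2}+\rho\int_2^M z_d^{p-2s-\frac{d-1}{2}}\intd z_d\lesssim\rho^{1/2}$, since $p<2s$ forces $p-2s-\frac{d-3}{2}<1$. On $\{z_d>M\}$, using (ii) and $|e_d-z|\asymp z_d$, the contributions of $P_{\R^d_+}$ and of $P_{\rho^{-1}\Omega}$ are each $\lesssim\int_M^\infty z_d^{p-2s-\frac{d+1}{2}}\intd z_d\lesssim M^{\,p-2s-\frac{d-1}{2}}=\rho^{\frac14(2s-p+\frac{d-1}{2})}$, the integral converging precisely because $p<2s$, and the part $\{z_d>1/\rho\}$ contributing even less. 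Collecting the two bounds yields $|R|\le c\rho^\mu$ with $\mu:=\min\bigl\{\tfrac12,\ \tfrac14(2s-p+\tfrac{d-1}{2})\bigr\}\in(0,1)$, which is the claim. (For $d=1$ there is no tangential variable, $P_D$ is a half-line, and $P_{\rho^{-1}\Omega}\Delta P_{\R^d_+}$ reduces to a far interval $\{z_d\gtrsim 1/\rho\}$, from which the same conclusion follows directly.)

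The main obstacle — absent from \autoref{Lem:ConvergenceToEllipse}, where the ellipsoid is bounded — is the non-compactness of $P_{\R^d_+}$: the symmetric difference has infinite measure, so the crude bound ``measure of the symmetric difference times $L^\infty$-norm of the integrand'' is unavailable. What rescues the argument is, first, that the kernel restricted to the paraboloid is integrable exactly when $p<2s$ (the same condition that makes $a(p,1)$ finite), which lets us discard the tail at heights $>M$ at the cost of a positive power of $\rho$; and second, that the interior and exterior ball conditions still control the cross-sectional width of $P_{\rho^{-1}\Omega}$ at every height up to order $1/\rho$, so that the finite part of the computation reduces to the ellipsoidal case. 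Balancing the cutoff $M$ against $\rho$ produces the exponent $\mu$; as is noted elsewhere in the paper, no attempt is made to optimize it.
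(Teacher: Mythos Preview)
Your argument is correct and follows the same overall strategy as the paper: rescale to the fixed point $e_d$, express $R$ as an integral over the symmetric difference $P_{\rho^{-1}\Omega}\Delta P_{\R^d_+}$, split into a near and a far region, control the near part via a ``strip around $\partial P_{\R^d_+}$'' estimate coming from the interior/exterior balls, and control the far part by the integrability of $|y|^{p}/|y|^{d+2s}$ over the paraboloidal region (which is exactly where $p<2s$ enters).

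The execution differs slightly. The paper cuts at the ball $B_{\rho^{-1/\kappa}}(e_d)$ for some fixed $\kappa>\max\{d,4\}$, observes that on this ball the strip width can be taken \emph{uniformly} equal to $c_1\rho^{1/2}$ (this works precisely because $\kappa>4$ forces the true width $\rho z_d^{3/2}\le\rho^{1-3/(2\kappa)}\le\rho^{1/2}$), and then bounds the near contribution crudely by $|S|\cdot\|\,\text{integrand}\,\|_{L^\infty(S)}$, where $|S|\lesssim\rho^{\frac12(1-d/\kappa)}$ is computed directly. You instead slice by height with cutoff $M=\rho^{-1/4}$, keep the sharper height-dependent width $\rho z_d^{3/2}$, and integrate slice-by-slice against the decaying kernel. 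Your route is a bit more hands-on but also more transparent about where the powers come from; the paper's is coarser but shorter. Both yield a positive (non-optimized) exponent $\mu$, and neither needs the other's refinements. Two minor cosmetic points: the intermediate claim $|d_{\rho^{-1}\Omega}(z)-z_d|\lesssim\rho|z'|^2$ actually gives $d_{\rho^{-1}\Omega}(z)=z_d(1+O(\rho))$ on $\partial P_{\R^d_+}$ rather than $z_d(1+O(\rho z_d))$, and the justification ``$p-2s-\tfrac{d-3}{2}<1$'' is a little loose (what is really used is that $\rho\,M^{p-2s-(d-3)/2}$ is a positive power of $\rho$), but neither affects the conclusion.
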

\begin{proof}
As in the proof of \autoref{Lem:ConvergenceToEllipse}, we have
\begin{align*}
\mathcal{L}_\Omega l^{p} (x_0) & = \rho ^{p-2s} \Bigg( a(p,1) +  \frac{1}{2} \pv \int _{P_{\rho ^{-1}\Omega} \setminus P_{\R^d_+}}  \frac{1-y_d^{p}}{|e_d-y|^{d+2s}} \intd y \\
& \qquad \qquad \qquad \qquad  - \frac{1}{2} \pv \int _{  P_{\R^d_+} \setminus P_{\rho ^{-1}\Omega}}  \frac{1-y_d^{p}}{|e_d-y|^{d+2s}} \intd y \Bigg) \\
   & = \rho ^{p-2s} \left( a(p,1) + R \right)
\end{align*}
where
\begin{equation*}
P_{\rho ^{-1}\Omega} = \left\{ y\in \R^d \mid |e_d -y | <  d_{\rho ^{-1} \Omega} (y) \right\} .
\end{equation*}
and
\begin{equation*}
P_{\R^d _+} = \left\{ y\in \R^d \mid |y-e_d| <  y_d  \right\} 
\end{equation*}
is the domain above the paraboloid from \eqref{eq:ParaboloidEquation} with $x=e_d$ (compare with \autoref{fig:SupportKernelSigma1}).
It remains to estimate the error term $R$.
Note that we have 
\begin{equation*}
y=(y',y_d) \in P_{\R^d_+} \quad \Leftrightarrow \quad y_d\geq \frac{1}{2} \text{ and } |y'| <  \sqrt{2y_d-1 } .
\end{equation*}
We want to continue as in \autoref{Lem:ConvergenceToEllipse} and show that the symmetric difference $P_{\R^d_+} \Delta P_{\rho ^{-1}\Omega}$ is contained in some strip around the paraboloid $\partial P_{\R^d_+}$ which is given by $|y'|= \sqrt{2y_d-1}$.
However, notice, that in the proof of \autoref{Lem:ConvergenceToEllipse} for the estimates $\sigma ^2\rho |A| \leq c_1 \rho$ and $\sigma ^2\rho |B| \leq c_1 \rho$, we have used that $|y|$ is bounded.
We can replace this condition by the assumption that $\rho |y|^4$ converges to $0$ uniformly in $y$, which holds if we assume $y\in B_{\rho ^{-1 / \kappa}}$ for some $\kappa >4$.

Now, fix some $\kappa > \max \{ d, 4 \} $.
Following the same arguments as in the proof of \autoref{Lem:ConvergenceToEllipse}, there exists some $c_1>0$, so that for $f(\rho):=c_1\rho ^{1/2}$, we have
\begin{equation}\label{eq:ParabolaStrip}
\begin{aligned}
B& _{\rho ^{-\frac{1}{\kappa}}} (e_d)  \cap  \left( P_{\R^d_+} \Delta P_{\rho ^{-1}\Omega } \right) \\
& \subset B_{\rho ^{-\frac{1}{\kappa}}} (e_d)  \cap \left \{ y+z \mid y\in \partial P_{\R^d _+} \quad \text{and} \quad z=(z',0) \text{ with } z'\in \R^{d-1} , |z'| \leq f( \rho)  \right\}
\end{aligned}
\end{equation}
for all $\rho \leq \rho _0$ for some small $\rho_0>0$.

Let us denote the set on the righthand side of \eqref{eq:ParabolaStrip} by $S$.
Then, we can estimate the measure of $S$ by
\begin{align*}
|S| & \leq c \int  _\frac{1}{2} ^{\rho ^{-\frac{1}{\kappa}}}   \left( \left( \sqrt{2z-1}+c_1 \sqrt{\rho}  \right) ^{d-1} -\left( \sqrt{2z-1}-c_1 \sqrt{\rho} \right) ^{d-1} \right) \intd z \\
& \leq c \rho ^{-\frac{1}{\kappa}} \left\| \left( \sqrt{2z-1}+c_1 \sqrt{\rho} \right) ^{d-1} -\left( \sqrt{2z-1}-c_1 \sqrt{\rho} \right) ^{d-1} \right\| _{L^\infty (1/2, \rho ^{-\frac{1}{\kappa}} )} \\
& = c \rho ^{-\frac{1}{\kappa}}  \left( \left( \sqrt{2\rho ^{-\frac{1}{\kappa}}-1}+c_1 \sqrt{\rho} \right) ^{d-1} -\left( \sqrt{2\rho ^{-\frac{1}{\kappa}}-1}-c_1 \sqrt{\rho} \right) ^{d-1} \right) \\
& \leq c \rho ^{-\frac{1}{\kappa}}  \left( \sqrt{\rho} \left(  \sqrt{\rho ^{-\frac{1}{\kappa}}} \right) ^{d-2} + \mathcal{O} \left( \rho^\frac{3}{2} \left(  \sqrt{\rho ^{-\frac{1}{\kappa}}} \right) ^{d-4} \right) \right) \\
& \leq c \rho ^{-\frac{1}{\kappa} +\frac{1}{2}-\frac{1}{2\kappa}(d-2)} \\
& = c \rho ^{\frac{1}{2}\left( 1- \frac{d}{\kappa } \right)}.
\end{align*}
Hence, we see that $|S|$ converges to $0$ since $\kappa > d$.
All together, we can estimate $|R|$ by
\begin{align*}
|R| & \leq \int _{P_{\R^d_+} \Delta P_{\rho ^{-1}\Omega }}   \frac{|1-y_d^{p}|}{|e_d-y|^{d+2s}} \intd y \\
& \leq \int _S  \frac{|1-y_d^{p}|}{|e_d-y|^{d+2s}} \intd y + \int _{\R ^d \setminus B_{\rho ^{-\frac{1}{\kappa}}} (e_d) }    \frac{|1-y_d^{p}|}{|e_d-y|^{d+2s}} \intd y \\
& \leq c |S| + c  \int _{\R ^d \setminus B_{\rho ^{-\frac{1}{\kappa}}}  }    \frac{|y|^{p}}{|y|^{d+2s}} \intd y \\
& \leq c \rho ^{\frac{1}{2}\left( 1- \frac{d}{\kappa } \right)} +c \rho^{\frac{1}{\kappa}(2s-p)} .
\end{align*}
Hence, we finish the proof by choosing $\mu = \min \left\{ \frac{1}{2}\left( 1- \frac{d}{\kappa } \right) ,\frac{1}{\kappa}(2s-p) \right\}  \in (0,1)$.
\end{proof}

\begin{Lemma}\label{Lem:BoundLd2sMinus1PlusEpsilon}
Fix $s\in (1/2,1)$ and $\sigma \in (0,1]$ and $\epsilon \in (0, \min \{ 2-2s , 2s-1 \} )$. Let $\Omega \subset \R^d$ be a $C^{1,1}$ domain satisfying the exterior and interior ball condition with radius $r_0$ form \autoref{Prop:Barrier}.
Then there exists some $0<\rho _0 = \rho _0 (r_0,d ,s , \sigma ,\epsilon )$ such that for all $x_0\in \Omega \cap B_{4/5}$ with $d_\Omega (x_0) \leq \rho _0$, we have
\begin{equation*}
\mathcal{L}_{\Omega , \sigma} d _\Omega^{2s-1+\epsilon }  (x_0) \leq  - c d_\Omega ^{-1+\epsilon } (x_0)
\end{equation*}
for some constant $c=c(r_0,d ,s, \sigma , \epsilon )>0$.
\end{Lemma}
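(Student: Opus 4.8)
The plan is to repeat the proof of \autoref{Lem:BoundLd2sMinus1} with the exponent $2s-1$ replaced by $p:=2s-1+\epsilon$; the only substantive difference is that the constant supplied by \autoref{Prop:CalculationsInHalfspace} now has a definite negative sign instead of vanishing. I first record that the hypothesis $\epsilon\in(0,\min\{2-2s,2s-1\})$, together with $s>1/2$, guarantees $p\in(0,1)$, that $p>2s-1$, and that $p$ lies in the admissible range of \autoref{Prop:CalculationsInHalfspace} for both $\sigma\in(0,1)$ and $\sigma=1$. After a rotation and a translation I may assume $x_0=(0,\dots,0,\rho)$ with $\rho:=d_\Omega(x_0)\le\rho_0$, that $0$ is the nearest boundary point, and that $B_{r_0}(0,\dots,0,r_0)\subset\Omega$ and $B_{r_0}(0,\dots,0,-r_0)\subset\Omega^c$. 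With $l(y):=(y_d)_+$ I decompose
\begin{equation*}
\mathcal{L}_{\Omega,\sigma} d_\Omega^{p}(x_0)=\mathcal{L}_{\Omega,\sigma}\bigl(d_\Omega^{p}-l^{p}\bigr)(x_0)+\mathcal{L}_{\Omega,\sigma} l^{p}(x_0).
\end{equation*}

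For the leading term I would apply \autoref{Lem:ConvergenceToEllipse} when $\sigma\in(0,1)$ and \autoref{Lem:ConvergenceToParabola} when $\sigma=1$, obtaining
\begin{equation*}
\mathcal{L}_{\Omega,\sigma} l^{p}(x_0)=\rho^{p-2s}\bigl(a(p,\sigma)+R\bigr)=\rho^{-1+\epsilon}\bigl(a(p,\sigma)+R\bigr),
\end{equation*}
where $|R|\le c\rho^{1/2}$ (resp. $|R|\le c\rho^{\mu}$ for some $\mu\in(0,1)$). Since $p>2s-1$, \eqref{eq:SignAPSigma} gives $a(p,\sigma)<0$; hence, after shrinking $\rho_0$ so that $|R|\le\tfrac12|a(p,\sigma)|$, we get $\mathcal{L}_{\Omega,\sigma} l^{p}(x_0)\le\tfrac12 a(p,\sigma)\,\rho^{-1+\epsilon}<0$.

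For the difference term I would argue exactly as in \autoref{Lem:BoundLd2sMinus1}: using $|a^{p}-b^{p}|\le|a-b|(a^{p-1}+b^{p-1})$ and, since $p\in(0,1)$, $|a^{p}-b^{p}|\le|a-b|^{p}$, combined with the $C^{1,1}$ Taylor bound $|d_\Omega(y)-l(y)|\le c|y-x_0|^2$ and the fact that $d_\Omega(y),l(y)\asymp\rho$ on $B_{\rho/2}(x_0)$, one finds
\begin{align*}
\bigl|\mathcal{L}_{\Omega,\sigma}(d_\Omega^{p}-l^{p})(x_0)\bigr|
&\le c\,\rho^{p-1}\int_{B_{\rho/2}(x_0)}\frac{|y-x_0|^2}{|y-x_0|^{d+2s}}\intd y+c\int_{\Omega\setminus B_{\rho/2}(x_0)}\frac{|y-x_0|^{2p}}{|y-x_0|^{d+2s}}\intd y\\
&\le c\,\rho^{\epsilon}+c\,\rho^{\beta},\qquad \beta:=\min\{0,\,2s-2+2\epsilon\}.
\end{align*}
Because $s>1/2$ and $\epsilon>0$ we have $\epsilon>-1+\epsilon$ and $\beta>-1+\epsilon$, so after one final shrinking of $\rho_0$ this term is $\le\tfrac14|a(p,\sigma)|\,\rho^{-1+\epsilon}$. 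Adding the two bounds yields $\mathcal{L}_{\Omega,\sigma} d_\Omega^{p}(x_0)\le\tfrac14 a(p,\sigma)\,\rho^{-1+\epsilon}=-c\,d_\Omega^{-1+\epsilon}(x_0)$, which is the claim.

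I do not expect any genuine obstacle beyond what is already packaged in \autoref{Lem:ConvergenceToEllipse} and \autoref{Lem:ConvergenceToParabola} (and what is already carried out in \autoref{Lem:BoundLd2sMinus1}); the one point that truly needs attention here is the exponent arithmetic in the last display — verifying that both $\epsilon$ and $2s-2+2\epsilon$ (or $0$) strictly exceed $-1+\epsilon$ so that the error is of strictly lower order than the leading term $-|a(p,\sigma)|\rho^{-1+\epsilon}$. This is precisely where the standing assumptions $s>1/2$ and $\epsilon<\min\{2-2s,2s-1\}$ are used, the bound $\epsilon<2-2s$ also being what makes $p<1$ and hence legitimizes the concavity inequalities above.
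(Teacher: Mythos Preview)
Your proposal is correct and follows essentially the same route as the paper's own proof: the same decomposition into $\mathcal{L}_{\Omega,\sigma}(d_\Omega^{p}-l^{p})(x_0)+\mathcal{L}_{\Omega,\sigma} l^{p}(x_0)$, the same near/far splitting of the first term via the two elementary inequalities for $|a^{p}-b^{p}|$, and the same invocation of \autoref{Lem:ConvergenceToEllipse}/\autoref{Lem:ConvergenceToParabola} together with $a(p,\sigma)<0$ for the second term. Your error bound $c\rho^{\epsilon}+c\rho^{\beta}$ with $\beta=\min\{0,2s-2+2\epsilon\}$ is exactly the paper's $c(1+\rho^{2s-2+2\epsilon})$ rewritten, and the final exponent comparison is the same.
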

\begin{proof}
We will use the same geometric setting as in the proof of \autoref{Lem:BoundLd2sMinus1}.
Let us consider
\begin{equation}
\mathcal{L}_\Omega d_\Omega^{2s-1+\epsilon } (x_0) = \mathcal{L}_\Omega ( d_\Omega^{2s-1 + \epsilon} -l^{2s-1 + \epsilon}) (x_0) + \mathcal{L}_\Omega l^{2s-1 +\epsilon} (x_0) 
\end{equation}
where $l(y) =(y_d)_+$.
As in the proof of \autoref{Lem:BoundLd2sMinus1}, we get
\begin{equation*}
|d_\Omega ^{2s-1+\epsilon} (y) -l^{2s-1+\epsilon} (y)|  \leq c |y-x_0|^2 \left( d_\Omega ^{2s-2+\epsilon} (y) +l^{2s-2+\epsilon} (y) \right) \leq c |y-x_0|^2 \rho ^{2s-2+\epsilon}
\end{equation*}
for all $y\in B_{\rho /2} (x_0)$.
Since $|a^{\gamma}-b^{\gamma}|\leq |a-b|^{\gamma}$ for $\gamma \in (0, 1]$, we also get
\begin{equation*}
|d_\Omega ^{2s-1+\epsilon} (y) -l^{2s-1+\epsilon} (y)| \leq \left( |d_\Omega  (y) -l (y)| \right) ^{2s-1 + \epsilon } \leq c |y-x_0|^{2(2s-1+\epsilon )}
\end{equation*}
for all $y\in \Omega \setminus B_{\rho /2} (x_0)$.
All together, we obtain
\begin{align*}
|\mathcal{L}_\Omega ( d_\Omega ^{2s-1+\epsilon } -l^{2s-1+\epsilon }) (x_0)| & \leq \pv \int _\Omega \frac{|d_\Omega ^{2s-1 +\epsilon } (y) -l^{2s-1+\epsilon } (y)|}{|y-x_0|^{d+2s}} \intd y \\
& \leq c \int _{B_{\rho /2} (x_0)}  \frac{|y-x_0|^2 \rho ^{2s-2+\epsilon}}{|y-x_0|^{d+2s}} \intd y \\
& \qquad + c \int _{\Omega \setminus B_{\rho /2} (x_0)} \frac{  |y-x_0|^{2(2s-1+\epsilon)}  }{|y-x_0|^{d+2s}} \intd y \\
& \leq c \rho ^{2s-2+\epsilon } \left( \frac{\rho}{2} \right) ^{2-2s} + c (1+ \rho ^{2s-2+2\epsilon} ) \\
& \leq c (1+\rho ^{2s-2 +2 \epsilon} )  .
\end{align*}
For $\sigma \in (0,1)$ we can apply \autoref{Lem:ConvergenceToEllipse} and use the fact that $a(2s-1+\epsilon , \sigma )<0$ to get
\begin{equation*}
\mathcal{L}_\Omega l^{2s-1 +\epsilon} (x_0) \leq \rho ^{-1+\epsilon } \left( a(2s-1+\epsilon ,\sigma) + R \right)
\end{equation*}
with $|R| \leq c \rho ^{\frac{1}{2}}$.
All together, for $\sigma \in (0,1)$, we obtain
\begin{equation*}
\mathcal{L}_{\Omega , \sigma} d _\Omega^{2s-1+\epsilon }  (x_0) \leq a(2s-1+\epsilon ,\sigma) \rho^{-1+\epsilon } + c \rho ^{-\frac{1}{2}+\epsilon} + c (1+\rho ^{2s-2 +2 \epsilon} ) \leq -c \rho ^{-1+\epsilon }
\end{equation*}
for $\rho_0 >0$ small enough.
For $\sigma =1$ we can apply \autoref{Lem:ConvergenceToParabola} and use the fact that $a(2s-1+\epsilon , \sigma )<0$ to get
\begin{equation*}
\mathcal{L}_\Omega l^{2s-1 +\epsilon} (x_0) \leq \rho ^{-1+\epsilon } \left( a(2s-1+\epsilon ,\sigma) + R \right)
\end{equation*}
with $|R| \leq c \rho ^{\mu}$.
We obtain
\begin{equation*}
\mathcal{L}_{\Omega , \sigma} d _\Omega^{2s-1+\epsilon }  (x_0) \leq a(2s-1+\epsilon ,\sigma) \rho^{-1+\epsilon } + c \rho ^{-1+\mu +\epsilon} + c (1+\rho ^{2s-2 +2 \epsilon} ) \leq -c \rho ^{-1+\epsilon }
\end{equation*}
for $\rho_0>0$ small enough.
\end{proof}

\section{Hölder regularity up to the boundary} \label{sec:HoelderRegUpToBoundary}
%\subsection{$L^\infty$ bound for weak solutions}
Using the fact that $\mathcal{E}_\Omega  (v,v )$ is comparable to $[v] _{H^s(\Omega )}$ (see \autoref{Cor:EnergiesComparable}), we can apply \cite[Thm. 1]{Fukushima1977} to get an $L^\infty$ bound of $u$.
For completion, we present the proof from \cite{Fukushima1977} here.
\begin{Proposition}\label{Prop:LinftyBound}
Fix $s\in (1/2,1)$ and $\sigma \in (0,1]$. Let $\Omega \subset \R^d$ be a bounded $C^{1,1}$ domain.
Assume that $u\in H^s_0(\Omega )$ is a weak solution to
\begin{gather*}
\begin{aligned}
\mathcal{L}_{\Omega , \sigma} u &= f \quad \text{ in }\Omega \\
u& =0 \quad \text{ on } \partial \Omega 
\end{aligned}
\end{gather*}
where $f\in L^\infty (\Omega )$.
Then $u$ is bounded with
\begin{equation*}
\| u\| _{L^\infty (\Omega )} \leq c \| f\| _{L^\infty (\Omega )} .
\end{equation*}
for some $c=c(d,s,\sigma ,\Omega )>0$.
\end{Proposition}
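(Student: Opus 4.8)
The plan is to run Stampacchia's truncation argument. For $k\geq 0$ set $u_k:=(u-k)_+$ and $A_k:=\{x\in\Omega\colon u(x)>k\}$. The first point is that $u_k$ is an admissible test function: the map $\phi(t)=(t-k)_+$ is $1$-Lipschitz with $\phi(0)=0$, so $|\phi(u(x))-\phi(u(y))|\leq|u(x)-u(y)|$ gives $[u_k]_{H^s(\R^d)}\leq[u]_{H^s(\R^d)}<\infty$, and $u_k$ vanishes wherever $u$ does; by the characterization \eqref{eq:ExtendingHs0Functions} we conclude $u_k\in H^s_0(\Omega)$. The second ingredient is the Markovian property of $\mathcal{E}_{\Omega,\sigma}$: since $\phi$ is non-decreasing and $1$-Lipschitz one has the pointwise inequality $(\phi(a)-\phi(b))^2\leq(a-b)(\phi(a)-\phi(b))$, and because the kernel $\mathcal{K}_{\Omega,\sigma}$ is non-negative this yields $\mathcal{E}_{\Omega,\sigma}(u_k,u_k)\leq\mathcal{E}_{\Omega,\sigma}(u,u_k)$. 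Testing \eqref{eq:WeakSolConcept} with $\varphi=u_k$ and using \autoref{Cor:EnergiesComparable} then gives
\[
[u_k]_{H^s(\Omega)}^2 \leq c\,\mathcal{E}_{\Omega,\sigma}(u_k,u_k) \leq c\,\mathcal{E}_{\Omega,\sigma}(u,u_k) = c\int_\Omega f\,u_k \intd x \leq c\,\|f\|_{L^\infty(\Omega)}\int_{A_k} u_k \intd x .
\]

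Next I would invoke the fractional Sobolev inequality for $H^s_0$ of the bounded domain $\Omega$: $\|u_k\|_{L^q(\Omega)}\leq c\,[u_k]_{H^s(\Omega)}$, where $q=\tfrac{2d}{d-2s}$ if $d>2s$ and $q$ is any fixed exponent in $(2,\infty)$ otherwise, so that in all cases $q>2$. Combined with Hölder's inequality $\int_{A_k}u_k\leq\|u_k\|_{L^q(\Omega)}|A_k|^{1-1/q}$, the previous display becomes $\|u_k\|_{L^q(\Omega)}\leq c\,\|f\|_{L^\infty(\Omega)}\,|A_k|^{1-1/q}$. For $h>k\geq 0$ one has $|A_h|(h-k)^q\leq\int_{A_h}u_k^q\leq\|u_k\|_{L^q(\Omega)}^q$, and therefore
\[
|A_h| \leq \frac{\big(c\,\|f\|_{L^\infty(\Omega)}\big)^q}{(h-k)^q}\,|A_k|^{\,q-1}, \qquad h>k\geq 0 .
\]

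Since $q-1>1$, the third step is the classical Stampacchia lemma applied to the nonincreasing function $k\mapsto|A_k|$: it yields a level $k_0\leq c\,\|f\|_{L^\infty(\Omega)}\,|\Omega|^{(q-2)/q}$ with $|A_{k_0}|=0$, i.e. $u\leq c(\Omega,d,s,\sigma)\,\|f\|_{L^\infty(\Omega)}$ almost everywhere in $\Omega$. Running the same argument with $-u$, which is a weak solution of $\mathcal{L}_{\Omega,\sigma}(-u)=-f$ in $\Omega$ with $-u\in H^s_0(\Omega)$, bounds $-u$ from above in the same way, and the two estimates combine to $\|u\|_{L^\infty(\Omega)}\leq c\,\|f\|_{L^\infty(\Omega)}$.

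The Sobolev embedding and the Stampacchia iteration are routine; the only steps that require genuine (if brief) care are the admissibility $u_k\in H^s_0(\Omega)$, so that $u_k$ may legitimately be used in \eqref{eq:WeakSolConcept}, and the Markovian inequality $\mathcal{E}_{\Omega,\sigma}(u_k,u_k)\leq\mathcal{E}_{\Omega,\sigma}(u,u_k)$, which is exactly where the non-negativity and symmetry of the kernel $\mathcal{K}_{\Omega,\sigma}$ are used. (In the degenerate case $d=1$, where $s>1/2=d/2$, one could instead conclude directly from $H^s_0(\Omega)\hookrightarrow L^\infty(\Omega)$, but the truncation argument with $q$ chosen as above covers all dimensions uniformly.)
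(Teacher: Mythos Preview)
Your proof is correct and follows essentially the same Stampacchia truncation argument as the paper: test with $(u-k)_+$, use the Markovian inequality $\mathcal{E}_{\Omega,\sigma}(u_k,u_k)\leq\mathcal{E}_{\Omega,\sigma}(u,u_k)$, pass to the $H^s$-seminorm via \autoref{Cor:EnergiesComparable}, apply the Sobolev embedding, and iterate. Your write-up is in fact slightly more careful than the paper's, since you explicitly justify $u_k\in H^s_0(\Omega)$ and treat the case $d\leq 2s$ (i.e.\ $d=1$), where the paper's choice $p=\tfrac{2d}{d-2s}$ is not literally valid.
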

\begin{proof}
Set $v=(u-k)_+$ for some $k\geq 0$. Then $v\in H^s_0(\Omega )$.
Testing the equation with $\varphi =v$ and using that $\mathcal{E}_\Omega  (v,v ) \leq\mathcal{E}_\Omega  (u,v ) $, we obtain
\begin{equation*}
\mathcal{E}_\Omega  (v,v ) \leq \int _{\Omega } f v . 
\end{equation*}
Since $\mathcal{E}_\Omega (v,v )$ is comparable to $[v] _{H^s(\Omega )}$ (see Corollary \ref{Cor:EnergiesComparable}), we can use Sobolev's embedding theorem to get
\begin{equation*}
 \| v \| ^2_{L^p (\Omega )} \leq c \mathcal{E}_\Omega  (v,v ) \leq c \int _{\Omega } f v 
\end{equation*}
where $p=\frac{2d}{d-2s}>2$. Applying Hölder's inequality, we obtain
\begin{equation*}
\| v \| ^2 _{L^p (\Omega )} \leq c \int _{\Omega } f v \leq c \| f\| _{L^\infty (\Omega )} \| v \|  _{L^p (\Omega )} A(k) ^{1-\frac{1}{p}}
\end{equation*}
where $A(k)= | \left\{ x\in \Omega \colon u(x)> k \right\} |$.
Hence, for every $h>k\geq 0$, we get
\begin{equation*}
A(h) \leq \frac{1}{(h-k)^p} \int _\Omega |v|^p \leq \frac{1}{(h-k)^p} c^p \| f\| _{L^\infty (\Omega )} ^p A(k) ^{p-1} .
\end{equation*}
By Stampacchia's Lemma (see, e.g., \cite[Lemma 4.1]{Stampacchia1965}), we get that $u$ is bounded from above 
\begin{equation*}
u \leq c \|f\| _{L^\infty (\Omega )} . 
\end{equation*}
Repeating the same proof for $-u$ and $-f$ completes the proof.
\end{proof}
\begin{Proposition}\label{Prop:HoelderRegulUptoBoundaryLocal}
Fix $s\in (1/2,1)$, $\sigma \in (0,1]$, and $\epsilon >0$.
Let $\Omega \subset \R^d$ be a $C^{1,1}$ domain, such that for every $z\in \partial \Omega \cap B_1$ we can find two balls of radius $r_0>0$ in $\Omega$ and $\Omega ^c$ which both touch $\partial \Omega$ in $z$.
Assume that $u$ is a weak solution of
\begin{equation}\label{eq:EquationForLocalizedHoelderReg}
\begin{aligned}
\mathcal{L}_{\Omega ,\sigma} u & =f \qquad \text{in} \quad   B_1 \cap \Omega , \\
u&= 0 \qquad \text{on} \quad \partial  B_1\cap \Omega  .
\end{aligned}
\end{equation}
with $u\in L^\infty _{2s-\epsilon} (\Omega ) $ and $d_\Omega ^{1-\epsilon '} f \in L^\infty (B_1\cap \Omega)$ for some $\epsilon '>0$.
We impose the boundary condition $u=0$ on $B_1\cap \Omega$ by assuming that $u\chi \in H^s(\R^d )$ for every $\chi \in C^\infty _c (B_1)$ where we extend $u$ by $u=0$ on $\Omega ^c$.

Then $u \in C^{2s-1}(B_{1/2} \cap \overline{\Omega })$ with
\begin{equation} \label{eq:LocalHoelderBoundUoToBoundary}
[u] _{C^{2s-1}(B_{1/2} \cap \overline{\Omega })} \leq c \left( \| u \| _{L^\infty _{2s-\epsilon} (\Omega )} + \| d_\Omega ^{1-\epsilon '} f\| _{L^\infty (B_1\cap \Omega)} \right)
\end{equation}
for some constant $c=c(r_0,s,\sigma ,\epsilon , \epsilon ')>0$.
\end{Proposition}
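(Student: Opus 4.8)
\emph{Plan.} The plan is to follow the two-step scheme announced in the introduction: first establish the pointwise bound $|u|\le C_0\,d_\Omega^{2s-1}$ on $B_{4/5}\cap\Omega$, with $C_0=c\big(\|u\|_{L^\infty_{2s-\epsilon}(\Omega)}+\|d_\Omega^{1-\epsilon'}f\|_{L^\infty(B_1\cap\Omega)}\big)$, and then upgrade it to the Hölder seminorm bound \eqref{eq:LocalHoelderBoundUoToBoundary} by interior estimates carried out at the dyadic scale $d_\Omega(x)$.

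\emph{Step 1 (the $d_\Omega^{2s-1}$ bound).} I would use the barrier $\tilde\varphi$ of \autoref{Prop:Barrier} with parameter $\epsilon'$, so that $\mathcal{L}_{\Omega,\sigma}\tilde\varphi\ge d_\Omega^{-1+\epsilon'}$ on $\{0<d_\Omega<\eta\}\cap B_{4/5}$, $d_\Omega^{2s-1}\le\tilde\varphi\le C d_\Omega^{2s-1}$ on $B_1$, and $\tilde\varphi=0$ off $B_2$, and compare $u$ with the global supersolution $w:=C_0\tilde\varphi+C_0'(1+|x|^2)^{s-\epsilon/2}$ on the region $V:=\{0<d_\Omega<\eta\}\cap B_{4/5}$. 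Here $C_0'\asymp\|u\|_{L^\infty_{2s-\epsilon}(\Omega)}$ is taken large enough that $w\ge u$ on $\Omega\setminus V$ (using $u=0$ on $\Omega^c$, the bound $u\le\|u\|_{L^\infty_{2s-\epsilon}(\Omega)}(1+|x|^{2s-\epsilon})$, and $w\gtrsim C_0'(1+|x|^{2s-\epsilon})$), and $C_0\gtrsim\|u\|_{L^\infty_{2s-\epsilon}(\Omega)}+\|d_\Omega^{1-\epsilon'}f\|_{L^\infty(B_1\cap\Omega)}$ is taken large enough that on $V$ one has $\mathcal{L}_{\Omega,\sigma}w\ge C_0 d_\Omega^{-1+\epsilon'}-cC_0'\ge|f|$ — the point being that $|f|\le\|d_\Omega^{1-\epsilon'}f\|_{L^\infty}d_\Omega^{-1+\epsilon'}$ on $V$ and that $g\mapsto\mathcal{L}_{\Omega,\sigma}g$ is bounded on the bounded set $V$ for $g(x)=(1+|x|^2)^{s-\epsilon/2}$. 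The weak maximum principle for $\mathcal{L}_{\Omega,\sigma}$ — which follows from the nonnegativity of $\mathcal{E}_{\Omega,\sigma}$ by testing $\mathcal{E}_{\Omega,\sigma}(w-u,\varphi)\ge0$ with $\varphi=(w-u)_-\in H^s_0(V)$ and using $\mathcal{E}_{\Omega,\sigma}(v_+,v_-)\le0$ — then yields $u\le w\le C_0 C d_\Omega^{2s-1}+cC_0'\le cC_0 d_\Omega^{2s-1}$ on $V$ (since $d_\Omega<\eta$ there), while on $(B_{4/5}\cap\Omega)\setminus V$ the elementary bound $u\le\|u\|_{L^\infty(B_{4/5})}\le\eta^{-(2s-1)}\|u\|_{L^\infty(B_{4/5})}d_\Omega^{2s-1}$ holds. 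Applying the same argument to $-u$ and $-f$ gives $|u|\le C_0 d_\Omega^{2s-1}$ on $B_{4/5}\cap\Omega$; in particular $u$ extends continuously to $B_{4/5}\cap\overline\Omega$ with zero boundary value.

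\emph{Step 2 (Hölder seminorm via interior estimates).} Fix $x,y\in B_{1/2}\cap\overline\Omega$ with $\rho:=d_\Omega(x)\le d_\Omega(y)$ and set $c_\sigma:=\sigma/(1+\sigma)$. If $|x-y|\ge c_\sigma\rho/8$, then by Step~1 $|u(x)|\le C_0\rho^{2s-1}\lesssim C_0|x-y|^{2s-1}$ and, since $d_\Omega(y)\le\rho+|x-y|\lesssim|x-y|$, also $|u(y)|\lesssim C_0|x-y|^{2s-1}$, so $|u(x)-u(y)|\lesssim C_0|x-y|^{2s-1}$. If $|x-y|<c_\sigma\rho/8$, I perform an interior estimate at scale $\rho$: by \eqref{eq:BLocally1AroundPoint} one has $\mathcal{B}_{\Omega,\sigma}(x',\cdot)\equiv1$ near the diagonal for $x'\in B_{c_\sigma\rho/4}(x)$, so, exactly as in the proof of \autoref{Lem:InteriorRegularity}, the equation rewrites on $B_{c_\sigma\rho/4}(x)$ as $(-\Delta)^s u=f+h$, where splitting the integral defining $h$ into its part over $B_{4/5}\cap\Omega$ (on which $|u|\le C_0(\rho+|x'-\cdot|)^{2s-1}$ by Step~1) and its exterior part (bounded using $\|u\|_{L^\infty_{2s-\epsilon}(\Omega)}$) gives $\|h\|_{L^\infty(B_{c_\sigma\rho/4}(x))}\lesssim C_0\rho^{-1}$, and likewise $\|f\|_{L^\infty(B_{c_\sigma\rho/4}(x))}\lesssim C_0\rho^{-1}$. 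The rescaled interior estimate for the fractional Laplacian used in \autoref{Lem:InteriorRegularity} (cf. the proof of \autoref{Cor:ScaledDownInteriorRegularity}), applied on $B_{c_\sigma\rho/4}(x)$ and fed with the local $L^\infty$-bound $\|u\|_{L^\infty(B_{c_\sigma\rho/4}(x))}\lesssim C_0\rho^{2s-1}$, the tail contribution $\rho^{2s}\sup_{x'}\int_{|x'-y|\ge c_\sigma\rho/4}|u(y)||x'-y|^{-d-2s}\,dy\lesssim C_0\rho^{2s-1}$ (again by Step~1), and $\rho^{2s}\|f+h\|_{L^\infty}\lesssim C_0\rho^{2s-1}$, yields $[u]_{C^{2s-1}(B_{c_\sigma\rho/8}(x))}\lesssim\rho^{-(2s-1)}C_0\rho^{2s-1}\lesssim C_0$, hence $|u(x)-u(y)|\lesssim C_0|x-y|^{2s-1}$. (When $\rho$ is bounded below one simply replaces $c_\sigma\rho/4$ by a fixed radius; this is a plain interior estimate.) Combining the two cases with the continuity from Step~1 gives $u\in C^{2s-1}(B_{1/2}\cap\overline\Omega)$ together with \eqref{eq:LocalHoelderBoundUoToBoundary}.

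\emph{Main obstacle.} I expect the delicate part to be Step~1: one must produce a single global supersolution $w$ that simultaneously dominates $u$ on all of $\Omega\setminus V$ — which is what forces both the weighted growth correction $(1+|x|^2)^{s-\epsilon/2}$ and the use of the hypothesis $u\in L^\infty_{2s-\epsilon}(\Omega)$, since on the part of $\partial B_{4/5}$ near $\partial\Omega$ and far out in $\Omega$ the barrier $\tilde\varphi$ gives no control — while keeping $\mathcal{L}_{\Omega,\sigma}w\ge|f|$ near $\partial\Omega$; checking the admissibility condition \eqref{eq:AssumptionOnUWeakSol} for $w-u$ so that the weak maximum principle genuinely applies, and tracking the non-translation-invariance and $\Omega$-dependence of $\mathcal{L}_{\Omega,\sigma}$, are part of this. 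Step~2 is comparatively routine once $|u|\le C_0 d_\Omega^{2s-1}$ is available, the only care being the bookkeeping of the correction term $h$ and of the fractional-Laplacian tails at the scale $\rho=d_\Omega(x)$.
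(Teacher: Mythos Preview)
Your overall strategy --- barrier for $|u|\lesssim d_\Omega^{2s-1}$, then interior estimates at scale $d_\Omega$ --- matches the paper exactly, and your Step~2 is essentially the paper's Step~3 (the paper rescales the operator $\mathcal{L}_\Omega$ directly via \autoref{Lem:ScalingOperator} rather than rewriting as $(-\Delta)^s u = f+h$ at scale $\rho$, but this is the same content).

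The genuine difference is in how Step~1 is executed. The paper first \emph{localizes}: it sets $\tilde u := \chi u$ with $\mathbbm{1}_{B_{3/4}}\le\chi\le\mathbbm{1}_{B_{4/5}}$ and computes $\mathcal{L}_\Omega\tilde u = u\,\mathcal{L}_\Omega\chi + \chi f - 2\Gamma(u,\chi)$ via the product rule, bounding the carr\'e du champ $\Gamma(u,\chi)$ with the interior $C^1$ estimate from \autoref{Cor:ScaledDownInteriorRegularity}. This yields $|\mathcal{L}_\Omega\tilde u|\le c\, d_\Omega^{-1+\epsilon'}$ on $B_{4/5}\cap\Omega$, and the barrier comparison is then run on the \emph{compactly supported} $\tilde u$ against $c_6\tilde\varphi$ alone --- no growth correction is needed, and the weak maximum principle applies without any tail issues. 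Your route (keep $u$, add $C_0'(1+|x|^2)^{s-\epsilon/2}$ to the barrier) avoids the carr\'e du champ computation but trades it for the two technicalities you correctly flag as the main obstacle: (i) making sense of $\mathcal{E}_\Omega(w-u,\varphi)$ for $\varphi\in H^s_0(V)$ despite $w$ growing at infinity (this works because the relevant tail is linear in $w-u$, not quadratic), and (ii) ensuring $w\ge u$ on all of $\Omega\setminus V$, which in particular requires $\tilde\varphi\ge0$ on $B_2\setminus B_1$ --- not guaranteed by \autoref{Prop:Barrier} as stated, though obtainable by enlarging the constant $C$ there once one assumes (as the paper does at the outset) that $d_\Omega$ is bounded on $B_1$. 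The paper's localization is cleaner precisely because it sidesteps both points; your approach is valid but carries the extra bookkeeping exactly where you anticipated it.
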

\begin{proof}
W.l.o.g., we can assume that $|d_\Omega (x) | \leq c$ for all $x\in B_1$, for some constant $c=c(\sigma )>0$, since otherwise we can use \autoref{Lem:InteriorRegularity} to get \eqref{eq:LocalHoelderBoundUoToBoundary}.
Furthermore, by dividing the equation by some constant, we can assume that $\| u \| _{L^\infty _{2s-\epsilon} (\Omega )} + \| d_\Omega ^{1-\epsilon '} f\| _{L^\infty (B_1\cap \Omega)} \leq 1$.
We split the proof into three steps.

\emph{Step 1:}
Let $\chi  \in C^\infty _c(\R^d )$ be a smooth cutoff function with $\ind _{B_{3/4}} \leq \chi  \leq \ind _{B_{4/5}}$.
Define $\tilde{u}:=u\chi $. We want to bound $\mathcal{L}_\Omega \tilde{u}$ on $B_{4/5} \cap \Omega$.
For $x\in B_{4/5} \cap \Omega$, we have
\begin{gather}\label{eq:ProductEvaluationTildeU}
\begin{aligned}
\mathcal{L}_\Omega \tilde{u} (x) &= u(x) \mathcal{L}_\Omega \chi  (x) + \chi  (x) \mathcal{L}_\Omega u(x) \\
& \qquad \quad - \int _\Omega \frac{(u(x)-u(y))(\chi  (x) -\chi  (y) ) }{|x-y|^{d+2s}} \mathcal{B}_\Omega (x,y) \intd y  \\
& = u(x) \mathcal{L}_\Omega \chi  (x) + \chi  (x) \mathcal{L}_\Omega u(x) - 2 \Gamma (u, \chi )
\end{aligned}
\end{gather}
where $\Gamma (u, \chi )$ is the carré du champ operator.
Notice, that for $x\in B_{4/5}\cap \Omega$ we have $|u(x) \mathcal{L}_\Omega \chi  (x)| \leq c_1$ since $\| u \| _{L^\infty _{2s-\epsilon} (\R^d)} \leq 1$ and $\chi $ is smooth.
Furthermore,
\begin{equation*}
| \chi  (x) \mathcal{L}_\Omega u(x) | \leq | \mathcal{L}_\Omega u(x) | \leq d_\Omega ^{-1+\epsilon '} (x)
\end{equation*} 
for $x\in B_{4/5} \cap \Omega $.
It remains to estimate $\Gamma (u, \chi )$ from \eqref{eq:ProductEvaluationTildeU}.
Notice that $| \chi  (x) -\chi  (y) | \leq c |x-y|$ for $c= \| \chi  \| _{C^1}$.
Hence,
\begin{align*}
2 |\Gamma (u, \chi )| & \leq   c\int _{ \Omega \cap B_{\kappa d_\Omega (x)}(x)} \frac{|u(x)-u(y)| }{|x-y|^{d+2s-1}} \mathcal{B}_\Omega (x,y) \intd y  \\
& \quad + c \int _{ \Omega \cap B_{\kappa d_\Omega (x)}^c (x)} \frac{|u(x)-u(y)| }{|x-y|^{d+2s-1}} \mathcal{B}_\Omega (x,y) \intd y \\
 & = I_1 +I_2 
\end{align*}
where $\kappa =\left( 2(2+3/\sigma ) \right) ^{-1}$.
We have
\begin{align*}
|I_2|  & \leq c \int _{B_{\kappa d_\Omega (x)}^c (x) \cap \Omega} \frac{|u(x)| + |u(y)| }{|x-y|^{d+2s-1}} \intd y \\
 & \leq c \| u \| _{L^{\infty}_{2s-\epsilon}(\R^d )} \int _{B_{\kappa d_\Omega (x)}^c (x) } \frac{1}{|x-y|^{d+2s-1}} \intd y + c\int _{B_{\kappa d_\Omega (x)}^c  (0)} \frac{|u(x+y)|}{|y|^{d+2s-1}} \intd y  \\
 & \leq c  (\kappa d_\Omega (x)) ^{1-2s} +  c \| u \| _{L^{\infty}_{2s-\epsilon} (\R^d) } \left( \int _{B_{\kappa d_\Omega (x)}^c \cap B_2} \frac{1}{|y|^{d+2s-1}} \intd y +\int _{B_{\kappa d_\Omega (x)}^c \cap B_2^c} \frac{|y|^{2s-\epsilon}}{|y|^{d+2s-1}}\intd y \right) \\
 & \leq c_2  \left( d_\Omega (x)\right) ^{1-2s} 
\end{align*}
for $x\in B_{4/5}\cap \Omega$.
For $I_1$, we will use the interior regularity result from \autoref{Cor:ScaledDownInteriorRegularity}.
Notice, that by \autoref{Cor:ScaledDownInteriorRegularity}, we have
\begin{align*}
[u] _{C^1(B_{\kappa d_\Omega (x)} (x))} & \leq c  (\kappa d_\Omega (x) )^{-1} \left( 1+ (\kappa d_\Omega (x))^{2s} \| f \| _{L^\infty (B_{2\kappa d_\Omega (x)} (x))} \right) \\
& \leq c ( d_\Omega (x) )^{-1}
\end{align*}
where in the last step we have used that $|f(y)|\leq d_\Omega ^{-1+\epsilon '} (y)\leq c d_\Omega ^{-1+\epsilon '} (x) $ for all $y\in B_{2\kappa d_\Omega (x)} (x)$.
Hence, for all $y\in B_{\kappa d_\Omega (x)} (x)$
\begin{equation*}
|u(x)-u(y)| \leq c |x-y| ( d_\Omega (x) )^{-1} .
\end{equation*}
Therefore, we get
\begin{align*}
|I_1|& \leq c( d_\Omega (x) )^{-1} \int _{ \Omega \cap B_{\kappa d_\Omega (x)}(x)} \frac{1 }{|x-y|^{d+2s-2}} \mathcal{B}_\Omega (x,y) \intd y \\
& \leq c( d_\Omega (x) )^{-1} \left(  \kappa d_\Omega (x) \right) ^{2-2s} \leq c_3 \left( d_\Omega (x)\right) ^{1-2s} .
\end{align*}
All together, we obtain for $x\in B_{4/5}\cap \Omega$
\begin{equation*}
| \mathcal{L}_\Omega \tilde{u} (x) | \leq c_1+ \left( d_\Omega (x) \right) ^{-1+\epsilon '} + (c_2+c_3) \left( d_\Omega (x) \right) ^{1-2s} \leq c_4 \left( d_\Omega (x) \right) ^{-1+\epsilon '}
\end{equation*}
where in the last step, w.l.o.g., we have assumed that $\epsilon ' <2-2s$.

\emph{Step 2:} By \autoref{Prop:Barrier}, there exists a barrier $\tilde{\varphi } \in H^s_{loc} (\R^d )$ satisfying
\begin{align*}
\mathcal{L}_\Omega \tilde{\varphi } & \geq d_\Omega ^{-1+\epsilon '} \qquad \text{in} \quad \{ 0 < d_\Omega (x) <d \} \cap B_{4/5} \\
 d_\Omega ^{2s-1} \leq \tilde{\varphi } & \leq c_5 d_\Omega ^{2s-1}  \qquad \text{in} \quad \Omega \cap B_2 \\
 \tilde{\varphi } & = 0  \qquad \text{in} \quad \R^d\setminus B_3
\end{align*}
for some constants $c_5>1$ and $d >0$.
Then,
\begin{equation*}
c_6 \mathcal{L}_\Omega \tilde{\varphi } \geq \mathcal{L}_\Omega \tilde{u} \quad \text{in } \{ 0 < d_\Omega (x) <d \} \cap B_{4/5} \quad \text{and} \quad  c_6 \tilde{\varphi } \geq \tilde{u} \quad \text{in } \R^d \setminus \left( \{ 0 < d_\Omega (x) <d \} \cap B_{4/5} \right)
\end{equation*}
for some large $c_6>0$ depending on $c_4$ and $d$.
Hence, by the maximum principle (see \autoref{Lem:WeakMaximumPrinciple}), this implies $\tilde{u}=u \leq c_6 \tilde{\varphi } \leq c_5c_6 d_\Omega ^{2s-1}$ in $\Omega \cap B_{3/4}$.
Repeating the same argument for $-u$ yields
\begin{equation}\label{eq:uLeqDist}
|u(x)| \leq c d_\Omega (x)^{2s-1} \qquad \text{for all} \quad x \in B_{3/4}\cap \Omega .
\end{equation}

\emph{Step 3:}
In the third step, we combine $|u| \lesssim d _\Omega ^{2s-1}$ with interior regularity results to conclude $[u] _{C^{2s-1}(\overline{ \Omega } \cap B_{1/2} )} \leq c$.

Let $x_1 ,x_2 \in \Omega \cap B_{1/2} $ and assume that $r:=d_\Omega (x_1) \leq d_\Omega (x_2)$ and set $\rho := |x_1-x_2|$.
We will distinguish between two cases.

\textbf{Case 1:}
If $\rho \geq \frac{r}{2}\left( 2+\frac{3}{\sigma} \right) ^{-1} $, then
\begin{equation*}
|u(x_1) - u(x_2)| \leq  |u(x_1) | + | u(x_2)| \leq c r^{2s-1}+ c ( \rho +r )^{2s-1} \leq c \rho ^{2s-1} 
\end{equation*}
using \eqref{eq:uLeqDist}.

\textbf{Case 2:}
Now assume $\rho <\frac{r}{2}\left( 2+\frac{3}{\sigma} \right) ^{-1} $. Set $R:= r\left( 2+\frac{3}{\sigma} \right) ^{-1}$, then $B_{R\left( 2+\frac{3}{\sigma} \right)}(x_1)\subset \Omega$ and $x_2\in B_\frac{R}{2}(x_1)$.
We define the rescaled function $\overline{u}\colon R^{-1} (\Omega - x_1) \to \R$, $\overline{u}(x)= \tilde{u}(x_1+Rx)$.
By scaling of the operator $\mathcal{L}$ (see Lemma \ref{Lem:ScalingOperator}), we get
\begin{equation*}
\mathcal{L}_{R^{-1} (\Omega - x_1) } \overline{u} (x) = R^{2s} \mathcal{L}_\Omega \tilde{u} (x_1+Rx) =:\overline{f}(x)
\end{equation*}
for $x\in B_1$ in the weak sense.
By interior regularity (see Lemma \ref{Lem:InteriorRegularity}), we have
\begin{equation*}
[\overline{u}] _{C^{2s-1} (\overline{B_{1/2}})} \leq c \left( \| \overline{u} \| _{L^\infty _{2s-1} ( R^{-1} (\Omega - x_1) )} +\| \overline{f} \| _{L^\infty (B_1) } \right)  .
\end{equation*}
Let $x_0 \in \partial \Omega$ such that $|x_0-x_1| = d_\Omega (x_1)=r$.
Using \eqref{eq:uLeqDist}, we get for all $x\in B_{1/(4R)}$
\begin{align*}
|\overline{u} (x)| &= |\tilde{u}(x_1+Rx)| \leq c d_\Omega ^{2s-1} (x_1+Rx) \leq c |x_1+Rx - x_0| ^{2s-1} \leq c \left( |x_1-x_0| + |Rx| \right) ^{2s-1} \\
& \leq c \left( |x_1-x_0|^{2s-1} + |Rx|^{2s-1} \right) = c \left( r^{2s-1} +R^{2s-1} |x|^{2s-1} \right) \leq c R^{2s-1} \left( 1+ |x|^{2s-1} \right) .
\end{align*}
Furthermore, using $\|\overline{u}\| _{L^\infty (\R^d)} = \| \tilde{u} \| _{L^\infty (\R^d )} \leq 2$, we also obtain $|\overline{u} (x)| \leq c R^{2s-1} \left( 1+ |x|^{2s-1} \right)$ for all $x\in B_{1/(4R)} ^c$, i.e. $\| \overline{u} \| _{L^\infty _{2s-1} ( R^{-1} (\Omega - x_1) )} \leq c R^{2s-1}$.
Since also $\| \overline{f} \| _{L^\infty (B_1) } \leq c R^{2s-1}$, we obtain
\begin{equation*}
 R^{2s-1} [ u ]_{C^{2s-1}(B_{R/2}(x_1))}= [\overline{u}] _{C^{2s-1} (\overline{B_{1/2}})} \leq c R^{2s-1} ,
\end{equation*}
i.e. $[ u ]_{C^{2s-1}(B_{R/2}(x_1))} \leq c$. Since $x_1,x_2 \in B_{R/2}(x_1))$, we get
\begin{equation*}
|u(x_1) - u(x_2)| \leq c |x_1 -x_2|^{2s-1} = c \rho ^{2s-1} .
\end{equation*}

Combining case 1 and  case 2 yields $[u] _{C^{2s-1}(\overline{ \Omega } \cap B_{1/2})} \leq c$ which finishes the proof.
\end{proof}
\begin{Remark}
\autoref{Prop:HoelderRegulUptoBoundaryLocal} remains true for strong solutions $u \in C^{2s+\epsilon }_{loc} (\Omega \cap B_1) \cap C(\overline{\Omega \cap B_1}) \cap L^\infty _{2s-\epsilon } (\Omega )$ to \eqref{eq:EquationForLocalizedHoelderReg}.
\end{Remark}
\begin{Remark}\label{Rem:HopfTypeLemma}
Following the proof of \cite[Prop. 2.6.6]{RosOton2024Book}, one can also prove a Hopf-type Lemma of the following form.
If $u\in H^s_0(\Omega )$ is a weak solution to \eqref{eq:DirichletProblem} with $0 \leq f\in L^\infty (\Omega )$, then either $u\equiv 0$ in $\Omega$ or
\begin{equation}\label{eq:HopfTypeEstimate}
|u(x)| \geq c \left( d_\Omega (x) \right)^{2s-1} \qquad \text{for all} \quad x\in \Omega ,
\end{equation} 
for some $c>0$.

The main step to prove \eqref{eq:HopfTypeEstimate} consists of changing the barrier in \autoref{Prop:Barrier} from $cd_\Omega^{2s-1} - d_\Omega ^{2s-1+\epsilon }$ to a barrier of the form $cd_\Omega^{2s-1} + d_\Omega ^{2s-1+\epsilon }$.
\end{Remark}

\begin{proof}[Proof of \autoref{Thm:HoelderContinuityWeakSol}]
First we use \autoref{Prop:LinftyBound} to see that $u$ is bounded.
Then, using \autoref{Prop:HoelderRegulUptoBoundaryLocal} on finitely many balls which cover $\Omega$, we finish the proof.
\end{proof}

\section{One dimensional boundary Harnack} \label{sec:BoundaryHarnack}
We start by introducing some notation.
For $R>0$, we define the following subset of $\R^d_+$
\begin{equation*}
I_R:= \R^{d-1} \times (0,R).
\end{equation*}
The goal of this section is to prove \autoref{Prop:BoundaryHarnack}.
For the proof, we mostly follow \cite[Sec. 5]{Audrito2023}.
However, difficulties arise since the operator degenerates as we approach the boundary.

In \autoref{Sec:BarriersForSigma1}, we construct specific one-dimensional barriers needed for the case $\sigma =1$.
However, they are not needed when $\sigma \in (0,1)$ due to the effect described in the last bullet point of \autoref{Rem:CommentsOnMainResults}.
In \autoref{Sec:OscillationDecay}, we combine a Hopf type result (\autoref{Lem:HopfTypeResult}) with an interior Harnack inequality (\autoref{Lem:InteriorHarnack}) to get an oscillation decay at the boundary (\autoref{Lem:OscDecayBoundaryHaranck}).
Finally, in \autoref{sec:OneDBoundaryHarnack}, we prove \autoref{Prop:BoundaryHarnack}.

\begin{Proposition}\label{Prop:BoundaryHarnack}
Fix $\sigma \in (0,1]$, $s\in (1/2,1)$, and $\epsilon >0$.
Assume that $u\in C_{loc}^{2s+\epsilon}(\R^d_+) \cap C(\overline{\R^d_+ })$ is a one dimensional function, i.e. $u(x)=\tilde{u}(x_d)$, and $u$ satisfies  
\begin{align*}
\mathcal{L}_{\R^d_+,\sigma } u & = 0 \qquad \text{in} \quad  I_{2R} , \\
u&=0 \qquad \text{on} \quad \partial \R^d_+
\end{align*}
for some $R\geq 1$.
Furthermore, assume that $u$ satisfies the growth condition
\begin{equation}\label{eq:GrowthCondtionBoundaryHarnack}
|u(y)| \leq c_0 \left( 1+ |y|^{2s-\epsilon _0} \right) \qquad (y\in \R^d_+ )
\end{equation}
for some $c_0 \geq 0$ and $\epsilon _0 >0$.

Then there exists some $\alpha = \alpha (d,s,\sigma ) >0$ and $c=c(d,s,\sigma ,c_0 ,\epsilon _0)>0$ such that
\begin{equation*}
\left[ \frac{u(x)}{x_d^{2s-1}}  \right] _{C^\alpha (I_R)} \leq c R^{1-2s-\alpha }  \| u \| _{L^\infty (I_{2R})}  .
\end{equation*}
\end{Proposition}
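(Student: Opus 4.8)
\emph{Set-up.} Since $u(x)=\tilde u(x_d)$ depends only on the last coordinate, $u(x)/x_d^{2s-1}=\tilde u(x_d)/x_d^{2s-1}$ is a function of $x_d$ alone, so the assertion is genuinely a one-dimensional Hölder estimate for $t\mapsto\tilde u(t)/t^{2s-1}$ on $(0,R)$ and the unbounded tangential directions play no role. The distinguished profile against which we compare is $x_d^{2s-1}$: by \autoref{Prop:CalculationsInHalfspace} one has $\mathcal L_{\R^d_+,\sigma}x_d^{2s-1}=0$ (the vanishing $a(2s-1,\sigma)=0$). On the set $\{R/4<x_d<R\}$ the quotient $u/x_d^{2s-1}$ is Hölder with a controlled seminorm: apply the interior estimate \autoref{Cor:ScaledDownInteriorRegularity} to $\mathcal Lu=0$ at scale $\sim x_d$, using the growth condition \eqref{eq:GrowthCondtionBoundaryHarnack} to bound the weighted $L^\infty$ tail, and divide by the smooth factor $x_d^{2s-1}$, which is bounded above and below there; chaining over dyadic annuli, the whole estimate reduces to understanding $\tilde u(t)/t^{2s-1}$ as $t\downarrow 0$.

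\emph{Oscillation decay.} The core is a geometric oscillation decay along the scales $r_k=R2^{-k}$. Put $m_k:=\inf_{I_{r_k}}u/x_d^{2s-1}$ and $M_k:=\sup_{I_{r_k}}u/x_d^{2s-1}$. One first establishes $M_0-m_0\lesssim R^{1-2s}\|u\|_{L^\infty(I_{2R})}$ by a barrier argument together with the maximum principle and the growth hypothesis \eqref{eq:GrowthCondtionBoundaryHarnack}, and then aims at $M_k-m_k\le\theta^k(M_0-m_0)$ for some fixed $\theta\in(0,1)$; summing this yields $[u/x_d^{2s-1}]_{C^\alpha(I_R)}\le cR^{1-2s-\alpha}\|u\|_{L^\infty(I_{2R})}$ with $\alpha=\log(1/\theta)/\log 2$, intersected with the interior exponent. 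For the inductive step, since $\mathcal Lx_d^{2s-1}=0$, both $v_k:=u-m_kx_d^{2s-1}$ and $w_k:=M_kx_d^{2s-1}-u$ are nonnegative in $I_{r_k}$ and $\mathcal L$-harmonic there; at the point $p_k:=\tfrac{1}{2}r_ke_d$ at least one of them, say $v_k$, satisfies $v_k(p_k)\ge\tfrac{1}{2}(M_k-m_k)(r_k/2)^{2s-1}$. The interior Harnack inequality (\autoref{Lem:InteriorHarnack}) promotes this to $v_k\gtrsim(M_k-m_k)r_k^{2s-1}$ on a ball of radius $\sim r_k$ around $p_k$, and the Hopf-type lemma (\autoref{Lem:HopfTypeResult}) then carries the positivity to the boundary, $v_k\ge c'(M_k-m_k)x_d^{2s-1}$ on $I_{r_{k+1}}$ with a dimensional $c'>0$. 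Hence $m_{k+1}\ge m_k+c'(M_k-m_k)$, i.e. $M_{k+1}-m_{k+1}\le(1-c')(M_k-m_k)$ — which is exactly \autoref{Lem:OscDecayBoundaryHaranck}.

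\emph{The main obstacle.} The delicate point is that $v_k$ is nonnegative only in $I_{r_k}$, not on all of $\R^d_+$, so evaluating $\mathcal Lv_k$ and running the Hopf barrier forces one to absorb the contribution of $v_k$, and of its negative part, far from $I_{r_k}$. This tail is genuinely dangerous because $\mathcal L_{\R^d_+,\sigma}$ degenerates at $\partial\R^d_+$: as $x_d\to 0$ the mass of the kernel near $x$ disappears, and in the critical case $\sigma=1$ the second half of the kernel integrates over the paraboloid $P(x)$, which always reaches deep into the half-space, in contrast with the ellipsoid $E(x)$ for $\sigma\in(0,1)$, which collapses to $x$. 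One therefore needs explicit sub- and supersolution barriers adapted to this degeneracy — the one-dimensional barriers of \autoref{Sec:BarriersForSigma1}, which for $\sigma=1$ must be dressed with the correction factor $x_d^{(d-1)/2}$ — so that the gain $c'$ in the oscillation step does not deteriorate as $k\to\infty$; the growth condition \eqref{eq:GrowthCondtionBoundaryHarnack} and the $L^\infty(I_{2R})$ normalization are what keep the accumulated tail errors summable over the scales, at the price of a final constant depending on $c_0$ and $\epsilon_0$. Constructing these barriers and carrying out the tail estimates uniformly in the scale is the heart of the argument; granting them, the oscillation decay closes and the gluing with the interior Hölder bound from the set-up step is routine.
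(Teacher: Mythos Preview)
Your proposal is correct and follows essentially the same route as the paper: oscillation decay for $u/x_d^{2s-1}$ via the Hopf-type lemma and interior Harnack applied to $u-m_kx_d^{2s-1}$ (and its counterpart $M_kx_d^{2s-1}-u$), with the nonlocal tail of the negative part absorbed through the barriers of \autoref{Sec:BarriersForSigma1} in the degenerate case $\sigma=1$, and then glued with interior regularity away from the boundary. The only cosmetic differences are that the paper works with the positive part $(u-m_kx_d^{2s-1})_+$ so that the Harnack and Hopf lemmas apply to a globally nonnegative function carrying a right-hand side from the tail of $v_-$, and it sums the two resulting inequalities rather than selecting the larger one at a midpoint.
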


\subsection{Barriers for $\sigma =1$}\label{Sec:BarriersForSigma1}
\begin{Lemma}\label{Lem:UpperBarrierForBoundaryHarnack}
Fix $\sigma = 1$ and $s\in (1/2,1)$.
Let $\eta \in C^\infty _c ([0,2))$ with $\ind _{[0,1]} \leq \eta \leq \ind _{[0,2]}$ and define $\varphi \colon \R^d_+ \to \R$ by $\varphi (x) = \eta (x_d) x_d^{2s-1}$. Then 
\begin{equation}\label{eq:BoundSupperSol1d}
\mathcal{L}_{\R^d_+,\sigma } \varphi (x) \geq c  \left( x_d\right) ^\frac{d-1}{2} \qquad \text{for} \quad x \in I_1 ,
\end{equation}
for some $c=c(s,d)>0$.
\end{Lemma}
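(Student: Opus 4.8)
The plan is to use that $\varphi$ agrees with the $\mathcal{L}_{\R^d_+,\sigma}$-harmonic function $w(y):=y_d^{2s-1}$ on $\{0<y_d<1\}$, so that for $x\in I_1$ the quantity $\mathcal{L}_{\R^d_+,\sigma}\varphi(x)$ is produced entirely by the difference $\psi:=w-\varphi$, a smooth function supported in $\{y_d\ge 1\}$. Fix $x\in I_1$; by tangential translation invariance \eqref{eq:ShiftingBInHalfSpace} we may take $x'=0$. First I would split the principal value integral defining $\mathcal{L}_{\R^d_+,\sigma}\varphi(x)$ over $\{y_d<1\}$ and $\{y_d\ge1\}$: on the first region $\varphi\equiv w$, so the singular part coincides with that of $w$; on the second region $\varphi=w-\psi$ and $\psi$ vanishes near $x$, so no principal value is needed. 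Since $p=2s-1\in(-1,2s)$ and $a(2s-1,\sigma)=0$, \autoref{Prop:CalculationsInHalfspace} gives $\mathcal{L}_{\R^d_+,\sigma}w\equiv 0$, and hence
\[
\mathcal{L}_{\R^d_+,\sigma}\varphi(x)=\int_{\{y_d\ge1\}}\psi(y)\,\mathcal{K}_{\R^d_+,\sigma}(x,y)\intd y\ \ge\ 0,
\]
the integral being absolutely convergent because the relevant summand of $\mathcal{B}$ confines $y$ to the paraboloid $P(x)$ from \eqref{eq:ParaboloidEquation}, on whose dyadic pieces $\{y_d\asymp 2^k\}$ the integrand decays geometrically in $k$.

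Next I would extract the lower bound by keeping only a convenient slice of this integral. On $\{y_d\ge 2\}$ one has $\eta(y_d)=0$, hence $\psi(y)=y_d^{2s-1}\ge 2^{2s-1}\ge 1$ (using $s>1/2$); discarding the nonnegative first summand of $\mathcal{B}$ and retaining only $\mathbbm{1}_{\{x\in B_{y_d}(y)\}}=\mathbbm{1}_{P(x)}(y)$ yields
\[
\mathcal{L}_{\R^d_+,\sigma}\varphi(x)\ \ge\ \tfrac12\int_{P(x)\cap\{y_d\ge 2\}}\frac{y_d^{2s-1}}{|x-y|^{d+2s}}\intd y .
\]
Finally I restrict integration to the cylinder $Q_x:=\{\,|y'|<\sqrt{x_d},\ 2<y_d<3\,\}$. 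Using \eqref{eq:ParaboloidEquation} with $x'=0$, on $Q_x$ we have $|y'|^2<x_d\le x_d(2y_d-x_d)$, so $Q_x\subset P(x)\cap\{y_d\ge 2\}$; moreover $|x-y|^2=|y'|^2+(x_d-y_d)^2<1+9=10$ there, and $y_d^{2s-1}\ge 1$, so the integrand is $\ge 10^{-(d+2s)/2}$ on $Q_x$. Since $|Q_x|=\omega_{d-1}\,x_d^{(d-1)/2}$, with $\omega_{d-1}$ the volume of the unit ball in $\R^{d-1}$ and $\omega_0=1$ so that the case $d=1$ is included, we conclude
\[
\mathcal{L}_{\R^d_+,\sigma}\varphi(x)\ \ge\ \tfrac12\,10^{-(d+2s)/2}\,\omega_{d-1}\,x_d^{(d-1)/2}\ =\ c\,x_d^{(d-1)/2}
\]
with $c=c(s,d)>0$, which is \eqref{eq:BoundSupperSol1d}.

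The only step needing a little care is the reduction $\mathcal{L}_{\R^d_+,\sigma}\varphi(x)=\int_{\{y_d\ge1\}}\psi\,\mathcal{K}_{\R^d_+,\sigma}(x,\cdot)\intd y$ on $I_1$: one must observe that for $\epsilon$ small $B_\epsilon(x)\subset\{y_d<1\}$, where $\varphi$ and $w$ coincide so the singular contributions cancel exactly, and that the leftover integral over $\{y_d\ge1\}$ converges absolutely by the paraboloid/ball geometry of $\supp\mathcal{K}_{\R^d_+,\sigma}(x,\cdot)$ sketched above — no cancellation beyond this is needed. Everything else is a geometric inclusion plus a volume computation; there is no genuine obstacle.
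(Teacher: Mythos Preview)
Your proof is correct and follows essentially the same route as the paper: use the harmonicity of $y_d^{2s-1}$ to reduce $\mathcal{L}_{\R^d_+,1}\varphi(x)$ to the integral of $(1-\eta(y_d))y_d^{2s-1}\,\mathcal{K}(x,y)$, drop the ball summand of $\mathcal{B}$, and bound below by integrating over the cylinder $\{|y'|<\sqrt{x_d},\,2<y_d<3\}\subset P(x)$. Your version adds a bit more detail on absolute convergence and explicit constants, but the argument is the same.
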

\begin{proof}
Let $x\in I_1$. By shifting (see \autoref{Lem:ScalingOperator}), we can assume that $x=(0, \dots , 0 , x_d)$.
Using that $\mathcal{L}_{\R^d_+ } x_d^{2s-1} =0$ on $\R^d_+$, we get
\begin{align*}
\mathcal{L}_{\R^d_+} \varphi (x) & = \mathcal{L}_{\R^d_+} x_d^{2s-1} + \mathcal{L}_{\R^d_+} (x_d^{2s-1} (\eta -1))  \\
& = \int _{\R^d_+}(1-\eta (y_d) ) y_d^{2s-1} \mathcal{K}(x,y) \intd y  \\
& \geq  \int _{\R^d_+ \cap \{ y\colon y_d\in (2,3) \}} (1-\eta (y_d) ) y_d^{2s-1} \mathcal{K}(x,y) \intd y.
\end{align*}
Recall that
\begin{equation}\label{eq:KernelKInHalfspace}
\mathcal{K}(x,y)= |x-y|^{-d-2s} \frac{1}{2} \left( \ind _{B_{x_d}(x)} (y) + \ind _{P(x)} (y)  \right)
\end{equation}
where
\begin{equation}\label{eq:ParabolaPartKernel}
P(x)= \left\{ y\in \R^d_+ \colon |y-x|< y_d \right\}
= \left\{ y\in \R^d_+ \colon y_d>\frac{x_d}{2} \text{ and }|y'| <   \sqrt{x_d} \sqrt{2y_d-x_d} \right\} .
\end{equation}
Here we will only use the $P(x)$ part of the kernel.
Hence,
\begin{align*}
\mathcal{L}_{\R^d_+ } \varphi (x) & \geq \frac{1}{2} \int _{P(x)\cap \{ y_d \in (2,3) \}} (1-\eta (y_d) ) y_d^{2s-1} |x-y|^{-d-2s} \intd y  \\
& \geq c \int _{P(x)\cap \{ y_d \in (2,3) \}}  |x-y|^{-d-2s} \intd y   .
\end{align*}
Notice, that $ \left\{ y\colon y_d \in (2,3) , |y'| \leq \sqrt{x_d} \right\} \subset P(x)\cap \{ y \colon y_d \in (2,3) \} $.
Therefore,
\begin{align*}
\mathcal{L}_{\R^d_+ } \varphi (x) \geq c \int _{\{ y \colon y_d \in (2,3), |y'| \leq \sqrt{x_d} \}} |x-y|^{-d-2s}  \intd y \geq   c \left( \sqrt{x_d} \right)^{d-1} .
\end{align*}
\end{proof}

\begin{Lemma}\label{Lem:LowerBarrierForBoundaryHarnack}
Fix $\sigma =1$ and $s\in (1/2,1)$.
Let $\eta \in C^\infty _c ([0,2))$ with $\ind _{[0,5/4 ]} \leq \eta \leq \ind _{[0,2]}$.
Furthermore let $\zeta \in C^\infty _c ((1,2))$ with $\ind _{[5/4,7/4]} \leq \zeta \leq \ind _{[1,2]}$.
Then, for every $N>0$ there exists $M=M(s,N ,d )>1$ such that $\varphi \colon \R^d_+ \to \R$, $\varphi (x) = \eta (x_d) x_d^{2s-1} + M \zeta (x_d)$ satisfies
\begin{equation} \label{eq:LowerBarrierOneDimensionalBH}
\mathcal{L}_{\R^d_+,\sigma } \varphi (x) \leq - N  \left( x_d \right) ^\frac{d-1}{2} \qquad \text{for} \quad x \in I_1 .
\end{equation}
\end{Lemma}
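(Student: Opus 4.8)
The plan is to imitate the proof of \autoref{Lem:UpperBarrierForBoundaryHarnack}, writing $\varphi = g + M\zeta$ with $g(x) := \eta(x_d)\,x_d^{2s-1}$ and choosing $M$ large. Concretely, I would establish two one-sided bounds on $I_1$: an upper bound $\mathcal{L}_{\R^d_+,\sigma}g(x) \le c_1\,x_d^{(d-1)/2}$ and an upper bound $\mathcal{L}_{\R^d_+,\sigma}\zeta(x) \le -c_2\,x_d^{(d-1)/2}$, with $c_1 = c_1(d,s) > 0$, $c_2 = c_2(d,s) > 0$; then $\mathcal{L}_{\R^d_+,\sigma}\varphi = \mathcal{L}_{\R^d_+,\sigma}g + M\,\mathcal{L}_{\R^d_+,\sigma}\zeta \le (c_1 - Mc_2)\,x_d^{(d-1)/2}$, and $M := \max\{2,(N+c_1)/c_2\} > 1$ yields \eqref{eq:LowerBarrierOneDimensionalBH}. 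As in \autoref{Lem:UpperBarrierForBoundaryHarnack} and the earlier barrier lemmas, by tangential translation invariance \eqref{eq:ShiftingBInHalfSpace} (cf.\ \autoref{Lem:ScalingOperator}) and the one-dimensionality of $\varphi$ it suffices to treat $x = (0,\dots,0,x_d)$ with $x_d \in (0,1)$; whenever a lower bound on the kernel \eqref{eq:KernelKInHalfspace} is needed I would discard the ball part and keep only the paraboloid $P(x)$ from \eqref{eq:ParabolaPartKernel}, and whenever an upper bound is needed I would use $\mathcal{B} \le 1$.

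For the first bound, note that $\eta \equiv 1$ on $[0,5/4] \supset [0,1)$, so $g(x) = x_d^{2s-1}$ for $x_d \in (0,1)$, and since $\mathcal{L}_{\R^d_+,\sigma}x_d^{2s-1} \equiv 0$ by \autoref{Prop:CalculationsInHalfspace} (because $a(2s-1,1)=0$), the same computation as in \autoref{Lem:UpperBarrierForBoundaryHarnack} gives $\mathcal{L}_{\R^d_+,\sigma}g(x) = \int_{\R^d_+}(1-\eta(y_d))\,y_d^{2s-1}\,\mathcal{K}(x,y)\intd y$, with a nonnegative integrand supported in $\{y_d \ge 5/4\}$. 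The part over $B_{x_d}(x)$ is nonzero only when $x_d > 5/8$, and there $|x-y| \ge y_d - x_d \ge 1/4$ while $|B_{x_d}(x)| \lesssim x_d^d \le 1$, so it is $\lesssim 1 \lesssim x_d^{(d-1)/2}$. For the part over $P(x)$ I would use $|x-y| \ge y_d - x_d \ge y_d/5$ (valid for $y_d \ge 5/4$, $x_d \le 1$) together with the estimate $\lesssim (x_d y_d)^{(d-1)/2}$ for the $(d-1)$-dimensional measure of the slice $\{y' : |y'| < \sqrt{x_d}\,\sqrt{2y_d-x_d}\}$, which reduces it to $\lesssim x_d^{(d-1)/2}\int_{5/4}^{\infty} y_d^{(d-1)/2 - d - 1}\intd y_d \lesssim x_d^{(d-1)/2}$, the exponent $(d-1)/2 - d - 1 = -(d+3)/2$ being $< -1$. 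This gives $\mathcal{L}_{\R^d_+,\sigma}g(x) \le c_1\,x_d^{(d-1)/2}$.

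For the second bound, $\zeta$ is supported in $(1,2)$ and $x_d < 1$, so $\zeta(x_d) = 0$ and $\mathcal{L}_{\R^d_+,\sigma}\zeta(x) = -\int_{\R^d_+}\zeta(y_d)\,\mathcal{K}(x,y)\intd y \le -\tfrac12\int_{P(x)\cap\{5/4\le y_d\le 7/4\}}|x-y|^{-d-2s}\intd y$, using $\zeta \ge \ind_{[5/4,7/4]}$ and $\mathcal{K}(x,y) \ge \tfrac12|x-y|^{-d-2s}\ind_{P(x)}(y)$. On that region $|x-y|^2 = |y'|^2 + (y_d-x_d)^2 \le x_d(2y_d-x_d) + (7/4)^2$ is bounded above for $x_d < 1$, so $|x-y|^{-d-2s} \gtrsim 1$; and since $2y_d - x_d \ge 3/2$ there, the measure of $P(x)\cap\{5/4\le y_d\le 7/4\}$ is $\gtrsim x_d^{(d-1)/2}$. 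Hence $\mathcal{L}_{\R^d_+,\sigma}\zeta(x) \le -c_2\,x_d^{(d-1)/2}$, and combining with the previous paragraph and choosing $M$ as above completes the proof.

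I expect the main obstacle to be the upper estimate of $\mathcal{L}_{\R^d_+,\sigma}g$: one must control the far-field contribution of the unbounded paraboloid $P(x)$ and verify that its $y_d$-tail integral converges and produces exactly the weight $x_d^{(d-1)/2}$ — this is what fixes the exponent $(d-1)/2$ on the right-hand side of \eqref{eq:LowerBarrierOneDimensionalBH} and makes it compatible with \autoref{Lem:UpperBarrierForBoundaryHarnack}. The remaining points (the nearly tangential ball contribution $B_{x_d}(x)$, which is active only as $x_d \uparrow 1$, and the legitimacy of the pointwise principal-value evaluations, which holds because $\varphi$ is smooth near any $x$ with $x_d \in (0,1)$ and all the relevant integrals converge) are routine.
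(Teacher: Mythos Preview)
Your proposal is correct and follows essentially the same strategy as the paper: split $\varphi = g + M\zeta$, use $\mathcal{L}_{\R^d_+}x_d^{2s-1}=0$ to reduce $\mathcal{L}_{\R^d_+}g$ to a nonnegative integral supported in $\{y_d\ge 5/4\}$, bound the ball and paraboloid contributions separately by $c\,x_d^{(d-1)/2}$, bound $-\mathcal{L}_{\R^d_+}\zeta$ below by $c\,x_d^{(d-1)/2}$ via the paraboloid slice, and choose $M$ large. The only notable difference is technical: for the far-field paraboloid tail (the paper's $I_2$) you slice in $y_d$ and use $|x-y|\ge y_d/5$ to get a convergent one-dimensional integral with exponent $-(d+3)/2$, whereas the paper passes to spherical coordinates and estimates the surface measure of $\{|y|=r\}\cap P(x)$ --- your argument is somewhat shorter but yields the same bound.
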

\begin{proof}
Let $x\in I_1$. By Shifting (see \autoref{Lem:ScalingOperator}), we can assume that $x=(0, \dots 0,x_d)$.
Since $\mathcal{L}_{R^d_+}x_d^{2s-1}=0$ on $\R^d_+$ (see \autoref{Prop:CalculationsInHalfspace}), we have
\begin{align*}
\mathcal{L}_{\R^d_+ } \varphi (x) & = \int _{\R^d_+} y_d^{2s-1} (1 - \eta (y_d)) \mathcal{K}(x,y) \intd y - M \int _{\R^d_+} \zeta (y_d) \mathcal{K}(x,y) \intd y \\
& = \int _{\R^d_+ \cap \left\{y \colon 1\leq y_d\leq 2  \right\}} y_d^{2s-1} (1 - \eta (y_d)) \mathcal{K}(x,y) \intd y + \int _{\R^d_+ \cap \left\{y \colon 2\leq y_d< \infty \right\}} y_d^{2s-1}  \mathcal{K}(x,y) \intd y \\
& \qquad - M \int _{\R^d_+ \cap \left\{y \colon 1\leq y_d\leq 2  \right\}} \zeta (y_d) \mathcal{K}(x,y) \intd y \\
& = I_1(x) + I_2(x) -MI_3(x) .
\end{align*}
First, we want to show that $I_1(x) \leq c_1 \sqrt{x_d}^{d-1}$ for some constant $c_1=c_1(s,d)>0$.
Recall that
\begin{equation*}
\mathcal{K}(x,y)= |x-y|^{-d-2s} \frac{1}{2} \left( \ind _{B_{x_d}(x)} (y) + \ind _{P(x)} (y)  \right)
\end{equation*}
where
\begin{equation*}
P(x)= \left\{ y\in \R^d_+ \colon |y-x|< y_d \right\}
= \left\{ y\in \R^d_+ \colon y_d>\frac{x_d}{2} \text{ and }|y'| <   \sqrt{x_d} \sqrt{2y_d-x_d} \right\}  .
\end{equation*}
Note that $\supp \mathcal{K}(x, \cdot ) \cap \left\{y \colon y_d\in [1,2]  \right\} \subset \left\{ y\colon y_d \in [1,2] , |y'| \leq 2 \right\}$.
Furthermore, $1-\eta (y_d) \leq c |1-y_d|^2$ for all $y_d \in (1,2)$. Hence,
\begin{align*}
I_1(x) \leq \int _{\{ y \colon y_d \in [1,2], |y'| \leq 2 \}} c y_d^{2s-1}  \frac{|1-y_d|^2}{ |x-y|^{d+2s}} \intd y 
\leq \int _{\{ y \colon y_d \in [1,2], |y'| \leq 2 \}} c y_d^{2s-1}   |x-y|^{2-d-2s} \intd y 
\leq c 
\end{align*}
for some constant $c$ independent of $x$.
If $x_d< 1/2$ then $\supp \mathcal{K}(x, \cdot ) \cap \left\{y \colon y_d\in [1,2]  \right\} \subset \left\{ y\colon y_d \in [1,2] , |y'| \leq 2\sqrt{x_d} \right\}$, so we get
\begin{align*}
I_1(x) 
\leq \int _{\{ y \colon y_d \in [1,2], |y'| \leq 2\sqrt{x_d} \}} y_d^{2s-1} c  |x-y|^{2-d-2s} \intd y 
\leq c \sqrt{x_d}^{d-1} .
\end{align*}
Hence, we have shown that $I_1(x) \leq c_1 \sqrt{x_d}^{d-1}$.

Next, we will estimate $I_2$.
Since $B_{x_d}(x)\cap \{ y\colon y_d\geq 2\}=\emptyset $, we only have to deal with $P(x)$ as a domain for integration.
By transforming in spherical coordinates, we get
\begin{align*}
I_2(x)&= \frac{1}{2}\int_{P(x) \cap \{ y\colon y_d\geq 2\}} y_d^{2s-1} |x-y|^{-d-2s} \intd y \leq c \int_{P(x) \cap \{ y\colon y_d\geq 2\}} |y|^{-d-1} \intd y \\
&\leq c \int_2^\infty  \left| \{ y\in \R^d_+ \colon |y|=r\} \cap P(x) \right| r^{-d-1} \intd r .
\end{align*}
We get
\begin{align*}
\{ y\in \R^d_+ \colon |y|=r\} \cap P(x) & = \left\{ y\in \R^d_+ \colon |y|=r \text{ and } |y'| <\sqrt{2x_dy_d -x_d^2}  \right\} \\
&\subset \left\{ y\in \R^d_+ \colon |y|=r \text{ and } |y'| <\sqrt{2x_dy_d }  \right\} =:A_r.
\end{align*}
By scaling the hypersurface $A_r$ onto the unit sphere, we obtain
\begin{align*}
|A_r| & = r^{d-1} \left| \left\{ y\in \R^d_+ \colon |y|=1 \text{ and } |y'| <\sqrt{\frac{2y_dx_d}{r}}  \right\} \right| \\
& \leq r^{d-1} \left| \left\{ y\in \R^d_+ \colon |y|=1 \text{ and } |y'| <\sqrt{2y_dx_d}  \right\} \right| =: r^{d-1} | \tilde{A_r} |
\end{align*} 
where in the last step we have used that $r\geq 2$.
We will estimate $|\tilde{A_r}|$ by using $n$ dimensional spherical coordinates.
The important coordinate is the polar angle $\theta =\arcsin (|y'|)$.
By a direct calculation, for $\tilde{A_r}$ we integrate over those $\theta$ such that $\sin \theta \in [0,h]$ where
\begin{align*}
h =\sqrt{x_d \left( \sqrt{x_d^2+4}-x_d \right)} .
\end{align*}
By taylor expanding for small $x_d$, we obtain
\begin{align*}
|\tilde{A_r}| = c \int _0^{\arcsin (h)} \sin ^{d-2} (\theta ) \intd \theta \leq c \sqrt{x_d}^{d-1}.
\end{align*}
Hence, we get
\begin{align*}
I_2(x) \leq c \int_2^\infty  \sqrt{x_d}^{d-1} r^{-2} \intd r \leq c_2 \sqrt{x_d}^{d-1}
\end{align*}
for some positive constant $c_2=c_2(d,s)$.

For $I_3$, notice, that $ \left\{ y\colon y_d \in [1,2] , |y'| \leq \sqrt{x_d} \right\} \subset \supp \mathcal{K}(x, \cdot ) \cap \left\{y \colon y_d \in [1,2]  \right\} $.
Therefore
\begin{align*}
I_3(x)\geq c \int _{\{ y \colon y_d \in [1,2], |y'| \leq \sqrt{x_d} \}} \zeta (y_d)  \intd y \geq   c \sqrt{x_d}^{d-1} \int_1^2 \zeta (y_d)  \intd y_d \geq c_3 \sqrt{x_d}^{d-1}
\end{align*}
for some positive constant $c_3=c_3(d,s)$. 
All together, we obtain
\begin{align*}
\mathcal{L}_{\R^d_+ } \varphi (x) & = I_1(x) + I_2(x) -MI_3(x) \leq \left( c_1+c_2 - M c_3 \right) \sqrt{x_d}^{d-1} .
\end{align*}
By choosing $M>0$ large enough, we get \eqref{eq:LowerBarrierOneDimensionalBH}.
\end{proof}

\subsection{Oscillation decay}\label{Sec:OscillationDecay}
For $R>0$ define the following subsets of $\R^d_+$
\begin{equation*}
 I_R^+=I_{R,1}^+:= \R^{d-1} \times (R/4,R/2)
\end{equation*}
and for $\sigma \in (0,1)$
\begin{equation*}
I_{R,\sigma } ^+ := \R^{d-1} \times \left( R(1-\sigma )^2 /2 , R (1-\sigma ) /2 \right) .
\end{equation*}
\begin{Lemma}\label{Lem:HopfTypeResult}
Fix $\sigma =1 $, $s\in (1/2,1)$, $\epsilon >0$  and let $R>0$.
Assume that $u\in C(\overline{\R^d_+}) \cap C^{2s+\epsilon}_{loc}(\R^d_+) \cap L^\infty_{2s-\epsilon }(\R^d_+)$ is a one dimensional function, i.e. $u(x)=\tilde{u}(x_d)$, and $u$ satisfies 
\begin{alignat*}{2}
\mathcal{L}_{\R^d_+,\sigma } u(x) & \geq -K_0 \left( \frac{x_d}{R}\right) ^\frac{d-1}{2} \qquad & &\text{for} \quad x\in I_{R/4},  \\
u&\geq 0  \qquad & &\text{on }\quad \R^{d}_+ .
\end{alignat*}
for some constant $K_0\geq 0$.
Then
\begin{equation}\label{eq:HopfTypeResultIn1d}
\inf _{x\in I_R^+} \frac{u(x)}{x_d^{2s-1}} \leq c \left( \inf _{x\in I_{R/4}} \frac{u(x)}{x_d^{2s-1}} + K_0R \right)
\end{equation}
for some constant $c=c(s,d )>0$.

For $\sigma \in (0,1)$, assume that $u$ satisfies
\begin{align*}
\mathcal{L}_{\R^d_+,\sigma } u(x) & \geq 0  \qquad \text{for} \quad x\in I_{R (1-\sigma)^2 /2} ,\\
u&\geq 0  \qquad \text{on }\quad I_{R (1-\sigma) /2} .
\end{align*}
Then 
\begin{equation}\label{eq:HopfTypeResultIn1dForSigmaLess1}
\inf _{x\in I_{R,\sigma }^+} \frac{u(x)}{x_d^{2s-1}} \leq   \inf _{x\in I_{R (1-\sigma)^2 /2}} \frac{u(x)}{x_d^{2s-1}} .
\end{equation}
\end{Lemma}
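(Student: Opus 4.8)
The plan is to reduce to $R=1$ by scaling and then handle the cases $\sigma\in(0,1)$ and $\sigma=1$ separately, the first by a direct maximum principle against the harmonic function $x_d^{2s-1}$ and the second by the maximum principle against suitably rescaled barriers from \autoref{Lem:UpperBarrierForBoundaryHarnack} and \autoref{Lem:LowerBarrierForBoundaryHarnack}. For $u_R(x):=u(Rx)$ (again one dimensional) one has $\mathcal L_{\R^d_+,\sigma}u_R(x)=R^{2s}(\mathcal L_{\R^d_+,\sigma}u)(Rx)$ by \autoref{Lem:ScalingOperator} and \eqref{eq:ScalingBInHalfSpace}, so $u_R$ satisfies the hypotheses with $R$ replaced by $1$ (and, for $\sigma=1$, with $K_0$ replaced by $R^{2s}K_0$); since each relevant infimum of $u_R/x_d^{2s-1}$ over the $R=1$ strips equals $R^{2s-1}$ times the corresponding infimum for $u$, dividing the $R=1$ conclusion by $R^{2s-1}$ gives back the stated one (turning $R^{2s}K_0$ into $K_0R$). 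So assume $R=1$.

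\emph{Case $\sigma\in(0,1)$.} Here the operator is local near $\partial\R^d_+$: if $x_d<(1-\sigma)^2/2$ and $\mathcal K_{\R^d_+,\sigma}(x,y)\neq 0$ then either $y\in B_{\sigma x_d}(x)$, forcing $y_d<(1+\sigma)x_d$, or $x\in B_{\sigma y_d}(y)$, forcing $y_d<x_d/(1-\sigma)$; since $1/(1-\sigma)>1+\sigma$, in both cases $y_d<x_d/(1-\sigma)<(1-\sigma)/2$, i.e.\ $\mathcal K_{\R^d_+,\sigma}(x,\cdot)$ is supported in $I_{(1-\sigma)/2}$. Put $\ell:=\inf_{I_{(1-\sigma)^2/2}}u/x_d^{2s-1}\ge 0$ and $L:=\inf_{I_{1,\sigma}^+}u/x_d^{2s-1}\ge 0$; we must show $L\le\ell$ and may assume $L>0$. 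Since $a(2s-1,\sigma)=0$, \autoref{Prop:CalculationsInHalfspace} gives $\mathcal L_{\R^d_+,\sigma}(Lx_d^{2s-1})=0$, hence $\mathcal L_{\R^d_+,\sigma}u\ge\mathcal L_{\R^d_+,\sigma}(Lx_d^{2s-1})$ on $I_{(1-\sigma)^2/2}$, while $u\ge Lx_d^{2s-1}$ on $\overline{I_{1,\sigma}^+}$ by definition of $L$ and continuity. Because the kernel on $I_{(1-\sigma)^2/2}$ only sees values in $I_{(1-\sigma)/2}=I_{(1-\sigma)^2/2}\cup\overline{I_{1,\sigma}^+}$ (up to a null slice), the weak maximum principle \autoref{Lem:WeakMaximumPrinciple} applies and yields $u\ge Lx_d^{2s-1}$ on $I_{(1-\sigma)^2/2}$; therefore $\ell\ge L$, which is the claim (with constant $1$).

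\emph{Case $\sigma=1$.} Now $\mathcal K_{\R^d_+,1}(x,\cdot)$ reaches far into $\R^d_+$ through the paraboloid $P(x)$, so $x_d^{2s-1}$ cannot serve as a competitor directly. First remove the $K_0$ term: with $\varphi$ the barrier of \autoref{Lem:UpperBarrierForBoundaryHarnack} (so $\mathcal L_{\R^d_+,1}\varphi\ge c_\varphi x_d^{(d-1)/2}$ on $I_1$, $\varphi=x_d^{2s-1}$ on $[0,1]$, $\varphi\ge 0$, $\supp\varphi\subset\{x_d<2\}$), the function $v:=u+\tfrac{K_0}{c_\varphi}\varphi\ge 0$ is one dimensional, satisfies $\mathcal L_{\R^d_+,1}v\ge 0$ on $I_{1/4}$, and $v/x_d^{2s-1}=u/x_d^{2s-1}+K_0/c_\varphi$ on $\{x_d<1\}$; hence it suffices to treat $K_0=0$. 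Set $\ell:=\inf_{I_{1/4}}u/x_d^{2s-1}\ge 0$ and $L:=\inf_{I_1^+}u/x_d^{2s-1}\ge 0$, assume $L>0$, take $\psi$ the barrier of \autoref{Lem:LowerBarrierForBoundaryHarnack} for the fixed choice $N=1$ (with its resulting fixed $M=M(s,d)>1$), and put $\phi(x):=\psi(4x)$ and $a:=L\,4^{1-2s}/(3M)$. On $\{x_d<1/4\}$ one has $\phi=(4x_d)^{2s-1}$ and, by scaling, $\mathcal L_{\R^d_+,1}(a\phi)(x)=4^{2s}a(\mathcal L_{\R^d_+,1}\psi)(4x)\le -4^{2s}aN(4x_d)^{(d-1)/2}<0\le\mathcal L_{\R^d_+,1}u(x)$; on $\{x_d\ge 1/4\}$ one has $a\phi\le 3aM=L\,4^{1-2s}\le Lx_d^{2s-1}\le u$ for $x_d\in[1/4,1/2)$ (using $0\le\psi\le 3M$ on the slab $\{x_d\in[1,2)\}$ and $x_d^{2s-1}\ge 4^{1-2s}$ there) and $a\phi=0\le u$ for $x_d\ge 1/2$. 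The weak maximum principle \autoref{Lem:WeakMaximumPrinciple} gives $u\ge a\phi$ on $I_{1/4}$, so $\tilde u(x_d)/x_d^{2s-1}\ge a\,4^{2s-1}=L/(3M)$ for $x_d<1/4$, i.e.\ $\ell\ge L/(3M)$; thus $L\le 3M\ell$, and undoing the reductions gives the assertion with $c=c(s,d)$.

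\emph{Main obstacle.} The difficulty is geometric rather than analytic. For $\sigma\in(0,1)$ one must pin down the exact support of $\mathcal K_{\R^d_+,\sigma}(x,\cdot)$ via \eqref{eq:EllipsoidEquation} and observe that the strips in the statement are arranged precisely so that the maximum principle can be run with the bare harmonic competitor $x_d^{2s-1}$, giving the sharp constant $1$. For $\sigma=1$ the point is that the paraboloidal reach of the kernel forbids $x_d^{2s-1}$ and forces one to use the compactly supported barrier $\psi_N$, whose extra bump $M\zeta$ is exactly what makes it a subsolution; getting the rescaling of $\psi_N$ and the calibration of $a$ right, so that $a\phi\le u$ on all of $\{x_d\ge 1/4\}$ while $\phi$ still equals $x_d^{2s-1}$ up to a fixed multiple near the boundary, is the delicate step, and it is also what causes the loss of the sharp constant. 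A minor technical point is that \autoref{Lem:WeakMaximumPrinciple} is invoked on the unbounded strips $I_{1/4}$ and $I_{(1-\sigma)^2/2}$; this is harmless, since $u$ and the competitors are one dimensional, so one applies it on the bounded pieces $\{|x'|<T\}$ and lets $T\to\infty$.
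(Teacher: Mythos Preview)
Your proof is correct and follows essentially the same approach as the paper: reduce by scaling, handle $\sigma\in(0,1)$ by comparing with $Lx_d^{2s-1}$ via the locality of the kernel, and handle $\sigma=1$ by first absorbing the $K_0$ term with the supersolution barrier from \autoref{Lem:UpperBarrierForBoundaryHarnack} and then comparing with the subsolution barrier from \autoref{Lem:LowerBarrierForBoundaryHarnack}. The only cosmetic differences are that the paper normalizes to $R=4$ (so the barrier from \autoref{Lem:LowerBarrierForBoundaryHarnack} is used directly on $I_1$ with $N=0$) whereas you normalize to $R=1$ and rescale the barrier to $I_{1/4}$ with $N=1$, and the paper invokes the strong maximum principle \autoref{Lem:MaximumPrincipleStrongSol} rather than \autoref{Lem:WeakMaximumPrinciple}; neither affects the argument.
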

\begin{proof}
We start with the proof of \eqref{eq:HopfTypeResultIn1d} for $\sigma =1$.

Let us first consider the case $K_0=0$.
By scaling, it is enough to prove \eqref{eq:HopfTypeResultIn1d} for $R=4$.
Define
\begin{equation*}
m:=\inf _{x\in I_4^+} \frac{u(x)}{x_d^{2s-1}} .
\end{equation*}
Then $u(x)\geq m x_d^{2s-1} \geq m$ for $x_d\in [1,2]$.
Let $\varphi$ be the barrier from \autoref{Lem:LowerBarrierForBoundaryHarnack} (with $N=0$ in \autoref{Lem:LowerBarrierForBoundaryHarnack}), i.e. $\varphi (x) = \eta (x_d) x_d^{2s-1} + M \zeta (x_d)$ for some $M>1$ and smooth cutoff functions $\eta$ and $\zeta$ with $\ind _{[0,5/4 ]} \leq \eta \leq \ind _{[0,2]}$ and $\ind _{[5/4,7/4]} \leq \zeta \leq \ind _{[1,2]}$, such that $\mathcal{L}_{\R^d_+} \varphi \leq 0$ on $I_1$.
Then obviously $m(M+2)^{-1}\varphi \leq u$ on $[2,\infty )$ since $\varphi =0$ on $[2,\infty )$ and 
\begin{equation*}
m(M+2)^{-1}\varphi (x) = m(M+2)^{-1} \left(\eta (x_d) x_d^{2s -1} + M \zeta (x_d) \right) \leq m \leq u(x)
\end{equation*}
for $x_d\in [1,2]$ as well.
Furthermore $m(M+2)^{-1}\varphi (0)=0 \leq u(0)$ and $\mathcal{L}_{\R^d_+} m(M+2)^{-1}\varphi \leq 0 \leq \mathcal{L}_{\R^d_+} u$ on $I_1$.
Hence, by the maximum principle (\autoref{Lem:MaximumPrincipleStrongSol} is still applicable even though $I_1$ is not a bounded domain, since $u$ and $\varphi$ are one-dimensional functions), we deduce
\begin{equation*}
m(M+2)^{-1}\varphi (x)   \leq u(x)
\end{equation*}
for $x\in \R^{d-1}\times (0,1)$, which proves \eqref{eq:HopfTypeResultIn1d} for $K_0=0$.

Now, let us consider the case $K_0>0$.
Recall that $\sigma =1$.
Again, by scaling, it is enough to proof \eqref{eq:HopfTypeResultIn1d} for $R=4$.
Let $\varphi$ be the barrier from \autoref{Lem:UpperBarrierForBoundaryHarnack}, i.e. $\varphi (x) = \eta (x_d)x_d^{2s-1}$ for some smooth cutoff function $\eta$ with $\ind_{[0,1]} \leq \eta \leq \ind_{[0,2]}$ and $\mathcal{L}_{\R^d_+} \varphi \geq c_1 \left( \sqrt{x_d}\right) ^{d-1}$ on $I_1$ for some constant $c_1>0$.
Define
\begin{equation*}
v(x):=\frac{K_0}{c_1} \varphi (x) + u(x) .
\end{equation*}
Then $\mathcal{L}_{\R^d_+}v \geq 0$ on $I_1$ and $v\geq 0$ in $\R^d_+$.
Hence, we can apply \eqref{eq:HopfTypeResultIn1d} to $v$, by the first part of the proof and get
\begin{equation*}
\inf _{x\in I_4^+} \frac{u(x)}{x_d^{2s-1}} \leq \inf _{x\in I_4^+} \frac{v(x)}{x_d^{2s-1}} \leq c \inf _{x\in I_1} \frac{v(x)}{x_d^{2s-1}}  \leq c \left( \inf _{x\in I_1} \frac{u(x)}{x_d^{2s-1}} + K_0 \right) .
\end{equation*}
This proves \eqref{eq:HopfTypeResultIn1d} also for $K_0>0$.

Finally we will prove \eqref{eq:HopfTypeResultIn1dForSigmaLess1} for the case $\sigma \in (0,1)$.
Again, by scaling, we can assume that $R=1$.
Set $m=\inf _{I_{1,\sigma} ^+} \frac{u(x)}{x_d ^{2s-1}} \geq 0$ and $\varphi (x) :=m x_d^{2s-1}$.
Let $\kappa =(1-\sigma )/2$.
By construction, we have
\begin{equation*}
\varphi (x) \leq u(x) \qquad \text{for all} \quad x\in I_{1,\sigma} ^+ =  \R^{d-1} \times \left( (1-\sigma ) \kappa, \kappa \right) .
\end{equation*}
By \autoref{Prop:CalculationsInHalfspace}, we also have $\mathcal{L}_{\R^d_+} \varphi = 0 \leq \mathcal{L}_{\R^d_+}  u$ on $I_{ (1-\sigma) \kappa}$.
Notice, that $\supp \mathcal{K} (x, \cdot ) \subset I_{ \kappa}$ for all $x\in I_{ (1-\sigma)\kappa }$.
Hence, we can apply the maximum principle to conclude
\begin{equation*}
m x_d^{2s-1} \leq u(x)
\end{equation*}
for all $x\in I_{ (1-\sigma) \kappa }$ which proves \eqref{eq:HopfTypeResultIn1dForSigmaLess1}.
\end{proof}

\begin{Lemma}\label{Lem:InteriorHarnack}
Fix $\sigma  =1$, $s\in (1/2,1)$, $\epsilon >0$.
Assume that $u\in C(\overline{\R^d_+}) \cap C^{2s+\epsilon}(\R^d_+) \cap L^\infty_{2s-\epsilon }(\R^d_+)$ is a one dimensional function, i.e. $u(x)=\tilde{u}(x_d)$, and $u$ satisfies  
\begin{alignat*}{2}
| \mathcal{L}_{\R^d_+,\sigma } u(x) | & \leq K_0 \qquad  & &\text{for} \quad x\in I_{3R/4} \\
u&\geq 0  \qquad & & \text{on }\quad \R^{d}_+ 
\end{alignat*}
for some $K_0\geq 0$ and $R>0$.
Then
\begin{equation}\label{eq:InteriorHarnackWithx2s-1}
\sup _{x\in I_{R}^+} \frac{u(x)}{x_d^{2s-1}} \leq c \left( \inf _{x\in I^+_{R }} \frac{u(x)}{x_d^{2s-1}} + K_0R  \right)
\end{equation}
for some constant $c=c(d,s )>0$.

For $\sigma \in (0,1)$, assume that $u$ satisfies
\begin{align*}
 \mathcal{L}_{\R^d_+,\sigma } u(x)  & =0 \qquad \text{for} \quad x\in I_{R(1-\sigma )} \\
u&\geq 0  \qquad \text{on}\quad I_R .
\end{align*}
Then
\begin{equation*}
\sup _{x\in I_{R,\sigma }^+} \frac{u(x)}{x_d^{2s-1}} \leq c  \inf _{x\in I^+_{R,\sigma }} \frac{u(x)}{x_d^{2s-1}} 
\end{equation*}
for some constant $c=c(d,s,\sigma )>0$.
\end{Lemma}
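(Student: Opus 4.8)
The plan is to reduce the statement to a standard interior Harnack inequality for the fractional Laplacian. First, applying the scaling of the operator (\autoref{Lem:ScalingOperator}) to $u_R(x):=u(Rx)$, which satisfies $\mathcal{L}_{\R^d_+,\sigma}u_R(x)=R^{2s}(\mathcal{L}_{\R^d_+,\sigma}u)(Rx)$, it suffices to treat $R=1$. On the sub-strip in question (i.e.\ $I_1^+$ when $\sigma=1$, $I_{1,\sigma}^+$ when $\sigma\in(0,1)$) the height $x_d$ ranges over a compact subinterval of $(0,\infty)$ depending only on $d,s,\sigma$, so there $x_d^{2s-1}$ is bounded above and below by positive constants; hence the claim is equivalent to the plain interior Harnack inequality $\sup u\le c(\inf u+K_0)$ (resp.\ $\sup u\le c\inf u$) over that strip. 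Since $u$ is one-dimensional and, by \eqref{eq:ShiftingBInHalfSpace}, the operator is invariant under horizontal translations, it is enough to bound $\tilde u(a)/\tilde u(b)$ for two heights $a,b$ in this subinterval, i.e.\ to compare $u$ at the points $\xi_a=(0,\dots,0,a)$ and $\xi_b=(0,\dots,0,b)$, both of which lie at distance $\ge\delta_\ast(d,s,\sigma)>0$ from $\partial\R^d_+$. I would then connect $\xi_a$ and $\xi_b$ by a chain of a controlled number of balls $B_{\delta_0}(z_1),\dots,B_{\delta_0}(z_N)\subset\R^d_+$, centred on the $x_d$-axis, with consecutive balls overlapping, where $\delta_0=\delta_0(d,s,\sigma)$ is small enough that $d_{\R^d_+}(z_i)\ge\delta_\ast$ (so that, by \eqref{eq:BLocally1AroundPoint}, $\mathcal{B}_{\R^d_+,\sigma}(x,y)=1$ whenever $x\in B_{\delta_0}(z_i)$ and $y\in B_{2\delta_0}(x)$) and $B_{4\delta_0}(z_i)$ lies in the region where $\mathcal{L}_{\R^d_+,\sigma}u$ is controlled.

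On each such ball I would rewrite the equation in terms of the fractional Laplacian exactly as in the proof of \autoref{Lem:InteriorRegularity}: extending $u$ by zero outside $\R^d_+$,
\[
(-\Delta)^s u(x)=\mathcal{L}_{\R^d_+,\sigma}u(x)+u(x)\int_{\R^d}\frac{1-\mathcal{B}_{\R^d_+,\sigma}(x,y)\,\mathbbm{1}_{\R^d_+}(y)}{|x-y|^{d+2s}}\intd y-\int_{\R^d_+}\frac{u(y)(1-\mathcal{B}_{\R^d_+,\sigma}(x,y))}{|x-y|^{d+2s}}\intd y=:\mathcal{L}_{\R^d_+,\sigma}u(x)+h(x),
\]
where no principal value is needed in the correction because $1-\mathcal{B}_{\R^d_+,\sigma}(x,\cdot)$ vanishes on $B_{2\delta_0}(x)$.

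The heart of the matter is the bound $\|h\|_{L^\infty(B_{\delta_0}(z_i))}\le C$, and this is the step I expect to be the main obstacle. The first correction integral is $\le C(d,s,\sigma)\,|u(x)|$ since its kernel is integrable away from $B_{2\delta_0}(x)$. For the second one I would use crucially that $u$ is one-dimensional — as stressed in the strategy section, one cannot simply pass to $\mathcal{L}_{\R_+,\sigma}\tilde u$ — so that the growth hypothesis \eqref{eq:GrowthCondtionBoundaryHarnack} sharpens to $|u(y)|=|\tilde u(y_d)|\le c_0(1+y_d^{2s-\epsilon_0})$ for every $y\in\R^d_+$; splitting the integral into $\{\delta_0<|x-y|<1\}$, where $y_d$ is bounded, and $\{|x-y|\ge1\}$, where $y_d\le x_d+|x-y|$ so $y_d^{2s-\epsilon_0}\lesssim 1+|x-y|^{2s-\epsilon_0}$, together with a description of the supports of $\mathcal{B}_{\R^d_+,\sigma}(x,\cdot)$ (the ball together with the ellipsoid from \eqref{eq:EllipsoidEquation} when $\sigma\in(0,1)$, and the region above the paraboloid \eqref{eq:ParaboloidEquation} when $\sigma=1$) makes it converge and bounded. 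Since $|u(x)|$ is likewise bounded on the strip, this yields $\|h\|_{L^\infty(B_{\delta_0}(z_i))}\le C$, hence $\|(-\Delta)^s u\|_{L^\infty(B_{\delta_0}(z_i))}\le K_0+C$ when $\sigma=1$ and $\le C$ when $\sigma\in(0,1)$ (where $\mathcal{L}_{\R^d_+,\sigma}u\equiv0$); in the latter case the strips $I_{R(1-\sigma)}$, $I_{R,\sigma}^+$ are precisely the ones for which the relevant parts of $\partial\R^d_+$ stay out of the picture.

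Having reduced to $(-\Delta)^s u=f_i$ in $B_{\delta_0}(z_i)$ with $\|f_i\|_{L^\infty}\lesssim K_0+1$ and $u\ge0$ in all of $\R^d$ (recall $u=0$ outside $\R^d_+$, so the exterior tail terms have the right sign), I would invoke the classical interior Harnack inequality for non-negative solutions of the fractional Laplacian with $L^\infty$ right-hand side (see e.g.\ \cite{RosOton2024Book}), which has constant depending only on $d,s$, and then chain these estimates along $z_1,\dots,z_N$ using that consecutive balls overlap. Translating back via $x_d^{2s-1}\asymp 1$ on the strip and undoing the rescaling gives the asserted inequality, the factor $R$ in front of $K_0$ appearing because $K_0$ carries a $2s$-homogeneous scaling whereas the weight $x_d^{2s-1}$ carries a $(2s-1)$-homogeneous one; the contribution of the correction $h$ enters only through a term of the same type as $K_0R$ and is absorbed. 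The case $\sigma\in(0,1)$ is identical except that there is no surviving right-hand side and the constant is allowed to depend on $\sigma$ through $\delta_0$ and $\delta_\ast$.
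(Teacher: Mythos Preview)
Your approach is genuinely different from the paper's, and it has a gap. The paper does not pass through the fractional Laplacian at all: after rescaling to $R=1$ it applies an interior Harnack inequality \emph{directly} to the operator $\mathcal{L}_{\R^d_+,\sigma}$ (citing \cite{Cozzi2017} together with \cite{DiCastro2014}), and then uses that $x_d^{2s-1}\asymp 1$ on $I_{1,\sigma}^+$. The point of working with $\mathcal{L}$ itself is that the known nonlocal Harnack inequalities for kernels with the upper bound $K\le \Lambda|x-y|^{-d-2s}$ and a lower bound only near the diagonal produce a constant depending solely on the structural data of the kernel; the only global input is the tail of $u_-$, which vanishes because $u\ge 0$. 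For $\sigma\in(0,1)$ the paper further uses that $\supp\mathcal{K}(x,\cdot)\subset I_R$ for $x\in I_{R(1-\sigma)}$, so nonnegativity on $I_R$ suffices.

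Your rewriting $(-\Delta)^s u=\mathcal{L}_{\R^d_+,\sigma}u+h$ breaks exactly this independence. The correction $h(x)=u(x)\int\frac{1-\mathcal{B}\mathbbm{1}_{\R^d_+}}{|x-y|^{d+2s}}\intd y-\int_{\R^d_+}\frac{u(y)(1-\mathcal{B})}{|x-y|^{d+2s}}\intd y$ is \emph{not} bounded by a constant depending only on $d,s,\sigma$: the first piece is of size $C\,u(x)$ and the second is of size $C\|u\|_{L^\infty_{2s-\epsilon}}$ (your reference to the growth bound even brings in the constant $c_0$, which is a norm of $u$). Plugging $\|h\|_{L^\infty}$ into the Harnack for $(-\Delta)^s$ therefore yields a constant depending on $u$, which is incompatible with the claimed $c=c(d,s)$; downstream in \autoref{Lem:OscDecayBoundaryHaranck} this would force $\alpha$ to depend on $c_0$, contrary to what is stated there. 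For $\sigma\in(0,1)$ there is a second issue: the hypothesis is only $u\ge 0$ on $I_R$, so after extending by zero you do not have $u\ge 0$ on $\R^d$, and the Harnack for $(-\Delta)^s$ does not apply without a tail term for $u_-$. The fix is the paper's: apply the Harnack to $\mathcal{L}_{\R^d_+,\sigma}$ directly rather than detouring through $(-\Delta)^s$.
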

\begin{proof}
We start with the case $\sigma =1$.
By scaling, it is enough to prove the lemma for $R=1$.
The interior Harnack inequality (see, e.g., \cite{Cozzi2017} and adapt \cite[Lem. 4.2]{DiCastro2014} with the assumptions \cite[(2.2), (2.3)]{DiCastro2014} to obtain \cite[(6.47)]{Cozzi2017}) gives us
\begin{equation*}
\sup _{x\in I_{1,\sigma }^+} u(x) \leq c \left(  \inf _{x\in I_{1,\sigma }^+} u(x) +K_0 \right)   .
\end{equation*}
Now \eqref{eq:InteriorHarnackWithx2s-1} follows easily since $1/(x_d)^{2s-1}$ is bounded from above and below for $x\in I_{1,\sigma }^+$.

For $\sigma \in (0,1)$ we proceed similarly.
Note, that for $\sigma \in (0,1)$, the assumption $u\geq 0$ on $I_{R}$ is enough, since $\supp \mathcal{K}(x,\cdot ) \subset I_R$ for all $x\in I_{R(1-\sigma )}$. 
\end{proof}

\begin{Lemma}\label{Lem:OscDecayBoundaryHaranck}
Fix $\sigma \in (0,1]$, $s\in (1/2,1)$, $\epsilon >0$.
Assume that $u\in C^{2s+\epsilon}_{loc}(\R^d_+) \cap C(\overline{\R^d_+ }) \cap L^\infty_{2s-\epsilon}(\R^d_+)$ is a one dimensional function, i.e. $u(x)=\tilde{u}(x_d)$, and $u$ satisfies  
\begin{align*}
\mathcal{L}_{\R^d_+,\sigma } u(x) & = 0 \qquad \text{for} \quad x\in I_2 , \\
u(0)&=0 .
\end{align*}
Furthermore, assume that $u$ satisfies the growth condition \eqref{eq:GrowthCondtionBoundaryHarnack} for some $c_0 \geq 0$ and $\epsilon _0 >0$.
Set $\theta =4$ for $ \sigma =1$ and $\theta = 2(1-\sigma )^{-2}$ for $\sigma \in (0,1)$. 

Then there exists a non-decreasing sequence $(m_n)$, a non-increasing sequence $(M_n)$ and constants $\alpha = \alpha ( s,d, \sigma ) \in (0,1)$ and $K=K(s,d, \sigma ,c_0 ,\epsilon _0)>0$ such that
\begin{equation} \label{eq:BoundsMnmn}
m_n \leq \frac{u(x)}{x_d^{2s-1}} \leq M_n \qquad \text{for all} \quad x \in I_{\theta ^{-n}}
\end{equation}
and
\begin{equation}\label{eq:OscDecayMnmn}
 M_n - m_n = K \theta^{-n \alpha} 
\end{equation}
for all $n\in \N_0$.
\end{Lemma}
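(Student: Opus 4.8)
The plan is to construct $(m_n)$ and $(M_n)$ by induction on $n$ together with the constants $K$ and $\alpha$. Write $\rho_n:=\theta^{-n}$ and $v:=u/x_d^{2s-1}$. For $n=0$ I first record that $v$ is bounded on $I_1=I_{\rho_0}$: this is the half-space counterpart of Step 2 of \autoref{Prop:HoelderRegulUptoBoundaryLocal}. Cutting $u$ off at $I_2$ turns the far field into a bounded (indeed $x_d^{(d-1)/2}$-weighted) right-hand side, using \eqref{eq:GrowthCondtionBoundaryHarnack} and the shape of $\supp\mathcal{K}_{\R^d_+,\sigma}(x,\cdot)$; comparing the cut-off function with a large multiple of the $\mathcal{L}_{\R^d_+,\sigma}$-harmonic function $x_d^{2s-1}$ (\autoref{Prop:CalculationsInHalfspace} with $p=2s-1$) via the maximum principle then gives $|u|\le C x_d^{2s-1}$ on $I_1$ for some $C=C(d,s,\sigma,c_0,\epsilon_0)$. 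I set $m_0:=-C$, $M_0:=C$, $K:=2C$, so \eqref{eq:BoundsMnmn} and \eqref{eq:OscDecayMnmn} hold for $n=0$.

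For the inductive step, assume $m_0\le\dots\le m_n$ and $M_n\le\dots\le M_0$ satisfy \eqref{eq:BoundsMnmn}--\eqref{eq:OscDecayMnmn}. It suffices to show $\sup_{I_{\rho_{n+1}}}v-\inf_{I_{\rho_{n+1}}}v\le K\theta^{-(n+1)\alpha}$, since an interval of that length can then be slid between $[m_n,M_n]$ and the range of $v$ on $I_{\rho_{n+1}}\subset I_{\rho_n}$ while preserving monotonicity. Consider $w:=u-m_n x_d^{2s-1}$ and $\bar w:=M_n x_d^{2s-1}-u$; by \autoref{Prop:CalculationsInHalfspace} both are $\mathcal{L}_{\R^d_+,\sigma}$-harmonic in $I_2$, both are nonnegative on $I_{\rho_n}$, and $w+\bar w=(M_n-m_n)x_d^{2s-1}$. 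Pick a point $\xi$ in the collar $I^+_{\rho_n,\sigma}$; then $w(\xi)/\xi_d^{2s-1}\ge\tfrac12(M_n-m_n)$ or $\bar w(\xi)/\xi_d^{2s-1}\ge\tfrac12(M_n-m_n)$, say the former (the other case being symmetric under $u\mapsto-u$). Apply the interior Harnack inequality \autoref{Lem:InteriorHarnack} on $I^+_{\rho_n,\sigma}$ to spread this to $w\ge c_\ast(M_n-m_n)x_d^{2s-1}$ on the whole collar, and then the Hopf-type estimate \autoref{Lem:HopfTypeResult} at scale $\rho_n$ to propagate it down, getting $u-m_n x_d^{2s-1}=w\ge\bigl(c_\ast'(M_n-m_n)-E_n\bigr)x_d^{2s-1}$ on $I_{\rho_{n+1}}$ for a fixed $c_\ast'\in(0,1)$ and an error term $E_n$. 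Hence $m_{n+1}$ may be raised to $m_n+c_\ast'(M_n-m_n)-E_n$ (and $M_{n+1}:=M_n$), so $\sup_{I_{\rho_{n+1}}}v-\inf_{I_{\rho_{n+1}}}v\le(1-c_\ast')(M_n-m_n)+E_n$; choosing $\alpha=\alpha(d,s,\sigma)>0$ small enough that $(1-c_\ast')+E_n/(M_n-m_n)\le\theta^{-\alpha}$ for all $n$ (after enlarging $K$ to absorb the $n$-independent part of $E_n$ coming from $c_0$) closes the induction.

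For $\sigma\in(0,1)$ this is immediate: the choice $\theta=2(1-\sigma)^{-2}$ forces $\supp\mathcal{K}_{\R^d_+,\sigma}(x,\cdot)\subset I_{\rho_n(1-\sigma)/2}\subset I_{\rho_n}$ for $x$ in the deep region, so the operator only sees $w,\bar w$ where they are nonnegative, no truncation is needed, $E_n\equiv 0$, and \autoref{Lem:InteriorHarnack} and \autoref{Lem:HopfTypeResult} apply to $w$ verbatim. The genuine obstacle is $\sigma=1$: because $\supp\mathcal{K}_{\R^d_+,1}(x,\cdot)$ contains the unbounded paraboloid $P(x)$, $w$ and $\bar w$ are not globally nonnegative, so one must pass to the truncations $\max\{\pm(u-m_n x_d^{2s-1}),0\}$ and control the nonlocal defect $\mathcal{L}_{\R^d_+,1}$ of the truncation. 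Using the annular bounds $(w)_-\le K\theta^{-k\alpha}x_d^{2s-1}$ on $I_{\rho_k}\setminus I_{\rho_{k+1}}$ (forced by \eqref{eq:BoundsMnmn} at the earlier scales) together with the paraboloid geometry and \eqref{eq:GrowthCondtionBoundaryHarnack} for the true far field, this defect is bounded by an $x_d^{(d-1)/2}$-weighted quantity $K_0$ of exactly the form admitted by \autoref{Lem:InteriorHarnack}, \autoref{Lem:HopfTypeResult} and the barriers of \autoref{Sec:BarriersForSigma1}. Keeping $E_n$ a small enough fraction of the current oscillation for the geometric decay to survive is precisely what requires the careful bookkeeping of these annular contributions and the specific value $\theta=4$, and is what prevents an explicit value of $\alpha$.
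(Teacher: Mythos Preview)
Your proposal is correct and very close to the paper's argument; the one real difference is the mechanism you use to extract the contraction. You run a \emph{dichotomy}: pick a point $\xi$ in the collar, decide whether $w=u-m_n x_d^{2s-1}$ or $\bar w=M_n x_d^{2s-1}-u$ is at least half the oscillation there, and then improve only that side via Harnack followed by Hopf. The paper instead applies the Harnack--Hopf chain to \emph{both} truncations $v_+=(u-m_k x_d^{2s-1})_+$ and $w_+=(M_k x_d^{2s-1}-u)_+$ and \emph{sums} the two resulting inequalities; since the two quantities add up to the constant $M_k-m_k$, this summation yields the oscillation bound $\sup_{I_{\rho_{k+1}}}v-\inf_{I_{\rho_{k+1}}}v\le\frac{c_3-1}{c_3}(M_k-m_k)+\text{error}$ directly, without a case distinction. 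Both devices are classical and equivalent in strength; your version avoids doing the estimate twice, while the paper's version is symmetric and makes the contraction constant slightly more transparent.

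The technical core---for $\sigma=1$, bounding $|\mathcal{L}_{\R^d_+,1}(w_+)|$ on $I_{\frac34\rho_n}$ by a quantity of the form $c\bigl(Kf(\alpha)+c_2\bigr)\rho_n^{\,\alpha-1}(x_d/\rho_n)^{(d-1)/2}$ using the annular bounds $(w)_-\le K\theta^{-j\alpha}x_d^{2s-1}$ on $I_{\rho_j}\setminus I_{\rho_{j+1}}$, the paraboloid geometry of $P(x)$, and the far-field growth \eqref{eq:GrowthCondtionBoundaryHarnack}---is identical in both approaches, and you identify it correctly. The paper makes this explicit by showing that the resulting function $f(\alpha)=\int_{1/4}^\infty(4t)^{2s-1}\frac{(16t)^\alpha-1}{t^{1+2s}}\,dt$ tends to $0$ as $\alpha\to 0$, which is exactly the ingredient that lets one first fix $K$ large enough to absorb the $c_2$ term (your ``$n$-independent part of $E_n$ coming from $c_0$'') and then take $\alpha$ small enough that the total contraction factor drops below $\theta^{-\alpha}$. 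Your ``sliding'' construction of $(m_{n+1},M_{n+1})$ is also a clean way to guarantee the monotonicity of the two sequences once the oscillation bound is in hand.
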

\begin{proof}
We will first show how to prove \eqref{eq:BoundsMnmn} and \eqref{eq:OscDecayMnmn} for $\sigma =1$.

\emph{Step 1:}
We start with the case $n=0$.
Using step 2 from the proof of \autoref{Prop:HoelderRegulUptoBoundaryLocal} (see \eqref{eq:uLeqDist}), we get $|u| \leq c_1 x_d^{2s-1}$ on $I_1$ for some constant $c_1=c_1(d,s,c_0,\epsilon _0)>0$.
By setting $m_0 := \inf _{x\in I_1} \frac{u(x)}{x_d^{2s-1}}$ and $M_0 := K+m_0$ for a constant $K\geq 2c_1$ which we will choose later, \eqref{eq:BoundsMnmn} and \eqref{eq:OscDecayMnmn} follow for $n=0$.

\emph{Step 2:}
Assume that \eqref{eq:BoundsMnmn} and \eqref{eq:OscDecayMnmn} holds for all $n\leq k$ for some $K\geq 2 c_1$ and $\alpha \in (0,1)$ which we will choose later. We will now construct $m_{k+1}$ and $M_{k+1}$.
Set
\begin{equation*}
v(x) := u(x) -m_k x_d^{2s-1} .
\end{equation*}
We decompose $v$ into its positive and negative part $v=v_+ - v_-$.
Notice that $v=v_+$ on $I_{4^{-k}}$ by \eqref{eq:BoundsMnmn}.

Let $y \in I_1 \setminus I_{4^{-k}}$ and choose the integer $j\leq k-1$ such that $4^{-j-1}< y_d \leq 4^{-j}$.
Then, 
\begin{gather}\label{eq:LowerBoundOnV}
\begin{aligned}
v(y) & \geq (m_j -m _k)y_d^{2s-1} \geq (m_j-M_j+M_k-m_k) y_d^{2s-1}  \\ 
& = K (-4 ^{-j\alpha} + 4^{-k \alpha }) y_d^{2s-1}  = -K 4^{-k (2s-1+\alpha)} \left( \frac{y_d}{4^{-k}} \right) ^{2s-1} \left( \frac{4^{-j\alpha}}{4^{-k \alpha}}-1 \right) \\
& \geq -K4^{-k(2s-1+\alpha )} \left( \frac{y_d}{4^{-k}} \right) ^{2s-1} \left( \left( \frac{4y_d}{4^{-k}} \right) ^\alpha -1 \right) .
\end{aligned}
\end{gather}
Now let $x \in  I_{\frac{3}{4} 4^{-k} }$. We want to bound $|\mathcal{L}_{\R^d_+} v_+(x) |$, so w.l.o.g. we can assume that $x=(0, \dots ,0, x_d)$.
Then
\begin{equation}\label{eq:J1J2PartLVMinus}
\begin{aligned}
0\leq -\mathcal{L}_{\R^d_+} v_-(x)  & = \frac{1}{2} \int _{P(x)} \frac{v_-(y)}{|x-y|^{d+2s}} \intd y  +
\frac{1}{2} \int _{B_{x_d}(x)} \frac{v_-(y)}{|x-y|^{d+2s}} \intd y  \\
& = J_1 +J_2 .
\end{aligned}
\end{equation}
We will start with $J_1$.
Note, that
\begin{align*}
J_1 & = \frac{1}{2}\int_{4 ^{-k}}^\infty  \int_{\left\{ |y'|\leq \sqrt{2x_dt-x_d^2} \right\}}  \,  \frac{v_-(t)}{\left( (t-x_d)^2+|y'|^2  \right) ^{\frac{d+2s}{2}}}  \intd y' \intd t\\
& = \frac{1}{2}\int_{4 ^{-k}}^\infty  \frac{v_-(t)}{|t-x_d|^{1+2s}}  \int_{\left\{ |u| \leq \frac{\sqrt{2x_dt-x_d^2}}{t-x_d} \right\}}  \frac{1}{\left( 1+|u| ^2 \right) ^\frac{d+2s}{2}}  \intd u \intd t
\end{align*}
where we have used that $v_-=0$ on $I_{4^{-k}}$, as well as the explicit formula for the kernel $\mathcal{K}$ (see \eqref{eq:KernelKInHalfspace} and \eqref{eq:ParabolaPartKernel}) and the transformation $y'= (t-x_d)u$.
To bound the integral in the $u$ variable, we will use that
\begin{equation*}
\frac{\sqrt{2x_dt-x_d^2}}{t-x_d} = \sqrt{x_d} \frac{\sqrt{2t-x_d}}{t-x} \leq \sqrt{x_d} \frac{\sqrt{2t-x_d}}{t/4} = \sqrt{x_d}\frac{4}{\sqrt{t}} \sqrt{2-\frac{x_d}{t}} \leq 8 \sqrt{\frac{x_d}{4^{-k}}}
\end{equation*}
for all $t\in ( 4^{-k} , \infty )$ and $x \in  I_{\frac{3}{4} 4^{-k} }$ .
Hence, we obtain
\begin{equation*}
0\leq J_1 \leq c\left(\frac{x_d}{4 ^{-k}}\right) ^\frac{d-1}{2}  \int_{4 ^{-k}}^\infty  \frac{v_-(t)}{|t-x_d|^{1+2s}}\intd t  .
\end{equation*}
Using \eqref{eq:GrowthCondtionBoundaryHarnack} and \eqref{eq:LowerBoundOnV}, we get
\begin{align*}
0\leq J_1 &\leq  c\left(\frac{x_d}{4 ^{-k}}\right) ^\frac{d-1}{2} \int_{4 ^{-k}}^1 K4^{-k(2s-1+\alpha )} \left( \frac{t}{4^{-k}} \right) ^{2s-1} \frac{  \left( \frac{4t}{4^{-k}} \right) ^\alpha -1 }{|t-x_d|^{1+2s}}\intd t \\
 & \qquad + c  \left(\frac{x_d}{4 ^{-k}}\right) ^\frac{d-1}{2} \int _1^\infty c_0 \frac{1+ t ^{2s- \epsilon _0}}{|t-x_d|^{1+2s}}\intd t \\
 & = c \left(\frac{x_d}{4 ^{-k}}\right) ^\frac{d-1}{2} (J_{2,1} + J_{2,2}) 
\end{align*}
for $x \in  I_{\frac{3}{4} 4^{-k} }$.
By a direct computation, we see that $J_{2,2} \leq c_2$ for some constant $c_2=c_2(d,s,c_0,\epsilon _0)>0$.
For $J_{2,1} $, we obtain for $x \in  I_{\frac{3}{4} 4^{-k} }$
\begin{align*}
J_{2,1}  &= K4^{-k(2s-1+\alpha )} \int_{4 ^{-k} - x_d}^{1-x_d}  \left( \frac{t+x_d}{4^{-k}} \right) ^{2s-1} \frac{  \left( \frac{4(t+x_d)}{4^{-k}} \right) ^\alpha -1 }{t^{1+2s}}\intd t \\
& \leq K4^{-k(2s-1+\alpha )} \int_{4^{-k}/4}^{1}  \left( \frac{t+x_d}{4^{-k}} \right) ^{2s-1} \frac{  \left( \frac{4(t+x_d)}{4^{-k}} \right) ^\alpha -1 }{t^{1+2s}}\intd t \\
& \leq K4^{-k(2s-1+\alpha )} \int_{4^{-k}/4}^{1}  \left( \frac{4t}{4^{-k}} \right) ^{2s-1} \frac{  \left( \frac{16t}{4^{-k}} \right) ^\alpha -1 }{t^{1+2s}}\intd t \\
& = K4^{-k(\alpha -1 )} \int_{1/4}^{4^k}  \left( 4t \right) ^{2s-1} \frac{  \left( 16t  \right) ^\alpha -1 }{t^{1+2s}}\intd t \\
& \leq K4^{-k(\alpha -1 )} f( \alpha )
\end{align*}
where
\begin{equation*}
f(\alpha ) := \int_{1/4}^{\infty}  \left( 4t \right) ^{2s-1} \frac{  \left( 16t  \right) ^\alpha -1 }{t^{1+2s}}\intd t .
\end{equation*}
Notice, that by the dominated convergence theorem, $f(\alpha ) \to 0$ as $\alpha \to 0$.

Combining the estimates on $J_{2,1}$ and $J_{2,2}$, we obtain
\begin{equation}\label{eq:J1Bound}
J_1 \leq c ( K f(\alpha ) +c_2 ) 4^{-k(\alpha -1 )} \left(\frac{x_d}{4 ^{-k}}\right) ^\frac{d-1}{2} .
\end{equation}
Note, that by similar arguments, one can also proof the estimate \eqref{eq:J1Bound} for $J_2$ from \eqref{eq:J1J2PartLVMinus}.
Hence, together with $\mathcal{L}_{\R^d_+} v (x) =0$, we get
\begin{equation*}
| \mathcal{L}_{\R^d_+} v_+ (x) | \leq  | \mathcal{L}_{\R^d_+} v (x) | +| \mathcal{L}_{\R^d_+} v_- (x) | \leq c ( K f(\alpha ) +c_2 ) 4^{-k(\alpha -1 )} \left(\frac{x_d}{4 ^{-k}}\right) ^\frac{d-1}{2}
\end{equation*}
for all $x \in  I_{\frac{3}{4} 4^{-k} }$.

Now, we apply \autoref{Lem:HopfTypeResult} and \autoref{Lem:InteriorHarnack} for $v_+$ with $R= 4^{-k}$ to obtain
\begin{gather}\label{eq:SupInfBoundV}
\begin{aligned}
\sup _{x\in I^+_{4^{-k}}} \left( \frac{u(x)}{x_d ^{2s-1}} - m_k \right) & \leq c  \left[ \inf _{x\in I^+_{4^{-k}}} \left( \frac{u(x)}{x_d ^{2s-1}} - m_k \right) + ( K f(\alpha ) +c_2 ) 4^{-k\alpha  }\right] \\
& \leq c   \left[ \inf _{x\in I_{4^{-k}/4}} \left( \frac{u(x)}{x_d ^{2s-1}} - m_k \right) + ( K f(\alpha ) +c_2 ) 4^{-k\alpha  }\right] 
\end{aligned}
\end{gather}
Repeating the same arguments as above for $w(x)= M_k x_d^{2s-1} -u(x)$ yields
\begin{equation} \label{eq:SupInfBoundW}
\sup _{x\in I^+_{4^{-k}}} \left( M_k - \frac{u(x)}{x_d ^{2s-1}}  \right) \leq c \left[ \inf _{x\in I_{4^{-k}/4}} \left( M_k - \frac{u(x)}{x_d ^{2s-1}}  \right) + ( K f(\alpha ) +c_2 ) 4^{-k\alpha  }\right] .
\end{equation}
Summing up \eqref{eq:SupInfBoundV} and \eqref{eq:SupInfBoundW} gives us
\begin{align*}
M_k - m_k & \leq c \left[ \inf _{x\in I_{4^{-k}/4}} \left( \frac{u(x)}{x_d ^{2s-1}} - m_k \right) +\inf _{x\in I_{4^{-k}/4}} \left( M_k - \frac{u(x)}{x_d ^{2s-1}}  \right) + ( K f(\alpha ) +c_2 ) 4^{-k\alpha  } \right] \\
& = c_3 \left[ M_{k} -m_k +\inf _{x\in I_{4^{-k}/4}}  \frac{u(x)}{x_d ^{2s-1}} -\sup _{x\in I_{4^{-k}/4}}  \frac{u(x)}{x_d ^{2s-1}}  +( K f(\alpha ) +c_2 ) 4^{-k\alpha  }\right] 
\end{align*}
for some constant $c_3=c_3(d,s )>1$.
Hence,
\begin{align*}
\sup _{x\in I_{4^{-k}/4}}  \frac{u(x)}{x_d ^{2s-1}} - \inf _{x\in I_{4^{-k}/4}}  \frac{u(x)}{x_d ^{2s-1}} & \leq \frac{c_3-1}{c_3} (M_k -m_k) +  ( K f(\alpha ) +c_2 ) 4^{-k\alpha  } \\
& = \left( \frac{c_3-1}{c_3} + f(\alpha ) +\frac{c_2}{K} \right) K 4^{-k \alpha }.
\end{align*}
By choosing $K = \max \{ 3c_2c_3 ,2c_1 \}$ and $\alpha \in (0,1)$ small enough such that
\begin{equation*}
f(\alpha ) \leq \frac{1}{3 c_3} \quad \text{and} \quad 1-\frac{1}{3c_3} \leq 4^{-\alpha} ,
\end{equation*}
we get
\begin{equation*}
\frac{c_3-1}{c_3} + f(\alpha ) +\frac{c_2}{K} \leq \frac{c_3-1}{c_3} +  \frac{1}{3 c_3} + \frac{c_2}{3c_2c_3} =  1-\frac{1}{3c_3} \leq 4^{-\alpha} .
\end{equation*}
By setting $m_{k+1}:= \inf _{x\in I_{4^{-k}/4}}  \frac{u(x)}{x_d ^{2s-1}}$ and $M_{k+1}:= m_{k+1} + K 4^{-\alpha (k+1)}$, both \eqref{eq:BoundsMnmn} and \eqref{eq:OscDecayMnmn} follow for $n=k+1$ in the case $\sigma =1$.

Let us now consider the case $\sigma \in (0,1)$. 
We will just comment on the differences in the proof.
The case $n=0$ follows from \eqref{eq:uLeqDist} as above.
In step 2, we can leave out the estimate on $| \mathcal{L}_{\R^d_+} v_+ (x)|$ since by the choice of $\theta$ we can directly apply \autoref{Lem:HopfTypeResult} and \autoref{Lem:InteriorHarnack} to $v(x)=u(x)-m_kx_d^{2s-1}$ and $R=\theta ^{-k}$ to obtain
\begin{equation*}
\sup _{x\in I^+_{\theta^{-k},\sigma }} \left( \frac{u(x)}{x_d ^{2s-1}} - m_k \right)  \leq c   \inf _{x\in I^+_{\theta^{-k},\sigma }} \left( \frac{u(x)}{x_d ^{2s-1}} - m_k \right) 
 \leq c   \inf _{x\in I_{\theta^{-(k+1)}}} \left( \frac{u(x)}{x_d ^{2s-1}} - m_k \right) .
\end{equation*}
Hence, as above, we get
\begin{equation*}
\sup _{x\in I_{\theta^{-(k+1)}}}  \frac{u(x)}{x_d ^{2s-1}} - \inf _{x\in I_{\theta^{-(k+1)}}}  \frac{u(x)}{x_d ^{2s-1}} \leq  \frac{c_3-1}{c_3} K \theta ^{-k \alpha } .
\end{equation*}
By choosing $\alpha \in (0,1)$ such that $\frac{c_3-1}{c_3} \leq \theta ^{-\alpha}$, we finish the proof.
\end{proof}

\subsection{Proof of 1-d boundary Harnack} \label{sec:OneDBoundaryHarnack}

\begin{proof}[Proof of \autoref{Prop:BoundaryHarnack}]
By scaling, it is enough to prove the result for $R=1$.
Set $d\colon \R_+\to \R$, $d(x):=x$ and $v\colon \R_+ \to\R$, $v:= \tilde{u}  /d^{2s-1}$.
By dividing $u$ by some constant, we can assume that $\| u \|_{L^\infty (I_2)} \leq 1$, so it remains to show that $[v]_{C^\alpha (0,1)} \leq c $ for some constant $c=c(d,s,\sigma , c_0 ,\epsilon _0 )$.

By \autoref{Lem:OscDecayBoundaryHaranck}, we have
\begin{equation} \label{eq:OscDecayFromOscDecayLemmaUsedInProofHoelderReg}
\sup _{x\in (0, \theta ^{-n})} v(x)  - \inf _{x\in (0, \theta ^{-n})} v(x)  \leq K \theta ^{-n\gamma } \quad \text{for all} \quad n\in \N
\end{equation}
for some constants $\theta =\theta (\sigma )>1$, $K=K(s,d,\sigma ,c_0 ,\epsilon _0 )>0$ and $\gamma = \gamma (s,d,\sigma , c_0 , \epsilon _0) \in (0,1)$.
Hence, from \eqref{eq:OscDecayFromOscDecayLemmaUsedInProofHoelderReg}, we know that $v$ can be continuously extended to $x=0$.

Set $\kappa = \frac{1}{2(2+3/\sigma )}$.
Then, by \autoref{Cor:ScaledDownInteriorRegularity}, we have for all $y\in (0,1)$
\begin{equation}\label{eq:InteriorEstimateUInHoelderProof}
[\tilde{u} ] _{C^{2s-1} (y-\kappa y, y+\kappa y)}  \leq c y^{-2s+1}
\end{equation}
for some constant $c =c(d,s,\sigma ,c_0 ,\epsilon _0)$.
Furthermore, by scaling, we obtain
\begin{equation}\label{eq:InteriorEstimateDInHoelderProof}
[d^{-2s+1} ]_{C^{2s-1} (y(1-\kappa ), y(1+\kappa ))} = y^{-4s+2} [d^{-2s+1} ]_{C^{2s-1} (1-\kappa , 1+\kappa )} \leq c y^{-4s+2}
\end{equation}
for some constant $c=c(s,\sigma )>0$.
Combining \eqref{eq:InteriorEstimateUInHoelderProof} and \eqref{eq:InteriorEstimateDInHoelderProof}, together with $\| \tilde{u} \| _{L^\infty (0,2)}\leq 1$ and $\| d^{-2s+1} \| _{L^\infty ((y(1-\kappa ), y(1+\kappa )))} \leq cy^{-2s+1}$ we get
\begin{gather}\label{eq:InteriorEstimatesVInHoelderProof}
\begin{aligned}
[v]_{C^{2s-1} (y(1-\kappa ), y(1+\kappa ))} & \leq \| d^{-2s+1} \| _{L^\infty ((y(1-\kappa ), y(1+\kappa )))} [\tilde{u} ] _{C^{2s-1} (y-\kappa y, y+\kappa y)}  \\
& \qquad + \| \tilde{u} \| _{L^\infty (0,2)} [d^{-2s+1} ]_{C^{2s-1} (y(1-\kappa ), y(1+\kappa ))} \\
& \leq c y^{2-4s}
\end{aligned}
\end{gather}
for all $y\in (0,1)$ for some constant $c=c(d,s,\sigma ,\epsilon _0 ,c_0 )>0$.

Now, fix some $p>2$.
Let $x,y\in (0,1)$ with $x\leq y$ and $\rho =|y-x|$.

\emph{Case 1:} Assume that $\rho \geq \kappa y^p$.
We will choose $n\in \N$ such that $\theta ^{-(n+1)} < y \leq \theta ^{-n}$ with $\theta >1$ from \autoref{Lem:OscDecayBoundaryHaranck}.
Then, by using \eqref{eq:OscDecayFromOscDecayLemmaUsedInProofHoelderReg}, we obtain
\begin{equation*}
|v(x) -v(y)| \leq |v(x) -v(0)| + |v(y) -v(0)| \leq 2 K \theta ^{-n\gamma } \leq  c \rho ^{\gamma /p } 
\end{equation*}
for some constant $c=c(d,s,\sigma ,\epsilon _0 ,c_0 )>0$.

\emph{Case 2:} If $\rho < \kappa y^p$, then $\rho < \kappa y$ since $p>1$, which implies $x\in (y(1-\kappa),y(1+\kappa ))$.
Using \eqref{eq:InteriorEstimatesVInHoelderProof}, we obtain
\begin{equation*}
|v(x) -v(y)| \leq c \rho ^{2s-1} y^{2-4s} \leq c \rho ^{2s-1} \rho ^{\frac{2-4s}{p}} = c \rho ^{(2s-1) \left( 1-\frac{2}{p} \right)}
\end{equation*}
for some constant $c=c(d,s,\sigma ,\epsilon _0 ,c_0 )>0$

We finish the proof by choosing $\alpha = \min \{ \gamma /p, (2s-1)(1-2/p) \} >0$.
\end{proof}

\section{Liouville type result in half-space}\label{sec:LiouvilleTypeResult}
The goal of this section is to prove the following Liouville type result in the half-space using the boundary Harnack result from \autoref{Prop:BoundaryHarnack}.
\begin{Proposition}\label{Prop:LiouvilleInHalfspace}
Fix $s\in (1/2,1)$, $\sigma \in (0,1]$, $\epsilon >0$.
Assume that $u \in C(\overline{\R^d_+}) \cap C^{2s+\epsilon}_{loc} (\R^d_+)$ satisfies
\begin{gather}\label{eq:EquationLiouvilleThm}
\begin{aligned}
\mathcal{L}_{\R^d_+ ,\sigma } u &= 0 \qquad \text{in} \quad \R^d_+ ,\\
u&=0 \qquad \text{on} \quad \partial \R ^d_+ .
\end{aligned}
\end{gather}
Then there exists some $\alpha _*=\alpha _* (d,s,\sigma )>0$ such that, if $u$ satisfies the growth bound
\begin{equation}\label{eq:GrowthAssumptionLiouvilleThm}
|u(x)| \leq c (1+ |x| ^{2s-1+\alpha _* } ) \qquad (x\in \R^d_+)
\end{equation}
for some $c>0$, then $u(x)=A x_d^{2s-1}$ for some $A \in \R$.
\end{Proposition}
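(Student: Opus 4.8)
The plan is to run the standard two-step Liouville argument in the half-space, exploiting translation invariance to reduce to a one-dimensional problem and then quoting the boundary-Harnack result. First I would show that any solution $u$ of \eqref{eq:EquationLiouvilleThm} with the growth bound \eqref{eq:GrowthAssumptionLiouvilleThm} is in fact one-dimensional, i.e.\ $u(x)=\tilde u(x_d)$. Since $\mathcal{L}_{\R^d_+,\sigma}$ is translation invariant in the tangential directions (cf.\ \eqref{eq:ShiftingBInHalfSpace}), for any fixed $h=(h',0)\in\R^{d-1}\times\{0\}$ the difference quotient $v_h(x):=u(x+h)-u(x)$ again solves $\mathcal{L}_{\R^d_+,\sigma}v_h=0$ in $\R^d_+$ with $v_h=0$ on $\partial\R^d_+$, and it inherits a growth bound of the same order $2s-1+\alpha_*$. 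I would then apply the interior estimates (\autoref{Cor:ScaledDownInteriorRegularity}) together with the barrier bound $|u|\lesssim d_{\R^d_+}^{2s-1}$ (the half-space version of \eqref{eq:uLeqDist}, obtained from \autoref{Prop:Barrier}) to conclude that $v_h$ satisfies the hypotheses of \autoref{Prop:BoundaryHarnack} on every $I_R$; letting $R\to\infty$ in the resulting estimate $[v_h/x_d^{2s-1}]_{C^\alpha(I_R)}\le cR^{1-2s-\alpha}\|v_h\|_{L^\infty(I_{2R})}$, and using that $\|v_h\|_{L^\infty(I_{2R})}\lesssim R^{2s-1+\alpha_*}$ with $\alpha_*$ chosen smaller than $\alpha$, forces $v_h/x_d^{2s-1}$ to be constant in $x$, hence (since it vanishes nowhere-forcingly) $v_h\equiv c(h)x_d^{2s-1}$. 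Iterating in $h$ and using $v_h(x)/|h|\to\partial_{h'}u$ shows every tangential derivative of $u$ is a multiple of $x_d^{2s-1}$; but a function whose tangential gradient is $x$-independent and which has the stated subquadratic growth must be affine in $x'$ with coefficients depending only on $x_d$, and the growth bound together with $u=0$ on $\partial\R^d_+$ kills the tangential-linear part, giving $u(x)=\tilde u(x_d)$.

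Second, with $u=\tilde u(x_d)$ in hand, I would apply \autoref{Prop:BoundaryHarnack} directly to $u$ itself on $I_R$ for all $R\ge 1$: this yields
\begin{equation*}
\left[\frac{\tilde u(x_d)}{x_d^{2s-1}}\right]_{C^\alpha(I_R)}\le cR^{1-2s-\alpha}\|u\|_{L^\infty(I_{2R})}\le c R^{1-2s-\alpha}\,R^{2s-1+\alpha_*}=cR^{\alpha_*-\alpha},
\end{equation*}
where I use \eqref{eq:GrowthAssumptionLiouvilleThm} in the second inequality. Choosing $\alpha_*:=\alpha/2$ (with $\alpha=\alpha(d,s,\sigma)$ the exponent from \autoref{Prop:BoundaryHarnack}), the right-hand side tends to $0$ as $R\to\infty$, so $w(x_d):=\tilde u(x_d)/x_d^{2s-1}$ is a constant $A$ on $(0,\infty)$; that is, $u(x)=A\,x_d^{2s-1}$, which is the claim. (One checks this $u$ is admissible: $\mathcal{L}_{\R^d_+,\sigma}x_d^{2s-1}=a(2s-1,\sigma)x_d^{-1}=0$ by \autoref{Prop:CalculationsInHalfspace}.)

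The main obstacle I expect is the first step — rigorously passing from ``all tangential difference quotients of $u$ are multiples of $x_d^{2s-1}$'' to ``$u$ is one-dimensional'' while keeping the growth bookkeeping clean. Two subtleties need care. (i) Applying \autoref{Prop:BoundaryHarnack} to $v_h$ requires $v_h\in C^{2s+\epsilon}_{loc}(\R^d_+)\cap C(\overline{\R^d_+})$ and a growth bound $|v_h(y)|\le c_0(1+|y|^{2s-\epsilon_0})$; the regularity is inherited from $u$ by translation and the interior estimates, and the growth follows from \eqref{eq:GrowthAssumptionLiouvilleThm} provided $2s-1+\alpha_*<2s$, i.e.\ $\alpha_*<1$, which is harmless. (ii) The conclusion that $v_h/x_d^{2s-1}$ is $x$-independent gives $v_h(x)=\lambda(h)x_d^{2s-1}$; to deduce $u$ is one-dimensional one should argue, e.g., by fixing $x_d$ and noting $x'\mapsto u(x',x_d)$ has all first-order difference quotients independent of $x'$, hence is affine in $x'$, then invoking the sublinear-in-$x'$ growth (since $2s-1+\alpha_*<1$) to eliminate the linear term; the constant term is $\tilde u(x_d)$. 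An alternative that avoids iterating difference quotients is to directly show $\lambda(h)$ is additive and bounded, hence linear in $h$, $\lambda(h)=\langle b,h'\rangle$, and then the function $u(x)-\langle b,x'\rangle x_d^{2s-1}$ has vanishing tangential difference quotients and is therefore one-dimensional; finally $\langle b,x'\rangle x_d^{2s-1}$ violates the growth bound unless $b=0$. Either route works; the bookkeeping of which norm controls which is the only place where one must be careful.
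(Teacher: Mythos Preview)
Your second step is exactly the paper's argument: once $u$ is known to be one-dimensional, apply \autoref{Prop:BoundaryHarnack} on $I_R$, use the growth bound, and send $R\to\infty$ to force $\tilde u(x_d)/x_d^{2s-1}$ to be constant. That part is fine.

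The first step, however, has a genuine gap. You plan to apply \autoref{Prop:BoundaryHarnack} to the tangential difference $v_h(x)=u(x+h)-u(x)$ and conclude that $v_h/x_d^{2s-1}$ is constant. But \autoref{Prop:BoundaryHarnack} has as an explicit hypothesis that the input function is already one-dimensional, $u(x)=\tilde u(x_d)$; its proof (the oscillation decay in \autoref{Lem:OscDecayBoundaryHaranck}, the Hopf step in \autoref{Lem:HopfTypeResult}, the interior Harnack in \autoref{Lem:InteriorHarnack}) genuinely relies on this, because the operator degenerates near $\partial\R^d_+$ and the one-dimensionality is what makes the barriers and comparison arguments on the strips $I_R$ work. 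Since $v_h$ is not a priori one-dimensional, you cannot feed it into \autoref{Prop:BoundaryHarnack}, and so the conclusion $v_h=\lambda(h)x_d^{2s-1}$ is unjustified. Both of your proposed workarounds at the end presuppose that this identity has already been obtained, so they do not rescue the argument.

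The paper circumvents this by using a different tool for the reduction step: instead of the boundary Harnack, it invokes the boundary $C^{2s-1}$ estimate of \autoref{Prop:HoelderRegulUptoBoundaryLocal} (which does \emph{not} require one-dimensionality). Rescaled, this gives $[v]_{C^{2s-1}(B_R)}\le cR^{\mu-(2s-1)}$ whenever $v$ solves the equation with $\|v\|_{L^\infty(B_R)}\le R^{\mu}$. Applying this to $u$ and then to the successive difference quotients
\[
v_1^h=\frac{u(\cdot+h)-u}{c_1|h|^{2s-1}},\qquad v_k^h=\frac{v_{k-1}^h(\cdot+h)-v_{k-1}^h}{c_1|h|^{2s-1}},
\]
each iteration drops the growth exponent by $2s-1$; after finitely many steps the exponent is negative and $v_{\tilde k}^h\equiv 0$. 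This shows $u$ is a polynomial in $x'$, and the growth bound $2s-1+\alpha_*<2$ cuts it down to degree one: $u(x)=\sum_i u_i(x_d)x_i+\tilde u(x_d)$. The symmetry of $\mathcal{K}_{\R^d_+}$ in the tangential directions then gives $\mathcal{L}_{\R^d_+}u_i=0$, and since each $u_i$ \emph{is} one-dimensional, one can now legitimately apply the already-proved second step to get $u_i(x_d)=c_ix_d^{2s-1}$; the growth bound forces $c_i=0$. The essential difference from your plan is that the decrement tool is \autoref{Prop:HoelderRegulUptoBoundaryLocal}, not \autoref{Prop:BoundaryHarnack}.
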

\begin{proof}[Proof of \autoref{Prop:LiouvilleInHalfspace}]
Fix some $\alpha _* \in (0, \alpha )$ where $\alpha =\alpha (d,s,\sigma ) \in (0,1)$ is taken from \autoref{Prop:BoundaryHarnack}.
By \autoref{Lem:UOneDFunction}, we know that $u$ is a one dimensional function, i.e. $u(x)=\tilde{u}(x_d)$.
Then, \autoref{Prop:BoundaryHarnack} gives us
\begin{align*}
\left[ \frac{\tilde{u}(x)}{x^{2s-1}} \right] _{ C^\alpha (0,R)} & \leq c R^{1-2s-\alpha} \| \tilde{u} \| _{L^\infty (0,2R) } \\
& \leq c  R^{1-2s-\alpha} (1+(2R)^{2s-1+\alpha _\ast}) \\
& \leq c R^{\alpha_\ast -\alpha} \to 0 \qquad \text{as} \quad R\to \infty ,
\end{align*}
which implies \smash{$\left[ \frac{\tilde{u}(x)}{x^{2s-1}} \right]  _{ C^\alpha (\R _+)} =0$}.
Hence, $\tilde{u}(x)/x_d^{2s-1}$ is constant, i.e. $u(x)=A x_d^{2s-1}$ for some $A \in \R$.
\end{proof}
We finish the proof of \autoref{Prop:LiouvilleInHalfspace} by showing that the solution $u(x)$ only depends on $x_d$.
Essentially, the proof only uses the translational invariance of \smash{$\mathcal{L}_{\R^d_+}$} in the tangential direction \smash{$\R^{d-1}$} as well as the Hölder regularity bounds up to the boundary from \autoref{Prop:HoelderRegulUptoBoundaryLocal}.
\begin{Lemma}\label{Lem:UOneDFunction}
Let $u\in C(\overline{\R^d_+}) \cap C^{2s+\epsilon}_{loc} (\R^d_+)$ solve \eqref{eq:EquationLiouvilleThm} from \autoref{Prop:LiouvilleInHalfspace} and assume that \eqref{eq:GrowthAssumptionLiouvilleThm} holds true for some $\alpha _* \in (0,\alpha )$ where $\alpha =\alpha (d,s,\sigma ) \in (0,1)$ is taken from \autoref{Prop:BoundaryHarnack}.
Then $u$ is a one dimensional function, i.e. $u(x)=\tilde{u}(x_d)$ for some $\tilde{u}\colon \R_+\to \R$.
\end{Lemma}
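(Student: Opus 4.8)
The plan is to exploit that $\mathcal{L}_{\R^d_+,\sigma }$ is invariant under translations in the tangential directions (property \eqref{eq:ShiftingBInHalfSpace}) and to run a Liouville argument for the tangential increments of $u$. Fix $e\in \R^{d-1}$ with $|e|=1$ and $h>0$; for brevity write $he:=(he,0)\in\R^d$ and set $v(x):=u(x+he)-u(x)$. Since a tangential shift maps $\R^d_+$ onto itself, the change of variables $z\mapsto z+he$ together with \eqref{eq:ShiftingBInHalfSpace} gives $\mathcal{L}_{\R^d_+,\sigma }[u(\cdot +he)](x)=(\mathcal{L}_{\R^d_+,\sigma }u)(x+he)=0$, so that $v$ lies in $C^{2s+\epsilon}_{loc}(\R^d_+)\cap C(\overline{\R^d_+})$, vanishes on $\partial\R^d_+$, and solves $\mathcal{L}_{\R^d_+,\sigma }v=0$ in $\R^d_+$ in the strong (hence weak) sense. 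It suffices to show $v\equiv 0$: since $e$ and $h$ are arbitrary (and the case $h<0$ follows by replacing $e$ with $-e$), this forces $u$ to be invariant under all tangential translations, i.e.\ $u(x)=u(0',x_d)=:\tilde u(x_d)$.

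\emph{Step 1: improved growth for $v$.} Applying \autoref{Prop:HoelderRegulUptoBoundaryLocal} (in the strong-solution form of the remark following it, with right-hand side $\equiv 0$ and weight exponent $2s-\epsilon'$ for a small $\epsilon'>0$ with $2s-1+\alpha_*<2s-\epsilon'$) to the rescaled function $x\mapsto u(Rx)$ on $B_1$, the growth bound \eqref{eq:GrowthAssumptionLiouvilleThm} yields $\|u(R\cdot )\|_{L^\infty_{2s-\epsilon'}(\R^d_+)}\le cR^{2s-1+\alpha_*}$ for $R\ge 1$; undoing the scaling and chaining over a covering of $B_R$ by unit-scale balls (centred either at $\partial\R^d_+$, where the boundary estimate applies, or in the interior, where \autoref{Cor:ScaledDownInteriorRegularity} applies, while for widely separated points one uses instead $|u|\le cR^{2s-1+\alpha_*}$ on $B_R$) gives
\begin{equation*}
[u]_{C^{2s-1}(B_R\cap\overline{\R^d_+})}\le c\,R^{1-2s}\cdot R^{2s-1+\alpha_*}=c\,R^{\alpha_*}\qquad (R\ge 1).
\end{equation*}
Choosing $R\asymp 2(|x|+h+1)$ and using that $x,x+he\in B_R\cap\overline{\R^d_+}$, we deduce
\begin{equation*}
|v(x)|\le [u]_{C^{2s-1}(B_R\cap\overline{\R^d_+})}\,h^{2s-1}\le c\,h^{2s-1}\bigl(1+|x|^{\alpha_*}\bigr)\qquad (x\in\R^d_+).
\end{equation*}

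\emph{Step 2: Liouville for $v$.} Recall from the proof of \autoref{Prop:BoundaryHarnack} that its exponent $\alpha=\alpha(d,s,\sigma)$ satisfies $\alpha<2s-1$, hence $\alpha_*<2s-1$. Repeating the scaled estimate from \autoref{Prop:HoelderRegulUptoBoundaryLocal} with $u$ replaced by $v$ — which by Step 1 satisfies a growth bound of order $\alpha_*$ — now gives, for every $R\ge 1$,
\begin{equation*}
[v]_{C^{2s-1}(B_R\cap\overline{\R^d_+})}\le c\,h^{2s-1}R^{1-2s}\bigl(1+R^{\alpha_*}\bigr)\le c\,h^{2s-1}R^{\,\alpha_*+1-2s}\longrightarrow 0\quad\text{as }R\to\infty ,
\end{equation*}
because $\alpha_*+1-2s<0$. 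Therefore $[v]_{C^{2s-1}(\overline{\R^d_+})}=0$, so $v$ is constant; since $v=0$ on $\partial\R^d_+$ we conclude $v\equiv 0$, and the claim follows as explained above.

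The main obstacle is Step 1. The crude estimate $|v|\le|u(\cdot+he)|+|u|$ only provides $v$ with growth of order $2s-1+\alpha_*$, which is exactly the borderline rate that the rescaled $C^{2s-1}$ estimate does not improve to $0$. The gain comes from controlling the tangential increment of $u$ by the interior/boundary $C^{2s-1}$ seminorm of $u$, which itself grows only like $R^{\alpha_*}$; this turns the borderline growth into the strictly subcritical growth needed for the Liouville step. Some care is required in the covering/chaining argument and in distinguishing the regimes $|x-y|\ll R$ and $|x-y|\asymp R$, but these are routine.
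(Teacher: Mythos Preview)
Your argument is correct and is in fact a clean simplification of the paper's own proof. Both proofs start identically: use tangential translation invariance to see that $v=u(\cdot+he)-u$ solves \eqref{eq:EquationLiouvilleThm}, and use a rescaled application of \autoref{Prop:HoelderRegulUptoBoundaryLocal} to deduce $[u]_{C^{2s-1}(B_R\cap\overline{\R^d_+})}\le cR^{\alpha_*}$, hence $|v|\lesssim h^{2s-1}(1+|x|^{\alpha_*})$. From here the paper iterates: it forms higher tangential difference quotients $v_k^h$ until the growth exponent drops below zero, concludes that $u$ is a polynomial of bounded degree in $x'$, cuts the degree to~$1$ via the growth bound, and then eliminates the linear part $\sum_i u_i(x_d)x_i$ by invoking the already-established one-dimensional Liouville result for each $u_i$. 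You instead exploit the explicit construction $\alpha=\min\{\gamma/p,(2s-1)(1-2/p)\}<2s-1$ from the proof of \autoref{Prop:BoundaryHarnack}, so that a single further application of the scaled $C^{2s-1}$ estimate to $v$ gives $[v]_{C^{2s-1}(B_R)}\le cR^{\alpha_*-(2s-1)}\to 0$, forcing $v\equiv 0$ directly. This avoids both the iteration and the appeal to the one-dimensional Liouville step, at the cost of using one extra piece of information about $\alpha$. One minor remark: the ``chaining over a covering by unit-scale balls'' in your Step~1 is unnecessary --- a single application of \autoref{Prop:HoelderRegulUptoBoundaryLocal} to $x\mapsto u(Rx)$ already yields the global seminorm bound on $B_{R/2}\cap\overline{\R^d_+}$, exactly as in the Claim at the start of the paper's proof.
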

\begin{proof}
Set $\beta =2s-1 \in (0,1)$ and $\gamma =2s-1+\alpha _* \in (0,2s) $.
We start with the following claim.

\textbf{Claim:} If $v$ satisfies 
\begin{align*}
\mathcal{L}_{\R^d_+} v &= 0 \quad \text{in} \quad\R^d_+ , \\
v& = 0 \quad \text{on}  \quad \partial \R^d_+
\end{align*}
with $\| v \| _{L^\infty (B_R)} \leq R^\mu $ for all $R\geq R_0$ for some $\mu \in (0,2s)$ and $R_0\geq 1$.
Then 
\begin{equation*}
[ v ] _{C^\beta (B_R)} \leq c_1 R^{\mu - \beta}  \quad \text{for all}\quad R\geq 1
\end{equation*}
for some constant $c_1 =c_1(R_0, d,s, \sigma , \mu ) >0$.

The claim can be proven by rescaling \autoref{Prop:HoelderRegulUptoBoundaryLocal}.
Indeed, for $R\geq 1$ let $v_R(x):=v(Rx)$. Then,
\begin{equation*}
|v_R(x)| =|v (Rx)| \leq R_0^\mu + R^\mu |x|^\mu \leq R_0^\mu  R^\mu(1+|x|^\mu ) \quad (x\in \R^d_+),
\end{equation*}
i.e. $\| v_R\| _{ L^\infty _\mu (\R^d )} \leq R_0 ^\mu R^\mu$.
Furthermore, $\mathcal{L}_{\R^d_+} v_R =0$ on $\R^d_+$. Hence, by \autoref{Prop:HoelderRegulUptoBoundaryLocal}
\begin{equation*}
[ v ] _{C^{2s-1} (B_{R/2})} = R^{-\beta} [v_R] _{C^{2s-1} (B_{1/2})} \leq c R^{-\beta } \| v_R\| _{ L^\infty _\mu (\R^d )} \leq c R_0 ^\mu R^{\mu -\beta }
\end{equation*}
for some constant $c=c(d,s, \sigma )$. This proves the claim.

Now let us prove \autoref{Lem:UOneDFunction}.
By dividing $u$ by some constant, we can assume that $\| u\| _{L^\infty (B_R)} \leq R^{\gamma}$ for all $R\geq 1$.
Hence, by the claim we know that $[u] _{C^{2s-1} (B_R)} \leq c_1 R^ {\gamma - \beta }$.

Fix some $h\in \R^{d-1} \times \{ 0\}$. We define
\begin{equation*}
v_1^h (x):= \frac{u(x+h)-u(x)}{c_1 |h|^{2s-1}} .
\end{equation*}
Then, $\mathcal{L}_{\R^d_+} v_1^h =0$ on $\R^d_+$ and
\begin{equation*}
\|  v_1^h \| _{L^\infty (B_R)} \leq \frac{1}{c_1} [ u] _{ C^{2s-1} (B_{2R})} \leq R^{\gamma -\beta } 
\end{equation*} 
for all $R\geq \max \{ 1 ,|h| \}$.

Now, we repeat this argument inductively. For $k\geq 2$ define
\begin{equation*}
v_k^h(x) = \frac{v_{k-1}^h (x+h) - v_{k-1}^h (x)}{c_1 |h| ^{d+2s} } .
\end{equation*}
Then $\mathcal{L} _{\R^d_+} v_{k}^h =0$ on $\R^d_+$, and by the above claim, $\| v_k \| _{L^\infty (B_R)} \leq R^{\gamma -k \beta }$ for large $R$.
Let $\tilde{k}$ be the smallest positive integer such that $\gamma - \tilde{k} \beta <0$. By sending $R\to \infty$, we obtain $v _{\tilde{k}}^h =0$.
Since this is true for all $h\in \R^{d-1} \times \{ 0\}$, we know that $u$ is polynomial of degree at most $\tilde{k}-1$ in the first $d-1$ variables.
Using \eqref{eq:GrowthAssumptionLiouvilleThm} and the fact that $2s-1+\alpha _* <2$, we conclude that $u$ is just a polynomial of degree at most $1$ in the first $d-1$ variables.
Hence,
\begin{equation*}
u(x) = \sum _{i=1}^{d-1} u_i(x_d) x_i + \tilde{u} (x_d)
\end{equation*}
for some functions $u_i \colon \R_+ \to \R$ and $\tilde{u} \colon \R_+ \to \R$.
Due to the symmetry of the integration domain in the operator $\mathcal{L}_{\R^d_+}$ in the $\R^{d-1}\times \{ 0\}$ direction, we have
\begin{equation*}
0 = \mathcal{L}_{\R^d_+} u = \sum _{i=1}^{n-1} x_i\mathcal{L}_{\R^d_+} u_i + \mathcal{L}_{\R^d_+} \tilde{ u} \qquad \text{in} \quad \R^d_+ . 
\end{equation*}
This implies \smash{$\mathcal{L}_{\R^d_+} u_i  = 0$} in $\R^d_+$ for all $i\in \{ 1, \dots , d-1\}$.
Using the above prove of \autoref{Prop:LiouvilleInHalfspace}, we conclude that $u_i(x_d) =c_i x_d^{2s-1}$ for some constant $c_i\in \R$.
However, if $c_i\neq 0$, this would contradict \eqref{eq:GrowthAssumptionLiouvilleThm}.
This implies $u(x)=\tilde{u}(x_d)$.
\end{proof}
\section{Higher boundary regularity}\label{sec:HigherBoundaryReg}
Let $\Omega \subset \R^d$ be a bounded $C^{1,1}$ domain.
Recall that $d_\Omega$ is  $C^{1,1}$ close to the boundary $\partial \Omega$.
For convenience, we will replace $d_\Omega$ in the interior with a $C^{1,1}$ function.
To be more precise, we fix $d_0>0$ and a function $\delta _\Omega \in C^{1,1} (\overline{\Omega} )$ such that
\begin{equation*}
\delta _\Omega = d_\Omega \quad \text{on} \quad \{ 0\leq d_\Omega (x) \leq d_0 \} \quad  \text{and} \quad \frac{1}{2} d_\Omega \leq \delta _\Omega \leq  d_\Omega \quad \text{on} \quad \Omega .
\end{equation*}
In this section, we will prove \autoref{Thm:HigherOrderBoundaryReg} using a standard blow-up argument together with the Liouville type result in the half-space.
We mostly follow the proof of \cite[Sec. 2.7]{RosOton2024Book}.
\begin{Lemma}\label{Lem:ExpansionHigherBoundaryReg}
Fix $\sigma \in (0,1]$ and $s\in (1/2,1)$.
Let $\Omega \subset \R^d$ be a bounded $C^{1,1}$ domain with $0\in\partial \Omega $.
Assume that $u\in H^s_0(\Omega )$ is a weak solution to
\begin{gather*}
\begin{aligned}
\mathcal{L}_{\Omega , \sigma} u &= f \quad \quad \text{in} \quad \Omega , \\
u& =0 \quad \quad \text{on} \quad  \partial \Omega 
\end{aligned}
\end{gather*}
where $f\in L^\infty (\Omega )$.
Then there exists $q_0\in \R$ and some $\alpha = \alpha (d,s,\sigma )>0$ such that
\begin{equation}\label{eq:ExpansionAroundPoint}
\| u -q_0\delta _\Omega ^{2s-1} \| _{L^\infty (\Omega \cap B_r )} \leq c \left( \| u \| _{L^\infty (\Omega )} + \| f \| _{L^\infty (\Omega )} \right) r^{2s-1+\alpha }
\end{equation}
for all $r\in (0,1]$ for some constant $c=c(\Omega , d, s, \sigma )>0$.
\end{Lemma}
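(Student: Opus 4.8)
The plan is to run the standard blow-up and compactness argument at the boundary point $0$, in the spirit of \cite[Sec. 2.7]{RosOton2024Book}, with \autoref{Prop:LiouvilleInHalfspace} playing the role of the classification of global solutions in the half-space. After dividing the equation by a constant we may assume $\|u\|_{L^\infty(\Omega)}+\|f\|_{L^\infty(\Omega)}\le 1$. Fix $\alpha\in(0,\alpha_*)$ small, where $\alpha_*=\alpha_*(d,s,\sigma)$ is from \autoref{Prop:LiouvilleInHalfspace}; we also want $\alpha$ small enough that $\alpha+\gamma<1$, with $\gamma=\gamma(d,s,\sigma)<1$ the exponent of \autoref{Lem:BoundLd2sMinus1}, plus a little room below $\alpha_*$ for the worst-scale selection. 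It then suffices to find $\rho\in(0,1/2)$, $C_0\ge1$ (depending only on $\Omega,d,s,\sigma$) and a sequence $(q_k)_{k\ge0}\subset\R$ with
\begin{equation*}
\|u-q_k\,\delta_\Omega^{2s-1}\|_{L^\infty(\Omega\cap B_{\rho^k})}\le\rho^{k(2s-1+\alpha)},\qquad |q_{k+1}-q_k|\le C_0\,\rho^{k\alpha}.
\end{equation*}
Indeed, the second bound makes $(q_k)$ Cauchy, so $q_k\to q_0$ with $|q_k-q_0|\lesssim\rho^{k\alpha}$, and for $\rho^{k+1}<r\le\rho^k$ one combines the first bound at scale $\rho^k$ with $\|\delta_\Omega^{2s-1}\|_{L^\infty(B_{\rho^k})}\lesssim\rho^{k(2s-1)}$ to obtain \eqref{eq:ExpansionAroundPoint}. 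The case $k=0$ holds with $q_0=0$ since $\|u\|_{L^\infty(\Omega)}\le1$; in fact \autoref{Thm:HoelderContinuityWeakSol} combined with $u(0)=0$ already gives $|u(x)|\lesssim|x|^{2s-1}$, the natural starting point of the scheme (and a bound available, with the same constant, for every admissible solution).

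For the inductive step I would argue by contradiction and compactness at the level of the whole class of admissible pairs. If no such $\rho,C_0$ exist, then, after the usual ``worst scale'' selection, one extracts $C^{1,1}$ domains $\Omega_j$ with $0\in\partial\Omega_j$ and a common interior/exterior ball radius, weak solutions $u_j$ of $\mathcal{L}_{\Omega_j,\sigma}u_j=f_j$ in $\Omega_j$ with $\|u_j\|_{L^\infty}+\|f_j\|_{L^\infty}\le1$, and radii $r_j\to0$ such that the excess
\begin{equation*}
\theta_j(r):=\inf_{q\in\R}\,r^{-(2s-1+\alpha)}\,\|u_j-q\,\delta_{\Omega_j}^{2s-1}\|_{L^\infty(\Omega_j\cap B_r)}
\end{equation*}
satisfies $\theta_j(r_j)\to\infty$ together with the worst-scale estimate $\theta_j(r)\le C\,\theta_j(r_j)(r/r_j)^{-\alpha'}$ for $r\in[r_j,1]$, where $\alpha+\alpha'<\alpha_*$. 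Letting $q_j\in\R$ be near-optimal in $\theta_j(r_j)$, I would set $\Omega_j':=r_j^{-1}\Omega_j$ and
\begin{equation*}
v_j(x):=\frac{\bigl(u_j-q_j\,\delta_{\Omega_j}^{2s-1}\bigr)(r_jx)}{r_j^{2s-1+\alpha}\,\theta_j(r_j)},
\end{equation*}
so that $\inf_{q\in\R}\|v_j-q\,\delta_{\Omega_j'}^{2s-1}\|_{L^\infty(\Omega_j'\cap B_1)}=1$, while telescoping the near-optimal constants over dyadic scales and using the worst-scale bound yields the growth control $\|v_j\|_{L^\infty(\Omega_j'\cap B_R)}\lesssim R^{2s-1+\alpha+\alpha'}$ for $R\ge1$. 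By the scaling of the operator (\autoref{Lem:ScalingOperator}) and the identity $\mathcal{L}_{\Omega_j'}[\delta_{\Omega_j'}^{2s-1}](x)=r_j\,\mathcal{L}_{\Omega_j}[\delta_{\Omega_j}^{2s-1}](r_jx)$, the function $v_j$ solves $\mathcal{L}_{\Omega_j',\sigma}v_j=g_j$ in $\Omega_j'\cap B_{1/r_j}$, where $g_j$ gathers $r_j^{1-\alpha}\theta_j(r_j)^{-1}f_j(r_j\cdot)$ and $-r_j^{1-\alpha}\theta_j(r_j)^{-1}q_j\,\mathcal{L}_{\Omega_j}[\delta_{\Omega_j}^{2s-1}](r_j\cdot)$; since $\delta_{\Omega_j}=d_{\Omega_j}$ near $0$ and $|q_j|\lesssim 1+\theta_j(r_j)$, \autoref{Lem:BoundLd2sMinus1} gives $|g_j(x)|\lesssim r_j^{1-\alpha}\theta_j(r_j)^{-1}+r_j^{1-\alpha-\gamma}\,d_{\Omega_j'}(x)^{-\gamma}$ with $\gamma<1$, which tends to $0$ locally uniformly in $\R^d_+$ and integrably up to the flattening boundary.

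Next I would pass to the limit. The interior estimate \autoref{Cor:ScaledDownInteriorRegularity} and the localized boundary Hölder estimate \autoref{Prop:HoelderRegulUptoBoundaryLocal}, applied to $v_j$ and combined with the growth control, bound $(v_j)$ uniformly in $C^{2s-1}$ up to the boundary on each fixed ball; since $0\in\partial\Omega_j$ and the interior/exterior ball radius is uniform, $\Omega_j'$ converges locally in $C^{1,1}$ to $\R^d_+$, and one checks from the explicit form of $\mathcal{B}_{\Omega,\sigma}$ that $\mathcal{L}_{\Omega_j',\sigma}$ is stable under this convergence. Hence, along a subsequence, $v_j\to v$ locally uniformly on $\overline{\R^d_+}$, where $v$ solves $\mathcal{L}_{\R^d_+,\sigma}v=0$ in $\R^d_+$, $v=0$ on $\partial\R^d_+$, and $|v(x)|\lesssim1+|x|^{2s-1+\alpha+\alpha'}$ with $\alpha+\alpha'<\alpha_*$. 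By \autoref{Prop:LiouvilleInHalfspace}, $v(x)=A\,x_d^{2s-1}$ for some $A\in\R$. On the other hand, passing $\inf_{q\in\R}\|v_j-q\,\delta_{\Omega_j'}^{2s-1}\|_{L^\infty(\Omega_j'\cap B_1)}=1$ to the limit gives $\inf_{q\in\R}\|v-q\,x_d^{2s-1}\|_{L^\infty(\R^d_+\cap B_1)}=1$, impossible for $v=A\,x_d^{2s-1}$. This contradiction yields the dyadic decay, and the estimate $|q_{k+1}-q_k|\le C_0\rho^{k\alpha}$ comes out of comparing the two successive $\delta_\Omega^{2s-1}$-approximations on $B_{\rho^k}$.

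The main obstacle is the compactness and stability step. Unlike in the translation-invariant model case, here the operator depends on the domain and degenerates at the boundary, so one must verify carefully that solutions on the rescaled domains $\Omega_j'$ genuinely converge to a solution of $\mathcal{L}_{\R^d_+,\sigma}v=0$, and that the error $g_j$ — present because $\delta_\Omega^{2s-1}$ is only approximately $\mathcal{L}_\Omega$-harmonic, with the defect quantified by \autoref{Lem:BoundLd2sMinus1} — is negligible in the limit despite the kernel vanishing near the boundary. This is precisely where the quantitative geometric analysis of \hyperref[sec:Barriers]{Section 3} enters, and it is also what forces the restriction $\alpha<\min\{\alpha_*,1-\gamma\}$, hence our lack of control over the final exponent.
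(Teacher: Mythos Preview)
Your proposal is correct and follows essentially the same blow-up and Liouville strategy as the paper's own proof, which is also modelled on \cite[Sec.~2.7]{RosOton2024Book}. The only differences are packaging: the paper works with a single fixed $\Omega$, takes $q_{m,r}$ to be the $L^2$-projection coefficient $\int_{B_r}u_m\delta_\Omega^{2s-1}\big/\int_{B_r}\delta_\Omega^{2(2s-1)}$ (so the contradiction is reached via the orthogonality $\int_{B_1}v_\infty x_d^{2s-1}=0$ rather than your $\inf_q=1$), uses the monotone quantity $\theta(\rho)=\sup_m\sup_{\rho\le r<1}r^{-(2s-1+\alpha)}Q_{m,r}$ directly to avoid the extra exponent $\alpha'$, and isolates the stability step you flag as the main obstacle as a separate interior-regularity lemma in the half-space that upgrades the distributional limit to a strong solution.
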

\begin{proof}
We will choose $\alpha =\min \{ \alpha _* , 1- \gamma \}$ with $\alpha_*$ from \autoref{Prop:LiouvilleInHalfspace} and $\gamma$ from \autoref{Lem:BoundLd2sMinus1}.
After dividing the equation by some constant, we can assume that
\begin{equation*}
\| u \| _{L^\infty (\Omega )} + \| f \| _{L^\infty (\Omega )} \leq 1 .
\end{equation*}
\emph{Step 1:}  In the first step, we will show that we can replace $q_0$ in \eqref{eq:ExpansionAroundPoint} by $q_r$ depending on $r\in (0,1)$.
To be more precise, we claim that if
\begin{equation}\label{eq:ExpansionWithQr}
\forall r\in (0,1), \quad \exists q_r \in \R   \colon \quad \| u -q_r\delta _\Omega ^{2s-1} \| _{L^\infty (\Omega \cap B_r )} \leq c_1 r^{2s-1+\alpha }
\end{equation}
holds true, then $\| u -q_0\delta _\Omega ^{2s-1} \| _{L^\infty (\Omega \cap B_r )} \leq c c_1 r^{2s-1+\alpha }$ for some $q_0\in \R$ and $c=c(\Omega ,d, s)>0$ independent of $r$ and $|q_0| \leq c(c_1+1)$.

To prove this claim, assume that \eqref{eq:ExpansionWithQr} holds.
Using that $r^{2s-1}\leq c \| \delta _\Omega ^{2s-1} \| _{L^\infty (B_r)}$ for some constant $c>0$ depending on $\Omega$, we obtain for all $r\in (0, 1/2]$ and $\beta \in [1,2]$
\begin{gather}\label{eq:qrIsCauchyAtZero}
\begin{aligned}
|q_{\beta r} - q_r| & \leq c r^{-2s+1} \| q_{\beta r} \delta _\Omega ^{2s-1} - q_r \delta _\Omega ^{2s-1} \| _{L^\infty (B_r)} \\
& \leq c r^{-2s+1} \| u - q_r \delta _\Omega ^{2s-1} \| _{L^\infty (B_r)} + c r^{-2s+1} \| u - q_{\beta r} \delta _\Omega ^{2s-1} \| _{L^\infty (B_r)} \\
& \leq c c_1 r^{\alpha } .
\end{aligned}
\end{gather}
Furthermore, using that $\| u\| _{L^\infty (\Omega )} \leq 1$, we have
\begin{gather}\label{eq:qrBoundedForRBigger}
\begin{aligned}
|q_r| & \leq c r^{-2s+1} \| q_r  \delta _\Omega ^{2s-1} \| _{L^\infty (\Omega \cap B_r )} \\
& \leq c r^{-2s+1}\|u- q_r  \delta _\Omega ^{2s-1} \| _{L^\infty (\Omega \cap B_r )} +c  r^{-2s+1}\| u\| _{L^\infty (\Omega \cap B_1)} \\
& \leq c(c_1+1)
\end{aligned}
\end{gather}
for all $r\in (1/2,1)$.
Note that \eqref{eq:qrIsCauchyAtZero} and \eqref{eq:qrBoundedForRBigger} imply that the limit
\begin{equation*}
q_0:= \lim _{r \to 0 } q_r
\end{equation*}
exists.
Summing up the following geometric sequence, we obtain
\begin{equation*}
|q_0 -q_r| \leq \sum _{j=0}^\infty | q_{2^{-j}r} - q_{2^{-j-1}r} | \leq c c_1 r^{\alpha} \sum _{j=0}^\infty 2^{-j\alpha} \leq c c_1 r^\alpha
\end{equation*}
for all $r\in (0,1)$, which in particular implies $|q_0| \leq c (c_1+1)$.
Finally, for all $r\in (0,1)$ we get
\begin{align*}
\| u -q_0\delta _\Omega ^{2s-1} \| _{L^\infty (\Omega \cap B_r )} & \leq \| u -q_r\delta _\Omega ^{2s-1} \| _{L^\infty (\Omega \cap B_r )} +|q_r -q_0| \| \delta _\Omega ^{2s-1} \| _{L^\infty (B_r)}  \leq c c_1 r^{2s-1+\alpha } ,
\end{align*}
which proves the claim.

\emph{Step 2:} By step 1, it is enough to prove \eqref{eq:ExpansionWithQr}. We will argue by contradiction.
Assume that there exists sequences of functions $(u_m) \in H^s_0(\Omega )$ and $(f_m) \in L^\infty (\Omega )$ such that $\| u_m \| _{L^\infty (\Omega )} + \| f_m\| _{L^\infty (\Omega )} \leq 1$ with 
\begin{align*}
\mathcal{L}_\Omega u_m & =f_m \quad \text{in}  \quad\Omega , \\
u_m&=0 \quad \text{on} \quad\partial \Omega
\end{align*}
and
\begin{equation*}
 \sup _{r\in (0,1)} r^{-2s+1-\alpha } \| u_m -q_{m,r}\delta _\Omega ^{2s-1} \| _{L^\infty (\Omega \cap B_r )} \geq m
\end{equation*}
for any $q_{m,r}\in \R$.
We will extend $u_m$ and $f_m$ to functions on $\R^d$ by setting $u_m=f_m=0$ on $\Omega ^c$.
Furthermore, we will choose
\begin{equation}\label{eq:Choiceqmr}
q_{m,r} = \frac{\int _{B_r} u_m \delta _\Omega ^{2s-1}}{\int _{B_r} \delta _\Omega ^{2(2s-1)}} .
\end{equation}
Set 
\begin{equation*}
Q_{m,r}:= \| u_m-q_{m,r} \delta _\Omega ^{2s-1} \| _{L^\infty (B_r)} \quad \text{and} \quad \theta (\rho ) := \sup _{m\in \N} \sup _{\rho \leq r < 1} r^{-2s+1-\alpha} Q_{m,r} .
\end{equation*}
Then $\theta (\rho )$ is non increasing, finite for any $\rho >0$, and satisfies $\lim _{\rho \to 0}\theta (\rho ) = \infty$.
Hence, there exists sequences $(m_k)$ and $(r_k)$ with $m_k \to \infty$ and $r_k\to 0$ such that $r_k^{-2s+1-\alpha }Q _{m_k,r_k} \geq  \frac{1}{2}\theta (r_k)$, in particular
\begin{equation}\label{eq:rk-2s+1-aQExplodes}
r_k^{-2s+1-\alpha }Q _{m_k,r_k} \to \infty \qquad \text{as} \quad k \to \infty .
\end{equation}

\emph{Step 3:} Now, we define
\begin{equation*}
v_k(x):=\frac{u_{m_k} (r_kx)-q_{m_k,r_k} \delta _\Omega ^{2s-1} (r_kx)}{Q_{m_k,r_k}} .
\end{equation*}
By the definition of $Q_{m,r}$, we immediately get
\begin{equation} \label{eq:vkLinftyis1}
\| v_k \| _{L^\infty (B_1)} =1.
\end{equation}
Furthermore, we claim that
\begin{equation}\label{eq:vkGrowthCondition}
\| v_k\| _{L^\infty (B_R)} \leq c R^{2s-1+\alpha } \qquad \text{for} \quad 1\leq R \leq \frac{1}{2r_k} .
\end{equation}
Indeed, as in \eqref{eq:qrIsCauchyAtZero}, we have for any $\beta \in [1,2]$
\begin{equation*}
|q_{m,\beta r} - q_{m,r}| \leq c r^{-2s+1} \left( Q_{m,r} + Q_{m,\beta r} \right) \leq c r^\alpha \theta (r) .
\end{equation*}
Hence, if $1\leq R\leq 1/(2r)$ then there exists $\beta \in [1,2)$ and $N\in \N$ such that $R=\beta 2^N$ and we get
\begin{align*}
|q_{m,Rr}-q_{m,r} | & \leq c \sum _{j=0}^{N-1} (2^jr)^\alpha \theta (2^j r) +|q_{m,\beta 2^N r} - q_{m,2^Nr} | \\
& \leq c  r^\alpha 2^{N \alpha } \theta (r) + c r^\alpha 2^{N\alpha } \theta (2^N r) \\
& \leq c r^\alpha R^\alpha \theta (r) .
\end{align*}
Finally, we obtain
\begin{align*}
\| v_k\| _{L^\infty (B_R)} & = Q_{m_k,r_k}^{-1} \| u_{m_k} - q_{m_k,r_k} \delta _\Omega ^{2s-1} \| _{L^\infty (B_{Rr_k})} \\
& \leq Q_{m_k,r_k}^{-1} \left( Q_{m_k,Rr_k} + | q_{m_k,Rr_k} - q_{m_k,r_k} | \| \delta _\Omega ^{2s-1} \| _{L^\infty (B_{Rr_k})} \right) \\
& \leq Q_{m_k,r_k}^{-1} \theta (Rr_k) R^{2s-1+\alpha} r_k^{2s-1+\alpha } + c Q_{m_k,r_k}^{-1} R^\alpha r_k^\alpha \theta (r_k) (Rr_k)^{2s-1} \\
& \leq R^{2s-1+\alpha } \theta (Rr_k) 2 \left( \theta (r_k) \right)^{-1} + c \theta (r_k) Q_{m_k,r_k}^{-1} r_k^{2s-1+\alpha } R^{2s-1+\alpha } \\
& \leq c R^{2s-1+\alpha}
\end{align*}
which proves \eqref{eq:vkGrowthCondition}.

Using $|u_{m_k}| \lesssim \delta _\Omega ^{2s-1}$ (see \eqref{eq:uLeqDist}) and \eqref{eq:Choiceqmr}, we get that $|q_{m_k,r_k}| \leq c$ for some constant $c$ independent of $k$.
Hence, with $\| u_{m_k} \| _{L^\infty (\R^d )} \leq 1$, we get
\begin{equation*}
| v_k(x) |  \leq c Q_{m_k,r_k}^{-1} r_k^{2s-1+\alpha } |x|^{2s-1+\alpha }
\end{equation*}
for all $x\in \R^d\setminus B_{1/(2r_k)}$.
Hence, together with \eqref{eq:rk-2s+1-aQExplodes} and \eqref{eq:vkGrowthCondition}, we conclude
\begin{equation}\label{eq:LInftyTailBoundOnVk}
\| v_k \| _{L^\infty _{2s-1+\alpha } (\R^d)} \leq c .
\end{equation}
for some constant $c>0$ independent of $k$.

Let $\Omega _k :=\frac{1}{r_k} \Omega$.
Then, by definition of $q_{m,r }$, we have
\begin{equation}\label{eq:IntegralVkd21-1}
\int _{B_1} v_k(x) \delta ^{2s-1}_{\Omega _k}(x) \intd x = Q_{m_k,r_k}^{-1} r_k^{-2s+1} \int _{B_1}\left[ u_{m_k}(r_k x)\delta_\Omega^{2s-1}(r_kx)-q_{m_k ,r_k} \delta _\Omega ^{2(2s-1)}(r_kx) \right] \intd x      =0.
\end{equation}

\emph{Step 4:} 
The function $v_k$ satisfies
\begin{gather}\label{eq:EquationVk}
\begin{aligned}
\mathcal{L}_{\Omega _k }v_k &=\tilde{f}_k \quad \text{in } \Omega _k , \\
v_k&=0 \quad \text{on } \partial \Omega _k
\end{aligned}
\end{gather}
where
\begin{equation*}
\tilde{f}_k (x) = Q_{m_k,r_k}^{-1} r_k ^{2s} \left[ f_{m_k}(r_kx) -q_{m_k,r_k} \left( \mathcal{L}_\Omega \delta _\Omega ^{2s-1} \right) (r_kx)  \right] .
\end{equation*}
Recall from \autoref{Lem:BoundLd2sMinus1} that $| \mathcal{L}_\Omega \delta _\Omega ^{2s-1} | \lesssim d_\Omega ^{-\gamma }$ for some $\gamma \in (0,1)$.
Together with the fact that $d_{\Omega}(r_k x) =r_k d_{\Omega _k} (x)$, as well as $\| f_{m_k} \|_{L^\infty (\Omega )} \leq 1$ and $|q_{m_k,r_k}|\leq c$ we obtain
\begin{gather}\label{eq:LinftyBoundOnfk}
\begin{aligned}
\| \tilde{f}_k d_{\Omega _k}^\gamma \| _{L^\infty (\Omega _k)} & \leq Q_{m_k,r_k}^{-1} r_k^{2s}  \| d_{\Omega_k}^\gamma \| _{L^\infty (\Omega _k)} +  Q_{m_k,r_k}^{-1} r_k^{2s} |q_{m_k,r_k}| r_k^{-\gamma}\|d_{\Omega _k}^{-\gamma } d_{\Omega _k}^\gamma\| _{L^\infty (\Omega _k)}  \\
&\leq c Q_{m_k,r_k}^{-1} r_k^{2s-\gamma} \to 0 \quad (k\to \infty )
\end{aligned}
\end{gather}
where in the last step, we have used that $\alpha < 1- \gamma$ and $Q_{m_k,r_k}^{-1} r_k^{2s-1+\alpha} \to 0$ (see \eqref{eq:rk-2s+1-aQExplodes}).

\emph{Step 5:}
Using \autoref{Prop:HoelderRegulUptoBoundaryLocal} together with \eqref{eq:LInftyTailBoundOnVk} and \eqref{eq:LinftyBoundOnfk}, we obtain
\begin{equation}
\| v_k\| _{C^{2s-1}(B_R)} \leq c
\end{equation}
where $c>0$ is a constant depending on $R$ but not on $k$.

Hence, by compactness, up to a subsequence $(v_k)$ converges locally uniformly (actually in $C^\nu_{loc}$ for any $\nu < 2s-1$) in $\R^d$ to some $v_\infty$.
By taking limits in \eqref{eq:vkLinftyis1}, \eqref{eq:vkGrowthCondition} and \eqref{eq:IntegralVkd21-1}, $v_\infty$ satisfies
\begin{equation*}
 \| v_\infty \|_{L^\infty (B_1)} =1 , \quad  v_\infty \in L^\infty _{2s-1+\alpha } (\R^d ) \cap C(\R^d ) , \quad \int _{B_1} v_\infty(x) x_d^{2s-1} \intd x = 0
\end{equation*}
and $v_\infty = 0$ on $\R^d \setminus \R^d_+$.
Furthermore, all the $v_k$ satisfy \eqref{eq:EquationVk} in the weak sense. Therefore, they also satisfy the equation in the distributional sense, i.e.
\begin{equation}\label{eq:DistributionalEquationVj}
\int _{\Omega _k} v_k \mathcal{L}_{\Omega _k}\varphi =  \int _{\Omega _k} \tilde{f}_k \varphi , \quad \text{for all } \varphi \in C^\infty _c (\Omega _j) .
\end{equation}
Now, we claim that $\mathcal{L}_{\R^d_+} v_\infty =0$ on $\R^d_+$ in the distributional sense, i.e.
\begin{equation}\label{eq:DistributionalEquationVInfty}
\int _{\R^d_+} v_\infty \mathcal{L}_{\R^d_+}\varphi = 0 , \quad \text{for all } \varphi \in C^\infty _c (\R^d_+) .
\end{equation}
To prove this, we want to take limits in \eqref{eq:DistributionalEquationVj}.
Fix $\varphi \in C^\infty _c (\R^d_+)$.
Note, that $\varphi \in C^\infty _c (\Omega _k)$ for large $k$ since $\Omega _k \to \R^d_+$.
Since $\supp \varphi$ is away from $\partial \R^d_+$, $\tilde f _k$ converges to $0$ uniformly on $\supp \varphi$. Hence, by \eqref{eq:LinftyBoundOnfk}, we have
\begin{equation*}
\int _{\Omega _k} \varphi \tilde{f}_k \to 0 \qquad \text{as } k \to \infty .
\end{equation*}
On the other hand, it is easy to see that $v_k(x) \mathcal{L}_{\Omega _j} \varphi (x) \to v_\infty (x) \mathcal{L}_{\R^d_+} \varphi (x)$ pointwise for every $x \in \R^d_+$.
Since $|v_k(x)|\leq c(1+|x| ^{2s-1+\alpha } )$ for all $x\in \R^d$ and
\begin{equation*}
|\mathcal{L}_{\Omega _k} \varphi (x) | \leq c (1+|x| ) ^{-d-2s} \qquad (x\in \R^d)
\end{equation*}
for some constant $c$ depending on $\varphi$, we can apply dominated convergence theorem to conclude
\begin{equation*}
\int _{\Omega _j} v_k \mathcal{L}_{\Omega _k}\varphi \to \int _{\R^d_+} v_\infty \mathcal{L}_{\R^d_+}\varphi
\end{equation*}
as $k\to \infty$. This proves \eqref{eq:DistributionalEquationVInfty}.

By \autoref{Lem:InteriorRegularityOnHalfspace}, $v_\infty$ is actually a strong solution $\mathcal{L}_{\R^d_+} v_\infty =0$ on $\R^d_+$.
Hence, we can apply \autoref{Prop:LiouvilleInHalfspace} to conclude that $v_\infty (x) =Ax_d^{2s-1}$ for some constant $A\in\R$.
Note that $\int _{B_1} v_\infty(x) x_d^{2s-1} \intd x = 0$ implies $A=0$. But this contradicts $\| v_\infty \|_{L^\infty (B_1) } =1$.
\end{proof}
\begin{Lemma} \label{Lem:InteriorRegularityOnHalfspace}
Fix $s\in (1/2,1)$, $\sigma \in (0,1]$, $\epsilon >0$, and set $\kappa =(2+3/\sigma )^{-1}$.
Let $u\in C(\overline{ \R^d_+}) \cap L^\infty _{2s-\epsilon} (\R^d)$ be a distributional solution to $\mathcal{L}_{\R^d_+,\sigma } u = f$ in $B_{\kappa }(e_d)$ , i.e.
\begin{equation*}
 \int _{\R^d_+} u(x) \mathcal{L}_{\R^d_+,\sigma} \varphi (x)  \intd x =  \int _{\R^d_+} f(x) \varphi (x) \intd x \qquad \text{for all} \quad \varphi \in C^\infty _c (B_{\kappa }(e_d)) ,
\end{equation*}
for some $f \in C^\alpha (B_{\kappa }(e_d))$ where $\alpha \in (0,1)$ with $2s+\alpha \notin \N$.

Then $u \in C^{2s+\alpha} (B_{\kappa /4}(e_d))$ with
\begin{equation*}
\| u \| _{C^{2s+\alpha}(B_{\kappa /4})(e_d)} \leq c \left( \| u\| _{L^\infty _{2s-\epsilon} (\R^d)} + \| f\| _{C^\alpha (B_{\kappa }(e_d))} \right) 
\end{equation*}
for some $c=c(d,s,\sigma , \epsilon ,\alpha )>0$.
\end{Lemma}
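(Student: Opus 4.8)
The plan is to rewrite the equation in terms of the fractional Laplacian, treating the (now absolutely convergent) remainder as an additional right-hand side, and then to bootstrap using interior estimates for $(-\Delta)^s$, in the spirit of \autoref{Lem:InteriorRegularity}.

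\textbf{Step 1 (reduction to the fractional Laplacian).} The value $\kappa=(2+3/\sigma)^{-1}\in(0,1)$ is chosen so that, since $d_{\R^d_+}(x)=x_d\geq 1-\kappa$ for $x\in B_\kappa(e_d)$, \eqref{eq:BLocally1AroundPoint} provides some $r_0=r_0(d,\sigma)\in(0,\kappa)$ with $\mathcal{B}_{\R^d_+,\sigma}(x,y)=1$ for all $x\in B_\kappa(e_d)$ and $y\in\R^d_+$ with $|x-y|<r_0$. Setting
\begin{equation*}
h(x):=c_{d,s}\int_{\R^d_+}(u(x)-u(y))\frac{1-\mathcal{B}_{\R^d_+,\sigma}(x,y)}{|x-y|^{d+2s}}\intd y+c_{d,s}\int_{\{y_d\leq 0\}}(u(x)-u(y))\frac{\intd y}{|x-y|^{d+2s}},
\end{equation*}
both integrals are absolutely convergent for $x\in B_\kappa(e_d)$ — their integrands are supported in $\{|x-y|\geq r_0\}$, and $u\in L^\infty_{2s-\epsilon}(\R^d)$ controls the tails — so $\|h\|_{L^\infty(B_\kappa(e_d))}\leq c\,\|u\|_{L^\infty_{2s-\epsilon}(\R^d)}$. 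Using the symmetry of $\mathcal{B}_{\R^d_+,\sigma}$ and Fubini one checks that $(-\Delta)^s u=c_{d,s}f+h$ in $B_\kappa(e_d)$ in the distributional sense. Since $f\in C^\alpha\subset L^\infty$, interior regularity for $(-\Delta)^s$ (\cite[Theorem 2.4.3]{RosOton2024Book}), together with the tail bound on $u$, yields $u\in C^{2s}(\overline{B_{2\kappa/3}(e_d)})$ with $\|u\|_{C^{2s}(\overline{B_{2\kappa/3}(e_d)})}\leq c(\|u\|_{L^\infty_{2s-\epsilon}(\R^d)}+\|f\|_{L^\infty(B_\kappa(e_d))})$; in particular $u$ is Lipschitz near $e_d$, as $2s>1$.

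\textbf{Step 2 (Hölder regularity of $h$).} Next I would show $h\in C^{0,1}(B_{\kappa/2}(e_d))$. Write $h=c_{d,s}\,u\,g-c_{d,s}\,G$, where $g(x)$ and $G(x)$ are obtained by pulling the factors $u(x)$, respectively $u(y)$, out of the two integrals above (each running over $\{|x-y|\geq r_0\}$). Since $u$ is Lipschitz near $e_d$ by Step 1, it suffices to prove that $g$ and $G$ are Lipschitz on $B_{\kappa/2}(e_d)$. For $x_1,x_2\in B_{\kappa/2}(e_d)$ the $x$-dependence in their integrands enters only through the factor $|x-y|^{-d-2s}$ — smooth away from the diagonal, with variation contributing $\lesssim|x_1-x_2|$ (convergently against the weight $1+|y|^{2s-\epsilon}$ for $G$) — through the fixed half-space indicator $\ind_{\{y_d\leq 0\}}$ (no $x$-dependence), and through the two indicators $\ind_{\{|x-y|<\sigma x_d\}}$ and $\ind_{\{|x-y|<\sigma y_d\}}$ making up $\mathcal{B}_{\R^d_+,\sigma}(x,\cdot)$. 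The symmetric difference of the supports of $\mathcal{B}_{\R^d_+,\sigma}(x_1,\cdot)$ and $\mathcal{B}_{\R^d_+,\sigma}(x_2,\cdot)$, intersected with $\{|x-y|\geq r_0/2\}$, is contained in a shell of thickness $\lesssim|x_1-x_2|$ around the sphere $\{|x-y|=\sigma x_d\}$, respectively around the moving sphere $\{|x-y|=\sigma y_d\}$; exactly as in the symmetric-difference estimates of \autoref{Lem:ConvergenceToEllipse} and \autoref{Lem:ConvergenceToParabola} — on that shell $|x-y|\asymp\sigma y_d\gtrsim r_0$, hence $|y|\lesssim y_d$ — integrating $(1+|y|^{2s-\epsilon})|x-y|^{-d-2s}$ over it produces a bound $\lesssim|x_1-x_2|$. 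Hence $g,G\in C^{0,1}(B_{\kappa/2}(e_d))$ and therefore $\|h\|_{C^\alpha(B_{\kappa/2}(e_d))}\leq c(\|u\|_{L^\infty_{2s-\epsilon}(\R^d)}+\|f\|_{C^\alpha(B_\kappa(e_d))})$. I expect this measure estimate on the "flip set" to be the only delicate point.

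\textbf{Step 3 (Schauder estimate).} Finally, $(-\Delta)^s u=c_{d,s}f+h$ with $c_{d,s}f+h\in C^\alpha(B_{\kappa/2}(e_d))$ and $2s+\alpha\notin\N$, so the interior Schauder estimate for $(-\Delta)^s$ (\cite{RosOton2024Book}) upgrades $u$ to $C^{2s+\alpha}(B_{\kappa/4}(e_d))$ with
\begin{equation*}
\|u\|_{C^{2s+\alpha}(B_{\kappa/4}(e_d))}\leq c\big(\|u\|_{L^\infty_{2s-\epsilon}(\R^d)}+\|c_{d,s}f+h\|_{C^\alpha(B_{\kappa/2}(e_d))}\big)\leq c\big(\|u\|_{L^\infty_{2s-\epsilon}(\R^d)}+\|f\|_{C^\alpha(B_\kappa(e_d))}\big),
\end{equation*}
which is the claim (covering $B_{\kappa/4}(e_d)$ by finitely many such balls, if one insists on the exact radius, is routine).
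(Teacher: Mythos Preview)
Your approach is the paper's: rewrite $(-\Delta)^s u = c_{d,s}f + h$, first use the $L^\infty$ interior estimate to get enough regularity of $u$, then prove $h\in C^{0,1}$ near $e_d$ by splitting into the kernel variation and the symmetric-difference ("flip set") contributions, and finally apply Schauder for $(-\Delta)^s$. The paper carries this out with the decomposition $h_2(x)=\tfrac12\int_{(B_{\sigma x_d}(x))^c}\cdots+\tfrac12\int_{R(x)^c}\cdots$ and the four terms $I_1,\dots,I_4$, which matches your Step~2.

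One remark on the delicate point you flag. Your appeal to \autoref{Lem:ConvergenceToEllipse}--\autoref{Lem:ConvergenceToParabola} is only heuristic: those lemmas perturb the \emph{domain} (comparing $E_{\rho^{-1}\Omega}$ to $E_{\R^d_+}$) with the center fixed at $e_d$, whereas here you perturb the \emph{center} from $x_1$ to $x_2$ with the domain fixed, so they do not directly apply. For $\sigma=1$ the level set $\{|x-y|=y_d\}$ is the unbounded paraboloid $\partial P(x)$, and the naive ``shell of thickness $\lesssim|x_1-x_2|$'' has infinite measure; what saves the estimate is that the integrand decays like $y_d^{-d-\epsilon}$ along $\partial P$. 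The paper makes this precise via an explicit coarea argument: parametrize $\partial P((1-t)x+tz)$ by a scaling-translation map $g_t$ from $\partial P(e_d)$, bound $|\partial_t g_t(y)|\le c|x-z|(1+|y'|)$, and then check that $\int_{\partial P(w)}(1+|y'|)(1+|y|)^{-d-\epsilon}\,dH_{d-1}(y)<\infty$ uniformly in $w\in B_{\kappa/2}(e_d)$. Your sketch is correct once this computation is inserted.
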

\begin{proof}
After dividing by some constant, we can assume that $ \| u\| _{L^\infty _{2s-\epsilon} (\R^d)} + \| f\| _{C^\alpha (B_{\kappa }(e_d))} \leq 1$.
By \autoref{Lem:InteriorRegularity}, we know that $\| u \| _{C^{\alpha} (B_{\kappa /2})} \leq c$ for some constant $c=c(d,s,\sigma ,\epsilon )>0$.
As in \autoref{Lem:InteriorRegularity}, we rewrite our operator in terms of the fractional Laplacian $(-\Delta )^s$ and then use interior regularity results for $(-\Delta )^s$.
Notice that for all $x\in B_{\kappa /2}(e_d)$, we have 
\begin{gather}\label{eq:RewriteOperatorAsFractLap}
\begin{aligned}
(-\Delta )^s u(x) &= f(x) + u(x) \int _{\R^d} \frac{1-\mathcal{B}(x,y)}{|x-y|^{d+2s}} \intd y + \int _{\R^d} \frac{u(y)(1-\mathcal{B}(x,y))}{|x-y|^{d+2s}} \intd y \\
& = f(x) +u(x) h_1(x) + h_2(x)
\end{aligned}
\end{gather}
in the distributional sense.
We want to show that the right-hand side of \eqref{eq:RewriteOperatorAsFractLap} is $C^\alpha$.
By assumption we have $\| f \|_{ C^\alpha (B_{\kappa /2}(e_d))} \leq 1$.
It was shown in \autoref{Lem:InteriorRegularity} that $\| h_1 \| _{L^\infty (B_{\kappa /2}(e_d))} \leq c$ and $\| h_2 \| _{L^\infty (B_{\kappa /2}(e_d))} \leq c$ for some constant $c=c(d,s, \sigma , \epsilon )>0$.
Assume that we have already shown that $[h_1 ] _{C^\alpha (B_{\kappa /2}(e_d))} \leq c$ and $[h_2 ] _{C^\alpha (B_{\kappa /2}(e_d))} \leq c $ for some constant $c=c(d,s, \sigma , \epsilon ,\alpha )>0$. Then (using that the product of two bounded Hölder continuous functions is Hölder continuous), we get that
\begin{equation*}
 \| f +u h_1 + h_2 \|  _{C^\alpha (B_{\kappa /2}(e_d))} \leq c .
\end{equation*}
By interior regularity results for the fractional Laplacian (see, e.g., \cite[Thm. 2.4.1.]{RosOton2024Book}), we conclude that $\| u \| _{C^{2s+\alpha}(B_{\kappa /4})(e_d)} \leq c$ for some constant $c=c(d,s,\sigma ,\epsilon ,\alpha )>0$, which proves \autoref{Lem:InteriorRegularityOnHalfspace}.

It remains to bound $[h_2 ] _{C^\alpha (B_{\kappa /2}(e_d))}$ ($[h_1 ] _{C^\alpha (B_{\kappa /2}(e_d))}$ can be bounded similarly by setting $u=1$).
Notice that
\begin{equation*}
h_2(x) = \frac{1}{2} \int _{(B_{\sigma x_d}   (x))^c } \frac{u(y)}{|x-y|^{d+2s}} \intd y + \frac{1}{2} \int _{R(x) ^c} \frac{u(y)}{|x-y|^{d+2s}} \intd y
\end{equation*}
where $R(x)$ is the ellipsoid $E(x)$ (from \eqref{eq:EllipsoidEquation}) in the case $\sigma \in (0,1)$ or the paraboloid $P(x)$ (from \eqref{eq:ParaboloidEquation}) in the case $\sigma =1$.
Notice that by rescaling the claim from the proof of \autoref{Lem:InteriorRegularity}, we see that
\begin{equation*}
\mathcal{B}(x,y) =1 \quad \text{for all} \quad x\in B_\kappa (e_d) , y \in B_{2\kappa} (e_d) .
\end{equation*}
As a consequence for $x,z\in B_{\kappa} (e_d)$, we have $|x-y| \geq \kappa$ for all $y \in (B_{\sigma z_d}   (z))^c \cup R(z) ^c $.
Hence, in the arguments below, we are always away from the singularity.

Let us fix $x, z\in B_{\kappa /2}(e_d)$.
Then,
\begin{align*}
2 |h_2 (x) -h_2(z)| & \leq  \int _{(B_{\sigma x_d}   (x))^c } |u(y)| \left| \frac{1}{|x-y|^{d+2s}} -\frac{1}{|z-y|^{d+2s}} \right|  \intd y  \\
 & \qquad  + \left| \int _{(B_{\sigma x_d}   (x))^c } \frac{u(y)}{|z-y|^{d+2s}} \intd y - \int _{(B_{\sigma z_d}   (z))^c} \frac{u(y)}{|z-y|^{d+2s}} \intd y \right| \\
 &\qquad + \int _{R(x)^c } |u(y)| \left| \frac{1}{|x-y|^{d+2s}} -\frac{1}{|z-y|^{d+2s}} \right|  \intd y  \\
 & \qquad  + \left| \int _{R(x)^c } \frac{u(y)}{|z-y|^{d+2s}} \intd y - \int _{R(z)^c} \frac{u(y)}{|z-y|^{d+2s}} \intd y \right| \\
 & = I_1 +I_2+I_3 +I_4 .
\end{align*}
We start with $I_1$.
Notice that $|x-y|$ and $|z-y|$ in the integral of $I_1$ are both bounded from below. Hence, we can use the $C^1$ regularity from $| \cdot | ^{-d-2s}$.
To be more precise, using the mean value formula, for every $y \in (B _{\sigma x_d}(x))^c$ there exists $w_y\in B_{\kappa /2}(e_d)$ such that
\begin{align*}
I_1 & \leq c |x-z| \int _{(B_{\sigma x_d}   (x))^c } |u(y)|  |w_y-y|^{-d-2s-1}\intd y \\
& \leq c |x-z| \| u\| _{L^\infty _{2s-\epsilon } (\R^d)} \int _{(B_{\sigma x_d}} (1+|y| ) ^{-d-1-\epsilon }\intd y \\
& \leq c |x-z| \| u\| _{L^\infty _{2s-\epsilon } (\R^d)}
\end{align*}
for some constant $c=c(d,s,\sigma ,\epsilon )$.
Similarly, we can bound $I_3 \leq c |x-z|$ for some constant $c=c(d,s,\sigma ,\epsilon )$.

Notice that for $I_2$, only those $y$ in the integral remain, which are contained in the symmetric difference $(B_{\sigma x_d}   (x))^c \Delta (B_{\sigma z_d}   (z))^c$.
By a direct computation, we see that
\begin{equation*}
(B_{\sigma x_d}   (x))^c \Delta (B_{\sigma z_d}   (z))^c \subset B_{\sigma x_d +2 |x-z|} (x) \setminus B_{\sigma x_d -2 |x-z|} (x) .
\end{equation*}
Hence, we can estimate $I_2$ by
\begin{equation*}
I_2 \leq \int _{B_{\sigma x_d +2 |x-z|} (x) \setminus B_{\sigma x_d -2 |x-z|} (x)}  \frac{|u(y)|}{|z-y|^{d+2s}} \intd y
\leq c \left| B_{\sigma x_d +2 |x-z|} (x) \setminus B_{\sigma x_d -2 |x-z|} (x)  \right|
\leq c |x-z|
\end{equation*}
for some $c=c(d,s,\sigma ,\epsilon )>0$, where we have used $\| u\| _{L^\infty _{2s-\epsilon} (\R^d)} \leq 1$ and the fact that $|z-y|$ in the integral is bounded by positive constants from above and below.

For $I_4$, also, only those $y$ in the symmetric difference $R(x)^c \Delta R(z)^c$ remain.
However, the geometric setting becomes much more complicated compared to $I_2$.

We will view the symmetric difference $D:=R(x)^c \Delta R(z)^c$ as the union
\begin{equation*}
D \subset \bigcup _{t\in [0,1]} \partial R((1-t)x+tz)
\end{equation*}
of the topological boundaries of the ellipsoids or paraboloids $R((1-t)x+tz)$ of the points connecting $x$ and $z$ in a straight line.
Then, using the coarea formula, we will write the integral over $D$ as an iterated integral over $t\in [0,1]$ and over the hypersurfaces $\partial R((1-t)x+tz)$.

We start with the case $\sigma \in (0,1)$.
Using the explicit inequality from \eqref{eq:EllipsoidEquation}, which describes the ellipsoid, we define a bijective function
\begin{align*}
g_t \colon \partial E(e_d) & \to \partial E((1-t)x+tz) \\
\begin{pmatrix}
y' \\
y_d
\end{pmatrix} & \mapsto \begin{pmatrix}
((1-t)x_d+tz_d) y' + (1-t)x' +t z' \\
((1-t)x_d+tz_d) \left( y_d -\frac{1}{1-\sigma ^2}  \right) + \frac{((1-t)x_d+tz_d)}{1-\sigma ^2}
\end{pmatrix}
\end{align*}
for every $t\in [0,1]$, which maps $\partial E(e_d)$ onto $\partial E((1-t)x+tz)$ by scaling and shifting the ellipsoid. 

By a direct computation, we can bound the derivative
\begin{equation}\label{eq:BoundDerivativeGtCaseSigmaLess1}
\left| \partial_t g_t(y)  \right| \leq c |x-z| (1+|y|)  \quad \text{for all}\quad y\in \partial E(e_d)
\end{equation}
for some constant $c=c(d, \sigma )$.
Furthermore, notice that $1+ |y|$ is bounded for all $y\in \partial E(e_d)$ by some constant depending only on $\sigma$.
Hence, $\left| \partial_t g_t(y)  \right| \leq c |x-z|$ for all $y\in \partial E(e_d)$.

Using the coarea formula, we obtain
\begin{align*}
I_4 & \leq \int _D \frac{|u(y)|}{|z-y|^{d+2s}} \intd y \\
& \leq \int _0^1 \int _{g_t(\partial E(e_d))}  \frac{|u(y)|}{|z-y|^{d+2s}} \left| \partial_t g_t(y) \right| \intd H_{d-1}(y)   \intd t \\
& \leq c |x-z| \int _0^1 \int _{ \partial E((1-t)x+tz) }  \frac{|u(y)|}{|z-y|^{d+2s}} \intd H_{d-1}(y)  \intd t 
\end{align*}
where $H_{d-1}$ is the $d-1$ dimensional Hausdorff measure.
Notice that in the last integral, both the ellipsoids $H_{d-1} (\partial E((1-t)x+tz))$ and the integrand are bounded.
This implies $I_4 \leq c |x-z|$ for some $c=c(d,s,\sigma )>0$ in the case $\sigma \in (0,1)$.

Now we consider the case $\sigma =1$.
We will proceed similarly as above.
The key differences are that both $\partial P((1-t)x+tz)$ as well as the right-hand side of \eqref{eq:BoundDerivativeGtCaseSigmaLess1} are not bounded anymore.
Using the explicit description of $P(x)$ from \eqref{eq:ParaboloidEquation}, we define a bijective map
\begin{align*}
g_t \colon \partial P(e_d) & \to \partial P((1-t)x+tz) \\
\begin{pmatrix}
y' \\
y_d
\end{pmatrix} & \mapsto \begin{pmatrix}
((1-t)x_d+tz_d) y' + (1-t)x' +t z' \\
 y_d -\frac{1}{2}   + \frac{((1-t)x_d+tz_d)}{2} 
\end{pmatrix} .
\end{align*}
A direct computation yields
\begin{equation*}
\left| \partial_t g_t(y)  \right| \leq c |x-z| (1+|y'|)  \quad \text{for all}\quad y\in \partial P(e_d)
\end{equation*}
for some constant $c=c(d)>0$.
Using the coarea formula, we get
\begin{equation}\label{eq:BoundI4CaseSigma1}
I_4  \leq c |x-z| \int _0^1 \int _{ \partial P((1-t)x+tz) }  \frac{|u(y)| (1+|y'| )}{|z-y|^{d+2s}} \intd H_{d-1}(y)  \intd t .
\end{equation}
We claim that
\begin{equation}\label{eq:BoundIntegralOverParaboloid}
\int _{ \partial P(w) }  \frac{|u(y)| (1+|y'| )}{|z-y|^{d+2s}} \intd H_{d-1}(y) \leq c
\end{equation}
for any $w\in B_{\kappa /2}(e_d)$, for some constant $c=c(d,s )>0$.

Notice, that $\partial P(w)$ is the graph of the function $h\colon \R^{d-1} \to \R$
\begin{equation*}
h(y'):= \frac{|y'-w|^2}{2w_d} +\frac{w_d}{2} .
\end{equation*}
Since $h(y') \asymp |y'|^2 +1$, we have $|(y',h(y'))| \asymp |y'|^2 +1$
Using that $\left| \nabla h (y') \right| \asymp |y'| +1$, we obtain
\begin{align*}
\int _{ \partial P(w) }  \frac{|u(y)| (1+|y'| )}{|z-y|^{d+2s}} \intd H_{d-1}(y) & \leq c \| u\| _{L^\infty_{2s-\epsilon} (\R^d)} \int _{ \partial P(w) }  \frac{ 1+|y'| }{(1+|y|)^{d+\epsilon}} \intd H_{d-1}(y) \\
& \leq c \int_{\R^{d-1}} \left| \nabla h (y') \right| \frac{1+|y'|}{(1+ |(y',h(y')|)^{d+\epsilon}} \intd y' \\
& \leq c \int_{\R^{d-1}} \frac{1}{(1+ |y'|)^{2d+2\epsilon -2}} \intd y' .
\end{align*}
Note that the above integral is finite.
This proves \eqref{eq:BoundIntegralOverParaboloid}. Combining \eqref{eq:BoundI4CaseSigma1} and \eqref{eq:BoundIntegralOverParaboloid}, we conclude that  $I_4 \leq c |x-z|$ for some $c=c(d,s, \sigma )>0$ in the case $\sigma =1$ as well.
Hence, we have shown that $|h_2(x)-h_2(z)| \leq I_1 +I_2+I_3+ I_4 \leq c |x-z|$, which proves that $h_2 \in C^\alpha (B_{\kappa /2}(e_d))$.
\end{proof}

\begin{proof}[Proof of \autoref{Thm:HigherOrderBoundaryReg}]
Fix $\alpha \in (0,1)$ from \autoref{Lem:ExpansionHigherBoundaryReg}.
%As in \autoref{Prop:HoelderRegulUptoBoundaryLocal}, the proof follows by a combination of interior estimates and \autoref{Lem:ExpansionHigherBoundaryReg}.
%For more details we refer the reader to \cite[Prop. 2.7.8.]{RosOton2024Book}.
By dividing the equation by some constant, we can assume that $\| f\| _{L^\infty (\Omega )} \leq 1$. Furthermore, by \autoref{Prop:LinftyBound}, we can also assume that $\| u \| _{L^\infty (\Omega )} \leq 1$.
We will now show that $ \| u / \delta _\Omega ^{2s-1} \| _{C^\alpha (\Omega )} \leq c$ for some constant $c=c(\Omega , d ,s,\sigma )>0$. Note, that this implies \eqref{eq:HigherBoundaryRegularity}, since by interior regularity results $u/d_\Omega ^{2s-1}\in C^\alpha _{loc}(\Omega )$ and $d_\Omega$ coincides with $\delta _\Omega$ close to the boundary.

Let $x_0 \in \Omega$ be a point with $\rho := d_\Omega (x_0)$, and choose $z_0 \in \partial \Omega$ such that $|x_0 -z_0| = \rho$.
Set $\kappa =\frac{1}{2(2+3/\sigma )}$ and define
\begin{equation*}
v(x):=u(x_0 + 2\kappa \rho x) - q(z_0) \delta _\Omega^{2s-1} (x_0 + 2 \kappa \rho x)
\end{equation*}
where $q(z_0)$ is the constant $q_0$ from \autoref{Lem:ExpansionHigherBoundaryReg} when the point $z_0$ is shifted to $0\in \R^d$.
Note, that $\| v \| _{L^\infty (\R^d )} \leq  c$ for some constant $c=c( \Omega ,d,s, \sigma )>0$.
Furthermore, using \autoref{Lem:ExpansionHigherBoundaryReg}, we obtain the weighted $L^\infty$-bound
\begin{equation*}
\| v \| _{L^\infty _{2s-1+\alpha } (\R^d )} \leq c \rho ^{2s-1+\alpha }
\end{equation*}
for some constant $c=c(\Omega ,d,s, \sigma )>0$.

Moreover, for $x\in B_1$, we have in the weak sense
\begin{align*}
| \mathcal{L}_{(2\kappa \rho )^{-1}(\Omega - x_0)} v(x) | & \leq c\rho ^{2s} | f(x_0 + 2\kappa \rho x) | + c |q(z_0)| \rho ^{2s} d_\Omega ^{-\gamma} (x_0 + 2\kappa \rho x)  \\
& \leq c \rho ^{2s} + c \rho ^{2s - \gamma} \leq c \rho ^{2s-1+\alpha }
\end{align*}
for some constant $c=c(\Omega ,d,s, \sigma )>0$, where we have used \autoref{Lem:BoundLd2sMinus1} and the fact that $\alpha < 1-\gamma$.

By \autoref{Lem:InteriorRegularity}, we get
\begin{align*}
[u-q(z_0) \delta _\Omega^{2s-1} ]_{C^\alpha (B_{\kappa \rho} (x_0))} & = c\rho ^{-\alpha } [v]_{C^\alpha (B_{1/2})} \\
&\leq c \rho ^{- \alpha } \left( \| v \| _{L^\infty _{2s-1+\alpha } (\R^d )} + \|  \mathcal{L}_{(2\kappa \rho )^{-1}(\Omega - x_0)} v(x)  \| _{L^\infty (B_1)} \right) \\
& \leq c \rho ^{2s-1 }
\end{align*}
for some constant $c=c(\Omega ,d,s, \sigma )>0$.

Since 
\begin{equation*}
\| \nabla \delta _\Omega ^{-2s+1} \| _{L^\infty (B _{\kappa \rho } (x_0))} \leq c \| \delta _\Omega ^{-2s}\| _{L^\infty (B _{\kappa \rho } (x_0))} \| \nabla \delta _\Omega \| _{L^\infty (B _{\kappa \rho } (x_0))} \leq c \rho ^{-2s} 
\end{equation*}
we obtain by interpolation
\begin{equation*}
[ \delta _\Omega^{-2s+1} ] _{C^\alpha (B _{\kappa \rho } (x_0))}  \leq \left[ \delta _\Omega^{-2s+1} \right] _{C^{0,1} (B _{\kappa \rho } (x_0))} ^{\alpha} \| \delta _\Omega^{-2s+1} \| _{L^\infty (B _{\kappa \rho } (x_0))} ^{1-\alpha } \leq c \rho ^{-2s+1-\alpha }.
\end{equation*}
Hence, we get
\begin{align*}
\left[ \frac{u}{\delta _\Omega ^{2s-1}} \right] _{C^\alpha (B _{\kappa \rho } (x_0))} & =  \left[ \frac{u}{\delta _\Omega ^{2s-1}} -q(z_0) \right] _{C^\alpha (B _{\kappa \rho } (x_0))} \\
& \leq  \| u -q(z_0)\delta _\Omega^{2s-1} \| _{L^\infty (B _{\kappa \rho } (x_0))}  \left[ \delta _\Omega ^{-2s+1} \right] _{C^\alpha (B _{\kappa \rho } (x_0))} \\
&  \qquad + \left[ u -q(z_0)\delta _\Omega^{2s-1} \right] _{C^\alpha (B _{\kappa \rho } (x_0))}  \left\| \delta _\Omega ^{-2s+1} \right\| _{L^\infty (B _{\kappa \rho } (x_0))} \\
& \leq c \rho ^{2s-1+\alpha} \rho ^{-2s+1} + c \rho ^{2s-1} \rho ^{-2s+1} \leq c
\end{align*}
for some constant $c=c(\Omega ,d,s, \sigma )>0$.
However, this is enough to conclude $ \| u / \delta _\Omega ^{2s-1} \| _{C^\alpha (\Omega )} \leq c$ (see, e.g., \cite[Lem. A.1.4]{RosOton2024Book}).
\end{proof}

\appendix
\section{Auxiliary Lemmas}
\begin{Lemma}[Scaling and Shifting]\label{Lem:ScalingOperator}
Fix $s\in (1/2,1)$, $\sigma \in (0,1]$, and $\epsilon >0$.
Let $u\in C^{2s+\epsilon}_{loc} (\Omega )$ satisfy $\mathcal{L}_{\Omega ,\sigma } u=f$ in $\Omega$ and define $\tilde{u}(x)=u(x_0+r x)$ for some $x_0 \in \R^d$ and $r>0$.
Then 
\begin{equation*}
\mathcal{L}_{r^{-1}(\Omega - x_0) , \sigma} \tilde{ u} (x) = r ^{2s} f(x_0+rx)
\end{equation*}
where $r^{-1}(\Omega - x_0) := \left\{ r^{-1}(y - x_0) \in \R^d \mid y \in \Omega  \right\}$.
This result remains true for weak solutions.
\end{Lemma}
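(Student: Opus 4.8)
The plan is a direct change of variables, carried out first for the pointwise formulation and then, by the same bookkeeping, for the weak one. Write $\Omega':= r^{-1}(\Omega-x_0)$ and fix $x\in\Omega'$, so that $w:=x_0+rx\in\Omega$. The first and only substantive step is to record how the ingredients of the operator transform under $z\mapsto x_0+rz$. Since the affine map $y\mapsto r^{-1}(y-x_0)$ is a bijection of $\R^d$ sending $\Omega^c$ onto $(\Omega')^c$, one gets $d_{\Omega'}(z)=r^{-1}d_\Omega(x_0+rz)$ for all $z\in\Omega'$, and of course $|z_1-z_2|=r^{-1}|(x_0+rz_1)-(x_0+rz_2)|$. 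Feeding these two identities into the definition of $\mathcal{B}_{\Omega',\sigma}$ shows that the condition $y\in B_{\sigma d_{\Omega'}(x)}(x)$ is equivalent to $x_0+ry\in B_{\sigma d_\Omega(w)}(w)$, and likewise for the symmetric indicator, so $\mathcal{B}_{\Omega',\sigma}(x,y)=\mathcal{B}_{\Omega,\sigma}(x_0+rx,x_0+ry)$; combined with $|x-y|^{-d-2s}=r^{d+2s}|(x_0+rx)-(x_0+ry)|^{-d-2s}$ this yields the kernel scaling
\[
\mathcal{K}_{\Omega',\sigma}(x,y)=r^{d+2s}\,\mathcal{K}_{\Omega,\sigma}(x_0+rx,\,x_0+ry).
\]

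Next I would plug this into the principal value integral defining $\mathcal{L}_{\Omega',\sigma}\tilde u(x)$ and substitute $z=x_0+ry$, $dy=r^{-d}\intd z$. Then $\tilde u(x)-\tilde u(y)=u(w)-u(z)$, the domain $\Omega'$ is mapped onto $\Omega$, and the excised ball $B_\epsilon(x)$ is mapped onto $B_{r\epsilon}(w)$; the kernel scaling turns the integral over $\Omega'\setminus B_\epsilon(x)$ into $r^{2s}\int_{\Omega\setminus B_{r\epsilon}(w)}(u(w)-u(z))\,\mathcal{K}_{\Omega,\sigma}(w,z)\intd z$. Since $u\in C^{2s+\epsilon}_{loc}$, the principal value exists in the pointwise sense, and letting $\epsilon\to 0$ gives $\mathcal{L}_{\Omega',\sigma}\tilde u(x)=r^{2s}\mathcal{L}_{\Omega,\sigma}u(w)=r^{2s}f(x_0+rx)$, which is the assertion.

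For the weak formulation the same substitution is applied on the level of the bilinear form and the right-hand side. Given a test function $\psi$ in the relevant $H^s_0$ space on the rescaled (sub)domain, set $\varphi(\xi):=\psi(r^{-1}(\xi-x_0))$, which lies in the corresponding $H^s_0$ space on the original (sub)domain because the change of variables $\xi\mapsto r^{-1}(\xi-x_0)$ scales the $H^s$ seminorm by a fixed power of $r$ and preserves the condition \eqref{eq:AssumptionOnUWeakSol}. Substituting $x=r^{-1}(\xi-x_0)$, $y=r^{-1}(\eta-x_0)$ in the double integral and using the kernel scaling gives $\mathcal{E}_{\Omega',\sigma}(\tilde u,\psi)=r^{2s-d}\,\mathcal{E}_{\Omega,\sigma}(u,\varphi)$, while $\int r^{2s}f(x_0+r\cdot)\,\psi=r^{2s-d}\int f\varphi$; since $u$ is a weak solution these two quantities agree, so $\tilde u$ is a weak solution of $\mathcal{L}_{\Omega',\sigma}\tilde u=r^{2s}f(x_0+r\cdot)$. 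I do not expect a genuine obstacle here: the lemma is a routine scaling computation, and the only care needed is the bookkeeping of the powers of $r$ and the verification that the affine change of variables maps the function spaces and subdomains in the weak-solution definition to one another.
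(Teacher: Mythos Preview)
Your proposal is correct and follows essentially the same approach as the paper: both record the identity $d_{\Omega'}(z)=r^{-1}d_\Omega(x_0+rz)$, deduce $\mathcal{B}_{\Omega',\sigma}(x,y)=\mathcal{B}_{\Omega,\sigma}(x_0+rx,x_0+ry)$, and then carry out the change of variables $y\mapsto x_0+ry$ in the principal value integral (respectively in $\mathcal{E}$ for the weak case). Your version is in fact slightly more explicit than the paper's in tracking the excised ball $B_\epsilon(x)\mapsto B_{r\epsilon}(w)$ and in checking that the factor $r^{2s-d}$ appears on both sides of the weak identity.
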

\begin{proof}
Note that
\begin{equation*}
d_{r^{-1}(\Omega - x_0)} (x) = r^{-1} d_{\Omega -x_0}(rx) = r^{-1} d_\Omega (x_0 + rx)
\end{equation*}
for all $x\in r^{-1}(\Omega - x_0)$.
Hence, by setting
\begin{equation*}
\mathcal{B}_\Omega (x,y)= \frac{1}{2} \ind _{\left\{ y\in B_{\sigma d_\Omega (x)}(x) \right\} } + \frac{1}{2} \ind _{\left\{ x\in B_{\sigma d_\Omega (y)}(y) \right\} } ,
\end{equation*}
we also get
\begin{equation*}
\mathcal{B}_{r^{-1}(\Omega - x_0)} (x,y) = \mathcal{B}_\Omega (x_0+rx , x_0 +ry) .
\end{equation*}
By a change of variables, we obtain 
\begin{align*}
\mathcal{L}_{r^{-1}(\Omega - x_0) } \tilde{ u} (x) & = \pv \int _{r^{-1}(\Omega - x_0)} \frac{\tilde{u}(x) - \tilde{u}(y)}{|x-y|^{d+2s}} \mathcal{B}_{r^{-1}(\Omega - x_0)} (x,y) \intd y \\
& =  \pv \int _{r^{-1}(\Omega - x_0)} r^{d+2s} \frac{u(x_0 +rx) - u(x_0+ry)}{|(x_0+rx)-(x_0+ry)|^{d+2s}} \mathcal{B}_\Omega (x_0+rx , x_0 +ry) \intd y \\
& = \pv \int _\Omega r^{2s} \frac{u(x_0 +rx) - u(y')}{|x_0+rx-y'|^{d+2s}} \mathcal{B}_\Omega (x_0+rx ,y') \intd y' \\
& = r^{2s} \mathcal{L}_{\Omega } u (x_0 +rx) .
\end{align*}
For a weak solution, a similar calculation yields
\begin{equation*}
\mathcal{E}_{r^{-1}(\Omega - x_0) } (\tilde{u} , \tilde{\varphi} ) = r^{2s} \int _{r^{-1}(\Omega - x_0)}  f(x_0+rx) \tilde{\varphi } (x) \intd x
\end{equation*}
where $\tilde{\varphi} (x) = \varphi (x_0 +rx)$ for some $\varphi \in H^s_0(\Omega )$.
\end{proof}

\begin{Lemma}[Maximum principle for weak solutions]\label{Lem:WeakMaximumPrinciple}
Fix $s\in (1/2,1)$ and $\sigma \in (0,1]$.
Let $\Omega \subset \R^d$ be a Lipschitz domain and $\Omega ' \subseteq \Omega$ be a bounded domain also with Lipschitz boundary.
Assume that $u$ is a weak supersolution $\mathcal{L}_{\Omega ,\sigma } u \geq 0$ in $\Omega '$, i.e.
\begin{equation*}
\mathcal{E}_{\Omega , \sigma } (u ,\varphi )\geq 0, \qquad \forall \varphi\in H^s_0(\Omega '), \varphi \geq 0 .
\end{equation*}
Furthermore, assume $u\geq 0$ a.e. on $\overline{\Omega } \setminus \Omega '$ in the sense that $u_- \in H^s( \R^d)$ with $u_- = 0$ on $\R^d \setminus \Omega '$.
Then $u\geq 0$ a.e. in $\Omega$.
\end{Lemma}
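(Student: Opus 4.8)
The plan is to run the classical argument for weak maximum principles, testing the supersolution inequality against the negative part of $u$, adapted to the energy form $\mathcal{E}_{\Omega,\sigma}$. First I would check that $u_-$ is an admissible test function: by hypothesis $u_-\in H^s(\R^d)$ with $u_-=0$ on $\R^d\setminus\Omega'$, so the characterization \eqref{eq:ExtendingHs0Functions} (valid since $s>1/2$ and $\Omega'$ is bounded Lipschitz) puts $u_-$ in $H^s_0(\Omega')$, and $u_-\geq 0$. Inserting $\varphi=u_-$ into the definition of weak supersolution gives $\mathcal{E}_{\Omega,\sigma}(u,u_-)\geq 0$. I would also record that both $\mathcal{E}_{\Omega,\sigma}(u,u_-)$ and $\mathcal{E}_{\Omega,\sigma}(u_-,u_-)$ are finite, the former by the integrability condition \eqref{eq:AssumptionOnUWeakSol} built into the weak solution concept, the latter because $\mathcal{B}_{\Omega,\sigma}\leq 1$ bounds it by $[u_-]^2_{H^s(\R^d)}<\infty$.

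The algebraic core is the elementary pointwise identity $(a-b)(a_--b_-)=(a_+-b_+)(a_--b_-)-(a_--b_-)^2$ together with the sign fact $(a_+-b_+)(a_--b_-)\leq 0$ (a four-case check on the signs of $(a,b)$). Applying this with $a=u(x)$ and $b=u(y)$, using $u=u_+-u_-$ so that $(u(x)-u(y))+(u_-(x)-u_-(y))=u_+(x)-u_+(y)$, then multiplying by the nonnegative kernel $\mathcal{K}_{\Omega,\sigma}(x,y)$ and integrating over $\Omega\times\Omega$ — legitimate since both energies are finite — yields $\mathcal{E}_{\Omega,\sigma}(u,u_-)+\mathcal{E}_{\Omega,\sigma}(u_-,u_-)=\tfrac12\iint_{\Omega\times\Omega}(u_+(x)-u_+(y))(u_-(x)-u_-(y))\,\mathcal{K}_{\Omega,\sigma}(x,y)\,\mathrm{d}x\,\mathrm{d}y\leq 0$. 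Combined with $\mathcal{E}_{\Omega,\sigma}(u,u_-)\geq 0$ this forces $\mathcal{E}_{\Omega,\sigma}(u_-,u_-)=0$.

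The remaining step, upgrading $\mathcal{E}_{\Omega,\sigma}(u_-,u_-)=0$ to $u_-\equiv 0$, is the only place needing some care, since the energy is formed with $d_\Omega$ while the test function lives on $\Omega'$. Because $\Omega'\subseteq\Omega$ gives $d_{\Omega'}\leq d_\Omega$ on $\Omega'$, we have $\mathcal{B}_{\Omega',\sigma}\leq\mathcal{B}_{\Omega,\sigma}$ on $\Omega'\times\Omega'$, and nonnegativity of the integrand then gives $\mathcal{E}_{\Omega',\sigma}(u_-,u_-)\leq\mathcal{E}_{\Omega,\sigma}(u_-,u_-)=0$. Now \autoref{Cor:EnergiesComparable} yields $[u_-]^2_{H^s(\Omega')}=0$, so $u_-$ is a.e.\ constant on the connected domain $\Omega'$, and since $u_-\in H^s_0(\Omega')$ with $s>1/2$ that constant must be $0$; hence $u_-=0$ a.e., i.e.\ $u\geq 0$ a.e.\ in $\Omega$. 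A self-contained alternative avoiding the comparability: $\mathcal{E}_{\Omega,\sigma}(u_-,u_-)=0$ forces $(u_-(x)-u_-(y))^2\mathcal{B}_{\Omega,\sigma}(x,y)=0$ for a.e.\ $(x,y)$, hence by \eqref{eq:BLocally1AroundPoint} $u_-(x)=u_-(y)$ for a.e.\ pair with $|x-y|<\tfrac{\sigma}{1+\sigma}\,d_\Omega(x)$, and the usual locally-constant-implies-constant argument on the connected set $\Omega$, together with $u_-=0$ outside $\Omega'$, finishes the proof.
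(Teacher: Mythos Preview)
Your proof is correct and follows essentially the same approach as the paper: test against $\varphi=u_-\in H^s_0(\Omega')$, use the sign identity $(u_+(x)-u_+(y))(u_-(x)-u_-(y))\leq 0$ to deduce $\mathcal{E}_{\Omega,\sigma}(u_-,u_-)=0$, then pass to $\mathcal{E}_{\Omega',\sigma}$ via $\mathcal{B}_{\Omega',\sigma}\leq\mathcal{B}_{\Omega,\sigma}$ and invoke \autoref{Cor:EnergiesComparable} to conclude $u_-$ is constant, hence zero. The paper's writeup is slightly more compact (it computes $\mathcal{E}_\Omega(u_+,u_-)\leq 0$ directly and chains $0\leq\mathcal{E}_\Omega(u,u_-)\leq -\mathcal{E}_\Omega(u_-,u_-)\leq 0$), but the content is the same; your alternative endgame via \eqref{eq:BLocally1AroundPoint} is a nice self-contained variant the paper does not include.
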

\begin{proof}
We decompose $u$ into its positive and negative part $u=u_+ - u_-$.
Note that $u_- \in H^s_0 ( \Omega ' )$.
Hence, $\varphi :=u_-$ is an admissible test function.
Using that
\begin{align*}
\mathcal{E}_{\Omega  } (u_+, u_-) & = \frac{1}{2} \int_\Omega \int_\Omega (u_+(x)-u_+(y))(u_-(x)-u_-(y)) \mathcal{K}_\Omega (x,y) \intd y\intd x \\
& = - \frac{1}{2} \int_\Omega \int_\Omega \left( u_+(x)u_-(y)+u_+(y)u_-(x) \right) \mathcal{K}_ \Omega (x,y) \intd y\intd x 
 \leq 0 ,
\end{align*}
we obtain
\begin{equation*}
0 \leq \mathcal{E}_{\Omega } (u, \varphi ) = \mathcal{E}_{\Omega  } (u_+ , u_- ) -\mathcal{E}_{\Omega } (u_- , u_- ) \leq -\mathcal{E}_{\Omega } (u_- , u_- ) \leq 0.
\end{equation*}
This implies
\begin{align*}
0= \mathcal{E}_{\Omega } (u_- , u_-) &=\frac{1}{2}\iint _{\Omega \times \Omega} \frac{\left( u_- (x) -u_-(y) \right) ^2}{|x-y|^{d+2s}} \mathcal{B}_\Omega (x,y) \intd x \intd y  \\
& \geq \frac{1}{2} \iint _{\Omega ' \times \Omega '} \frac{\left( u_- (x) -u_-(y) \right) ^2}{|x-y|^{d+2s}} \mathcal{B}_{\Omega '} (x,y) \intd x \intd y \\
 & =  \mathcal{E}_{\Omega '} (u_- , u_-) \asymp [u_- ]_{H^s(\Omega ')} 
\end{align*}
where for the last step, we use the comparability from \autoref{Cor:EnergiesComparable}.
Hence, $[u_-]_{H^s(\Omega ')}=0$ which implies that $u_- $ is constant a.e. in $\Omega '$.
Since by assumption $u_- \in H^s(\R^d)$ and $u_-=0$ a.e. in $\R^d\setminus \Omega '$, we conclude that $u\geq 0$ a.e. in $\R^d$.
\end{proof}
\begin{Lemma}[Maximum principle for strong solutions]\label{Lem:MaximumPrincipleStrongSol}
Fix $s\in (1/2,1)$, $\sigma \in (0,1]$, and $\epsilon >0$.
Let $\Omega \subset \R^d$ be a Lipschitz domain and $\Omega ' \subseteq \Omega$ be a bounded domain also with Lipschitz boundary.
Assume that $u \in C^{2s+\epsilon}_{loc}(\Omega ') \cap C (\overline{\Omega } )$ is a supersolution in $\Omega '$, i.e.
\begin{equation*}
\mathcal{L}_{\Omega ,\sigma } u (x) \geq 0 \quad \text{for all} \quad x\in \Omega ' .
\end{equation*}
Furthermore, assume $u\geq 0$ on $\overline{\Omega} \setminus \Omega '$.
Then $u\geq 0$ in $\overline{\Omega }$.
\end{Lemma}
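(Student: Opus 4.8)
The plan is to argue by contradiction via the minimum point of $u$, together with a propagation argument along connected components of $\Omega'$. First I would suppose $u<0$ somewhere in $\overline{\Omega}$. Since $u\ge 0$ on $\overline{\Omega}\setminus\Omega'$ and $\overline{\Omega'}$ is compact (as $\Omega'$ is bounded) with $u\in C(\overline{\Omega})$, the minimum $m:=\min_{\overline{\Omega'}}u$ is attained and satisfies $m<0$; as $u\ge 0>m$ on $\partial\Omega'$ and on all of $\overline{\Omega}\setminus\Omega'$, any minimizer lies in the open set $\Omega'$ and $m$ is in fact the infimum of $u$ over $\overline{\Omega}$. Fix such a minimizer $x_0\in\Omega'$.

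Next I would evaluate $\mathcal{L}_{\Omega,\sigma}u$ at $x_0$. Because $u\in C^{2s+\epsilon}_{loc}(\Omega')$ with $2s+\epsilon>1$ and $\nabla u(x_0)=0$, the integrand $(u(x_0)-u(y))\mathcal{K}_{\Omega,\sigma}(x_0,y)$ is absolutely integrable near $x_0$, so $\mathcal{L}_{\Omega,\sigma}u(x_0)$ is a genuine integral; since $u(x_0)-u(y)\le 0$ for all $y\in\Omega$ while $\mathcal{K}_{\Omega,\sigma}\ge 0$, it is $\le 0$ (the value $-\infty$ would already contradict the supersolution inequality). Combined with $\mathcal{L}_{\Omega,\sigma}u(x_0)\ge 0$ this forces $\mathcal{L}_{\Omega,\sigma}u(x_0)=0$, hence $(u(x_0)-u(y))\mathcal{K}_{\Omega,\sigma}(x_0,y)=0$ for a.e.\ $y\in\Omega$. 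By \eqref{eq:BLocally1AroundPoint} the kernel $\mathcal{K}_{\Omega,\sigma}(x_0,\cdot)$ is strictly positive a.e.\ on $B_{c\,d_\Omega(x_0)}(x_0)$ with $c=\sigma/(1+\sigma)>0$, so $u\equiv m$ a.e.\ there and, by continuity, on the whole ball. Running this argument at every point of $X:=\{x\in\Omega':u(x)=m\}$ shows that $X$ is open, and it is relatively closed in $\Omega'$ as a level set of the continuous function $u$.

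Then I would pass to the connected component $U$ of $\Omega'$ containing $x_0$: it is open, nonempty and bounded, and $X\cap U$ is nonempty and clopen in $U$, so $U\subseteq X$, i.e.\ $u\equiv m$ on $U$ and hence on $\overline{U}$. Since $U$ is a component of the open set $\Omega'$, we have $\partial U\subseteq\partial\Omega'\subseteq\overline{\Omega}\setminus\Omega'$, and $\partial U\neq\varnothing$ by boundedness. Any $z\in\partial U$ then satisfies simultaneously $u(z)\ge 0$ (being in $\overline{\Omega}\setminus\Omega'$) and $u(z)=m<0$ (being in $\overline{U}$), a contradiction. Therefore $u\ge 0$ in $\overline{\Omega}$.

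The main obstacle is that the one-point computation only delivers $\mathcal{L}_{\Omega,\sigma}u(x_0)=0$, not a strict sign, so a propagation step is unavoidable; and since $\Omega'$ need not be connected, this has to be organized on a single component $U$, using the inclusion $\partial U\subseteq\partial\Omega'$. The other delicate point is the honest pointwise evaluation of $\mathcal{L}_{\Omega,\sigma}u(x_0)$ as an integral with nonpositive integrand, which relies on $u\in C^{2s+\epsilon}_{loc}$ together with $\nabla u(x_0)=0$, and the strict positivity of the kernel near $x_0$ supplied by \eqref{eq:BLocally1AroundPoint}.
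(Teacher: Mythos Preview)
Your proof is correct and follows essentially the same contradiction-at-the-minimum strategy as the paper: sandwich $\mathcal{L}_{\Omega,\sigma}u(x_0)$ between $0$ and $0$, use \eqref{eq:BLocally1AroundPoint} to propagate $u\equiv m$ on a small ball, and then spread this to reach $\partial\Omega'$. The only cosmetic differences are that the paper uses a path-covering argument (implicitly treating $\Omega'$ as connected, as ``domain'' usually entails) where you use a clopen argument on a component, and that you spell out why the principal value is a genuine integral at the minimum via $\nabla u(x_0)=0$, which the paper leaves implicit.
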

\begin{proof}
We will argue by contradiction.
Let $x_0 \in \overline{\Omega'}$ such that $u(x_0)=\min _{x\in \overline{\Omega '}} u(x)$ and assume that $x_0\in \Omega '$ with $u(x_0)<0$.
Then 
\begin{equation} \label{eq:LUSandwichForStrongMaxPrinciple}
0 \leq \mathcal{L}_\Omega u(x_0) = \pv \int _\Omega \frac{u(x_0)-u(y)} {|x_0-y|^{d+2s}} \mathcal{B}_\Omega (x_0,y) \intd y \leq 0
\end{equation}
where for the last inequality we have used that $u(x_0) = \inf _{x\in \Omega} u(x)$, since $u\geq 0 > u(x_0)$ on $\Omega \setminus \Omega '$.
Recall that there exists some $c=c(\sigma )>0$ such that $\mathcal{B} _\Omega (x_0, y ) >0$ for all $y\in  B_{cd_\Omega (x_0)}(x_0)$ (see \eqref{eq:BLocally1AroundPoint}).
Hence using \eqref{eq:LUSandwichForStrongMaxPrinciple}, we conclude $u(y)=u(x_0)<0$ for all $y\in B_{cd_\Omega (x_0)}(x_0)$.

\textbf{Claim}: $u(y)=u(x_0)<0$ for all $y\in \Omega '$.

To see this, let $y_0 \in \Omega '$ and connect $x_0$ with $y_0$ by a path inside $\Omega '$.
Then choose $x_1, \dots, x_n$ on this path such that the balls $B_{cd_\Omega (x_i)}(x_i)$ cover the path.
Repeating the above argument $n$ times yields that $u(y_0)=u(x_0)$.

However, the above claim contradicts the assumption $u\in C(\overline{\Omega })$ with $u\geq 0$ on $\overline{\Omega} \setminus \Omega '$. 
\end{proof}

	%\listoftodos 

	%bibliography:
\printbibliography

\end{document}